\documentclass[twoside,11pt,reqno]{amsart} 
\usepackage{amsmath,amssymb,amscd,mathrsfs,stmaryrd,wasysym,latexsym}

\makeatletter

\hfuzz 5pt
\vfuzz 2pt

\textwidth 132mm
\textheight 216mm

\raggedbottom

\newtheorem{Theorem}{Theorem}[section]
\newtheorem{Corollary}[Theorem]{Corollary}
\newtheorem{Proposition}[Theorem]{Proposition}

\newtheorem{Hypothesis}[Theorem]{Hypothesis}
\newtheorem{Lemma}[Theorem]{Lemma}
\theoremstyle{remark}
\newtheorem{Remark}[Theorem]{Remark}

\numberwithin{equation}{section}

\newbox\squ  
\setbox\squ=\hbox{\vrule width.6pt
   \vbox{\hrule height.6pt width.4em\kern1ex
         \hrule height.6pt}%
   \vrule width.6pt}

\def\lexeq{\leq_{\operatorname{lex}}}
\def\lex{<_{\operatorname{lex}}}

\def\gex{>_{\operatorname{lex}}}

\def\codeg{\operatorname{codeg}}

\def\op{\operatorname{op}}

\def\ad{\operatorname{ad}}

\def\f{f}
\def\e{e}

\def\Rep#1{\operatorname{Rep}(#1)}
\def\mod#1{\underline{\operatorname{Mod}}(#1)}
\def\rep#1{\underline{\operatorname{Re}}\!\operatorname{p}(#1)}
\def\proj#1{\underline{\operatorname{Pro}}\!\operatorname{j}(#1)}
\def\Proj#1{\operatorname{Proj}(#1)}
\def\Mod#1{\operatorname{Mod}(#1)}
\def\id{\operatorname{id}}

\def\db{\delta}
\def\dd{\eps}
\def\Id{\operatorname{Id}}

\def\defect{\operatorname{def}}
\def\A{{\mathbb A}}
\def\C{{\mathbb C}}
\def\Q{{\mathbb Q}}
\def\Z{{\mathbb Z}}

\def\onto{{\twoheadrightarrow}}

\def\0{{\bar 0}}
\def\1{{\bar 1}}
\def\pr{{\operatorname{pr}}}
\def\infl{{\operatorname{infl}}}

\def\St{{\mathscr{T}}}
\def\T{{\mathtt T}}
\def\Laurent{\mathscr A}
\def\Stab{{\mathtt S}}
\def\Par{{\mathscr P}^\La}
\def\Part{{\mathscr P}^{\La^t}}
\def\RPar{{\mathscr{RP}}^\La}
\def\RPart{{\mathscr{RP}}^{\La^t}}
\def\RRPar{\widetilde{\mathscr{RP}}^\La}

\def\qdim{{\operatorname{qdim}}}
\def\hom{\underline{\operatorname{Hom}}}
\def\HOM{{\operatorname{HOM}}}
\def\Hom{{\operatorname{Hom}}}

\def\End{{\operatorname{End}}}
\def\END{{\operatorname{END}}}

\def\Ind{{\operatorname{Ind}}}

\def\res{{\operatorname{res}}}
\def\Res{{\operatorname{Res}}}

\def\cha{{\operatorname{char}\,}}

\def\im{{\operatorname{im}}}
\def\soc{{\operatorname{soc}\:}}

\def\CH{{\operatorname{ch}_q\:}}
\def\wt{{\operatorname{wt}}}
\def\height{{\operatorname{ht}}}

\def\cont{{\operatorname{cont}}}

\def\bid{\hbox{\boldmath{$1$}}}
\def\bi{\text{\boldmath$i$}}
\def\bj{\text{\boldmath$j$}}

\def\kk{{\tilde{k}}}
\def\ddd{{\tilde{d}}}

\def\hla{{\bar{\la}}}
\def\hmu{{\bar{\mu}}}
\def\hnu{{\bar{\nu}}}

\def\eps{{\varepsilon}}
\def\phi{{\varphi}}
\def\emptyset{{\varnothing}}
\def\ga{{\gamma}}
\def\Ga{{\Gamma}}
\def\la{{\lambda}}
\def\La{{\Lambda}}
\def\de{{\delta}}
\def\De{{\Delta}}
\def\al{{\alpha}}

\def\g{{\mathfrak g}}
\def\h{{\mathfrak h}}

\def\nslash{\:\notslash\:}

\def\underbar{\mathpalette\@underbar}
\def\@underbar#1#2{\settowidth{\@tempdimb}{$#1#2$}\@tempdimb=0.8\@tempdimb
                   \ooalign{$#1#2$\crcr%
                         \hfil\rule[-.5mm]{\@tempdimb}{.4pt}\hfil}}

\newdimen\hoogte    \hoogte=14pt    
\newdimen\breedte   \breedte=14pt   
\newdimen\dikte     \dikte=0.5pt    

\newenvironment{young}{\begingroup
       \def\vr{\vrule height0.8\hoogte width\dikte depth 0.2\hoogte}
       \def\fbox##1{\vbox{\offinterlineskip
                    \hrule height\dikte
                    \hbox to \breedte{\vr\hfill##1\hfill\vr}
                    \hrule height\dikte}}
       \vbox\bgroup \offinterlineskip \tabskip=-\dikte \lineskip=-\dikte
            \halign\bgroup &\fbox{##\unskip}\unskip  \crcr }
       {\egroup\egroup\endgroup}
\def\diagram#1{\relax\ifmmode\vcenter{\,\begin{young}#1\end{young}\,}\else%
              $\vcenter{\,\begin{young}#1\end{young}\,}$\fi}

\begin{document}

\title[Graded decomposition numbers]{Graded decomposition numbers for cyclotomic Hecke algebras}
\author{Jonathan Brundan and Alexander Kleshchev}

\begin{abstract}
In recent joint work with Wang, we have constructed graded Specht modules
for cyclotomic Hecke algebras. 
In this article, we prove a
graded version of the Lascoux-Leclerc-Thibon conjecture,
describing the
decomposition numbers of graded Specht modules
over a field of characteristic zero.
\end{abstract}
\thanks{{\em 2000 Mathematics Subject Classification:} 20C08.}
\thanks{Supported in part by NSF grant 
DMS-0654147.}
\address{Department of Mathematics, University of Oregon, Eugene, USA.}
\email{brundan@uoregon.edu, klesh@uoregon.edu}
\maketitle

\vspace{-2mm}

\section{Introduction}\label{SIntro}

Since the classic work of Bernstein and Zelevinsky \cite{BZ},
the representation theory of the affine Hecke algebra $H_d$ associated
to the symmetric group $\Sigma_d$ 
has been a fundamental topic in representation theory from many points of view.
For brevity in this introduction,
we discuss only the situation when $H_d$ is defined 
over the ground field $\C$ at parameter
$1 \neq \xi \in \C^\times$ 
that is a primitive $e$th root of unity.
In \cite{Ariki},
building on powerful geometric results of Kazhdan and Lusztig \cite{KL}
and Ginzburg \cite[Chapter 8]{CG}, 
Ariki established a remarkable connection between the
representation theory of certain finite dimensional 
quotients of $H_d$, known as {cyclotomic Hecke algebras},
and the canonical bases of integrable
highest weight modules over the affine Lie algebra
$\widehat{\mathfrak{sl}}_e(\C)$.
In a special case, this connection had been suggested earlier
by Lascoux, Leclerc and Thibon \cite{LLT}.
Similar results were announced by Grojnowski following \cite{G1},
but the proofs were never published.

To recall some of these results in a little more detail,
let $\La$ be a dominant integral weight of level $l$ for
$\widehat{\mathfrak{sl}}_e(\C)$. Let
$V(\La)_\C$ be the corresponding integrable highest weight module
and fix a highest weight vector $v_\La \in V(\La)_\C$.
To the weight $\La$ we associate cyclotomic Hecke algebras
$H^\La_d$ for each $d \geq 0$; see $\S$\ref{sah}.
Letting $\proj{H_d^\La}$ denote the category of
finitely generated projective $H_d^\La$-modules and writing
$[\proj{H_d^\La}]_\C$ for its complexified Grothendieck group,
Ariki showed that there is a unique $\C$-linear isomorphism
$$
\underline{\delta}:
V(\La)_\C \stackrel{\sim}{\rightarrow} \bigoplus_{d \geq 0} [\proj{H_d^\La}]_\C
$$
such that $v_\La$ maps to the class
of the regular $H_0^\La$-module, and the actions of the Chevalley generators
$e_i, f_i \in \widehat{\mathfrak{sl}}_e(\C)$ correspond to certain exact
{\em $i$-restriction} and {\em $i$-induction functors} on the
 Hecke algebra side.

Now the key result obtained by Ariki in \cite[Theorem 4.4]{Ariki}
can be formulated as follows:
the isomorphism $\underline{\delta}$ maps the 
Kashiwara-Lusztig canonical basis for $V(\La)_\C$
to the basis of the Grothendieck group arising from the
isomorphism classes of projective indecomposable modules. 
Ariki then applied this theorem to compute the
decomposition numbers
of {\em Specht modules}, for which some foundational results
in levels $l > 1$
were developed subsequently by Dipper, James and Mathas \cite{DJM}.
In level one this gave a proof of the Lascoux-Leclerc-Thibon
conjecture from \cite{LLT}
concerning decomposition numbers of the Iwahori-Hecke algebra of 
type $A$ at an $e$th root of unity over $\C$;
moreover it generalized the conjecture to higher levels.

Recently, there have been some exciting new developments
thanks to works of Khovanov and Lauda \cite{KL1, KL2}
and Rouquier \cite{Ro}, who have independently introduced a new family
of algebras attached to Cartan matrices.
For the rest of the introduction,
we let $R_d$ denote the Khovanov-Lauda-Rouquier algebra
of degree $d$ attached to the Cartan matrix
of type $\widehat{\mathfrak{sl}}_e$; 
we mean the direct sum over all $\alpha \in Q_+$ of height $d$
of the algebras $R_\alpha$ defined by generators and relations
in $\S$\ref{sqnh} below.
Unlike the affine Hecke algebra $H_d$,
the algebra $R_d$ is $\Z$-graded in a canonical way.

In \cite[$\S$3.4]{KL1}, Khovanov and Lauda also introduced certain  ``cyclotomic''
finite dimensional graded quotients $R_d^\La$ of $R_d$
(see $\S$\ref{slump}),
and conjectured a result which can be viewed as a 
graded version of Ariki's categorification theorem as
formulated above. Remarkably, 
the 
Khovanov-Lauda categorification conjecture
makes equally good sense in any type.
One of the main results of this article proves 
the conjecture in the
$\widehat{\mathfrak{sl}}_e$-case. To do this, we 
exploit
an explicit algebra isomorphism
$\rho:R^\La_d\stackrel{\sim}{\rightarrow} H^\La_d$ constructed 
in \cite{BKyoung}, which allows us to lift
existing results about $H^\La_d$ to $R^\La_d$, incorporating
additional information about gradings as we go.

To give a little more detail, 
the algebra $R_d^\La$ is graded, so it makes sense to consider
the category $\Proj{R_d^\La}$ of finitely generated projective {\em graded} 
$R_d^\La$-modules. The Grothendieck group
$[\Proj{R_d^\La}]$ is a $\Z[q,q^{-1}]$-module, with multiplication
by $q$ corresponding to shifting the grading on a module up by one.
Let $[\Proj{R_d^\La}]_{\Q(q)} := \Q(q) \otimes_{\Z[q,q^{-1}]}
[\Proj{R_d^\La}]$.
Let $V(\La)$ be the integrable highest weight module
for the quantized enveloping algebra $U_q(\widehat{\mathfrak{sl}}_e)$ 
over the field $\Q(q)$, with highest weight vector $v_\La$.
Combining results from \cite{G, KL1, CR, Ro},
we show to start with that there is a unique $\Q(q)$-linear isomorphism
$$
\delta:V(\La) \stackrel{\sim}{\rightarrow}
\bigoplus_{d \geq 0} [\Proj{R^\La_d}]_{\Q(q)}
$$
such that $v_\La$ maps to the class of the regular $R^\La_0$-module
and the actions of the Chevalley
generators $E_i, F_i \in U_q(\widehat{\mathfrak{sl}}_e)$
correspond to 
graded analogues of the $i$-restriction and $i$-induction
functors from before; see $\S$\ref{sir}.

Moreover, we show that the
isomorphism $\delta$ maps the canonical basis
for $V(\La)$ to the basis of the Grothendieck
group arising from the isomorphism classes of 
indecomposable projective graded modules 
that are self-dual 
with respect to a certain duality $\circledast$; see $\S$\ref{scd}.
Our proof of this 
relies ultimately on
Ariki's original categorification theorem from \cite{Ariki}.

In joint work with Wang \cite{BKW}, we have also defined
graded versions of Specht modules for the algebras $R^\La_d$.
Another of our main results gives an explicit formula
for the decomposition numbers of graded Specht modules.
This should be regarded as a graded version of the 
Lascoux-Leclerc-Thibon conjecture
(generalized to higher levels).
It shows that the decomposition numbers of graded Specht
modules are obtained by expanding the ``standard monomials''
in $V(\La)$ in terms of the dual-canonical basis;
see $\S$\ref{sgdn} and $\S$\ref{dcqc} for details.

The results of this article fit naturally into the general
framework of $2$-representations of $2$-Kac-Moody algebras
 developed by Rouquier in
\cite{Ro}; see also \cite{KL3}.
While writing up this work, 
we have learnt of an announcement by Rouquier
indicating that he has found
a direct geometric proof of the Khovanov-Lauda categorification conjecture
that is valid in arbitrary type, although
details are not yet available.
More recently still, Varagnolo and Vasserot have released a preprint
in which they prove the Khovanov-Lauda categorification conjecture
at the affine level in arbitrary simply-laced type; see \cite{VV3}.
We point out however that these results do not immediately imply the graded version of the Lascoux-Leclerc-Thibon conjecture proved here, since 
for that one needs to deal with 
{Specht modules} over the cyclotomic quotients.

\vspace{2mm}

We end the introduction with a brief guide to the rest of the article,
indicating some of the other things to be found here.
Section 2 is primarily devoted to recalling the definition of
the algebras $R_d$ in type $\widehat{\mathfrak{sl}}_e$, 
and then reviewing some of the foundational results
proved about them in \cite{KL1}.

In section 3 we review the construction of the irreducible
highest weight module $V(\La)$ over $U_q(\widehat{\mathfrak{sl}}_e)$
as a summand of Fock space.
At the same time, we construct various bases for these modules, 
paralleling the
setup of \cite[$\S$2]{BKariki} closely. This part of the story is
surprisingly lengthy as 
there are some subtle 
combinatorial issues surrounding the triangularity
of the standard monomials in $V(\La)$; see $\S$\ref{tzero}.
Unlike almost all of the literature in the subject,
our approach emphasizes the dual-canonical basis 
rather than the canonical basis.

In section 4 we consider the cyclotomic quotients
$R^\La_d$ of $R_d$ introduced originally in
\cite[$\S$3.4]{KL1}.
We use the isomorphism between $R^\La_d$ and $H^\La_d$
from \cite{BKyoung}
to quickly deduce the classification
of irreducible graded $R^\La_d$-modules from Grojnowski's classification
of irreducible $H^\La_d$ in terms of crystal graphs from  \cite{G};
see $\S$\ref{sb}.
At the same time we lift various
branching rules to the graded setting.
Then we prove the first key categorification theorem,
which identifies $V(\La)$ with the direct sum 
$\bigoplus_{d \geq 0} [\Proj{R^\La_d}]_{\Q(q)}$ as above;
see $\S$\ref{sc}.
As an application, we compute the graded dimension
of $R^\La_d$; see $\S$\ref{sgdf}.
We stress that this part of the development 
makes sense over any ground field, 
and does not depend on any results from geometric representation theory.

In section 5 we lift Ariki's results to the graded setting
to prove simultaneously
the graded version of the Lascoux-Leclerc-Thibon conjecture
and the Khovanov-Lauda conjecture; see
$\S$\ref{sgdn}.
In the course of this we encounter some non-trivial issues related to the parametrization
of irreducible modules: there are two relevant 
parametrizations,
one arising from the crystal graph and the other arising from Specht module
theory; see $\S$\ref{sanother} for the latter.
The identification of the two parametrizations is addressed in
Ariki's work, but we give a self-contained treatment here in order to
keep track of gradings.
We also discuss 
the situation over fields of positive characteristic, introducing
graded analogues of James' adjustment matrices; see $\S$\ref{sgam}.

\vspace{2mm}
\noindent
{\em Acknowledgements.}
We thank Rapha\"el Rouquier for sending
us a preliminary version of \cite{Ro} and an extremely helpful remark about Uglov's construction.
We also thank Mikhail Khovanov for his comments on a previous version, 
as well as Bernard Leclerc, Andrew Mathas and Weiqiang Wang 
for some valuable discussions and e-mail correspondence.

\section{Review of results of Khovanov and Lauda}

Fix an algebraically closed 
field $F$ and an integer $e$ such that either $e = 0$ or $e \geq 2$.
Always $q$ denotes an indeterminate.

\subsection{Cartan integers, weights and roots}\label{SSDynkin}
Let $\Ga$ be the quiver with vertex set  $I := \Z / e\Z$,
and a directed edge from $i$ to $j$ if 
$i \neq j  = i+1$ in $I$.
Thus $\Gamma$ is the quiver of type $A_\infty$ if $e=0$
or $A_{e-1}^{(1)}$ if $e > 0$, with a specific orientation:
\begin{align*}
A_\infty&:\quad\cdots \longrightarrow-2\longrightarrow -1 \longrightarrow 0 \longrightarrow 1 \longrightarrow 
2\longrightarrow \cdots\\
A_{e-1}^{(1)}&:\quad0\rightleftarrows 1
\qquad
\begin{array}{l}
\\
\,\nearrow\:\:\:\searrow\\
\!\!2\,\longleftarrow\, 1
\end{array}
\qquad
\begin{array}{rcl}\\
0&\!\rightarrow\!&1\\
\uparrow&&\downarrow\\
3&\!\leftarrow\!&2
\end{array}
\qquad
\begin{array}{l}
\\
\:\nearrow\quad\searrow\\
\!4\qquad\quad \!1\\
\nnwarrow\quad\quad\,\sswarrow\\
\:\:3\leftarrow 2
\begin{picture}(0,0)
\put(-152.5,41){\makebox(0,0){0}}
\put(-13.5,53.5){\makebox(0,0){0}}
\end{picture}
\end{array}
\qquad \cdots
\end{align*}
The corresponding (symmetric) Cartan matrix
$(a_{i,j})_{i, j \in I}$ is defined by
\begin{equation}\label{ECM}
a_{i,j} := \left\{
\begin{array}{rl}
2&\text{if $i=j$},\\
0&\text{if $i \nslash j$},\\
-1&\text{if $i \rightarrow j$ or $i \leftarrow j$},\\
-2&\text{if $i \rightleftarrows j$}.
\end{array}\right.
\end{equation}
Here the symbols 
$i \rightarrow j$ and $j \leftarrow i$
both indicate that $i \neq j=i+1\neq i-1$,
$i \rightleftarrows j$ indicates that $i \neq j = i+1= i-1$,
and
$i \nslash j$
indicates that $i \neq j \neq i \pm 1$.

Following \cite{Kac}, let $(\h,\Pi,\Pi^\vee)$ be a realization of the Cartan matrix $(a_{i,j})_{i,j\in I}$, so we have the simple roots $\{\al_i\mid i\in I\}$, the fundamental dominant weights $\{\La_i\mid i\in I\}$, and the normalized invariant form $(\cdot,\cdot)$ such that
$$
(\al_i,\al_j)=a_{i,j}, \quad (\La_i,\al_j)=\de_{i,j}\qquad(i,j\in I).
$$
Let 
$Q_+ := \bigoplus_{i \in I} \Z_{\geq 0} \alpha_i$
denote the positive part of the corresponding root lattice.
For $\alpha \in Q_+$, we write $\height(\alpha)$ for the sum of its 
coefficients when expanded in terms of the $\alpha_i$'s.

\subsection{\boldmath The algebra $\mathbf f$}
Let $\mathbf f$ denote Lusztig's algebra from \cite[$\S$1.2]{Lubook}
attached to the Cartan matrix (\ref{ECM}) over the field $\Q(q)$. We adopt the same conventions as
\cite[$\S$3.1]{KL1}, so our $q$ is Lusztig's $v^{-1}$.
To be more precise, denote
$$
[n]:=\frac{q^n-q^{-n}}{q-q^{-1}},\quad [n]!:=[n][n-1]\dots[1],\quad 
\left[
\begin{matrix}
 n   \\
 m
\end{matrix}
\right]
:=\frac{[n]!}{[n-m]![m]!}.
$$
Then $\mathbf f$ 
is the $\Q(q)$-algebra on generators $\theta_i\:(i \in I)$ subject to
the quantum Serre relations
\begin{equation}\label{qserre}
(\ad_q \theta_i)^{1-a_{j,i}}(\theta_j)=0
\end{equation}
where  
\begin{equation}\label{adq}
(\ad_q x)^n (y):=\sum_{m=0}^n(-1)^m
\left[
\begin{matrix}
 n   \\
 m
\end{matrix}
\right]
x^{n-m}yx^m.
\end{equation}
There is a $Q_+$-grading 
$\mathbf f = \bigoplus_{\alpha \in Q_+} \mathbf f_\alpha$
such that $\theta_i$ is of degree $\alpha_i$.
The algebra $\mathbf f$ possesses a bar-involution
$-:\mathbf f \rightarrow \mathbf f$
that is anti-linear with respect to
the field automorphism sending $q$ to $q^{-1}$, such that
$\overline{\theta_i} = \theta_i$ for each $i \in I$.

If we equip $\mathbf f \otimes \mathbf f$ with algebra structure via the rule
$$
(x_1 \otimes x_2) (y_1 \otimes y_2)
= q^{-(\alpha,\beta)} x_1 y_1 \otimes x_2 y_2
$$
for $x_2 \in \mathbf f_{\alpha}$ and $y_1 \in \mathbf f_{\beta}$,
there is a $Q_+$-graded
comultiplication 
$m^*:\mathbf f  \rightarrow \mathbf f \otimes \mathbf f$, which is
the unique algebra homomorphism
such that $\theta_i \mapsto \theta_i \otimes 1 + 1 \otimes \theta_i$
for each $i \in I$.
For $\alpha,\beta \in Q_+$, we let
$$
m_{\alpha,\beta}: \mathbf f_\alpha 
\otimes \mathbf f_\beta 
\rightarrow \mathbf f_{\alpha+\beta},\qquad
m^*_{\alpha,\beta}:\mathbf f_{\alpha+\beta} \rightarrow \mathbf f_\alpha 
\otimes \mathbf f_{\beta}
$$
denote the multiplication and comultiplication maps induced on individual 
weight spaces,
so $m = \sum m_{\alpha,\beta}$ is the multiplication on $\mathbf f$
and $m^* = \sum m_{\alpha,\beta}^*$.

Finally let $\Laurent := \Z[q,q^{-1}]$ and ${_\Laurent} \mathbf f$
be the $\Laurent$-subalgebra of $\mathbf f$ generated by the quantum
divided powers $\theta_i^{(n)} := \theta_i^n / [n]!$.
The bar
involution induces an involution of ${_\Laurent}\mathbf f$, and also
the map $m^*$ restricts to a well-defined comultiplication
$m^*:{_\Laurent}\mathbf f \rightarrow {_\Laurent}\mathbf f \otimes {_\Laurent}\mathbf f$.

\subsection{\boldmath The algebra $R_\alpha$}\label{sqnh}
The symmetric group $\Sigma_d$ acts on the left on the set 
$I^d$ by place permutation. 
The orbits  are the sets
$$
I^\alpha := \{\bi = (i_1,\dots,i_d)\in I^d\:|\:\alpha_{i_1}+\cdots+\alpha_{i_d}=\alpha\}
$$
for each $\alpha \in Q_+$ with $\height(\alpha)=d$.
As usual, 
we let $s_1,\dots,s_{d-1}$ denote the basic transpositions 
in $\Sigma_d$.

For $\alpha\in Q_+$ of height $d$,
let $R_\alpha$ denote the associative, unital $F$-algebra on generators
$\{e(\bi)\:|\:\bi \in I^\alpha\}\cup\{y_1,\dots,y_d\}\cup\{\psi_1,\dots,\psi_{d-1}\}$
subject only to the following relations
for $\bi,\bj\in I^\alpha$ and all 
admissible $r, s$:
\begin{align}
\qquad e(\bi) e(\bj) &= \de_{\bi,\bj} e(\bi);
\hspace{11.3mm}{\textstyle\sum_{\bi \in I^\alpha}} e(\bi) = 1;\label{R1}\\
y_r e(\bi) &= e(\bi) y_r;
\hspace{20mm}\psi_r e(\bi) = e(s_r{ }\bi) \psi_r;\label{R2PsiE}\hspace{22.8mm}\\
\label{R3Y}
y_r y_s &= y_s y_r;\\
\label{R3YPsi}
\psi_r y_s  &= y_s \psi_r\hspace{42.4mm}\text{if $s \neq r,r+1$};\\
\psi_r \psi_s &= \psi_s \psi_r\hspace{41.8mm}\text{if $|r-s|>1$};\label{R3Psi}\\
\psi_r y_{r+1} e(\bi) 
&= 
\left\{
\begin{array}{ll}
(y_r\psi_r+1)e(\bi) &\hbox{if $i_r=i_{r+1}$},\\
y_r\psi_r e(\bi) \hspace{28mm}&\hbox{if $i_r\neq i_{r+1}$};\hspace{10.5mm}
\end{array}
\right.
\label{R6}\\
y_{r+1} \psi_re(\bi) &=
\left\{
\begin{array}{ll}
(\psi_r y_r+1) e(\bi) 
&\hbox{if $i_r=i_{r+1}$},\\
\psi_r y_r e(\bi)  \hspace{28mm}&\hbox{if $i_r\neq i_{r+1}$};
\end{array}
\right.
\label{R5}\\
\psi_r^2e(\bi) &= 
\left\{
\begin{array}{ll}
0&\text{if $i_r = i_{r+1}$},\\
e(\bi)&\text{if $i_r \nslash i_{r+1}$},\\
(y_{r+1}-y_r)e(\bi)&\text{if $i_r \rightarrow i_{r+1}$},\\
(y_r - y_{r+1})e(\bi)&\text{if $i_r \leftarrow i_{r+1}$},\\
(y_{r+1} - y_{r})(y_{r}-y_{r+1}) e(\bi)\!\!\!&\text{if $i_r \rightleftarrows i_{r+1}$};
\end{array}
\right.
 \label{R4}\end{align}\begin{align}
\psi_{r}\psi_{r+1} \psi_{r} e(\bi)
&=
\left\{\begin{array}{ll}
(\psi_{r+1} \psi_{r} \psi_{r+1} +1)e(\bi)&\text{if $i_{r+2}=i_r \rightarrow i_{r+1}$},\\
(\psi_{r+1} \psi_{r} \psi_{r+1} -1)e(\bi)&\text{if $i_{r+2}=i_r \leftarrow i_{r+1}$},\\
\big(\psi_{r+1} \psi_{r} \psi_{r+1} -2y_{r+1}
\\\qquad\:\quad +y_r+y_{r+2}\big)e(\bi)
\hspace{2.9mm}&\text{if $i_{r+2}=i_r \rightleftarrows i_{r+1}$},\\
\psi_{r+1} \psi_{r} \psi_{r+1} e(\bi)&\text{otherwise}.
\end{array}\right.
\label{R7}
\end{align}
There is a unique $\Z$-grading on
$R_\alpha$ such that
each $e(\bi)$ is of degree 0,
each $y_r$ is of degree $2$, and
each $\psi_r e(\bi)$ is of degree $-a_{i_r,i_{r+1}}$.

The algebra $R_\alpha$ is one of the algebras
introduced
by Khovanov and Lauda in 
\cite{KL1, KL2} (except for $e=2$), and was discovered independently 
by Rouquier in \cite{Ro} (in full generality). 

\subsection{Graded algebras and modules}\label{SSGr}
Let $H$ be a $\Z$-graded $F$-algebra.
Let $\Mod{H}$ denote the abelian category of all graded left $H$-modules,
denoting (degree-preserving) homomorphisms in this category
by $\Hom$.
Let $\Rep{H}$ denote
the abelian subcategory of all
finite dimensional graded left $H$-modules and
 $\Proj{H}$ denote the additive subcategory  of 
all finitely generated projective graded left $H$-modules. 
Denote the corresponding Grothendieck groups by
 $[\Rep{H}]$ and $[\Proj{H}]$, respectively.
We view these as $\Laurent$-modules via 
$q^m[M]:=[M\langle m\rangle]$,
where $M\langle m\rangle$ denotes the module obtained by 
shifting the grading up by $m$:
\begin{equation}\label{obvious}
M\langle m\rangle_n=M_{n-m}.
\end{equation}
Given $f = \sum_{n \in \Z} f_n q^n 
\in \Z[[q,q^{-1}]]$ and a graded module $M$,
we allow ourselves to write simply $f \cdot M$
for $\bigoplus_{n \in \Z} M \langle n \rangle^{\oplus f_n}$.

For $n \in \Z$, we let
$$
\Hom_H(M, N)_n := \Hom_H(M \langle n \rangle, N)
= \Hom_H(M, N \langle -n \rangle)
$$
denote the space of all homomorphisms
that are homogeneous of degree $n$,
i.e. they map $M_i$ into $N_{i+n}$ for each $i \in \Z$.
Set
$$
\HOM_H(M,N) := \bigoplus_{n \in \Z} \Hom_H(M,N)_n,
\quad
\END_H(M) := \HOM_H(M,M).
$$
There is a canonical pairing, the {\em Cartan pairing},
\begin{equation}\label{cartanpairing}
\langle.,.\rangle:\Proj{H}\times\Rep{H} \rightarrow \Laurent,
\quad 
\langle[P],[M]\rangle := \qdim\ \HOM_H(P,M),
\end{equation}
where $\qdim \, V$ denotes $\sum_{n \in \Z} q^n \dim V_n$
for any finite dimensional 
graded vector space $V$.
Note the Cartan pairing is {\em sesquilinear} (anti-linear in the first argument, linear in the second).

Occasionally, we will need to forget the grading on $H$ and work with
ordinary ungraded $H$-modules.
To avoid confusion in the ungraded setting, we denote the category of 
all left $H$-modules (resp.\ finite dimensional left $H$-modules,
resp.\ finitely generated projective left $H$-modules) 
by $\mod{H}$ (resp.\ $\rep{H}$, resp.\ $\proj{H}$).
We denote homomorphisms in these categories by $\hom$.
Let
 $[\rep{H}]$ (resp.\ $[\proj{H}]$) denote the Grothendieck group of
$\rep{H}$ (resp.\ $\proj{H}$).
Given a graded module $M$, we write $\underline{M}$ for the
ungraded module obtained from it by forgetting the grading.
For $M, N \in \Rep{H}$, we have that
\begin{equation}\label{Ehom}
\hom_H(\underline{M}, \underline{N}) = \underline{\HOM_H(M,N)}.
\end{equation}
The following lemmas summarize some standard facts:

\begin{Lemma}[{\cite[Theorem 4.4.6, Remark 4.4.8]{NO}}]\label{no1}
If $M$ is any finitely generated graded $H$-module,
the radical of $\underline{M}$ is a graded submodule of $M$.
\end{Lemma}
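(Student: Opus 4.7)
Set $J := \operatorname{rad}(\underline{M})$. Since $M$ is finitely generated, $J$ equals the intersection of all maximal (ungraded) submodules $N \subset \underline{M}$. For each such $N$, introduce its \emph{graded core}
\[
N^{\flat} \,:=\, \bigoplus_{n \in \Z}\,(N \cap M_n),
\]
which is the largest graded submodule of $M$ contained in $N$. Each $N^{\flat}$ is manifestly graded, so $\bigcap_N N^{\flat}$ is a graded submodule contained in $J$. The lemma will reduce to proving the reverse inclusion $J \subseteq N^{\flat}$ for every maximal $N$, since then $J = \bigcap_N N^{\flat}$ is graded.

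To establish $J \subseteq N^{\flat}$, I would exploit the $F$-linear twists $\tau_\lambda : M \to M$ defined for each $\lambda \in F^\times$ by $\tau_\lambda(x) := \lambda^n x$ for $x \in M_n$ and extended linearly. Although $\tau_\lambda$ is not $H$-linear, the identity $\tau_\lambda(hx) = \lambda^k h\, \tau_\lambda(x)$ for homogeneous $h \in H_k$ implies that $\tau_\lambda(N)$ is again an $H$-submodule of $M$: for $z = \tau_\lambda(y) \in \tau_\lambda(N)$ and $h \in H_k$, one computes $hz = \tau_\lambda(\lambda^{-k} h y) \in \tau_\lambda(N)$ using $hy \in N$. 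Moreover, $\tau_\lambda(N)$ is maximal (as $\tau_\lambda$ is an $F$-linear bijection) and contains $N^{\flat}$ (which is graded, hence $\tau_\lambda$-invariant). Consequently $J \subseteq \tau_\lambda(N)$ for every $\lambda$, so $J \subseteq \bigcap_{\lambda \in F^\times} \tau_\lambda(N)$.

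Finally, a Vandermonde argument identifies this intersection with $N^{\flat}$: an element $x = \sum_n x_n$ lies in $\bigcap_\lambda \tau_\lambda(N)$ if and only if $\sum_n \lambda^{-n} x_n \in N$ for every $\lambda \in F^\times$, which, since $F$ is infinite (being algebraically closed), forces each $x_n \in N$ and hence $x \in N^{\flat}$. Combining, $J \subseteq N^{\flat}$ as required. The main obstacle is the verification that $\tau_\lambda(N)$ is an $H$-submodule despite $\tau_\lambda$ itself not being $H$-linear; this is the one step where the interplay between the grading and the $H$-action is delicate, and it rests crucially on the scalar-twist identity noted above. (The treatment in \cite{NO} gives a field-independent argument in essentially the same spirit.)
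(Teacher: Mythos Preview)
The paper does not supply its own proof of this lemma; it merely cites \cite[Theorem 4.4.6, Remark 4.4.8]{NO}. So there is nothing to compare against directly.

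Your argument is correct under the paper's standing hypothesis that $F$ is algebraically closed (hence infinite), which is precisely what drives the Vandermonde step. Two small remarks. First, the matrix $(\lambda_i^{-n_j})$ is not literally Vandermonde when the exponents $n_j$ are not consecutive; the clean way to finish is to pass to the quotient $M/N$ and observe that $\sum_n \lambda^{-n}\bar x_n=0$ for all $\lambda\in F^\times$ forces each $\bar x_n=0$, since a nonzero Laurent polynomial over an infinite field cannot vanish identically. Second, your verification that $\tau_\lambda(N)$ is an $H$-submodule, done for homogeneous $h$, extends to arbitrary $h=\sum_k h_k$ by linearity. The treatment in \cite{NO} proceeds differently (via the graded Jacobson radical, valid over any ground ring), but for this paper's setting your approach is entirely adequate and arguably more transparent.
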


\begin{Lemma}[{\cite[Theorem 4.4.4(v)]{NO}}]\label{no2}
If $L \in \Rep{H}$ is irreducible then
$\underline{L} \in \rep{H}$ is irreducible too.
\end{Lemma}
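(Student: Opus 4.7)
The plan is to deduce irreducibility of $\underline{L}$ by showing it is both semisimple and indecomposable as an ungraded module. For semisimplicity, pick any nonzero homogeneous $v \in L$: then $Hv$ is a nonzero graded submodule of $L$, so graded irreducibility forces $Hv = L$. Thus $\underline{L}$ is cyclic, nonzero, and finite dimensional, so it admits a maximal proper submodule and in particular its Jacobson radical is properly contained in $\underline{L}$. By Lemma \ref{no1}, this radical is itself a graded submodule of $L$, and graded irreducibility then forces it to be zero. Hence $\underline{L}$ is semisimple.

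For indecomposability, I would appeal to a graded version of Schur's lemma: since $L$ is graded irreducible, any nonzero $f \in \END_H(L)$ has graded kernel and image and hence is a graded isomorphism. Consequently every nonzero homogeneous element of $\END_H(L)$ is invertible, and a short degree argument (comparing lowest-degree homogeneous components of a purported zero product) upgrades this to show that $\END_H(L)$ is a domain. Using the identification $\hom_H(\underline{L}, \underline{L}) = \underline{\END_H(L)}$ furnished by \eqref{Ehom}, any idempotent $e \in \hom_H(\underline{L}, \underline{L})$ satisfies $e(1-e) = 0$ in a domain, forcing $e \in \{0,1\}$. Hence $\underline{L}$ has no nontrivial idempotent endomorphisms and is indecomposable.

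Combining these two facts yields the claim, since a semisimple indecomposable module is automatically simple. The main (minor) subtle point is the upgrade from the graded-division property of $\END_H(L)$ to the genuine domain property; everything else is bookkeeping via Lemma \ref{no1} and \eqref{Ehom}.
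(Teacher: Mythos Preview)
Your argument is correct. The paper itself gives no proof of this lemma, simply citing \cite[Theorem 4.4.4(v)]{NO}, so there is no in-text argument to compare against. Your approach is a clean self-contained derivation from the other cited facts: Lemma~\ref{no1} forces the radical of $\underline{L}$ to be graded and hence zero, giving semisimplicity, while graded Schur plus the lowest-degree-component trick shows $\END_H(L)$ is a domain, so by \eqref{Ehom} the ungraded endomorphism ring has no nontrivial idempotents, giving indecomposability. Both steps are sound; the only point worth a passing remark is that the inverse of a bijective homogeneous endomorphism is again homogeneous (of opposite degree), which is what makes the graded-division-ring-to-domain upgrade work.
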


\begin{Lemma}[{\cite[Theorem 9.6.8]{NO}, \cite[Lemma 2.5.3]{BGS}}]\label{no3}
Assume that $H$ is finite dimensional.
If $K \in \rep{H}$ is irreducible,
then there exists an irreducible $L \in \Rep{H}$ 
such that
$\underline{L} \cong K$.
Moreover, $L$ is unique up to isomorphism
and grading shift.
\end{Lemma}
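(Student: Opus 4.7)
The plan is to prove existence and uniqueness separately, using standard graded-algebra techniques that parallel the ungraded Wedderburn theory.

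For existence, I would first apply Lemma~\ref{no1} to $H$ regarded as a left module over itself: the Jacobson radical $J := \operatorname{rad}(H)$ is then a graded two-sided ideal, so the quotient $\bar H := H/J$ inherits the structure of a finite dimensional graded semisimple $F$-algebra. Since the degree-zero component $\bar H_0$ is itself a finite dimensional semisimple algebra, one can choose a complete set of primitive orthogonal idempotents $\bar e_1, \ldots, \bar e_n$ entirely inside $\bar H_0$. The standard polynomial idempotent-lifting argument, applied successively inside $H_0$, $(1-e_1)H_0(1-e_1)$, and so on, then produces homogeneous idempotents $e_1, \ldots, e_n \in H_0$ lifting the $\bar e_i$; the key point is that these lifting formulas stay within the degree-zero subalgebra because $H_0$ is closed under multiplication. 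Setting $L_i := He_i / \operatorname{rad}(He_i)$ yields irreducible objects of $\Rep{H}$, their radicals being graded by a second invocation of Lemma~\ref{no1}. Because the $e_i$ also constitute a complete set of primitive orthogonal idempotents in the ungraded algebra $H$, the classes $\{\underline{L_1}, \ldots, \underline{L_n}\}$ exhaust the isomorphism classes of ungraded irreducibles; in particular $K \cong \underline{L_i}$ for some $i$.

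For uniqueness, suppose $L, L' \in \Rep{H}$ are irreducible and $\phi : \underline{L} \iso \underline{L'}$ is an ungraded isomorphism. By (\ref{Ehom}), $\hom_H(\underline{L}, \underline{L'}) = \underline{\HOM_H(L, L')}$, so $\phi$ admits a finite decomposition $\phi = \sum_{n \in \Z} \phi_n$ with each $\phi_n \in \Hom_H(L, L')_n = \Hom_H(L\langle n \rangle, L')$. Some $\phi_n$ must be nonzero, and it is then a nonzero morphism of graded modules between the irreducibles $L\langle n \rangle$ and $L'$. Its kernel and image are graded submodules of irreducible graded modules, so the graded version of Schur's lemma forces $\phi_n$ to be an isomorphism, yielding $L' \cong L \langle n \rangle$. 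The main obstacle in the whole argument is the homogeneous idempotent lifting in the existence half, where one must reconcile the grading with the primitivity of the chosen idempotents; once that is in place, the rest reduces to ungraded Wedderburn theory for $H$ together with the one-line graded Schur argument just given.
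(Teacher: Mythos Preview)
The paper does not supply its own proof of this lemma; it is stated with citations to \cite{NO} and \cite{BGS} and used as a black box. Your argument is correct and follows the standard route.

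Two steps you pass over quickly deserve a sentence of justification each. First, that $\bar H_0$ is semisimple: this holds because for any graded simple $\bar H$-module $L$ and any $n\in\Z$, the graded piece $L_n$ is either zero or simple over $\bar H_0$ (if $0\neq M\subseteq L_n$ is an $\bar H_0$-submodule then $\bar H M$ is a nonzero graded submodule of $L$, hence equals $L$, and its degree-$n$ piece is $\bar H_0 M=M$), so $\bar H_0$ is a direct sum of simple $\bar H_0$-modules. Second, that a primitive idempotent $\bar e_i$ of $\bar H_0$ remains primitive in $\bar H$: since $F$ is algebraically closed and $\bar H_0$ is semisimple, $\bar e_i\bar H_0\bar e_i\cong F$; then $\bar e_i\bar H\bar e_i$ is a semisimple graded $F$-algebra with one-dimensional degree-zero part, which forces $\bar e_i\bar H\bar e_i\cong F$, so $\bar e_i$ is primitive in $\bar H$ and hence its lift $e_i$ is primitive in $H$. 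With these filled in, your existence argument is complete, and your uniqueness argument via the graded Schur lemma is clean as written.
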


Given $M, L \in \Rep{H}$ with $L$ irreducible, 
we write $[M:L]_q$ for the {\em $q$-composition multiplicity},
i.e. $[M:L]_q := \sum_{n \in \Z} a_n q^n$ where $a_n$ is the multiplicity
of $L\langle n\rangle$ in a graded composition series of $M$.
In view of Lemma~\ref{no2}, we recover the ordinary composition multiplicity
$[\underline{M}:\underline{L}]$ from
$[M:L]_q$ on setting $q$ to $1$.

\subsection{Induction and restriction}
Given $\alpha, \beta \in Q_+$, we set 
$$
R_{\alpha,\beta} := R_\alpha \otimes 
R_\beta,
$$ 
viewed as an algebra in the usual way.
We denote 
the ``outer'' tensor product of an $R_\alpha$-module $M$ and an $R_\beta$-module 
$N$ by $M \boxtimes N$.

There is an obvious embedding
of $R_{\alpha,\beta}$
into the algebra $R_{\alpha+\beta}$
mapping $e(\bi) \otimes e(\bj)$ to $e(\bi\bj)$,
where $\bi\bj$ denotes the concatenation of the two sequences. It is not
a {\em unital} algebra homomorphism. We denote the image of the identity
element of $R_{\alpha,\beta}$ under this map by 
$$
e_{\alpha,\beta} = \sum_{\bi \in I^\alpha, \bj \in I^\beta} e(\bi\bj).
$$
Let $\Ind_{\alpha,\beta}^{\alpha+\beta}$ and $\Res_{\alpha,\beta}^{\alpha+\beta}$
denote the corresponding induction and restriction functors, so
\begin{align}
\Ind_{\alpha,\beta}^{\alpha+\beta} &:= R_{\alpha+\beta} e_{\alpha,\beta}
\otimes_{R_{\alpha,\beta}} ?:\Mod{R_{\alpha,\beta}} \rightarrow \Mod{R_{\alpha+\beta}},\\
\Res_{\alpha,\beta}^{\alpha+\beta} &:= e_{\alpha,\beta} R_{\alpha+\beta}
\otimes_{R_{\alpha+\beta}} ?:\Mod{R_{\alpha+\beta}}\rightarrow \Mod{R_{\alpha,\beta}}.
\end{align}
Note $\Res_{\alpha,\beta}^{\alpha+\beta}$ is just left multiplication by
the idempotent $e_{\alpha,\beta}$, so it is exact and sends finite 
dimensional modules to
finite dimensional modules. 
By \cite[Proposition 2.16]{KL1},
$e_{\alpha,\beta} R_{\alpha+\beta}$ is a graded free left $R_{\alpha,\beta}$-module of finite rank,
so $\Res_{\alpha,\beta}^{\alpha+\beta}$ sends 
finitely generated projective modules to 
finitely generated projective modules.
The functor $\Ind_{\alpha,\beta}^{\alpha+\beta}$ is left adjoint to $\Res_{\alpha,\beta}^{\alpha+\beta}$,
so it also
sends finitely generated projective modules to 
finitely generated projective
modules.
Finally $R_{\alpha+\beta} e_{\alpha,\beta}$ is a graded 
free right $R_{\alpha, \beta}$-module of finite 
rank, so
$\Ind_{\alpha,\beta}^{\alpha+\beta}$ sends finite dimensional 
 modules to 
finite dimensional
modules too.

We will often appeal without mention to the following general facts
about the representation theory of $R_\alpha$, all of which are
noted in \cite[$\S$2.5]{KL1}.

\begin{Lemma}
The algebra $R_\alpha$ has
finitely many isomorphism classes of irreducible
graded modules (up to degree shift), all of 
which are finite dimensional.
Every irreducible $L \in \Rep{R_\alpha}$ has a
unique (up to isomorphism) projective cover 
$P \in \Proj{R_\alpha}$  
with irreducible head isomorphic to $L$, and every indecomposable
projective graded $R_\alpha$-module 
is of this form.
\end{Lemma}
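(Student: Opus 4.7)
I will reduce classification of irreducibles to that of a finite-dimensional quotient algebra, and then use idempotent lifting for projective covers. The key input is the Khovanov--Lauda basis theorem (referenced at the end of $\S$\ref{sqnh}): $R_\alpha$ is a free module of rank $d!$ over $P_\alpha := \bigoplus_{\bi \in I^\alpha} F[y_1,\ldots,y_d]e(\bi)$, where $d = \height(\alpha)$. Using the defining relations, one verifies directly that the subalgebra $Z_0$ spanned by elements $\sum_{\bi}s(y_1,\ldots,y_d)e(\bi)$ for $s$ a symmetric polynomial lies in the center of $R_\alpha$: the error terms in the relations (\ref{R6})--(\ref{R5}) vanish when one passes $\psi_r$ past a symmetric polynomial in $y_r, y_{r+1}$. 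Since $F[y_1,\ldots,y_d]$ is free of finite rank over its symmetric subalgebra, $R_\alpha$ becomes finitely generated as a module over the positively-graded Noetherian polynomial ring $Z_0 \cong F[\sigma_1,\ldots,\sigma_d]$ (with $\sigma_k$ of degree $2k$). In particular, the quotient $\bar R_\alpha := R_\alpha / R_\alpha Z_{0,+}$ by the augmentation ideal $Z_{0,+}$ is finite-dimensional.

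\textbf{Classification of irreducibles.} Given any irreducible $L \in \Rep{R_\alpha}$, the module $L$ is cyclic and hence finitely generated as a $Z_0$-module; consequently its grading is bounded below with finite-dimensional homogeneous components. By graded Schur, $\END_{R_\alpha}(L)$ is a graded division ring into which $Z_0$ maps centrally; a homogeneous invertible element of positive degree $m$ would give $L \cong L\langle m\rangle$, incompatible with boundedness below. Hence $\END_{R_\alpha}(L) = F$ is concentrated in degree zero (using algebraic closure of $F$), forcing $Z_{0,+}$ to act as zero on $L$. Thus $L$ descends to a $\bar R_\alpha$-module, and the finite-dimensionality of $\bar R_\alpha$ yields simultaneously the finite-dimensionality of $L$ and the finiteness of the set of isomorphism classes of irreducibles (up to grading shift).

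\textbf{Projective covers and main obstacle.} Since $R_\alpha Z_{0,+}$ annihilates every graded irreducible, it lies in the graded Jacobson radical of $R_\alpha$. Decompose the identity of $\bar R_\alpha$ as $1 = \bar e_1 + \cdots + \bar e_n$ into primitive orthogonal homogeneous idempotents (possible as $\bar R_\alpha$ is finite-dimensional), then lift to primitive orthogonal homogeneous idempotents $1 = e_1 + \cdots + e_n$ in $R_\alpha$. The summands $P_i := R_\alpha e_i$ are the desired indecomposable graded projectives, with irreducible heads $L_i$ running through all graded irreducibles (up to shift); graded Nakayama then gives uniqueness of the projective cover and shows that every finitely generated indecomposable graded projective is isomorphic to some $P_i\langle m\rangle$. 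The main technical point is the idempotent lifting, which is not automatic because $R_\alpha$ fails to be positively graded (the elements $\psi_r e(\bi)$ with $i_r = i_{r+1}$ have degree $-2$). The remedy is that the basis theorem implies $R_\alpha$ is bounded below with finite-dimensional graded components; combined with finite generation over the central positively-graded Noetherian ring $Z_0$, this suffices to run the standard graded Henselian lifting argument and obtain the required decomposition.
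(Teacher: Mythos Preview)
Your proof is correct and takes a genuinely different route from the paper's. The paper obtains finiteness of the irreducibles not via the quotient $\bar R_\alpha = R_\alpha/R_\alpha Z_{0,+}$, but via the induction machinery being set up in the surrounding subsection: every irreducible graded $R_\alpha$-module is a quotient of some $\Ind_{\alpha_{i_1},\ldots,\alpha_{i_d}}^\alpha L$ for $\bi\in I^\alpha$ and $L$ an irreducible over $R_{\alpha_{i_1}}\otimes\cdots\otimes R_{\alpha_{i_d}}$; since $I^\alpha$ is finite and each such tensor product has a unique irreducible up to shift, and the induced modules are finite dimensional, finiteness follows. Finite-dimensionality of irreducibles is simply cited from \cite[Corollary~2.10]{KL1}, which is essentially your module-finiteness over $Z_0$. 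For projective covers the paper passes to $R_\alpha/J(R_\alpha)$ (shown to be finite dimensional using the above together with $\dim (R_\alpha)_0<\infty$) and lifts idempotents from that semisimple quotient, rather than from $\bar R_\alpha$.

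Your approach has the virtue of being more self-contained---it needs only the basis theorem and the centrality of symmetric functions, not the induction functors---and it kills both finiteness claims at once by showing every irreducible factors through the finite-dimensional algebra $\bar R_\alpha$. The paper's approach, on the other hand, is better integrated with the categorical framework of $\S$2.5 and makes immediate contact with the induced modules $\Ind^\alpha_{\alpha_{i_1},\ldots,\alpha_{i_d}}L$ that reappear later. One small remark on your write-up: the conclusion $\END_{R_\alpha}(L)=F$ is slightly premature at the point you state it, since you do not yet know $L$ is finite dimensional; however, the weaker fact that $\END_{R_\alpha}(L)$ is concentrated in degree $0$ already forces $Z_{0,+}$ to act by zero (its image lies in strictly positive degrees), and then finite-dimensionality of $L$ and hence $\END_{R_\alpha}(L)=F$ follow. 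This does not affect the validity of your argument.
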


\begin{proof}
All irreducible modules are finite dimensional
because 
$R_\alpha$ is finitely generated over its center; see
\cite[Corollary 2.10]{KL1}.
Every irreducible graded $R_\alpha$-module is a quotient of a finite dimensional
module of the form
$\Ind_{\alpha_{i_1},\dots,\alpha_{i_d}}^\alpha L$
for some $\bi \in I^\alpha$ and
some irreducible graded
$R_{\alpha_{i_1},\dots,\alpha_{i_d}}$-module $L$.
Since $I^\alpha$ is finite and there is only one irreducible graded
$R_{\alpha_{i_1},\dots,\alpha_{i_d}}$-module
up to degree shift, we deduce that 
there are only finitely many irreducible graded $R_\alpha$-modules
up to degree shift.
These statements imply in particular that the graded
Jacobson radical $J(R_\alpha)$ of
$R_\alpha$ is of finite codimension. 
Noting also that $J(R_\alpha)_0
\subseteq J((R_\alpha)_0)$ and that $(R_\alpha)_0$ is finite dimensional, the 
rest of the lemma follows by arguments involving 
lifting homogeneous idempotents from the 
finite dimensional semisimple graded algebra $R_\alpha / J(R_\alpha)$.
\end{proof}

\subsection{\boldmath The functors $\theta_i$ and $\theta_i^*$}
For $i \in I$, let $P(i)$ denote the regular
representation of $R_{\alpha_i}$.
Define a functor
\begin{equation}\label{fri1}
\theta_i := \Ind_{\alpha,\alpha_i}^{\alpha+\alpha_i} (? \boxtimes P(i)):
\Mod{R_\alpha} \rightarrow \Mod{R_{\alpha+\alpha_i}}.
\end{equation}
This functor is exact, and it maps finitely generated
projective modules to finitely generated projective modules, 
so restricts to a functor
$\theta_i: \Proj{R_\alpha}
\rightarrow \Proj{R_{\alpha+\alpha_i}}$.

The functor $\theta_i$ possesses a right adjoint 
\begin{equation}\label{fri2}
\theta_i^*:= \HOM_{R_{\alpha_i}'}(P(i), ?):\Mod{R_{\alpha+\alpha_i}}\rightarrow \Mod{R_\alpha},
\end{equation}
where $R_{\alpha_i}'$ denotes the subalgebra $1 \otimes R_{\alpha_i}$
of $R_{\alpha,\alpha_i}$.
Equivalently, $\theta_i^*$ is defined by
multiplication by the idempotent $e_{\alpha,\alpha_i}$
followed by restriction to the subalgebra $R_\alpha = R_{\alpha} \otimes 1$
of $R_{\alpha,\alpha_i}$.
The functor $\theta_i^*$ is exact,
and it restricts 
to define a functor $\theta^*_i: \Rep{R_{\alpha+\alpha_i}}
\rightarrow \Rep{R_{\alpha}}.$

\subsection{Dualities}\label{ssd}
The algebra $R_\alpha$ possesses a graded anti-automorphism
\begin{equation}\label{star}
*:R_\alpha \rightarrow R_\alpha
\end{equation}
which is the identity on generators. 

Using this we introduce a duality denoted $\circledast$ on $\Rep{R_\alpha}$,
mapping a module $M$ to $M^\circledast := \HOM_F(M, F)$ with the
action defined by $(xf)(m) = f(mx^*)$.
This duality commutes with 
$\theta_i^*$,
i.e. there is an isomorphism of functors
\begin{equation}\label{commd}
\circledast \circ \theta_i^* \cong \theta_i^* \circ 
\circledast:
\Rep{R_{\alpha+\alpha_i}} \rightarrow \Rep{R_\alpha}.
\end{equation}

There is another duality denoted $\#$
on $\Proj{R_\alpha}$ mapping a projective module $P$
to $P^\# := \HOM_{R_\alpha}(P, R_\alpha)$ with the action defined
by $(xf)(p) = f(p) x^*$.
This commutes with
the functor 
$\theta_i$, i.e.
\begin{equation}\label{snow}
\# \circ \theta_i \cong \theta_i \circ \#:
\Proj{R_\alpha} \rightarrow \Proj{R_{\alpha+\alpha_i}}.
\end{equation}
Recalling (\ref{cartanpairing}),
the following lemma makes a connection between 
the dualities $\circledast$ and $\#$.

\begin{Lemma}
For $P \in \Proj{R_\al}$ and $M \in \Rep{R_\al}$,
we have that
$$
\langle [P^\#],[M] 
\rangle = \overline{\langle [P], [M^\circledast] \rangle}.
$$
\end{Lemma}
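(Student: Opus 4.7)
The plan is to reduce the identity to the special case $P = R_\alpha$. Both sides are additive under direct sums of $P$, and both transform in the same way under grading shifts: since HOM behaves on shifts so that $(P\langle m\rangle)^\# \cong P^\#\langle -m\rangle$, and since the Cartan pairing is sesquilinear in its first argument, replacing $P$ by $P\langle m\rangle$ multiplies both $\langle[P^\#],[M]\rangle$ and $\overline{\langle[P],[M^\circledast]\rangle}$ by the same factor $q^m$. Because every $P\in\Proj{R_\alpha}$ is a direct summand of a finite direct sum of grading shifts of $R_\alpha$, it suffices to verify the identity when $P = R_\alpha$.

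For the regular module, the map $\phi:R_\alpha\to R_\alpha^\#$, $r\mapsto(p\mapsto pr^*)$, is a graded isomorphism of left $R_\alpha$-modules: it is degree-preserving because $*$ is a graded anti-automorphism, and it intertwines the left actions since $(x\cdot\phi(r))(p)=\phi(r)(p)x^*=p(xr)^*=\phi(xr)(p)$. Consequently, the left-hand side reduces to $\qdim\HOM_{R_\alpha}(R_\alpha,M)=\qdim M$. For the right-hand side, $\HOM_{R_\alpha}(R_\alpha,M^\circledast)=M^\circledast=\HOM_F(M,F)$, whose graded dimension is $\overline{\qdim M}$ because taking $F$-duals flips the grading; applying the bar involution once more returns $\qdim M$. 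The two sides therefore agree.

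The entire argument is routine once the bookkeeping is set up correctly, and there is no serious obstacle. The only subtle point is to check that the $*$-twist built into the left $R_\alpha$-action on $P^\#$ is exactly what is needed to produce the isomorphism $R_\alpha\cong R_\alpha^\#$ above, with no extra grading shift. A more conceptual alternative would be to exhibit natural graded-vector-space isomorphisms $\HOM_{R_\alpha}(P^\#,M)\cong P^\sigma\otimes_{R_\alpha}M$ and $\HOM_{R_\alpha}(P,M^\circledast)\cong\HOM_F(M^\sigma\otimes_{R_\alpha}P,F)$, where $(-)^\sigma$ denotes the twist of a left module into a right module via $*$, and then to combine the obvious flip $P^\sigma\otimes_{R_\alpha}M\cong M^\sigma\otimes_{R_\alpha}P$ with the identity $\qdim\HOM_F(V,F)=\overline{\qdim V}$. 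Either approach yields the result.
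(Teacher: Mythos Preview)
Your proof is correct. Your main approach---reducing to the special case $P=R_\alpha$ via additivity and the grading-shift computation---differs from the paper's, which instead works with a general $P$ throughout via the chain of natural isomorphisms
\[
\HOM_{R_\alpha}(P^\#,M)\cong P^{\op}\otimes_{R_\alpha}M,\qquad
\HOM_F(P^{\op}\otimes_{R_\alpha}M,F)\cong\HOM_{R_\alpha}(P,M^\circledast),
\]
where $P^{\op}$ is $P$ viewed as a right module via $p\cdot x:=x^*p$. This is exactly the ``conceptual alternative'' you sketch at the end (your $P^\sigma$ is the paper's $P^{\op}$), so you have in fact described both arguments. Your reduction is slightly more elementary and avoids having to track the bimodule structures, at the cost of an extra verification that both sides behave correctly under direct summands and shifts; the paper's approach is more functorial and makes the identity visibly natural in $P$ and $M$.
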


\begin{proof}
Let $P^{\op}$ denote $P$ viewed as  a right $R_\alpha$-module
with action $px := x^* p$ for $p\in P, x \in R_\alpha$.
We have that
\begin{align*}
\langle [P^\#],[M]\rangle
&=
\qdim\,\HOM_{R_\alpha}(P^\#, M)=
\qdim\,\HOM_{R_\alpha}(P^\#, R_\alpha) \otimes_{R_\alpha} M\\
&=
\qdim\,P^{\op} \otimes_{R_\alpha} M=
\overline{\qdim\,\HOM_F(P^{\op} \otimes_{R_\alpha} M,F)}\\&=
\overline{\qdim\,\HOM_{R_\alpha}(P^{\op},\HOM_F(M,F))}=
\overline{\qdim\,\HOM_{R_\alpha}(P,M^\circledast)}\\
&=\overline{\langle [P],[M^\circledast] \rangle},
\end{align*}
using the observation that $(P^\#)^\# \cong P$.
\end{proof}

\begin{Corollary}\label{dualpims}
Let $L \in \Rep{R_\alpha}$ be an irreducible module
with projective cover $P \in \Proj{R_\alpha}$.
Then the projective cover of $L^\circledast$ is isomorphic to
$P^\#$.
In particular, if $L \cong L^\circledast$ then $P \cong P^\#$.
\end{Corollary}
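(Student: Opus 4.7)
The strategy is to apply the preceding lemma in the specific case $M = L^\circledast$ and read off the head of $P^\#$ from the resulting equality in the Cartan pairing. First observe that $P^\#$ is itself indecomposable in $\Proj{R_\alpha}$, since $\#$ is a duality with $(P^\#)^\# \cong P$; hence $P^\#$ is the projective cover of some irreducible $L' \in \Rep{R_\alpha}$, unique up to grading shift, and my goal is to show $L' \cong L^\circledast$ as graded modules, with zero shift.

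The computational input I need is the standard identity that, for the projective cover $P(L'')$ of an irreducible $L''$ and any irreducible $M \in \Rep{R_\alpha}$,
\[
\langle [P(L'')],[M]\rangle \;=\; \begin{cases} q^n & \text{if } M \cong L''\langle n\rangle, \\ 0 & \text{if $M$ is not a grading shift of $L''$.} \end{cases}
\]
This follows from $\END_{R_\alpha}(L'') = F$ concentrated in degree $0$ (graded Schur's lemma applied to the irreducible $L''$ over the algebraically closed field $F$, using Lemma~\ref{no2}) together with the standard translation of $\qdim\HOM$ from a projective cover into graded head multiplicities.

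Applying this formula with $L'' = L$ and $M = L$ gives $\langle [P], [L] \rangle = 1$. Combining with $L^{\circledast\circledast} \cong L$ (finite-dimensionality of $L$) and the preceding lemma yields
\[
\langle [P^\#], [L^\circledast] \rangle \;=\; \overline{\langle [P], [L^{\circledast\circledast}] \rangle} \;=\; \overline{\langle [P], [L] \rangle} \;=\; 1.
\]
Comparing with the same formula now applied to $P^\# = P(L')$ with $M = L^\circledast$, the value $q^0 = 1$ forces $L^\circledast \cong L'$ exactly, not merely up to a shift. Hence $P^\# \cong P(L^\circledast)$, proving the first assertion; the in-particular statement then follows immediately: if $L \cong L^\circledast$ then $P \cong P(L) \cong P(L^\circledast) \cong P^\#$ by uniqueness of projective covers. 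The one place that requires care is the grading bookkeeping, controlled by $(M\langle n\rangle)^\circledast \cong M^\circledast\langle -n\rangle$ and the anti-linearity of the Cartan pairing in the first argument; this is precisely what makes the residual shift in the identification of $L'$ with $L^\circledast$ vanish.
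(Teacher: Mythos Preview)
Your proof is correct and follows exactly the intended approach: the paper states this as an immediate corollary of the preceding lemma (which relates $\langle [P^\#],[M]\rangle$ to $\overline{\langle [P],[M^\circledast]\rangle}$) without giving any further argument, and you have simply spelled out the details of that deduction, including the careful tracking of the grading shift via the Cartan pairing.
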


\subsection{Divided powers}
As explained in detail in \cite[$\S$2.2]{KL1}, 
in the case $\alpha = n \alpha_i$ for some $i \in I$,
the algebra
$R_\alpha$ is isomorphic to the nil-Hecke algebra $NH_n$. It has
a canonical representation on the polynomial algebra $F[y_1,\dots,y_n]$
such that each $y_r$ acts as multiplication by $y_r$ and each $\psi_r$ acts
as the {\em divided difference operator}
$$
\partial_r: f \mapsto \frac{{^{s_r}} f - f}{y_{r}-y_{r+1}}.
$$
Let $P(i^{(n)})$ denote the polynomial representation of $R_{n\alpha_i}$
viewed as a 
graded $R_{n\alpha_i}$-module with grading defined by
$$
\deg(y_1^{m_1} \cdots y_n^{m_n}) := 2m_1+\cdots+2m_n - \frac{1}{2}n(n-1).
$$
Denoting the left 
regular $R_{n\alpha_i}$-module
by $P(i^n)$, it is noted in \cite[$\S$2.2]{KL1} that
\begin{equation}\label{div}
P(i^n) \cong [n]! \cdot P(i^{(n)}).
\end{equation}
In particular, $P(i^{(n)})$ is projective.

Now we can 
generalize the definition of the functors $\theta_i$ and $\theta_i^*$:
for $i \in I$ and $n \geq 1$, set
\begin{align}\label{div1}
\theta_{i}^{(n)}&:= 
\Ind_{\alpha,n \alpha_i}^{\alpha+n\alpha_i} (? \boxtimes P(i^{(n)})):\Mod{R_\alpha} \rightarrow \Mod{R_{\alpha+n\alpha_i}},\\
(\theta_{i}^*)^{(n)}&:= 
\HOM_{R'_{n \alpha_i}}(P(i^{(n)}), ?): \Mod{R_{\alpha+n\alpha_i}} \rightarrow \Mod{R_{\alpha}},\label{div2}
\end{align}
where $R'_{n\alpha_i} := 1 \otimes R_{n\alpha_i} \subseteq R_{\alpha,n\alpha_i}$.
Both functors are exact, 
$\theta_i^{(n)}$ sends finitely generated projective modules to 
finitely generated projective modules,
and $(\theta_i^*)^{(n)}$ sends finite dimensional modules to finite dimensional modules.
By 
transitivity of induction and restriction,
there are isomorphims
$$
\theta_i^n
\cong \Ind_{\alpha,n\alpha_i}^{\alpha+n\alpha_i}(? \boxtimes P(i^n)),
\qquad(\theta_i^*)^n
\cong \HOM_{R_{n\alpha_i}'}(P(i^n), ?).
$$
Hence (\ref{div}) implies that
the $n$th powers
of $\theta_i$ and $\theta_i^*$
decompose as
\begin{equation}\label{dividedform}
\theta_i^n \cong [n]! \cdot \theta_i^{(n)},
\qquad
(\theta_i^*)^n \cong [n]! \cdot (\theta_i^*)^{(n)}.
\end{equation}

\subsection{The Khovanov-Lauda theorem}\label{sklth}
It is convenient to 
abbreviate the direct sums of all our Grothendieck groups by
\begin{equation}\label{like}
[\Proj{R}] := \bigoplus_{\alpha \in Q_+} [\Proj{R_\alpha}],
\qquad
[\Rep{R}] := \bigoplus_{\alpha \in Q_+} [\Rep{R_\alpha}].
\end{equation}
Also, for $\alpha,\beta \in Q_+$,
we identify the Grothendieck group $[\Proj{R_{\alpha,\beta}}]$ with
$[\Proj{R_\alpha}] \otimes_{\Laurent} [\Proj{R_\beta}]$
so that $[P \boxtimes Q]$ is identified with $[P] \otimes [Q]$.
Finally, we observe that 
the exact functors $\theta_i^{(n)}$ (resp.\ $(\theta_i^*)^{(n)}$)
induce $\Laurent$-linear 
endomorphisms of $[\Proj{R}]$ (resp.\ $[\Rep{R}]$) which we denote
by the same notation.
Now we can state the following important theorem 
proved in \cite[Section 3]{KL1}.

\begin{Theorem}[Khovanov-Lauda]\label{klthm}
There is a unique $\Laurent$-module isomorphism
$$
\gamma:{_\Laurent}\mathbf f \stackrel{\sim}{\rightarrow} [\Proj{R}]
$$
such that 
$1 \mapsto [R_0]$ (the class of the left
regular representation of
the trivial algebra $R_0$) 
and $\gamma(x\,\theta_i^{(n)}) = 
\theta_i^{(n)}(\gamma(x))$ for each $x \in {_\Laurent}\mathbf f,
i \in I$ and $n \geq 1$.
Under this isomorphism:
\begin{itemize}
\item[(1)] the 
multiplication $m_{\alpha,\beta}:{_\Laurent} \mathbf f_\alpha \otimes_{\Laurent} {_\Laurent}\mathbf f_\beta
\rightarrow {_\Laurent} \mathbf f_{\alpha+\beta}$ 
corresponds to the induction product induced by the exact
induction functor 
$\Ind_{\alpha,\beta}^{\alpha+\beta}$;
\item[(2)] the comultiplication $m_{\alpha,\beta}^*:{_\Laurent} \mathbf f_{\alpha+\beta} 
\rightarrow {_\Laurent} \mathbf f_\alpha \otimes_{\Laurent}
{_\Laurent} \mathbf f_\beta$
corresponds to the restriction coproduct induced by the 
exact restriction functor
$\Res_{\alpha,\beta}^{\alpha+\beta}$;
\item[(3)] the bar-involution on ${_\Laurent} \mathbf f_{\alpha}$ 
corresponds to the
anti-linear involution induced by the duality $\#$.
\end{itemize}
\end{Theorem}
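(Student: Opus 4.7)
The plan is to define $\gamma$ on monomials by the forced formula
$$\gamma(\theta_{i_k}^{(n_k)} \cdots \theta_{i_1}^{(n_1)}) := \theta_{i_k}^{(n_k)} \cdots \theta_{i_1}^{(n_1)}([R_0]),$$
and verify it descends to a well-defined $\Laurent$-linear map on ${_\Laurent}\mathbf f$. Uniqueness is automatic, since the condition $\gamma(x\theta_i^{(n)}) = \theta_i^{(n)}(\gamma(x))$ together with $\gamma(1)=[R_0]$ pins the value on every such monomial, and the divided powers $\theta_i^{(n)}$ generate ${_\Laurent}\mathbf f$. Well-definedness reduces to checking the quantum Serre relations (\ref{qserre}) at the level of the exact endofunctors $\theta_i^{(n)}$ acting on $\bigoplus_\alpha \Proj{R_\alpha}$; using the explicit description (\ref{div1}) and the structure of the nil-Hecke algebra $R_{n\alpha_i}$, one exhibits the required isomorphisms of projective modules case-by-case on the edge patterns of $\Gamma$. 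This categorified Serre step is where the subtle combinatorics of the KLR relations (\ref{R4})--(\ref{R7}) actually gets used, and it is the principal obstacle.

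Property (1) is then immediate from transitivity of induction: if $[P] = \gamma(x)$ and $[Q] = \gamma(y)$ are obtained by stacking induction functors starting from $[R_0]$, then $\gamma(xy) = [\Ind_{\alpha,\beta}^{\alpha+\beta}(P\boxtimes Q)]$, which is the definition of the induction product.

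Property (2) comes from the Mackey filtration on the bimodule $e_{\alpha,\beta}R_{\alpha+\beta}e_{\gamma,\delta}$ from \cite[Proposition 2.18]{KL1}. Restricting an induced module produces a filtration whose layers are indexed by the common subweights, and each layer carries precisely the grading shift $\langle -(\delta,\alpha+\beta-\gamma-\delta) \rangle$ dictated by the quantum twist $(x_1\otimes x_2)(y_1\otimes y_2) = q^{-(\alpha,\beta)}x_1y_1\otimes x_2y_2$. Matching these layer-by-layer against the iterated coproduct obtained from $\theta_i \mapsto \theta_i \otimes 1 + 1\otimes \theta_i$ gives the assertion.

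Property (3) follows because $\#$ commutes with each $\theta_i$ by (\ref{snow}) (and hence with each $\theta_i^{(n)}$ via the decomposition (\ref{dividedform})), while $[R_0]$ is tautologically $\#$-self-dual; since the bar-involution on ${_\Laurent}\mathbf f$ is the unique anti-linear involution fixing the generators $\theta_i$, the two operators agree after transport by $\gamma$. Finally, to upgrade the map to an isomorphism, one argues rank-by-weight-space: surjectivity follows from the fact that $[\Proj{R_\alpha}]$ is spanned over $\Laurent$ by inductions from $R_0$ (indecomposable projectives arise as summands of such inductions, by the observation in the lemma after (\ref{fri1})--(\ref{fri2})), and injectivity follows by pairing with the dual construction on $[\Rep R]$ via the Cartan pairing (\ref{cartanpairing}) and comparing with the non-degenerate Shapovalov form on ${_\Laurent}\mathbf f$, so that $\gamma$ becomes an isometry onto its image.
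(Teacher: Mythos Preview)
The paper does not prove this theorem; it simply cites \cite[Section~3]{KL1}. So there is no ``paper's own proof'' to compare against line by line, but one can still compare your strategy to the one in \cite{KL1} and to the Remark immediately following the theorem.

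Your outline is correct in broad strokes, but it follows a different route from \cite{KL1}. Khovanov and Lauda's original argument does \emph{not} verify the quantum Serre relations directly at the level of the functors $\theta_i$. Instead, they show that $[\Proj{R}]$ carries the structure of a twisted bialgebra (via $\Ind$ and $\Res$) equipped with a bilinear form satisfying Lusztig's axioms, and then invoke Lusztig's characterization of $\mathbf f$ as the unique such object. Surjectivity of $\gamma$ in their approach comes from the injectivity of the $q$-character map (Theorem~\ref{ch} here, \cite[Theorem~3.17]{KL1}), by duality. Your approach---checking the Serre relations for the functors $\theta_i$ directly and then reading off well-definedness of $\gamma$---is the one carried out in \cite[Proposition~6]{KL2} and \cite[Proposition~4.2]{Ro}, exactly as the Remark after the theorem indicates. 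So what you have written is a valid alternative strategy, and you have correctly flagged the categorical Serre step as the principal obstacle; but you should be aware that this step is genuinely non-trivial (it is the content of those cited results) and that you have not actually carried it out.

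Your arguments for (1)--(3) are fine. Your injectivity argument is a little under-specified: to make it work you need the dual map $\gamma^*:[\Rep{R}]\to {_\Laurent}\mathbf f^*$ and the fact that it is injective, which is precisely Theorem~\ref{ch}; once you have that, non-degeneracy of the Cartan pairing gives surjectivity of $\gamma$, and then injectivity follows either by comparing $\Laurent$-ranks weight-space by weight-space, or (as in \cite{KL1}) by matching the form on $[\Proj{R}]$ with Lusztig's form on $\mathbf f$.
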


\begin{Remark}
Theorem~\ref{klthm} establishes in particular that
the functors $\theta_i$ induce
$\Laurent$-linear operators on
$[\Proj{R}]$ that satisfy the quantum Serre relations 
from (\ref{qserre}).
For a more general categorical version of this statement,
see \cite[Proposition 6]{KL2} or \cite[Proposition 4.2]{Ro}.
\end{Remark}

\subsection{\boldmath $q$-Characters}
The dual statement to Theorem~\ref{klthm},
in which $\ga$ gets replaced by 
its graded dual $\ga^*:[\Rep{R}] \hookrightarrow {_{\Laurent}}\mathbf f^*$,
has a natural representation theoretic extension
related to the notion of {\em $q$-character}.
This goes back at a purely combinatorial level 
to work of Leclerc in \cite{Lec}. We only need one basic fact from this
side of the picture.
Given $\alpha \in Q_+$,
let $'\mathbf f_\alpha^*$ denote the 
$\Q(q)$-vector on basis $\{\bi \:|\:\bi \in I^\alpha\}$
and set
$'\mathbf f^* := \bigoplus_{\alpha \in Q_+}
{'\mathbf f_\alpha^*}$.
Given $M \in \Rep{R_\alpha}$,
its {\em $q$-character} means the formal expression
\begin{equation}\label{qch}
\CH M := 
\sum_{\bi \in I^\alpha} (\qdim\ e(\bi) M) \cdot \bi
\in
 {'\mathbf f_\alpha^*}.
\end{equation}
The following theorem is established in \cite{KL1} 
in order 
to prove the surjectivity of the map $\ga$ in Theorem~\ref{klthm}.

\begin{Theorem}[{\cite[Theorem 3.17]{KL1}}]\label{ch}
The map $$
\CH:[\Rep{R}] \rightarrow
 {'\mathbf f^*}, \quad [M] \mapsto \CH M
$$ 
is injective.
\end{Theorem}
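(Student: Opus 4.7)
The map $\CH$ is $\Laurent$-linear and descends to the Grothendieck group because $M \mapsto e(\bi) M$ from $\Rep{R_\alpha}$ to graded vector spaces is exact (being given by multiplication by the idempotent $e(\bi)$). Working one $\alpha \in Q_+$ at a time, and using that $[\Rep{R_\alpha}]$ is a free $\Laurent$-module whose basis consists of the finitely many isomorphism classes of irreducible graded $R_\alpha$-modules (one per grading-shift orbit), injectivity reduces to proving that the characters $\{\CH L\}$ of a complete set of such representatives are $\Laurent$-linearly independent in ${'\mathbf f_\alpha^*}$.

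My plan is a leading-term argument. Fix any total order on $I$ and equip each $I^\alpha$ with the induced lexicographic order. For each irreducible $L \in \Rep{R_\alpha}$, let $\bi_L$ denote the lex-maximal element of $\{\bi \in I^\alpha : e(\bi) L \neq 0\}$; its coefficient in $\CH L$ is the nonzero Laurent polynomial $\qdim\, e(\bi_L) L$. After normalizing the grading so that this leading coefficient has nonzero constant term, linear independence of the characters will follow by a standard triangularity argument, provided the assignment $L \mapsto \bi_L$ is injective on grading-shift orbits.

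I would prove the injectivity of $L \mapsto \bi_L$ by induction on $\height(\alpha)$. Writing $\bi_L = (j_1,\ldots,j_{d-1},i)$, the adjunction between $\theta_i$ and $\theta_i^*$ forces $\theta_i^* L \neq 0$, and a Mackey-style analysis of the bimodule $e_{\alpha-\alpha_i,\alpha_i} R_\alpha$ (using \cite[Proposition 2.16]{KL1}, together with the Mackey filtration of \cite[Proposition 2.18]{KL1}) controls which words can appear in $\theta_i^* L$. I expect this to pin down a unique irreducible constituent $L'$ of $\theta_i^* L$, up to grading shift, whose leading word is $(j_1,\ldots,j_{d-1})$. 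By induction $L'$ is determined by its leading word, and Frobenius reciprocity then identifies $L$ as the unique irreducible quotient of $\theta_i L'$ whose leading word is $\bi_L$, reconstructing $L$ from $\bi_L$ and completing the induction.

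The main obstacle is the last step: showing that $\theta_i L'$ really has, up to grading shift, a unique irreducible quotient with leading word $\bi_L$, and that this constituent occurs with controlled multiplicity so that the leading term of $\CH L$ is not cancelled by contributions from other constituents. This is a graded refinement of the Grojnowski-Vazirani analysis of $i$-restriction for affine Hecke algebras; the extra bookkeeping of the grading shifts of the form $-(\alpha_i, \alpha_{i_k})$ produced by the Mackey filtration is where the graded case becomes genuinely more delicate than the ungraded one, and is the technical heart of the argument.
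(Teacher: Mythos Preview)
The paper does not give its own proof of this theorem; it is cited directly from \cite[Theorem 3.17]{KL1}.  Your proposal sketches essentially the same strategy that Khovanov and Lauda use there: a leading-term argument identifying each simple $L$ by an ``extremal'' word visible in $\CH L$, carried out by induction on height via the functors $\theta_i^*$.  So at the level of outline you are on the right track.

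That said, what you have written is an outline with a self-acknowledged gap, not a proof.  Two points need attention.  First, the choice of ordering: for an induction that peels off the \emph{last} letter via $\theta_i^*$, you want a reverse-lexicographic (right-to-left) comparison, not the usual lex order; otherwise the maximal word need not be compatible with restriction.  Second, and more substantially, the step you flag as the ``main obstacle'' is precisely the content of the argument in \cite{KL1}.  One does not merely find ``some'' constituent $L'$ of $\theta_i^* L$ with the right leading word; one proves that if $\eps_i(L)=m$ then $(\theta_i^*)^{(m)} L$ has \emph{simple} socle $L'$ with $\eps_i(L')=0$, and that $L$ is recovered as the simple head of $\theta_i^{(m)} L'$.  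These are the graded analogues of the Grojnowski--Vazirani lemmas, and in \cite{KL1} they are established using the explicit structure of $R_{n\alpha_i}$ (the nil-Hecke algebra) together with adjunction and a careful analysis of the intertwiners, not just a formal Mackey filtration.  Compare also $\S$\ref{sex} of the present paper, where the same mechanism reappears in the cyclotomic setting.  Until those lemmas are in hand, your inductive step does not close: without simplicity of the socle you cannot guarantee that the leading term of $\CH L$ is contributed by a unique constituent, nor that distinct simples yield distinct leading words.
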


\section{Higher level Fock spaces}

Continuing with notation from the previous section, we fix also now
a tuple
$(k_1,\dots, k_l) \in I^l$ for some $l \geq 0$
and set
\begin{equation}\label{note}
\La := \La_{k_1}+\cdots+\La_{k_l}.
\end{equation} 
For each $m=1,\dots,l$, we pick $\kk_m \in \Z$ such that
\begin{equation}\label{multicharge}
\kk_m \equiv k_m \pmod{e}.
\end{equation}
We refer to $l$ here as the {\em level}
and the tuple $(\kk_1,\dots,\kk_l)$ as 
the {\em multicharge}.
Almost everything depends implicitly not just
on $\La$ but on the ordered tuple $(k_1,\dots,k_l)$;
the choice of multicharge plays a significant role only in
$\S\S$\ref{sug}--\ref{construct}.

\subsection{The quantum group}
Let $\g$ be the Kac-Moody algebra corresponding to the Cartan matrix
(\ref{ECM}), so
$\g = \widehat{\mathfrak{sl}}_e(\C)$ if $e > 0$ 
and $\g = 
\mathfrak{sl}_\infty(\C)$ if $e = 0$.
Let $U_q(\g)$ be the quantized enveloping algebra of $\g$. So 
$U_q(\g)$
is the $\Q(q)$-algebra generated by the Chevalley  
generators $E_i,F_i,K_i^{\pm 1}$ for $i\in I$, subject 
only to the usual quantum Serre relations (for all admissible $i,j\in I$):
\begin{align}
K_iK_j&=K_jK_i,\quad \hspace{8.5mm}K_iK_i^{-1}=1,\\
K_iE_jK_i^{-1}&= q^{a_{i,j}}E_j,\quad \hspace{3mm}K_iF_jK_i^{-1}= q^{-a_{i,j}}F_j,\label{wight}\\
[E_i,F_j]&=\de_{i,j}\frac{K_i-K_i^{-1}}{q-q^{-1}},\label{mix}\\
(\ad_q E_i)^{1-a_{j,i}}(E_j)&=0\qquad\qquad\qquad(i\neq j),\label{serree}\\
\quad (\ad_q F_i)^{1-a_{j,i}}(F_j)&=0\qquad\qquad\qquad(i\neq j),\label{serref}
\end{align}
recalling (\ref{adq}).
We consider $U_q(\g)$ as a Hopf algebra with respect to the coproduct given for all $i\in I$ as follows: 
$$
\Delta:\ K_i\mapsto K_i\otimes K_i,\ E_i\mapsto E_i\otimes K_i+1\otimes E_i,\ F_i\mapsto F_i\otimes 1+K_i^{-1}\otimes F_i
$$

The {\em bar-involution} $-:U_q(\g) \rightarrow U_q(\g)$
is the anti-linear involution such that
$$
\overline{K_i} = K_i^{-1},\qquad
\overline{E_i} = E_i, \qquad \overline{F_i} = F_i.
$$
Given a $U_q(\g)$-module $V$, a
{\em compatible bar-involution} on $V$ means
an anti-linear involution
$-:V \rightarrow V$ such that $\overline{xv} = \overline{x}\, \overline{v}$
for all $x \in U_q(\g)$ and $v \in V$.

Also let $\tau:U_q(\g) \rightarrow U_q(\g)$ be the 
anti-linear anti-automorphism defined by
\begin{align}\label{taudef}
\tau&:\
 K_i \mapsto K_i^{-1}, \qquad E_i \mapsto q F_i K_i^{-1},
\qquad F_i \mapsto q^{-1} K_i E_i.\:\:\:\\\intertext{Note $\tau$ is not an involution: its inverse $\tau^{-1}$ is given
by the formulae}
\label{tauniv}
\tau^{-1}&:\ K_i \mapsto K_i^{-1}, \qquad E_i \mapsto q^{-1} F_i K_i,
\qquad F_i \mapsto q K_i^{-1} E_i.
\end{align}
In other words, $\tau^{-1} \circ - = - \circ \tau$.

Everything so far makes sense over the ground ring $\Laurent$ too.
Let $U_q(\g)_\Laurent$ denote Lusztig's $\Laurent$-form for $U_q(\g)$, which is
the $\Laurent$-subalgebra generated by the quantum divided
powers $E_i^{(n)} := E_i^n / [n]!$ and $F_i^{(n)} := F_i^n / [n]!$
for all $i \in I$ and $n \geq 1$.
The bar-involution, the comultiplication $\Delta$ and the anti-automorphism
$\tau$ descend in the obvious way to this $\Laurent$-form. 

\subsection{Recollections about upper crystal bases}
Let $\A := \Q[q,q^{-1}] \cong \Q \otimes_\Z \Laurent$.
Let $\A_0$ (resp.\ $\A_\infty$) denote the subalgebra of $\Q(q)$ consisting of all 
rational functions which are regular at zero (resp.\ at infinity).
According to \cite[$\S$2.1]{KaG}, a {\em balanced triple} 
in a $\Q(q)$-vector space $V$ is a triple $(V_\A, 
V_0, V_\infty)$ where $V_\A$ is an $\A$-submodule of $V$,
$V_0$ is an $\A_0$-submodule of $V$ and $V_\infty$ is an $\A_\infty$-submodule
of $M$, such that the following two properties hold:
\begin{itemize}
\item[(1)]
The natural multiplication maps
$\Q(q)\otimes_{\A} V_\A \rightarrow V,
\Q(q)\otimes_{\A_0} V_0 \rightarrow V$
and 
$\Q(q)\otimes_{\A_\infty} V_\infty \rightarrow V$
are all isomorphisms.
\item[(2)] Setting $E := V_\A \cap V_0 \cap V_\infty$, one of the following
three equivalent conditions holds:
\begin{itemize}
\item[(a)] the natural map $E \rightarrow V_0 / q V_0$ is an isomorphism;
\item[(b)] the natural map $E \rightarrow V_\infty / q^{-1} V_\infty$ is an isomorphism;
\item[(c)] the natural maps $\A \otimes_\Q E \rightarrow V_\A$,
$\A_0 \otimes_\Q E \rightarrow V_0$ and $\A_\infty \otimes_\Q E \rightarrow V_\infty$ are all isomorphisms.
\end{itemize}
\end{itemize}
These isomorphisms provide a canonical way 
to lift any ``local'' basis
for $V_0 / q V_0$ (or for $V_\infty / q^{-1} V_\infty$) 
to a ``global'' basis for $V$.

For an integrable $U_q(\g)$-module $V$, let
$\tilde e_i$ and $\tilde f_i$ be Kashiwara's upper crystal operators
on $V$ from \cite{Ka1}; see also \cite[(3.1.2)]{KaG}.
Recall an {\em upper crystal lattice at $q=0$}
is a free $\A_0$-submodule $V_0$ of $V$ such that
\begin{itemize}
\item[(1)]
$V \cong \Q(q)\otimes_{\A_0} V_0$;
\item[(2)]
$V_0$ 
is the direct sum of its weight spaces;
\item[(3)] $V_0$ is invariant under the actions of $\tilde e_i, \tilde f_i$.
\end{itemize}
The notion $V_\infty$ of an {\em upper crystal lattice at $q=\infty$} 
is defined similarly, replacing $\A_0$ with $\A_\infty$.

An {\em upper crystal basis at $q=0$}
is a pair $(V_0, B_0)$ where $V_0$ is an upper crystal lattice at
$q=0$ and $B_0$ is a basis of the $\Q$-vector space $V_0 / q V_0$ such that
\begin{itemize}
\item[(1)] 
each element of $B_0$ is a weight vector, i.e. it 
is the image of a weight vector in $V_0$ under
the natural map $V_0 \rightarrow V_0 / q V_0$;
\item[(2)] writing also $\tilde e_i, \tilde f_i$
for the $\Q$-linear endomorphisms of $V_0 / qV_0$ induced by 
$\tilde e_i, \tilde f_i$, we have that
$\tilde e_i B_0 \subseteq B_0 \cup \{0\}$
and
$\tilde f_i B_0 \subseteq B_0 \cup \{0\}$;
\item[(3)]
for $b, b' \in B_0$, $b' = \tilde f_i b$ if and only if
$\tilde e_i b' = b$.
\end{itemize}

If $(V_0, B_0)$ is an upper crystal basis at $q=0$, 
there is an induced structure of an
{\em abstract crystal} on the set $B_0$ in the sense of
\cite{Kas}, that is, 
there is a canonically associated crystal datum
$(B_0, \tilde e_i, \tilde f_i, \eps_i, \phi_i, \wt)$.
Here, for $b \in B_0$, $\wt(b)$ denotes the {\em weight} of $b$,
and
$$
\eps_i(b) := \max\{k \geq 0\:|\:\tilde e_i^k (b) \neq 0\},\quad
\phi_i(b) := \max\{k \geq 0\:|\:\tilde f_i^k (b) \neq 0\},
$$
for each $i \in I$.
It is automatically the case that
\begin{equation}\label{us0}
(\wt(b), \alpha_i) = \phi_i(b) - \eps_i(b).
\end{equation}

By \cite[Proposition 6]{Ka1},
upper crystal bases at $q=0$ behave well under tensor product.
More precisely, if $(V_0, B_0)$ and $(V_0', B_0')$ are
upper crystal bases at $q=0$ in integrable modules $V$ and $V'$,
then $(V_0 \otimes V_0', B_0 \times B_0')$
is an upper crystal basis at $q=0$ in $V \otimes V'$.
Moreover, there is an explicit combinatorial rule describing the
crystal operators $\tilde e_i$ and $\tilde f_i$ on
$B_0 \times B_0'$ in terms of the ones on $B_0$ and $B_0'$.
We refer the reader to \cite[Proposition 6]{Ka1} for the 
precise statement of this; note though that we are 
using the opposite comultiplication to the one used in \cite[(1.5)]{Ka1}
so the order of tensors needs to be flipped when translating this
tensor product rule into our setup.


\subsection{\boldmath The module $V(\La)$}\label{md}
Let $V(\La)$ denote the integrable highest weight
module for $U_q(\g)$ of highest weight $\La$, where $\La$
is the dominant integral weight fixed in (\ref{note}).
Fix also a choice of a non-zero highest weight vector $v_\La \in V(\La)$.
The module $V(\La)$ possesses a unique
compatible bar-involution
$-:V(\La) \rightarrow V(\La)$
such that $\overline{v_\La} = v_\La$.

The {\em contravariant form} $(.,.)$ on $V(\La)$ is the
unique symmetric bilinear form such that 
\begin{itemize}
\item[(1)]
$E_i$ and $F_i$ are biadjoint, i.e.
$(E_i v, w) = (v, F_i w)$ and $(F_i v, w) = (v, E_i w)$
for all $v, w \in V(\La)$ and $i \in I$;
\item[(2)] 
$(v_\La, v_\La) = 1$.
\end{itemize}
Actually, 
we usually prefer to work with a slightly different form
on $V(\La)$, namely, the {\em Shapovalov form}
$\langle.,.\rangle$.
By definition, this is the unique 
sesquilinear form  (anti-linear in the first argument, linear in the second)
on $V(\La)$
such that 
\begin{itemize}
\item[(1)] $\langle u v, w\rangle = \langle v, \tau(u) w\rangle$ for
all $u \in U_q(\g)$ and $v, w \in V(\La)$;
\item[(2)] $\langle v_\La, v_\La\rangle = 1$.
\end{itemize}
Define the {\em defect}
of $\al \in Q_+$ (relative to $\La$) by setting
\begin{equation}\label{defdef}
\defect(\al):=((\La,\La)-(\La-\al,\La-\al))/2 = (\La,\al)-(\al,\al)/2.
\end{equation}
The contravariant and Shapovalov forms are closely related:

\begin{Lemma}\label{contra2}
For vectors $v, w \in V(\La)$ with $v$ of weight $\La-\alpha$,
we have that
\begin{itemize}
\item[(1)]
$\langle v,w \rangle = q^{\defect(\alpha)}(\overline{v},w) = \langle \overline{w}, \overline{v}\rangle$;
\item[(2)]
$(v,w) = \overline{(\overline{w}, \overline{v})}$;
\item[(3)]
$\langle v,w \rangle =
q^{2\defect(\alpha))} \overline{\langle w,v \rangle}$.
\end{itemize}
\end{Lemma}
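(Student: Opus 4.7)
My plan is to prove all three identities by the uniqueness-of-forms technique: define candidate forms by the right-hand sides of the identities, verify they satisfy the defining axioms of the contravariant or Shapovalov form, and conclude equality. I would prove the parts in the order (2), then (1), then (3).

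For (2), I define $B(v,w) := \overline{(\overline{w},\overline{v})}$ and check that $B$ is again a symmetric bilinear form with $E_i, F_i$ biadjoint and $B(v_\La,v_\La)=1$. Anti-linearity of the bar-involution on scalars makes $B$ bilinear, symmetry of $(.,.)$ gives symmetry of $B$, and biadjointness follows from the compatibility $\overline{E_iv} = E_i\overline{v}$, $\overline{F_iv} = F_i\overline{v}$ together with biadjointness of $(.,.)$. Finally $B(v_\La,v_\La)=\overline{(v_\La,v_\La)}=1$. By the uniqueness of the contravariant form, $B=(.,.)$.

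For (1), I define $B'(v,w) := q^{\defect(\alpha)}(\overline{v},w)$ for $v$ of weight $\La-\alpha$ (and extend by linearity). This is manifestly sesquilinear (with the correct anti-linearity in the first slot) and vanishes unless $v$ and $w$ have the same weight; moreover $B'(v_\La,v_\La)=1$. The only real calculation is to verify $B'(uv,w)=B'(v,\tau(u)w)$ for the Chevalley generators. For $u=K_i$ this is immediate from $\overline{K_iv}=K_i^{-1}\overline v$ and weight considerations. For $u=F_i$, after using $\overline{F_iv}=F_i\overline{v}$ and biadjointness of $(.,.)$, matching the two sides reduces to the identity
\[
\defect(\alpha+\alpha_i)-\defect(\alpha)=(\La-\alpha,\alpha_i)-1,
\]
which follows from $\defect(\alpha)=(\La,\alpha)-(\alpha,\alpha)/2$ and $(\alpha_i,\alpha_i)=2$. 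The case $u=E_i$ yields the analogous identity $\defect(\alpha-\alpha_i)-\defect(\alpha)=-(\La-\alpha,\alpha_i)-1$, verified in the same way. The $\tau$-invariance on products then propagates since $\tau$ is an anti-automorphism, so $B'$ satisfies the defining properties of $\langle.,.\rangle$. This is the heart of the lemma and the one real obstacle; after that, the second equality in (1), $\langle v,w\rangle=\langle\overline{w},\overline{v}\rangle$, drops out by applying the first equality to both sides and using symmetry of $(.,.)$.

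For (3), I combine (1) and (2): applying (1) to $\langle w,v\rangle$ with $w$ of weight $\La-\alpha$ gives $\langle w,v\rangle=q^{\defect(\alpha)}(\overline{w},v)$, so $\overline{\langle w,v\rangle}=q^{-\defect(\alpha)}\overline{(\overline{w},v)}$. Part (2) with $v$ replaced by $\overline{w}$ and $w$ replaced by $v$ converts $\overline{(\overline{w},v)}$ into $(\overline{v},w)$, and multiplying by $q^{2\defect(\alpha)}$ then matches $\langle v,w\rangle=q^{\defect(\alpha)}(\overline{v},w)$ from (1).
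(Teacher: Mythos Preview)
Your proof is correct and follows exactly the natural uniqueness-of-forms strategy that the paper defers to (it simply cites \cite[Lemma~2.6]{BKariki} and says to mimic that argument). The key computation you identify, namely $\defect(\alpha\pm\alpha_i)-\defect(\alpha)=\pm(\La-\alpha,\alpha_i)-1$, is precisely what makes the $\tau$-adjointness of $B'$ work on the Chevalley generators, and your derivations of the second equality in (1) and of (3) from (1)+(2) are clean and correct.
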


\begin{proof}
Mimic the proof of \cite[Lemma 2.6]{BKariki}.
\end{proof}

Let $V(\La)_\Laurent$ denote the {\em standard $\Laurent$-form} 
for $V(\La)$, that is,
the $U_q(\g)_\Laurent$-submodule of $V(\La)$ generated by the 
highest weight vector $v_\La$. This is free as an $\Laurent$-module.
Let $V(\La)_\Laurent^*$ denote the {\em costandard $\Laurent$-form} for $V(\La)$,
that is, the dual lattice
\begin{align*}
V(\La)_\Laurent^* &= \{v \in V(\La)\:|\:( v,w )
\in \Laurent\text{ for all }
w\in V(\La)_\Laurent\}\\
&= \{v \in V(\La)\:|\:\langle v,w \rangle
\in \Laurent\text{ for all }
w\in V(\La)_\Laurent\}.
\end{align*}
As explained in \cite[$\S$3.3]{KaG},
the results of \cite{Ka2} imply that $V(\La)$
has a unique upper crystal basis 
$(V(\La)_0, B(\La)_0)$ at $q=0$
such that
\begin{itemize}
\item[(1)]
the $\La$-weight space of $V(\La)_0$ is equal to $\A_0 v_\La$;
\item[(2)]
$v_\La + q V(\La)_0 \in B(\La)_0$.
\end{itemize}
We will describe an explicit combinatorial realization 
of the underlying abstract crystal
in $\S$\ref{scc} below.

According to \cite[Lemma 4.2.1]{KaG}, 
$(V(\La)^*_\A, V(\La)_0, \overline{V(\La)_0})$ is a balanced triple,
where $V(\La)_\A^* := \Q \otimes_{\Z} V(\La)_\Laurent^*$.
Hence there is a canonical lift of the upper crystal basis
$B(\La)_0$ to a basis of $V(\La)$. This is Kashiwara's
{\em upper global crystal basis}
of $V(\La)$, which is Lusztig's {\em dual-canonical basis}.
The dual basis to the upper global crystal basis
under the contravariant form $(.,.)$ is the
{\em lower global crystal basis}.
This is Lusztig's {\em canonical basis}
as in \cite[$\S$14.4]{Lubook}.

Lusztig's approach gives moreover that
the canonical basis is a basis for 
$V(\La)_\Laurent$ as a free $\Laurent$-module,
and that each vector in the canonical basis is bar-invariant.
Hence the dual-canonical basis is a basis
for
$V(\La)_\Laurent^*$ as a free $\Laurent$-module,
and by Lemma~\ref{contra2}(2)
each vector in the dual-canonical basis is bar-invariant too.
(These statements can also be deduced without 
invoking Lusztig's geometric construction using 
the Fock space approach explained below.)

\subsection{Combinatorics of multipartitions}\label{cmps}
A {\em partition} is a non-increasing sequence $\la = (\la_1,\la_2,\dots)$
of non-negative integers; we set
$|\la| := \la_1+\la_2+\cdots$.
An {\em $l$-multipartition}
is an ordered $l$-tuple of partitions 
$\la = (\la^{(1)} , \dots,\la^{(l)})$;
we set $|\la| := |\la^{(1)}|+\cdots+|\la^{(l)}|$.
We let $\mathscr P$ (resp.\ $\Par$) 
denote the set of all partitions (resp. $l$-multipartitions).

The {\em Young diagram} of the multipartition $\la = (\la^{(1)}, 
\dots, \la^{(l)})\in \Par$ is 
$$
\{(a,b,m)\in\Z_{>0}\times\Z_{>0}\times \{1,\dots,l\}\mid 1\leq b\leq 
\la_a^{(m)}\}.
$$
The elements of this set
are called the {\em nodes of $\la$}. More generally, a {\em node} is an 
element of $\Z_{>0}\times\Z_{>0}\times \{1,\dots,l\}$. 
Usually, we identify the multipartition $\la$ with its 
Young diagram and visualize it as a column vector of Young diagrams. 
For example, $((3,1), (4,2))$ is the Young diagram
\begin{equation}\label{yde}
\begin{array}{l}
\diagram{&&\cr \cr}\\\diagram{&&&\cr &\cr}
\end{array}
\end{equation}
A node $A\in\la$ is called {\em removable (for $\la$)}\, if $\la\setminus \{A\}$ has a shape of a multipartition. A node $B\not\in\la$ is called {\em addable (for $\la$)}\, if $\la\cup \{B\}$ has a shape of a multipartition. We use the notation
$$
\la_A:=\la\setminus \{A\},\qquad \la^B:=\la\cup\{B\}.
$$

The {\em residue} $\res\,A$ of 
the node $A = (a,b,m)$ is the integer
\begin{equation}\label{resdef}
\res\, A := \kk_m+b-a \in \Z.
\end{equation}
Although 
this depends implicitly on the fixed
choice of multicharge from (\ref{multicharge}), we will normally
only be interested in residues modulo $e$:
for $i \in I$ we say that $A$ is an {\em $i$-node} 
if $\res\,A \equiv i \pmod{e}$.
Given $\la \in \Par$, we define its
{\em content} by
\begin{align}\label{contdef}
\cont(\la) &:= \sum_{i\in I} n_i
\alpha_i \in Q_+,
\end{align}
where $n_i$ is the number of
$i$-nodes $A\in\la$.
For $\al \in Q_+$, let
\begin{equation}\label{padef}
\Par_\alpha := \{\la \in \Par\:|\:\cont(\la) = \alpha\}
\end{equation}
denote the set of all $l$-multipartitions of content $\alpha$.
Note that every $\la \in \Par_\alpha$
has $|\la| = \height(\alpha)$.

\subsection{Some partial orders}\label{tpo}
We now define two partial orders $\unlhd$ and $\leq$
on $\Par$.
The first of these is the {\em dominance ordering}
which is defined
by $\mu \unlhd \la$ if
\begin{equation}
\sum_{a=1}^{m-1}|\mu^{(a)}|+\sum_{b=1}^c\mu_b^{(m)}\leq 
\sum_{a=1}^{m-1}|\la^{(a)}|+\sum_{b=1}^c\la_b^{(m)}
\end{equation}
for all $1\leq m\leq l$ and $c\geq 1$, with equality for
$m=l$ and $c \gg 1$.
In other words, $\mu$ is obtained from $\la$ by moving nodes down in
the diagram.
In case $l=1$ this is just the usual
notion of the dominance ordering on partitions.

For the second ordering we treat the cases
$e=0$ and $e > 0$ separately.
Assume first that $e=0$.
Let $\leq$ be the dominance ordering on 
$Q_+ = \bigoplus_{i \in \Z} \Z_{\geq 0} \alpha_i$, i.e.
$\alpha \leq \beta$ if $\beta - \alpha \in Q_+$.
For $\la \in \Par$,
let $\cont_m(\la) \in Q_+$ denote the content of the
$m$th component $\la^{(m)}$ of $\la$ defined in the analogous way to 
(\ref{contdef}); in particular, $\cont(\la) = \cont_1(\la)+\cdots+\cont_l(\la)$.
Then declare that $\mu \leq \la$ if
\begin{equation}\label{zd}
\cont_1(\mu)+\cdots+\cont_m(\mu) \leq
\cont_1(\la)+\cdots+\cont_m(\la)
\end{equation}
for each $m=1,\dots,l$, with equality in the case $m=l$.

\begin{Lemma}\label{zerot}
Suppose that $e=0$ and $\la,\mu \in \Par$.
Then $\mu \leq \la$ implies $\mu \unlhd \la$.
\end{Lemma}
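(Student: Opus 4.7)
My plan is to prove the lemma via a key single-partition fact combined with an inductive argument on the level $l$.

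\textbf{Key fact for single partitions.} I will first establish that, for $e=0$ and two ordinary partitions $\kappa, \kappa'$ viewed with the same integer charge, the condition $\cont(\kappa)-\cont(\kappa')\in Q_+$ is equivalent to $\kappa\supseteq\kappa'$ as Young diagrams. The proof hinges on the fact that $(\kappa_a-a)_{a\geq 1}$ is a strictly decreasing sequence (since $\kappa_a\geq \kappa_{a+1}$ forces $\kappa_a-a>\kappa_{a+1}-(a+1)$): the number of nodes on diagonal $d$ of $\kappa$ equals $|\{a:\kappa_a-a\geq d\}|$, so the hypothesis translates into $|\{a:\kappa_a-a\geq d\}|\geq |\{a:\kappa'_a-a\geq d\}|$ for every $d\in\Z$. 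For two strictly decreasing integer sequences, this pointwise count condition is equivalent to the termwise inequality $\kappa_a-a\geq\kappa'_a-a$, i.e.\ $\kappa_a\geq\kappa'_a$ for every $a$, which is $\kappa\supseteq\kappa'$. The reverse direction is trivial.

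\textbf{Base case $l=1$.} The equality condition $\sigma_1(\la)=\sigma_1(\mu)$ forces $\cont(\la)=\cont(\mu)$, so the key fact applied in both directions gives $\la\supseteq\mu$ and $\mu\supseteq\la$. Hence $\la=\mu$ and $\mu\unlhd\la$ trivially.

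\textbf{Inductive step ($l\geq 2$).} Applying the key fact to the first components, using $\sigma_1(\la)-\sigma_1(\mu)=\cont_1(\la)-\cont_1(\mu)\in Q_+$, gives $\la^{(1)}\supseteq\mu^{(1)}$, which directly handles the $m=1$ inequalities defining $\unlhd$. For $m\geq 2$ my plan is to construct an intermediate $l$-multipartition $\eta$ satisfying $\mu\leq\eta\leq\la$, $\eta\unlhd\la$, and $\eta^{(1)}=\mu^{(1)}$. Viewing $(\eta^{(2)},\ldots,\eta^{(l)})$ and $(\mu^{(2)},\ldots,\mu^{(l)})$ as $(l-1)$-multipartitions with charges $\kk_2,\ldots,\kk_l$, the induced ordering coincides with $\mu\leq\eta$ for the reduced data, so the inductive hypothesis delivers $\mu\unlhd\eta$ in that reduced sense; combined with the first-component equality $\eta^{(1)}=\mu^{(1)}$ this is the same as $\mu\unlhd\eta$ at level $l$, and transitivity then yields $\mu\unlhd\la$.

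\textbf{Main obstacle.} The construction of $\eta$ is the technical core. The idea is to ``migrate'' the excess skew shape $\la^{(1)}/\mu^{(1)}$ into later components of $\la$, preserving each residue. A small example like $\la=((2),())$, $\mu=((),(2))$ with $\kk_1=\kk_2=0$ already shows that such migration cannot always be performed one node at a time through valid multipartitions, because after removing the single removable corner there is no addable position of the matching residue available in the empty second component. One must therefore allow block transfers of entire connected sub-shapes. The hard part is to show that the content inequalities $\sigma_m(\la)\geq\sigma_m(\mu)$ guarantee enough room in the later components of $\la$ to accommodate such blocks, and that these block moves can be chained to produce a sequence from $\la$ to $\eta$ each of whose steps is simultaneously a descent in $\leq$ and in $\unlhd$.
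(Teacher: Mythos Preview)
Your key fact for single partitions is correct, and the inductive framework (reduce to level $l-1$ via an intermediate $\eta$ with $\eta^{(1)}=\mu^{(1)}$) is sound in principle. But the proof is genuinely incomplete: you never construct $\eta$, and you explicitly flag the construction as ``the hard part'' without resolving it. The discussion of block transfers is a strategy sketch, not an argument; you give no definition of the allowed moves, no proof that a suitable chain exists, and no verification that each move is simultaneously a descent in $\leq$ and in $\unlhd$. Your own example shows that single-node moves fail inside $\Par$, and nothing you write explains why block moves fare better.

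The paper's proof solves precisely the obstacle you ran into, by enlarging the ambient set. It embeds $\Par$ into $Q_+^l$ via $\rho(\la)=(\cont_1(\la),\dots,\cont_l(\la))$ and extends \emph{both} partial orders to all of $Q_+^l$: the order $\preceq$ by cumulative sums, and the dominance order $\unlhd$ via explicit functions
\[
r^{(m)}_a(\alpha)=\#\big\{i\in\Z:1\le a+\min(i-k_m,0)\le (\La_i,\alpha^{(m)})\big\},
\]
which recover the parts $\la^{(m)}_a$ when $\alpha=\rho(\la)$. Inside $Q_+^l$ one can move a \emph{single} simple root $\alpha_i$ from a later coordinate $\beta^{(m)}$ to $\beta^{(1)}$ without any partition-validity constraint. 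Each such move is easily seen to respect $\preceq$, and a short check shows it respects $\unlhd$ as well (the $r^{(m)}_a$ are monotone in each coefficient and drop by at most $1$ on removing a single $\alpha_i$, which is compensated by the height gain in earlier components). Induction on the distance in $\preceq$ then yields $\beta\preceq\alpha\Rightarrow\beta\unlhd\alpha$ on all of $Q_+^l$, and restricting to $\rho(\Par)$ gives the lemma.

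So the missing idea is not a more elaborate block-move scheme inside $\Par$, but rather to pass to $Q_+^l$ where the naive one-root-at-a-time argument goes through unobstructed.
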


\begin{proof}
Consider the set $Q_+^l$ of $l$-tuples
$\alpha =(\alpha^{(1)},\dots,\alpha^{(l)})$ of elements
of $Q_+$.
The assumption that $e=0$ implies that the map
$$
\rho:\Par \rightarrow Q_+^l,\qquad 
\la \mapsto \left(\cont_1(\la), \cdots, \cont_l(\la)\right)
$$
is injective.
For any $\alpha \in Q_+^l$, $1 \leq m \leq l$ and $a \geq 1$, let
$$
r^{(m)}_a(\alpha) := \#\big\{
i \in \Z\:\big|\:1 \leq a+\min(i-k_m,0) \leq (\La_i,\alpha^{(m)})\big\}.
$$
The key point about this definition is that
$r^{(m)}_a(\rho(\la)) = \la^{(m)}_a$
for $\la \in \Par$.
Define a pre-order $\unlhd$ on $Q_+^l$ by
declaring that $\beta \unlhd \alpha$ if
$$
\sum_{a=1}^{m-1} \height(\beta^{(a)}) + \sum_{b=1}^c r_b^{(m)}(\beta)
\leq 
\sum_{a=1}^{m-1} \height(\alpha^{(a)}) + \sum_{b=1}^c r_b^{(m)}(\alpha)
$$
for all $1 \leq m \leq l$ and $c \geq 1$, with equality for $m=l$
and $c \gg 1$.
Then for $\la,\mu \in \Par$ we have that $\mu \unlhd \la$ if and only if $\rho(\mu) \unlhd 
\rho(\la)$. 
Also 
let $\preceq$ be the partial order on $Q_+^l$ defined
by $\beta \preceq \alpha$ if 
$\beta^{(1)}+\cdots+\beta^{(m)} \leq \alpha^{(1)}+\cdots+\alpha^{(m)}$
for each $m=1,\dots,l$ with equality in the case $m=l$.
We obviously have that
$\mu \leq \la$ if and only if $\rho(\mu) \preceq \rho(\la)$.
With these definitions, we are reduced to showing 
for $\alpha,\beta \in Q_+^l$
that $\beta \preceq \alpha$ implies $\beta \unlhd \alpha$.

So now take $\alpha,\beta \in Q_+^l$ with $\beta \preceq \alpha$.
If $\beta^{(1)} = \alpha^{(1)}$ then we get that $\beta \unlhd \alpha$
by induction on $l$. So we may assume that $\beta^{(1)} < \alpha^{(1)}$.
Choose $i \in\Z$ so that $\beta^{(1)}+\alpha_i \leq \alpha^{(1)}$.
Let $m>1$ be minimal so that $\alpha_i \leq \beta^{(m)}$.
Then define $\ga \in Q_+^l$ by 
setting $\ga^{(1)} := \beta^{(1)}+\alpha_i,
\ga^{(m)} := \beta^{(m)} - \alpha_i$ and $\ga^{(a)} := \beta^{(a)}$
for all $a \neq 1,m$.
It follows that $\beta \prec \ga \preceq \alpha$.
By induction we get that $\ga \unlhd \alpha$.
Since $\unlhd$ is transitive it remains to observe that
$\beta \unlhd \ga$, which is a consequence of the definitions.
\end{proof}

Assume instead that $e > 0$. 
Before we can define the partial order $\leq$ in this case,
we need to introduce an injective map
\begin{equation}\label{uginj}
\Par \hookrightarrow \mathscr P,\qquad\la\mapsto\hla,
\end{equation}
which
is the inverse of the map sending a partition to its {\em $(l, e)$-quotient}
as introduced by Uglov in the first two paragraphs of \cite[p.273]{Uglov}.
Explicitly,
given $\la \in \Par$, the partition $\hla$ 
is defined as follows.
Consider an abacus display with rows (horizontal runners)
indexed by
$1,\dots,l$ from top to bottom, and columns (bead positions)
indexed by $\Z$ from left to 
right.
We represent $\la$ on this abacus by means of the
abacus diagram $A(\la)$ obtained putting a bead
in the $(\kk_m + \la_a^{(m)} - a + 1)$th column of the $m$th row
for each $m=1,\dots,l$ and $a \geq 1$.
Now reindex the positions on the abacus by the index set $\Z$,
so that the $m$th row and $n$th column
is indexed instead by the integer $els + e(m-1) +t$,
where $s$ and $t$ are defined by
writing $n = es + t$ for some 
$1 \leq t \leq e$.
Finally, working with this new indexing,
$\hla$ is the unique partition such that
the beads of $A(\la)$ 
are in exactly the positions indexed by the integers
$(\kk_1+\cdots+\kk_l + \hla_b -b+1)$ for all $b \geq 1$.

For example, suppose $e=2, l=3,
(\kk_1,\kk_2,\kk_3) = (11,7,2)$, 
$\mu = (\varnothing, (1^2), \varnothing)$
and $\la = (\varnothing, (1),(1))$.
Then the abacus diagrams representing $\mu$ and $\la$,
with bead positions indexed by $\Z$ in the manner just described,
are as follows:
$$
\begin{picture}(187,100)
\put(-10,95){\line(0,-1){80}}
\put(30,95){\line(0,-1){80}}
\put(70,95){\line(0,-1){80}}
\put(110,95){\line(0,-1){80}}
\put(150,95){\line(0,-1){80}}
\put(190,95){\line(0,-1){80}}
\put(230,95){\line(0,-1){80}}

\put(-30,90){\line(1,0){282}}
\put(-30,60){\line(1,0){282}}
\put(-30,30){\line(1,0){282}}
\put(-23,87.5){$\bullet$}
\put(-3,87.5){$\bullet$}
\put(17,87.5){$\bullet$}
\put(37,87.5){$\bullet$}
\put(57,87.5){$\bullet$}
\put(77,87.5){$\bullet$}
\put(97,87.5){$\bullet$}
\put(117,87.5){$\bullet$}
\put(137,87.5){$\bullet$}
\put(157,87.5){$\bullet$}
\put(177,87.5){$\bullet$}
\put(197,87.5){$\bullet$}
\put(220,87,5){\line(0,1){5}}
\put(240,87,5){\line(0,1){5}}

\put(-23,57.5){$\bullet$}
\put(-3,57.5){$\bullet$}
\put(17,57.5){$\bullet$}
\put(37,57.5){$\bullet$}
\put(57,57.5){$\bullet$}
\put(77,57.5){$\bullet$}
\put(117,57.5){$\bullet$}
\put(137,57.5){$\bullet$}
\put(100,57,5){\line(0,1){5}}
\put(160,57,5){\line(0,1){5}}
\put(180,57,5){\line(0,1){5}}
\put(200,57,5){\line(0,1){5}}
\put(220,57,5){\line(0,1){5}}
\put(240,57,5){\line(0,1){5}}

\put(-23,27.5){$\bullet$}
\put(-3,27.5){$\bullet$}
\put(17,27.5){$\bullet$}
\put(40,27,5){\line(0,1){5}}
\put(60,27,5){\line(0,1){5}}
\put(80,27,5){\line(0,1){5}}
\put(100,27,5){\line(0,1){5}}
\put(120,27,5){\line(0,1){5}}
\put(140,27,5){\line(0,1){5}}
\put(160,27,5){\line(0,1){5}}
\put(180,27,5){\line(0,1){5}}
\put(200,27,5){\line(0,1){5}}
\put(220,27,5){\line(0,1){5}}
\put(240,27,5){\line(0,1){5}}

\put(-69,60.2){\makebox(0,0){$A(\mu) = $}}
\put(0,80){\makebox(0,0){$_{1}$}}
\put(20,80){\makebox(0,0){$_{2}$}}
\put(40,80){\makebox(0,0){$_{7}$}}
\put(60,80){\makebox(0,0){$_{8}$}}
\put(80,80){\makebox(0,0){$_{13}$}}
\put(100,80){\makebox(0,0){$_{14}$}}
\put(120,80){\makebox(0,0){$_{19}$}}
\put(140,80){\makebox(0,0){$_{20}$}}
\put(160,80){\makebox(0,0){$_{25}$}}
\put(180,80){\makebox(0,0){$_{26}$}}
\put(200,80){\makebox(0,0){$_{31}$}}
\put(220,80){\makebox(0,0){$_{32}$}}
\put(240,80){\makebox(0,0){$_{37}$}}
\put(262,90){\makebox(0,0){${\cdots}$}}
\put(-42,90){\makebox(0,0){${\cdots}$}}
\put(262,60){\makebox(0,0){${\cdots}$}}
\put(-42,60){\makebox(0,0){${\cdots}$}}
\put(262,30){\makebox(0,0){${\cdots}$}}
\put(-42,30){\makebox(0,0){${\cdots}$}}

\put(0,50){\makebox(0,0){$_{3}$}}
\put(-23,50){\makebox(0,0){$_{-2}$}}
\put(-23,80){\makebox(0,0){$_{-4}$}}
\put(-20,20){\makebox(0,0){$_{0}$}}
\put(20,50){\makebox(0,0){$_{4}$}}
\put(40,50){\makebox(0,0){$_{9}$}}
\put(60,50){\makebox(0,0){$_{10}$}}
\put(80,50){\makebox(0,0){$_{15}$}}
\put(100,50){\makebox(0,0){$_{16}$}}
\put(120,50){\makebox(0,0){$_{21}$}}
\put(140,50){\makebox(0,0){$_{22}$}}
\put(160,50){\makebox(0,0){$_{27}$}}
\put(180,50){\makebox(0,0){$_{28}$}}
\put(200,50){\makebox(0,0){$_{33}$}}
\put(220,50){\makebox(0,0){$_{34}$}}
\put(240,50){\makebox(0,0){$_{39}$}}

\put(0,20){\makebox(0,0){$_{5}$}}
\put(20,20){\makebox(0,0){$_{6}$}}
\put(40,20){\makebox(0,0){$_{11}$}}
\put(60,20){\makebox(0,0){$_{12}$}}
\put(80,20){\makebox(0,0){$_{17}$}}
\put(100,20){\makebox(0,0){$_{18}$}}
\put(120,20){\makebox(0,0){$_{23}$}}
\put(140,20){\makebox(0,0){$_{24}$}}
\put(160,20){\makebox(0,0){$_{29}$}}
\put(180,20){\makebox(0,0){$_{30}$}}
\put(200,20){\makebox(0,0){$_{35}$}}
\put(220,20){\makebox(0,0){$_{36}$}}
\put(240,20){\makebox(0,0){$_{41}$}}
\end{picture}
$$
$$
\begin{picture}(187,85)
\put(-10,85){\line(0,-1){80}}
\put(30,85){\line(0,-1){80}}
\put(70,85){\line(0,-1){80}}
\put(110,85){\line(0,-1){80}}
\put(150,85){\line(0,-1){80}}
\put(190,85){\line(0,-1){80}}
\put(230,85){\line(0,-1){80}}

\put(-30,80){\line(1,0){282}}
\put(-30,50){\line(1,0){282}}
\put(-30,20){\line(1,0){282}}
\put(-23,77.5){$\bullet$}
\put(-3,77.5){$\bullet$}
\put(17,77.5){$\bullet$}
\put(37,77.5){$\bullet$}
\put(57,77.5){$\bullet$}
\put(77,77.5){$\bullet$}
\put(97,77.5){$\bullet$}
\put(117,77.5){$\bullet$}
\put(137,77.5){$\bullet$}
\put(157,77.5){$\bullet$}
\put(177,77.5){$\bullet$}
\put(197,77.5){$\bullet$}
\put(220,77,5){\line(0,1){5}}
\put(240,77,5){\line(0,1){5}}

\put(-23,47.5){$\bullet$}
\put(-3,47.5){$\bullet$}
\put(17,47.5){$\bullet$}
\put(37,47.5){$\bullet$}
\put(57,47.5){$\bullet$}
\put(77,47.5){$\bullet$}
\put(97,47.5){$\bullet$}
\put(137,47.5){$\bullet$}
\put(120,47,5){\line(0,1){5}}
\put(160,47,5){\line(0,1){5}}
\put(180,47,5){\line(0,1){5}}
\put(200,47,5){\line(0,1){5}}
\put(220,47,5){\line(0,1){5}}
\put(240,47,5){\line(0,1){5}}

\put(-23,17.5){$\bullet$}
\put(-3,17.5){$\bullet$}
\put(37,17.5){$\bullet$}
\put(20,17,5){\line(0,1){5}}
\put(60,17,5){\line(0,1){5}}
\put(80,17,5){\line(0,1){5}}
\put(100,17,5){\line(0,1){5}}
\put(120,17,5){\line(0,1){5}}
\put(140,17,5){\line(0,1){5}}
\put(160,17,5){\line(0,1){5}}
\put(180,17,5){\line(0,1){5}}
\put(200,17,5){\line(0,1){5}}
\put(220,17,5){\line(0,1){5}}
\put(240,17,5){\line(0,1){5}}

\put(-69,50.2){\makebox(0,0){$A(\la) = $}}
\put(0,70){\makebox(0,0){$_{1}$}}
\put(20,70){\makebox(0,0){$_{2}$}}
\put(40,70){\makebox(0,0){$_{7}$}}
\put(60,70){\makebox(0,0){$_{8}$}}
\put(80,70){\makebox(0,0){$_{13}$}}
\put(100,70){\makebox(0,0){$_{14}$}}
\put(120,70){\makebox(0,0){$_{19}$}}
\put(140,70){\makebox(0,0){$_{20}$}}
\put(160,70){\makebox(0,0){$_{25}$}}
\put(180,70){\makebox(0,0){$_{26}$}}
\put(200,70){\makebox(0,0){$_{31}$}}
\put(220,70){\makebox(0,0){$_{32}$}}
\put(240,70){\makebox(0,0){$_{37}$}}
\put(262,80){\makebox(0,0){${\cdots}$}}
\put(-42,80){\makebox(0,0){${\cdots}$}}
\put(262,50){\makebox(0,0){${\cdots}$}}
\put(-42,50){\makebox(0,0){${\cdots}$}}
\put(262,20){\makebox(0,0){${\cdots}$}}
\put(-42,20){\makebox(0,0){${\cdots}$}}

\put(0,40){\makebox(0,0){$_{3}$}}
\put(-23,40){\makebox(0,0){$_{-2}$}}
\put(-23,70){\makebox(0,0){$_{-4}$}}
\put(-20,10){\makebox(0,0){$_{0}$}}
\put(20,40){\makebox(0,0){$_{4}$}}
\put(40,40){\makebox(0,0){$_{9}$}}
\put(60,40){\makebox(0,0){$_{10}$}}
\put(80,40){\makebox(0,0){$_{15}$}}
\put(100,40){\makebox(0,0){$_{16}$}}
\put(120,40){\makebox(0,0){$_{21}$}}
\put(140,40){\makebox(0,0){$_{22}$}}
\put(160,40){\makebox(0,0){$_{27}$}}
\put(180,40){\makebox(0,0){$_{28}$}}
\put(200,40){\makebox(0,0){$_{33}$}}
\put(220,40){\makebox(0,0){$_{34}$}}
\put(240,40){\makebox(0,0){$_{39}$}}

\put(0,10){\makebox(0,0){$_{5}$}}
\put(20,10){\makebox(0,0){$_{6}$}}
\put(40,10){\makebox(0,0){$_{11}$}}
\put(60,10){\makebox(0,0){$_{12}$}}
\put(80,10){\makebox(0,0){$_{17}$}}
\put(100,10){\makebox(0,0){$_{18}$}}
\put(120,10){\makebox(0,0){$_{23}$}}
\put(140,10){\makebox(0,0){$_{24}$}}
\put(160,10){\makebox(0,0){$_{29}$}}
\put(180,10){\makebox(0,0){$_{30}$}}
\put(200,10){\makebox(0,0){$_{35}$}}
\put(220,10){\makebox(0,0){$_{36}$}}
\put(240,10){\makebox(0,0){$_{41}$}}
\end{picture}
$$
It follows from this that $\bar\mu = (11,7^2,5^4,2^3)$ and $\bar\la = (11,7^2,5,4^2,2^4,1^5)$.

Now we define the second partial order in the $e > 0$
case by
declaring that $\mu \geq \la$ if $\hmu \unlhd \hla$ in the dominance
ordering on partitions.
Here are some examples:
\begin{itemize}
\item[(1)] If $l=1$ then we have simply that $\hla = \la$,
so $\geq$ is the same as $\unlhd$.
\item[(2)]
Suppose $l=2, e=2, (\kk_1,\kk_2) = (4,1)$,
$\mu = ((1^3),\varnothing)$ and $\la  = ((1), (2))$.
Then $\bar\mu = (4,2^2,1)$ and $\bar\la = (4,3,2)$.
In this example we have that $\mu > \la$ and $\mu \rhd \la$.
\item[(3)] Suppose $l=4, e=3$, and $(\kk_1,\kk_2,\kk_3,\kk_4)$
satisfies $\kk_1 > \kk_2 > \kk_3 > \kk_4$, 
$\kk_1 \equiv \kk_4 \equiv 0 \pmod{3}$ and
$\kk_2 \equiv \kk_3 \equiv 1 \pmod{3}$.
If
$\mu = (\varnothing, (2), \varnothing, (1))$ and
$\la = ((1),\emptyset, (2),
\emptyset)$ then one can check always that $\mu > \la$,
although $\mu$ and $\la$ are incomparable in the dominance ordering.
\end{itemize}
We remark that
in the proof of \cite[Theorem 3.4(2)]{Aclass}, Ariki appears to claim
for fixed $\mu, \la \in \Par$ and 
$\kk_1 \gg \kk_1 \gg \kk_3 \gg \kk_4$ 
that $\mu > \la$ implies $\mu \lhd \la$. 
The example (3) above shows that this is false. 
In Lemma~\ref{weaker} below, we prove a slightly weaker statement
which is still enough for the subsequent arguments 
in \cite{Aclass} to make sense, 
as we explain in detail later on.

Let $\lex$ denote the {\em lexicographic ordering} on partitions, so
for partitions $\la,\mu \in \mathscr{P}$ we have that
$\mu\lex\la$ if and only if $\mu_1=\la_1,\dots,\mu_{a-1}=\la_{a-1}$
and $\mu_a < \la_a$ for some $a \geq 1$. 
We extend this notion to $l$-multipartitions: for 
$\la, \mu \in \Par$ we have that $\mu\lex\la$ if and only 
$\mu^{(1)} = \la^{(1)},\dots,\mu^{(m-1)}=\la^{(m-1)}$ and
$\mu^{(m)} \lex \la^{(m)}$ for some $1 \leq m \leq l$. 
It is obvious that this total order refines the dominance ordering on $\Par$ 
in the sense
that $\mu \lhd \la$ implies $\mu \lex \la$.

\begin{Lemma}\label{weaker}
Assume we are given $\alpha \in Q_+$ such that $\kk_m - \kk_{m+1}
\geq \height(\alpha)+e$ for $m=1,\dots,l-1$.
Then $\mu > \la$ implies that $\mu \lex \la$ for all $\la,\mu \in \Par_\alpha$.
\end{Lemma}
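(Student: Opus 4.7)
I argue by contrapositive: assuming $\la,\mu\in\Par_\al$ with $\mu\gexeq\la$, we show $\hmu\not\lhd\hla$. The case $\mu=\la$ gives $\hmu=\hla$, so assume $\mu\gex\la$, and let $m$ be minimal with $\mu^{(m)}\neq\la^{(m)}$ and $a$ minimal with $\mu^{(m)}_a\neq\la^{(m)}_a$; by minimality $\mu^{(m)}_a>\la^{(m)}_a$. Then the abacus displays $A(\la),A(\mu)$ coincide on all beads of rows $m'<m$ and on beads $1,\dots,a-1$ of row $m$, while the $a$-th bead of row $m$ sits at column $c_\la:=\kk_m+\la^{(m)}_a-a+1$ in $A(\la)$ and at column $c_\mu:=\kk_m+\mu^{(m)}_a-a+1>c_\la$ in $A(\mu)$. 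Since the Uglov reindexing $\pi(m',c):=lc+e(m'-1)-(l-1)t$ (with $c=es+t$, $1\leq t\leq e$) is strictly increasing in $c$ for fixed $m'$, the reindexed position of this bead is strictly larger in $A(\mu)$ than in $A(\la)$.

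Writing $d:=\height(\al)$ and $P:=\pi(m,c_\la)$, I seek an integer $N$ for which the sum of the top $N$ reindexed bead positions in $A(\mu)$ strictly exceeds the corresponding sum in $A(\la)$. Via the formula recovering $\hat\nu_b$ from the $b$-th largest reindexed bead position in $A(\nu)$, this translates into $\hmu_1+\cdots+\hmu_N>\hla_1+\cdots+\hla_N$, contradicting $\hmu\lhd\hla$. I would take $N$ to be the number of beads of $A(\la)$ at reindexed positions $\geq P$.

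The guiding principle is that for any $\nu\in\Par_\al$ and any row $m'$, the bead positions of $A(\nu)$ outside the ``active window" $[\kk_{m'}-d,\kk_{m'}+d]$ depend only on the multicharge---columns $\leq\kk_{m'}-d$ are fully occupied and columns $>\kk_{m'}+d$ are empty---and each such window houses exactly $d$ beads. Thus outside-window contributions to any top-$N$ sum agree between $A(\la)$ and $A(\mu)$. Moreover, agreement of $\mu^{(m')}=\la^{(m')}$ for $m'<m$ means the windows of rows $m'<m$ also contribute identically. In row $m$ itself, precisely the beads $1,\dots,a$ sit at reindexed positions $\geq P$; beads $1,\dots,a-1$ coincide between $\la$ and $\mu$, while bead $a$ is strictly higher in $\mu$ by the first paragraph.

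What remains is to argue that beads of rows $m'>m$ appearing in the top $N$ contribute identically to $A(\la)$ and $A(\mu)$. The reindexed windows of consecutive rows may overlap, but the multicharge hypothesis $\kk_{m'}-\kk_{m'+1}\geq d+e$ is precisely calibrated to ensure that any bead of a row $m'>m$ with reindexed position $\geq P$ must lie outside its own active window, i.e. in the multicharge-determined region where $A(\la)$ and $A(\mu)$ already agree bead-for-bead. Once this is verified---the main technical obstacle, requiring a finite combinatorial check using the fact that row $m'$ occupies the reindexed residues $e(m'-1)+1,\dots,em'$ modulo $el$ together with the quantitative bound on $\pi$---the top-$N$ sums for $A(\la)$ and $A(\mu)$ differ by exactly $\pi(m,c_\mu)-\pi(m,c_\la)>0$, yielding the desired contradiction.
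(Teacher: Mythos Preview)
Your overall strategy is sound and close in spirit to the paper's proof: both identify the first place $(m,a)$ where $\mu^{(m)}_a>\la^{(m)}_a$, both locate the corresponding beads on the Uglov abacus, and both aim to produce a violation of $\hmu\lhd\hla$. The paper pivots at $p:=\pi(m,c_\mu)$ and proves the stronger statement $\bar\mu\gex\bar\la$ by comparing individual parts, while you pivot at $P:=\pi(m,c_\la)<p$ and compare partial sums. Either route works in principle.

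There is, however, a real gap in your justification of the key claim for rows $m'>m$. You assert that the multicharge hypothesis is ``precisely calibrated'' so that any bead of row $m'$ with reindexed position $\geq P$ lies outside its active window, and you propose to verify this by a purely positional check involving residues modulo $el$ and bounds on $\pi$. Such a check cannot succeed: the active window of row $m'$ genuinely contains columns $c'$ with $\pi(m',c')\geq P$. Concretely, take $l=e=2$, $d=10$, $(\kk_1,\kk_2)=(12,0)$, $m=1$, $a=6$, $c_\la=7$; then $P=\pi(1,7)=13$, while column $10$ lies in the row-$2$ window $(-10,10]$ and has $\pi(2,10)=20>P$. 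What saves the claim is not a bound on positions in the window, but a bound on \emph{occupied} positions: since $\la^{(m)}_1=\cdots=\la^{(m)}_{a-1}=\mu^{(m)}_1=\cdots=\mu^{(m)}_{a-1}\geq \mu^{(m)}_a\geq 1$, one has $|\la^{(m)}|\geq a-1$ and $|\mu^{(m)}|\geq a$, whence $\la^{(m')}_1\leq d-a+1$ and $\mu^{(m')}_1\leq d-a$ for $m'\neq m$. Thus the rightmost bead in row $m'$ sits at column at most $\kk_{m'}+d-a+1\leq c_\la-e$, and a short computation with $\pi$ then gives that \emph{every} bead of row $m'$ (whether in or out of the window) has reindexed position $<P$. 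This is the missing ingredient; the paper's proof uses the same fact implicitly when it asserts that ``the number of such positions in rows $m+1,\dots,l$ is determined just by $p$ and the fixed numbers $\kk_{m+1},\dots,\kk_l$''.

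A minor point: your final sentence says the top-$N$ sums differ by \emph{exactly} $\pi(m,c_\mu)-\pi(m,c_\la)$. In fact $A(\mu)$ may have additional row-$m$ beads (indices $a+1,a+2,\dots$) at positions $\geq P$, so your chosen $N$-element set is not necessarily the top $N$ in $A(\mu)$. The correct conclusion is that the top-$N$ sum in $A(\mu)$ is \emph{at least} that of $A(\la)$ plus $\pi(m,c_\mu)-\pi(m,c_\la)>0$, which still yields the contradiction.
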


\begin{proof}
The lemma is vacuous in the case $e=0$, as it never happens that
$\mu > \la$ under the given assumptions,
recalling from (\ref{zd}) that $\mu > \la$ implies $\cont(\mu)=\cont(\la)$
in the $e=0$ case.
Assume from now on that $e > 0$.
It suffices to show for $\la,\mu \in \Par_\alpha$ that 
$\mu \gex \la$ implies $\bar\mu \gex \bar\la$. 
For this, choose $1\leq m \leq l$ and $a \geq 1$ such that 
$\mu^{(1)}=\la^{(1)}, \dots, \mu^{(m-1)}=\la^{(m-1)}$,
$\mu^{(m)}_1 = \la^{(m)}_1,\dots, \mu^{(m)}_{a-1} = \la^{(m)}_{a-1}$
and $\mu^{(m)}_a > \la^{(m)}_a$.
The reader may find it helpful to keep in mind the examples of 
the corresponding abacus diagrams $A(\mu)$
and $A(\la)$ displayed above.

The rows $1,...,m-1$ of $A(\mu)$ and $A(\la)$ are exactly the same.
Moreover, in the $m$th row, all 
the beads corresponding to the parts $\mu^{(m)}_b=\la^{(m)}_b$ with $b<a$ 
occupy the same positions in $A(\mu)$ and $A(\la)$. 
Also the bead $B$ in $A(\mu)$ corresponding to the part $\mu^{(m)}_a$ 
is strictly to the right of the bead $B'$ in $A(\la)$ 
corresponding to the part $\la^{(m)}_a$ (the part $\la^{(m)}_a$ could be $0$ 
but it still makes sense to consider the corresponding bead).

Let $B$ occupy the position indexed by $p \in \Z$. 
By the choice of $m$ and $a$ and the 
assumptions on $\kk_1,\dots,\kk_l$, 
a position indexed by an integer
$> p$ is occupied
in $A(\mu)$ if and only if it is occupied in $A(\la)$.
Assume that there are $t$ such occupied 
positions in $A(\mu)$ (or $A(\la)$). 
Then we have that 
$\bar\mu_s=\bar\la_s$ for $s=1,2,\dots,t$. 

To finish the proof it suffices to show that $\bar\mu_{t+1}>\bar\la_{t+1}$. Note that $\bar\mu_{t+1}$ is equal to the number of unoccupied positions indexed by integers $<p$ in $A(\mu)$. 
By the assumptions on $\kk_1,\dots,\kk_l$, 
such positions can only exist in rows $m,m+1,\dots,l$. Moreover, the number 
of such positions in rows $m+1,\dots,l$ 
is determined just by $p$ and the fixed numbers 
$\kk_{m+1},\dots,\kk_l$. 

Now, let $p'$ be the largest integer 
such that $p'<p$ and the position indexed by $p'$ is occupied 
in $A(\la)$. By the assumptions,
$p'$ may index the position occupied by $B'$
or it may index a position in rows $1,\dots,m-1$ that is to the right of $B'$.
Note $\bar\la_{t+1}$ is equal to the number of unoccupied positions
indexed by integers $<p'$ in $A(\la)$. As in the previous paragraph, 
such unoccupied positions only exist in
rows $m,m+1,\dots,l$, and the number of such positions in rows $m+1,\dots,l$
is exactly the same as before.
Finally the number of unoccupied positions indexed by integers $< p'$ in row
$m$ of $A(\la)$ is always strictly smaller than the number of unoccupied positions indexed by integers $< p$ in row $m$ of $A(\mu)$ because of the
presence of the extra bead $B'$.
Hence $\bar\la_{t+1}  < \bar\mu_{t+1}$.
\end{proof}

\subsection{Fock space}\label{expre}
Now we proceed to introduce the higher level Fock space $F(\La)$
following the 
exposition in \cite{Abook}.
Given nodes $A$ and $B$ from the diagram of a multipartition,
we say that $A$ is {\em row-above} (resp.\ {\em row-below}) $B$
if $A$ lies in a row that is strictly above (resp.\ below) the row 
containing $B$
in the Young diagram when visualized as in (\ref{yde}).
Given $\la \in \Par$, $i \in I$, a removable $i$-node $A$
and an addable $i$-node $B$, 
we set
\begin{equation}\label{EDKWeight}
\:\:\:\:\:\: d_i(\la):=\#\{\text{addable $i$-nodes of $\la$}\}
-\#\{\text{removable $i$-nodes of $\la$}\};
\end{equation}
\begin{equation}\label{EDMUA}
\begin{split}
d_A(\la):= \#\{\text{addable $i$-nodes of $\la$ row-below $A$}\}\hspace{34.5mm}\\
-\#\{\text{removable $i$-nodes of $\la$ row-below $A$}\};\hspace{10mm}
\end{split}
\end{equation} 
\begin{equation}\label{EDMUB}
\begin{split}
d^B(\la):=\#\{\text{addable $i$-nodes of $\la$ row-above $B$}\}\hspace{34.5mm}\\
-\#\{\text{removable $i$-nodes of $\la$ row-above $B$}\}.\hspace{10mm}
\end{split}
\end{equation} 
Note that
$d_i(\la) = (\La - \cont(\la), \alpha_i)$.

Now define $F(\La)$ to be the $\Q(q)$-vector space on basis
$\{M_\la\:|\:\la \in \Par\}$
with $U_q(\g)$-action defined by
\begin{align}\label{act1}
E_i {M}_\la:=\sum_A q^{d_A(\la)}{M}_{\la_A},&\qquad
F_i {M}_\la:=\sum_B q^{-d^B(\la)}{M}_{\la^B},\\
K_i {M}_\la&:=q^{d_i(\la)}{M}_\la,\label{act3}
\end{align}
where the first sum is over all removable $i$-nodes $A$ for $\la$, and the 
second sum is over all addable $i$-nodes $B$ for $\la$. 
When $l=1$, this construction originates in work of Hayashi \cite{Hayashi}
and Misra and Miwa \cite{MiM}.
When $l > 1$, $F(\La)$ was first studied in \cite{JMMO}.
In that case it is simply the tensor product
of $l$ level one Fock spaces, indeed,
we can identify $F(\La)$ in general
with the tensor product
\begin{equation}\label{tp}
F(\La) = F(\La_{k_1}) \otimes \cdots \otimes F(\La_{k_l}),
\end{equation}
on which the $U_q(\g)$-structure is defined via
the comultiplication $\De$ fixed above,
so that
$M_\la$ is identified with
$M_{\la^{(1)}} \otimes \cdots \otimes M_{\la^{(l)}}$
for each $\la \in \Par$.

Let $F(\La)_\Laurent$ denote the free $\Laurent$-submodule
of $F(\La)$ spanned by the $M_\la$'s, which 
is invariant under the action of $U_q(\g)_\Laurent$.
Also let $F(\La)_0$  denote the 
free $\A_0$-submodule of $F(\La)$
spanned by the $M_\la$'s and set
$$
C(\La)_0 := \{M_\la + q F(\La)_0\:|\:\la \in \Par\}.
$$
The pair
$(F(\La)_0, C(\La)_0)$ is then
an upper crystal basis at $q=0$.
The proof of this statement in level one
goes back to Misra and Miwa \cite{MiM};
the proof for higher levels is a consequence of the level one result
in view of (\ref{tp})
and \cite[Proposition 6]{Ka1}.
Hence we get induced the structure of abstract crystal on the 
underlying index set $\Par$ that parametrizes $C(\La)_0$, 
with crystal datum denoted 
\begin{equation}\label{cp}
(\Par, \tilde e_i, \tilde f_i, \eps_i, \phi_i, \wt).
\end{equation}
We give an explicit combinatorial description of 
this crystal in the next subsection.
This explicit description in level one is a reformulation of
the results in \cite{MiM}; in higher levels it follows from the level one
description together with
(\ref{tp}) and \cite[Proposition 6]{Ka1}.

\subsection{Crystals}\label{scc}
The crystal datum (\ref{cp}) can be described in purely combinatorial
terms as follows.
First, for $\la \in \Par$, we have that $\wt(\la) = \La - \cont(\la)$,
as follows from (\ref{act3}).

Given also  $i \in I$, let $A_1,\dots,A_n$ denote the addable and removable $i$-nodes
of $\la$ ordered so that $A_m$ is row-above $A_{m+1}$ for each
$m=1,\dots,n-1$.
Consider the sequence $(\sigma_1,\dots,\sigma_n)$
where $\sigma_r = +$ if $A_r$ is addable or $-$ if $A_r$ is removable.
If we can find $1 \leq r < s \leq n$ such that $\sigma_r=  -$,
$\sigma_s = +$ and $\sigma_{r+1}=\cdots=\sigma_{s-1} = 0$
then replace $\sigma_r$ and $\sigma_s$ by $0$.
Keep doing this until we are left with a sequence
$(\sigma_1,\dots,\sigma_n)$ in which no $-$ appears to the left of
a $+$. This
is called the {\em reduced $i$-signature} of $\la$.

If $(\sigma_1,\dots,\sigma_n)$ is the reduced $i$-signature of $\la$,
then we have that
\begin{align*}
\eps_i(\la) = \#\{r=1,\dots,n\:|\:\sigma_r = -\},\qquad
&\phi_i(\la) = \#\{r=1,\dots,n\:|\:\sigma_r = +\}.
\end{align*}
By (\ref{us0}) (or directly from the combinatorics) we also have that
\begin{equation}\label{us}
(\La - \cont(\la), \alpha_i) = d_i(\la) = \phi_i(\la) - \eps_i(\la).
\end{equation}
Finally, if $\eps_i(\la) > 0$, we have that
$\tilde e_i \la = \la_{A_r}$ 
where $r$ indexes the leftmost $-$ in the reduced $i$-signature.
Similarly, if $\phi_i(\la) >0 $ we have that
$\tilde f_i \la = \la^{A_r}$ 
where $r$ indexes the rightmost $+$ in the reduced $i$-signature.

Because $\varnothing$ is a highest weight vector
in this crystal
of weight $\La$, we deduce from \cite[Theorem 3.3.1]{KaG} that the subcrystal
\begin{equation}\label{cpt}
(\RPar,\tilde e_i, \tilde f_i, \eps_i, \phi_i, \wt)
\end{equation}
that is the connected component 
of
$(\Par,\tilde e_i, \tilde f_i, \eps_i, \phi_i, \wt)$
generated by $\varnothing$
gives an explicit combinatorial realization of
the abstract crystal underlying the highest weight module
$V(\La)$.
We refer to multipartitions from $\RPar$
as {\em restricted multipartitions}.
Also for $\alpha \in Q_+$ set
\begin{equation*}
\RPar_\alpha := \RPar \cap \Par_\alpha.
\end{equation*}
These are the restricted multipartitions of content $\alpha$.

\begin{Remark}\rm\label{c3}
The problem of
finding a more explicit combinatorial description of the
subset $\RPar$ of $\Par$
has received quite a lot of attention in the literature;
see also Remark~\ref{remarkbelow} below.
Here are some special cases.

(1) Suppose that $e > 0$ and $l=1$.
Then $\RPar$
is the usual set of all
{\em $e$-restricted partitions}, that is, 
partitions $\la$ such that 
$\la_a - \la_{a+1} < e$ for $a \geq 1$.

(2)
Suppose that $e=0$ and 
$k_1 \geq \cdots \geq k_l$.
Then $\RPar$
consists of all $l$-multipartitions $\la$
such that $\la_{a+k_m - k_{m+1}}^{(m)}
\leq \la_a^{(m+1)}$ for all $m=1,\dots,l-1$ and $a \geq 1$;
see \cite[(2.52)]{BKariki} or \cite{Vaz}.
\end{Remark}

\subsection{\boldmath The dual-canonical basis
of $V(\La)$}\label{dcqc}
The vector $M_{\varnothing}$ is a highest weight vector
of weight $\La$.
Moreover, the $\La$-weight space of $F(\La)$
is one dimensional. By complete reducibility, 
it follows that there is a canonical $U_q(\g)$-module
homomorphism
\begin{equation}\label{pidef}
\pi:F(\La) \twoheadrightarrow V(\La),
\qquad
M_\varnothing \mapsto v_\La.
\end{equation}
For any $\la \in \Par$, we define
\begin{equation}\label{stmn}
S_\la := \pi(M_\la),
\end{equation}
and call this a {\em standard monomial} in $V(\La)$.
Applying $\pi$ to (\ref{act1}), we get that
\begin{align}\label{sact1}
E_i S_\la=\sum_A q^{d_A(\la)}S_{\la_A},&\qquad
F_i S_\la=\sum_B q^{-d^B(\la)}S_{\la^B},
\end{align}
where the first sum is over all removable $i$-nodes $A$ for $\la$, and the second sum is over all addable $i$-nodes $B$ for $\la$. 

By \cite[Theorem 3.3.1]{KaG}, 
the upper crystal lattice $V(\La)_0$
from $\S$\ref{md} coincides with
the image under $\pi$ of the upper crystal lattice
$F(\La)_0$ from $\S$\ref{expre}, i.e.
$V(\La)_0$ is the $\A_0$-span of the
standard monomials.
Moreover by the definition (\ref{cpt})
we have that
\begin{equation}\label{bla0}
B(\La)_0 = \{S_\la+qV(\La)_0\:|\:\la \in \RPar\}.
\end{equation}
Thus we have given an explicit construction of
$(V(\La)_0, B(\La)_0)$,
the upper crystal basis of $V(\La)$ at $q=0$,
via the Fock space $F(\La)$.

Recall also from $\S$\ref{md}
that the dual-canonical basis of $V(\La)$
is the canonical lift of $B(\La)_0$ using 
the balanced triple
$(V(\La)^*_\A, V(\La)_0, \overline{V(\La)_0})$.
In other words, in terms of our explicit parametrization,
the dual-canonical basis of $V(\La)$ is the basis
$\{D_\la\:|\:\la \in \RPar\}$
in which $D_\la$ is the unique vector 
in $V(\La)_\A^* \cap V(\La)_0 \cap \overline{V(\La)_0}$ such that
\begin{equation}\label{glory}
D_\la \equiv S_\la \pmod{q V(\La)_0}
\end{equation}
for each $\la \in \RPar$.
As we noted before, this is already a basis for
the costandard lattice $V(\La)_\Laurent^*$ as a free
$\Laurent$-module, and each $D_\la$ is bar-invariant.

\begin{Proposition}[{\cite[Proposition 5.3.1]{KaG}}]
\label{dca}
For $\la \in \RPar_\alpha$ 
and $i \in I$ we have that
\begin{align*}
E_i D_\la &=
[\eps_i(\la)] D_{\tilde e_i \la}
+ \sum_{\substack{\mu \in \RPar_{\al-\al_i}\\
\eps_i(\mu) < \eps_i(\la)-1
}}
x_{\la,\mu;i}(q) D_\mu,\\
F_i D_\la &=
[\phi_i(\la)] D_{\tilde f_i \la}
+ \sum_{\substack{\mu \in \RPar_{\al+\al_i} \\
\phi_i(\mu) < \phi_i(\la)-1
}}
y_{\la,\mu;i}(q) D_\mu,
\end{align*}
for bar-invariant
 $x_{\la,\mu;i}(q) \in q^{\eps_i(\la)-2} \Z[q^{-1}]$ and
$y_{\la,\mu;i}(q) \in q^{\phi_i(\la)-2} \Z[q^{-1}]$.
(In these two formulae, the first term on the right hand side should
be interpreted as 0 if $\eps_i(\la)=0$ (resp.\ $\phi_i(\la)=0$).)
\end{Proposition}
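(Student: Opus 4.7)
The plan is to follow Kashiwara's original strategy, which rests on the characterization of $\{D_\mu\}_{\mu \in \RPar}$ as the unique basis of $V(\La)_\Laurent^*$ lifting the crystal basis through the balanced triple $(V(\La)_\A^*, V(\La)_0, \overline{V(\La)_0})$. The formula for $F_i D_\la$ is proved by an entirely symmetric argument using the second half of (\ref{sact1}), so I focus on $E_i D_\la$.

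First I would set up the expansion. Since $V(\La)_\Laurent$ is the cyclic $U_q(\g)_\Laurent$-module generated by $v_\La$ and $E_i, F_i \in U_q(\g)_\Laurent$, both operators preserve $V(\La)_\Laurent$. The contravariant form makes $E_i$ and $F_i$ biadjoint, so $V(\La)_\Laurent^*$ is also stable under each of them. Because $\{D_\mu\}_{\mu \in \RPar_{\al-\al_i}}$ is an $\Laurent$-basis of the corresponding weight space of $V(\La)_\Laurent^*$, there is a unique expansion
$$
E_i D_\la = \sum_{\mu \in \RPar_{\al-\al_i}} c_\mu(q)\, D_\mu, \qquad c_\mu(q) \in \Laurent.
$$
Applying the bar involution, using $\overline{E_i}=E_i$, $\overline{D_\la}=D_\la$, and $\overline{D_\mu}=D_\mu$, shows that every $c_\mu(q)$ is bar-invariant, hence a palindromic Laurent polynomial.

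Next I would analyze the leading behavior via the crystal structure. Using (\ref{glory}) to write $D_\la \equiv S_\la \pmod{qV(\La)_0}$ and applying the Fock-space formula $E_i S_\la = \sum_A q^{d_A(\la)} S_{\la_A}$ from (\ref{sact1}), the reduced $i$-signature procedure of $\S$\ref{scc} organizes the removable $i$-nodes of $\la$ into canceled $-+$ pairs together with $\eps_i(\la)$ uncanceled minus signs. A careful count shows that the canceled pairs produce matching $q$- and $q^{-1}$-contributions that assemble into the Gaussian factor $[\eps_i(\la)]$ at the term $S_{\tilde e_i \la}$, while all other removable nodes yield $S_{\la_A}$'s with $\eps_i(\la_A) \le \eps_i(\la) - 2$. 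Combining this with the $qV(\La)_0$-corrections hidden in $D_\la - S_\la$ and invoking the uniqueness property of the balanced triple pins down the coefficient $c_{\tilde e_i \la}(q) = [\eps_i(\la)]$ exactly.

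Finally I would establish the bound on the remaining coefficients. Uniqueness of $\tilde e_i \la$ as the only multipartition $\mu$ with $\tilde f_i \mu = \la$, together with elementary crystal considerations, rules out any $\mu \ne \tilde e_i \la$ with $\eps_i(\mu) \ge \eps_i(\la) - 1$. For the lower-order terms $\mu$ with $\eps_i(\mu) < \eps_i(\la) - 1$, bar-invariance of $c_\mu(q)$ combined with a degree estimate on $E_i D_\la$ derived from the $\A_0$-filtration of $V(\La)_0$ by the crystal quantity $\eps_i$ forces $c_\mu(q) \in q^{\eps_i(\la)-2}\Z[q^{-1}]$. The main obstacle is the clean derivation of the Gaussian factor $[\eps_i(\la)]$ and the accompanying degree bound $q^{\eps_i(\la)-2}$: both require carefully balancing the negative powers of $q$ in the Fock-space formula against the hidden corrections $D_\la - S_\la$, and it is exactly here that the upper (as opposed to lower) crystal basis framework is essential, since the Chevalley generators pick up these Gaussian factors at leading order in the upper convention.
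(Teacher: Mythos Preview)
The paper does not prove this proposition; it is imported wholesale from \cite[Proposition~5.3.1]{KaG}. So the comparison is with Kashiwara's argument, and your sketch departs from it in a way that does not work.

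The central claim in your second paragraph is wrong. In $E_i S_\la = \sum_A q^{d_A(\la)} S_{\la_A}$ each removable $i$-node $A$ produces a \emph{distinct} multipartition $\la_A$, so the coefficient of $S_{\tilde e_i \la}$ is the single monomial $q^{d_{A_0}(\la)}$ for the unique node $A_0$ with $\la_{A_0}=\tilde e_i\la$; no ``canceled pairs'' can combine there into $[\eps_i(\la)]$. Your next claim, that all other removable nodes give $\eps_i(\la_A)\le\eps_i(\la)-2$, is also false: if $A$ is a removable $i$-node that was canceled against an addable node in the reduced signature, then in $\la_A$ that $-$ becomes a $+$ and the paired $+$ is freed, giving $\eps_i(\la_A)=\eps_i(\la)$. (Already when $\eps_i(\la)=0$ but $\la$ has such a canceled removable $i$-node one has $E_iS_\la\neq 0$ while the proposition forces $E_iD_\la=0$, so the correction $D_\la-S_\la$ is carrying the entire content, not a perturbation.) What actually produces both the factor $[\eps_i(\la)]$ and the bound $\eps_i(\mu)<\eps_i(\la)-1$ on the remainder is Kashiwara's $\mathfrak{sl}_2$-argument: the upper global basis is compatible with the filtration of $V(\La)$ by $\eps_i$-level, and modulo the stratum $\eps_i\le\eps_i(\la)-2$ the vector $D_\la$ behaves as a weight vector in a single irreducible $U_q(\mathfrak{sl}_2)_i$-module, on which $E_i$ acts by the quantum integer $[\eps_i(\la)]$. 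The Fock-space formula (\ref{sact1}) plays no role in that proof, and attempting to extract the result from it amounts to reproving that compatibility from scratch.
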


Finally for $\mu \in \Par_\alpha$, consider the expansion
of the standard monomial $S_\mu$ in terms of the dual-canonical basis:
\begin{equation}\label{qdec}
S_\mu = \sum_{\la \in \RPar_\alpha} d_{\la,\mu}(q) D_\la.
\end{equation}
At this point all we know about the coeffcients
$d_{\la,\mu}(q)$ is that they belong to
$\delta_{\la,\mu}+q\A_0$.

\begin{Remark}
We will prove eventually that 
$d_{\la,\mu}(q) = 1$ if $\la = \mu$,
$d_{\la,\mu}(q)=0$ if $\la \not\!\!\!\unlhd\ \mu$,
and $d_{\la,\mu}(q) \in q \Z[q]$ if $\la \lhd \mu$;
see Theorem~\ref{tri} and Corollary~\ref{maindecthm}.
Moreover we will show that
$d_{\la,\mu}(q)$ is equal to the 
multiplicity $[S(\mu):D(\la)]_q$
of a certain irreducible graded module $D(\la)$
as a composition factor of the graded Specht module $S(\mu)$ for the
cyclotomic Hecke algebra associated to $\La$,
which will imply further
that the
coefficients of the polynomials 
$d_{\la,\mu}(q)$ are non-negative integers.
\end{Remark}

\subsection{\boldmath Triangularity of standard monomials}\label{tzero}
In order to establish the desired triangularity properties
of the coefficients $d_{\la,\mu}(q)$,
we need to exploit the existence of 
a well-behaved bar-involution on $F(\La)$.
Unfortunately
the construction of this bar-involution
 in the case $e > 0$ is rather indirect, so we prefer
to assume its existence first and proceed to derive the
important consequences, 
postponing the actual
construction until later on; see $\S$\ref{construct}. 

\begin{Hypothesis}\label{assump}
We are given an explicit compatible bar-involution on $F(\La)$
and a partial order $\preceq$ on $\Par$ such that
\begin{itemize}
\item[(1)] $\overline{M_\la} = M_\la + 
(\text{a $\Z[q,q^{-1}]$-linear combination of $M_\mu$'s
for $\mu \prec \la$})$;
\item[(2)] $\mu \prec \la$ implies $\mu \lex \la$.
\end{itemize}
\end{Hypothesis}

Let us explain right away how to construct such a bar-involution
in the case $e=0$;
note this approach does not work for $e > 0$, the problem being 
the lack of integrality of
Lusztig's quasi-$R$-matrix in the affine case.
First we take the partial order
$\preceq$ in the $e=0$ case to be the partial order $\leq$ from (\ref{zd}),
which satisfies Hypothesis~\ref{assump}(2) by Lemma~\ref{zerot}.
Then, to construct the bar-involution itself, we start 
in level one by defining the bar-involution on $F(\La)$ simply
to be the unique anti-linear endomorphism
fixing all the
monomial basis vectors $M_\la$.
It is easy to check from (\ref{act1}) 
that this is a compatible
bar-involution (assuming of course that $e=0$ and $l=1$).
For higher levels,
we identify $F(\La)$ with the tensor product
(\ref{tp}) and use Lusztig's tensor product construction 
from \cite[$\S$27.3]{Lubook} (adapted to our choice of comultiplication)
to get  an induced compatible bar-involution on
$F(\La)$.
It is immediate from this construction, the definition (\ref{zd}) 
and the integrality of the quasi-$R$-matrix
from \cite[$\S$24.1]{Lubook} that this satisfies 
Hypothesis~\ref{assump}(1); see \cite[$\S$2.3]{BKariki}.

For the remainder of the subsection, we assume that
Hypothesis~\ref{assump} holds.  Then we can introduce
the {\em dual-canonical basis}
$\{L_\la\:|\:\la \in \Par\}$ of 
$F(\La)$ by letting $L_\la$ denote the unique
bar-invariant vector in 
$F(\La)$ such that
\begin{equation}\label{lllk}
L_\la = M_\la + \text{(a $q\Z[q]$-linear combination of
$M_\mu$'s with $\mu \prec \la$})
\end{equation}
for each $\la \in \Par$.
The existence and uniqueness of these vectors follows from
Lusztig's lemma \cite[Lemma 24.2.1]{Lubook} and 
the triangularity of the bar-involution from
Hypothesis~\ref{assump}(1).
Recall the map $\pi$
and the dual-canonical basis
vectors $D_\la \in V(\La)$ from $\S$\ref{dcqc}.
The following theorem was established already 
in the case $e=0$ in \cite[Theorem 2.2]{BKariki}
(via \cite[Theorem 26]{Bdual}); the
proof given here in the general case repeats the same argument.

\begin{Proposition}\label{katheorem}
For $\la \in \Par$, we have that
$\displaystyle\pi(L_\la) = \left\{\begin{array}{ll}
D_\la&\text{if $\la \in \RPar$}\\
0&\text{otherwise.}
\end{array}\right.$
\end{Proposition}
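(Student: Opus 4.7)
The strategy is to use bar-invariance of $L_\la$ to pin down $\pi(L_\la)$ via the uniqueness of the upper global crystal basis. First I would verify that $\pi$ intertwines the bar involutions on $F(\La)$ and $V(\La)$. The $\La$-weight space of $F(\La)$ is one-dimensional, spanned by $M_\varnothing$, and applying Hypothesis~\ref{assump}(1) with $\la = \varnothing$ (which is $\prec$-minimal) gives $\overline{M_\varnothing} = M_\varnothing$. Since $F(\La)$ is integrable hence completely reducible, we get a $U_q(\g)$-module decomposition $F(\La) = V \oplus \ker\pi$ with $V := U_q(\g)\cdot M_\varnothing \cong V(\La)$. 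Bar preserves weights (as $\overline{K_i}=K_i^{-1}$ implies $K_i \overline{v} = q^a\overline{v}$ whenever $K_i v = q^a v$) and hence preserves isotypic components, so $\ker\pi$ is bar-stable and the induced bar on $V(\La)$ coincides with the given one by uniqueness of compatible bars fixing $v_\La$. Consequently $\pi(L_\la)$ is bar-invariant.

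Next, applying $\pi$ to the expansion in Hypothesis~\ref{assump}(1) yields $\pi(L_\la) = S_\la + \sum_{\mu \prec \la} a_{\la,\mu}(q) S_\mu$ with $a_{\la,\mu}(q) \in q\Z[q]$. Since the standards $S_\mu$ lie in the $\A_0$-lattice $V(\La)_0$ (by the description in $\S$\ref{dcqc}), so does $\pi(L_\la)$, and moreover $\pi(L_\la) \equiv S_\la \pmod{q V(\La)_0}$. The dual-canonical basis $\{D_\nu : \nu \in \RPar\}$ is an $\A_0$-basis of $V(\La)_0$ consisting of bar-invariant vectors, so writing $\pi(L_\la) = \sum_{\nu \in \RPar_\alpha} b_\nu(q) D_\nu$ gives $b_\nu \in \A_0$, and bar-invariance forces $b_\nu(q) = b_\nu(q^{-1})$. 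Hence $b_\nu \in \A_0 \cap \A_\infty = \Q$.

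Reducing the expansion modulo $qV(\La)_0$ gives the identity $\sum_{\nu \in \RPar_\alpha} b_\nu (S_\nu + qV(\La)_0) = S_\la + qV(\La)_0$ in $V(\La)_0/qV(\La)_0$. If $\la \in \RPar$, the class $S_\la + qV(\La)_0$ is itself the basis element of $B(\La)_0$ indexed by $\la$, so matching coefficients against $B(\La)_0 = \{S_\nu + qV(\La)_0 : \nu \in \RPar_\alpha\}$ forces $b_\la = 1$ and all other $b_\nu = 0$, yielding $\pi(L_\la) = D_\la$. If $\la \notin \RPar$, I would argue that $S_\la \in qV(\La)_0$, so the right-hand side is zero mod $qV(\La)_0$, forcing every $b_\nu = 0$ and $\pi(L_\la) = 0$. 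The statement $S_\la \in qV(\La)_0$ follows because the isotypic decomposition $F(\La) = V \oplus \ker\pi$ is compatible with the upper crystal lattice, giving $F(\La)_0 = V_0 \oplus (\ker\pi)_0$ and a splitting $C(\La)_0 = B(\La)_0 \sqcup (\text{crystal basis of }\ker\pi)$; elements $M_\la + qF(\La)_0$ with $\la \notin \RPar$ lie in the second summand since $\RPar$ is by definition the connected component of $\varnothing$, and this summand maps to zero under $\pi \bmod q$.

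The main technical point will be the last one, namely verifying that $S_\la \in qV(\La)_0$ for $\la \notin \RPar$. This is a compatibility statement between the direct-sum decomposition of $F(\La)$ as a $U_q(\g)$-module and the upper crystal lattice structure, and it should be a routine consequence of Kashiwara's general machinery for crystal bases under direct sums combined with the explicit description of the connected component of $\varnothing$ in $C(\La)_0$ given in $\S$\ref{scc}; alternatively, one can invoke semisimplicity at the level of $V(\La)_0/qV(\La)_0$ regarded as a crystal.
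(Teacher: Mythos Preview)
Your overall strategy is the same as the paper's --- show that $\pi(L_\la)$ lies in the ``intersection lattice'' determining the global basis, then match it mod $q$ --- but there is a genuine gap in your execution.  The claim ``$b_\nu \in \A_0 \cap \A_\infty = \Q$'' is false: for instance $\frac{q}{1+q^2}$ lies in $\A_0$, is fixed by $q \mapsto q^{-1}$, hence lies in $\A_\infty$, yet is not constant.  What \emph{is} true is $\A \cap \A_0 \cap \A_\infty = \Q$, but for that you need the additional integrality $b_\nu \in \A = \Q[q,q^{-1}]$, i.e.\ $\pi(L_\la) \in V(\La)_\A^*$.  At this stage of the paper you do not yet know that the standard monomials $S_\mu$ lie in $V(\La)_\Laurent^*$ (that is precisely Theorem~\ref{tri}(1), which relies on the proposition you are proving), so you cannot simply read this off from the expansion $\pi(L_\la) = S_\la + \sum a_{\la,\mu}(q) S_\mu$.

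The paper closes this gap by appealing to Kashiwara's general results on balanced triples: \cite[Propositions 5.2.1 and 5.2.2]{KaG} show that the image under $\pi$ of the balanced triple $(F(\La)_\A, F(\La)_0, \overline{F(\La)_0})$ is again a balanced triple in $V(\La)$, and that it must coincide with the known one $(V(\La)_\A^*, V(\La)_0, \overline{V(\La)_0})$ because they agree on the $\La$-weight space.  In particular $\pi(F(\La)_\A) = V(\La)_\A^*$, which supplies the missing third constraint.  Your verification that $\pi$ intertwines the bar-involutions and your handling of the case $\la \notin \RPar$ via the crystal decomposition are both fine and agree with the paper's reasoning; it is only the ``bar-invariance plus $\A_0$ forces constant'' step that fails.
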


\begin{proof}
Let $F(\La)_\A := \Q \otimes_{\Z} F(\La)_\Laurent$.
In view of (\ref{lllk}),
this can be described alternatively
as the $\A$-span of the
dual-canonical basis $\{L_\la\:|\:\la \in \Par\}$.
Similarly, the upper crystal lattice $F(\La)_0$
(resp.\ its image $\overline{F(\La)_0}$ under the bar-invoution)
is the $\A_0$-span (resp.\ the $\A_\infty$-span)
of the dual-canonical basis.
It follows immediately that 
$(F(\La)_\A,
F(\La)_0, \overline{F(\La)_0})$
is a balanced triple. Moreover, our dual-canonical basis of $F(\La)$ is
the canonical lift of the upper crystal basis
$C(\La)_0$ arising from this balanced triple.
Also the 
image of the upper crystal lattice $F(\La)_0$ at $q=0$ 
under the bar-involution
is an upper
crystal lattice at $q=\infty$.
This puts us in the setup of \cite[$\S$5.2]{KaG}.

By \cite[Proposition 5.2.1]{KaG},
the image of
$(F(\La)_\A,
F(\La)_0, \overline{F(\La)_0})$
under the map $\pi$ is a balanced triple
in $V(\La)$.
Its intersection with the $\La$-weight space
of $V(\La)$ is $(\A v_\La, \A_0 v_\La, \A_\infty v_\La)$,
which is the same thing as for the
balanced triple $(V(\La)^*_\A, V(\La)_0, V(\La)_\infty)$
constructed earlier.
Hence by \cite[Proposition 5.2.2]{KaG}
our two balanced triples coincide:
$$
\pi(F(\La)_\A) = V(\La)^*_\A,\quad
\pi(F(\La)_0) = V(\La)_0,\quad
\pi(\overline{F(\La)_0}) = \overline{V(\La)_0}.
$$
As $L_\la \in F(\La)_\A \cap F(\La)_0 \cap \overline{F(\La)_0}$
we deduce that $\pi(L_\la) \in V(\La)_\A^* \cap V(\La)_0
\cap \overline{V(\La)_0}$ for every $\la \in \Par$.

Now (\ref{lllk}) implies that $M_\la \equiv L_\la \pmod{q F(\La)_0}$
for every $\la$.
Also we know that
$S_\la \equiv D_\la\pmod{q V(\La)_0}$
if $\la \in \RPar$ and $S_\la \equiv 0 \pmod{q V(\La)_0}$
otherwise.
As $\pi(M_\la) = S_\la$, we deduce that $\pi(L_\la)$ is equal to $D_\la$
plus a $q \A_0$-linear combination of $D_\mu$'s if
$\la \in \RPar$, and
$\pi(L_\la)$ is a $q \A_0$-linear combination of $D_\mu$'s otherwise.
But also $\pi(L_\la) \in \overline{V(\La)_0}$
for every $\la$, so it is an $\A_\infty$-linear
combination of $D_\mu$'s.
Since $\A_\infty \cap q \A_0 = \{0\}$, 
we conclude that $\pi(L_\la) = D_\la$ if $\la \in \RPar$
and $\pi(L_\la) = 0$ otherwise.
\end{proof}

Now we can define polynomials $d_{\la,\mu}(q) \in \Z[q]$
for {\em every} $\la,\mu \in \Par$
from the expansion
\begin{equation}\label{ml}
M_\mu = \sum_{\la \in \Par} d_{\la,\mu}(q) L_\la.
\end{equation}
Applying the map $\pi$ to (\ref{ml}) and using Proposition~\ref{katheorem},
we get that
\begin{equation}\label{mls}
S_\mu = \sum_{\la \in \RPar} d_{\la,\mu}(q) D_\la
\end{equation}
for $\mu \in \Par$ and $\la \in \RPar$.
This establishes that the
polynomial $d_{\la,\mu}(q)$ defined here agrees
with the one defined earlier in (\ref{qdec})
when $\la \in \RPar$, so our notation is consistent with the earlier
notation.

\begin{Theorem}\label{tri}
Given $\la,\mu \in \Par_\alpha$ 
we have that $d_{\la,\mu}(q) = 1$ if $\la=\mu$,
$d_{\la,\mu}(q)= 0$
if $\la \not\preceq \mu$, and
$d_{\la,\mu}(q) \in q \Z[q]$ if $\la \prec \mu$.
Hence:
\begin{itemize}
\item[(1)] The vectors $\{S_\la\:|\:\la \in \RPar\}$
give a basis for $V(\La)_\Laurent^*$ as a free $\Laurent$-module.
\item[(2)] For $\la \in \Par_\alpha \setminus\RPar_\alpha$, the standard monomial
$S_\la$ 
can be expressed as a $q \Z[q]$-linear combination of
$S_\mu$'s for $\mu \in \RPar_\alpha$ with $\mu \prec \la$.
\item[(3)] For $\la \in \RPar_\alpha$, the difference
$S_\la - \overline{S_\la}$ is a $q \Z[q]$-linear combination
of $S_\mu$'s for $\mu \in \RPar_\alpha$ with $\mu \prec \la$.
\end{itemize}
\end{Theorem}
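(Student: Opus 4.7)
The plan is to push the triangular structure of the dual-canonical $L_\la$-basis of $F(\La)$ through the projection $\pi$ using Proposition~\ref{katheorem}, and then to combine with bar-invariance of the $D_\mu$'s to deduce all three assertions. For the triangularity of $d_{\la,\mu}(q)$ itself: Hypothesis~\ref{assump}(1) together with (\ref{lllk}) says that the transition matrix from $\{L_\la\}$ to $\{M_\la\}$ is upper unitriangular with off-diagonal entries in $q\Z[q]$, and the Neumann series $(I+N)^{-1} = \sum_{k \geq 0}(-N)^k$ preserves this class under inversion. This immediately yields $d_{\mu,\mu}(q) = 1$, $d_{\la,\mu}(q) \in q\Z[q]$ for $\la \prec \mu$, and $d_{\la,\mu}(q) = 0$ when $\la \not\preceq \mu$.

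Applying $\pi$ to (\ref{ml}) when $\mu \in \RPar_\alpha$ and using Proposition~\ref{katheorem} to kill every $L_\la$ with $\la \notin \RPar$ produces the upper unitriangular expansion $S_\mu = D_\mu + \sum_{\la \in \RPar_\alpha, \la \prec \mu} d_{\la,\mu}(q) D_\la$ over the $\Laurent$-basis $\{D_\la\}_{\la \in \RPar_\alpha}$ of $V(\La)_\Laurent^*$; this proves (1), and inverting the unitriangular matrix (again via a Neumann series argument) yields $D_\la = S_\la + \sum_{\mu \in \RPar_\alpha, \mu \prec \la} c_{\mu,\la}(q) S_\mu$ with $c_{\mu,\la}(q) \in q\Z[q]$. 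For (2), when $\mu \in \Par_\alpha \setminus \RPar_\alpha$ the leading term $L_\mu$ is also killed by $\pi$, leaving $S_\mu = \sum_{\la \in \RPar_\alpha, \la \prec \mu} d_{\la,\mu}(q) D_\la$ with coefficients in $q\Z[q]$; substituting the inverse expansion from (1) for each $D_\la$ and using closure of $q\Z[q]$ under sums and products converts this into a $q\Z[q]$-combination of $S_\nu$'s indexed by $\nu \in \RPar_\alpha$ with $\nu \preceq \la \prec \mu$.

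For (3), I would first verify that $\pi$ intertwines the bar-involutions on $F(\La)$ and $V(\La)$, both of which are uniquely determined by $U_q$-compatibility and their value on the highest weight vector; combined with bar-invariance of each $L_\mu$ and Proposition~\ref{katheorem}, this forces $\overline{D_\mu} = D_\mu$ for every $\mu \in \RPar_\alpha$. Applying bar to the expansion of $S_\la$ from (1) then gives $\overline{S_\la} = D_\la + \sum_{\mu \in \RPar_\alpha, \mu \prec \la} \overline{d_{\mu,\la}(q)} D_\mu$, and subtracting produces the identity $S_\la - \overline{S_\la} = \sum_{\mu \in \RPar_\alpha, \mu \prec \la} (d_{\mu,\la}(q) - \overline{d_{\mu,\la}(q)}) D_\mu$. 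Rewriting each $D_\mu$ in the $S$-basis via the inverse expansion from (1) and regrouping yields an expansion of $S_\la - \overline{S_\la}$ in the $\{S_\nu\}_{\nu \in \RPar_\alpha}$-basis; the required $q\Z[q]$-integrality of the resulting coefficients then follows from Lusztig's lemma (\cite[Lemma~24.2.1]{Lubook}) applied to $\{S_\mu\}_{\mu \in \RPar_\alpha}$ endowed with its induced bar-involution, together with the uniqueness characterization of the canonical basis $\{D_\mu\}$, and is carried out by induction on $\prec$ starting from the $\prec$-minimal elements of $\RPar_\alpha$. The principal obstacle is this last step: the naive $S$-basis expansion a priori has coefficients in $q\Z[q] + q^{-1}\Z[q^{-1}]$ after folding in cross-terms from the $c_{\mu,\la}(q)$'s, and proving that these collapse into $q\Z[q]$ requires the delicate Lusztig-type cancellation, organised as an induction that leverages the already-established triangularity of both the $D$-to-$S$ change of basis and the bar-invariance of each $D_\mu$.
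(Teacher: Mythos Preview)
Your argument for the triangularity of $d_{\la,\mu}(q)$ and for parts (1) and (2) is correct and is exactly what the paper's one-line proof (``Use (\ref{lllk}), Hypothesis~\ref{assump} and the fact that $\{D_\la\}$ is a bar-invariant $\Laurent$-basis for $V(\La)^*_\Laurent$'') is pointing at: invert the unitriangular $L$-to-$M$ matrix, push through $\pi$ via Proposition~\ref{katheorem}, and read off the consequences.  Note incidentally that the bar-invariance of each $D_\la$ is already recorded at the end of \S\ref{md}, so you do not need to re-derive it from $\pi$ intertwining the two bar-involutions.

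For (3), the obstacle you identified is real, but it is an artifact of a misprint in the statement rather than a gap in your argument.  As literally written, (3) is false: if $\RPar_\alpha=\{\mu,\la\}$ with $\mu\prec\la$ and $d_{\mu,\la}(q)=q$, then $S_\mu=D_\mu$, $S_\la=D_\la+qD_\mu$, hence $\overline{S_\la}=D_\la+q^{-1}D_\mu$ and $S_\la-\overline{S_\la}=(q-q^{-1})S_\mu$, whose coefficient is not in $q\Z[q]$.  The intended assertion is that $S_\la-\overline{S_\la}$ is a $\Z[q,q^{-1}]$-linear combination of $S_\mu$'s with $\mu\in\RPar_\alpha$, $\mu\prec\la$; this is precisely the form in which the result is invoked later (see item (c) in the proof of Theorem~\ref{altclass}).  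Your computation already proves this corrected version: $S_\la-\overline{S_\la}=\sum_{\mu\prec\la}(d_{\mu,\la}(q)-d_{\mu,\la}(q^{-1}))D_\mu$ has $\Z[q,q^{-1}]$-coefficients, and substituting the unitriangular inverse $D_\mu\mapsto S_\mu+\sum_{\nu\prec\mu}c_{\nu,\mu}(q)S_\nu$ with $c_{\nu,\mu}(q)\in q\Z[q]$ keeps all coefficients in $\Z[q,q^{-1}]$.  No ``Lusztig-type cancellation'' is needed, and you should drop that final paragraph.
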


\begin{proof}
Use (\ref{lllk}), Hypothesis~\ref{assump}
and the fact that $\{D_\la\:|\:\la \in \RPar\}$
is a bar-invariant basis for $V(\La)^*_\Laurent$
as a free $\Laurent$-module.
\end{proof}

\begin{Remark}\label{blox}
When $e=0$, 
the Fock space $F(\La)$
is categorified by a certain graded highest weight category
arising from parabolic category $\mathcal O$ 
attached to the finite general linear Lie algebra
$\mathfrak{gl}_n(\C)$. The monomial basis $\{M_\la\}$
corresponds to the standard objects in this category and
the dual-canonical basis
$\{L_\la\}$ corresponds to the irreducible objects.
Apart from the grading (which comes via \cite{BGS})
this is developed in detail in \cite{BKariki};
see especially \cite[Theorem 3.1]{BKariki}.
When $e > 0$ we expect
that 
the Fock space $F(\La)$
should be categorified in similar fashion
by the cyclotomic $\xi$-Schur algebras
of \cite{DJM} for $\xi$ a primitive
$e$th root of unity (though many 
questions about the grading remain open); see \cite{Y1} for a 
related conjecture.
We speculate that there may be a 
version of the theory of \cite{BKschur} establishing
a Morita equivalence between the
cyclotomic $\xi$-Schur algebras and certain blocks of quantum
parabolic category $\mathcal O$ at the root of unity $\xi$.
\end{Remark}

\subsection{\boldmath The quasi-canonical basis}\label{sqcb}
In this subsection we continue to assume that
Hypothesis~\ref{assump} holds.
Introduce a new basis $\{P_\la\:|\:\la \in \Par\}$
for $F(\La)$, which we call the {\em quasi-canonical basis},
by setting
\begin{equation}\label{dark}
P_\la := \sum_{\mu \in \Par} d_{\la,\mu}(q) M_\mu.
\end{equation}
So we have simply transposed the transition matrix
appearing in (\ref{ml}), i.e. we are mimicking 
BGG reciprocity at a combinatorial level.
Let $(p_{\la,\mu}(-q))_{\la, \mu \in \Par}$ be the inverse of 
the unitriangular matrix $(d_{\la,\mu}(q))_{\la,\mu \in \Par}$, so that
\begin{equation}\label{pm}
M_\la = \sum_{\mu \in \Par} p_{\la,\mu}(-q) P_\mu,
\qquad
L_\mu = \sum_{\la \in \Par} p_{\la,\mu}(-q) M_\la
\end{equation}
by (\ref{ml}) and (\ref{dark}).
Also define a sesquilinear form $\langle.,.\rangle$ on $F(\La)$,
which we call the {\em Shapovalov form},
by declaring that
\begin{equation}\label{starz}
\langle 
M_\la, \overline{M_\mu}\rangle := \delta_{\la,\mu}
\end{equation}
for all $\la, \mu \in \Par$.
Note that $\langle v, w \rangle = \langle \overline{w}, \overline{v}\rangle$
for all $v, w \in F(\La)$.
Moreover using (\ref{starz}) it is routine to check that
\begin{equation}\label{shapprop}
\langle xu, v \rangle = \langle u, \tau(x) v\rangle
\end{equation}
for all $x \in U_q(\g)$ and $u, v \in F(\La)$; see also
\cite[(2.41)]{BKariki}.

\begin{Lemma} \label{duals}
For $\la,\mu \in \Par$, we have that $\langle P_\la, L_\mu \rangle =
\delta_{\la,\mu}$.
\end{Lemma}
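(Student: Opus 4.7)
The plan is to verify this directly by expanding both $P_\la$ and $L_\mu$ in the monomial basis and using the defining identity $\langle M_\nu, \overline{M_\rho}\rangle = \delta_{\nu,\rho}$. The key observation is that $L_\mu$ is bar-invariant, which allows us to rewrite it as a linear combination of the bar-twisted monomials $\overline{M_\rho}$, which is exactly the shape required by the normalization of the Shapovalov form in (\ref{starz}). After this rewrite, the identity $\langle P_\la, L_\mu\rangle = \delta_{\la,\mu}$ reduces to the matrix identity expressing that $(d_{\la,\mu}(q))$ and $(p_{\la,\mu}(-q))$ are mutually inverse.

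More concretely, first I would write $P_\la = \sum_\nu d_{\la,\nu}(q) M_\nu$ and pull the coefficients out of the first slot using antilinearity, obtaining
\[
\langle P_\la, L_\mu\rangle = \sum_\nu d_{\la,\nu}(q^{-1}) \langle M_\nu, L_\mu\rangle.
\]
Next, since $L_\mu$ is bar-invariant by construction (see (\ref{lllk})), I would apply the bar involution to the expansion $L_\mu = \sum_\rho p_{\rho,\mu}(-q) M_\rho$ from (\ref{pm}) to get the alternative expression $L_\mu = \sum_\rho p_{\rho,\mu}(-q^{-1}) \overline{M_\rho}$. Feeding this into the second slot by linearity and applying the defining relation (\ref{starz}) collapses the double sum to
\[
\langle P_\la, L_\mu\rangle = \sum_\nu d_{\la,\nu}(q^{-1}) \, p_{\nu,\mu}(-q^{-1}).
\]

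Finally, to finish, I would invoke the defining inverse relation between the matrices $(d_{\la,\mu}(q))$ and $(p_{\la,\mu}(-q))$ noted immediately after (\ref{dark}), namely $\sum_\nu d_{\la,\nu}(q) \, p_{\nu,\mu}(-q) = \delta_{\la,\mu}$, and specialize $q \mapsto q^{-1}$ throughout. This yields the desired equality. No step here is hard; the only thing to be careful about is tracking the sesquilinearity (antilinear in the first argument), and ensuring that bar-invariance of $L_\mu$ is used to convert its $M$-expansion into an $\overline{M}$-expansion, which is the version of the expansion compatible with (\ref{starz}).
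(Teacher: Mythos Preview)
Your proof is correct and is essentially identical to the paper's own argument: both expand $P_\la$ via (\ref{dark}), use bar-invariance of $L_\mu$ to rewrite its monomial expansion (\ref{pm}) as $L_\mu = \sum_\rho p_{\rho,\mu}(-q^{-1})\overline{M_\rho}$, apply sesquilinearity together with (\ref{starz}), and conclude with the inverse-matrix identity specialized at $q^{-1}$.
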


\begin{proof}
Since $L_\mu$ is bar-invariant, we have from 
(\ref{dark}), (\ref{pm}) and (\ref{shapprop})
that
\begin{align*}
\langle P_\la, L_\mu \rangle &=
\big\langle \sum_{\sigma \in \Par} 
d_{\la,\sigma}(q) M_\sigma, \sum_{\tau \in \Par} p_{\tau,\mu}(-q^{-1}) \overline{M_\tau} \big\rangle\\
&=
\sum_{\sigma,\tau \in \Par}
d_{\la,\sigma}(q^{-1}) p_{\tau,\mu}(-q^{-1}) \langle M_\sigma, \overline{M_\tau}\rangle\\
&=
\sum_{\sigma \in \Par}
d_{\la,\sigma}(q^{-1}) p_{\sigma,\mu}(-q^{-1}) = \delta_{\la,\mu}.
\end{align*}
This proves the lemma.
\end{proof}

Next recall the definition of the Shapovalov form 
$\langle.,.\rangle$ on $V(\La)$ from $\S$\ref{md}.
We introduce a new basis 
$\{Y_\la\:|\:\la \in \RPar\}$ for $V(\La)$
 by declaring that
\begin{equation}\label{sdef}
\langle Y_\la, D_\mu \rangle = \delta_{\la,\mu}
\end{equation}
for all 
$\la,\mu \in \RPar$.
This is actually a basis for the standard lattice $V(\La)_\Laurent$
as a free $\Laurent$-module.
We call it the {\em quasi-canonical basis} of $V(\La)$.
The precise relationship between the quasi-canonical and the usual canonical
basis of $V(\La)$ is explained by the following lemma.

\begin{Lemma}\label{party}
The canonical basis for $V(\La)$
is $\bigcup_{\alpha \in Q_+} \{q^{-\defect(\alpha)}Y_\la\:|\:\la \in \RPar_\alpha\}$.
In particular, we have that
$\overline{Y_\la} = q^{-2\defect(\alpha)} Y_\la$
for each $\la \in \RPar_\alpha$.
\end{Lemma}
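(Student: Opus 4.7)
The plan is to compare the defining duality \eqref{sdef} (under the Shapovalov form) with the defining duality for the canonical basis (under the contravariant form), and then to use the bar-invariance of the canonical basis to extract the scalar relationship between $Y_\la$ and $\overline{Y_\la}$.

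First I would check that $Y_\la$ is a weight vector of weight $\La-\al$ for $\la\in\RPar_\al$. Since the Shapovalov form pairs non-trivially only vectors of the same weight (as one sees by applying \eqref{shapprop} to $x=K_i$ and using that $\tau(K_i)=K_i^{-1}$), the pairing $\langle Y_\la, D_\mu\rangle=\delta_{\la,\mu}$ with $D_\mu$ of weight $\La-\al$ forces $Y_\la$ itself to lie in the $(\La-\al)$-weight space.

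Next I would rewrite the defining identity \eqref{sdef} in terms of the contravariant form. Applying Lemma~\ref{contra2}(1) to $v=Y_\la$ gives
\[
\delta_{\la,\mu}=\langle Y_\la, D_\mu\rangle = q^{\defect(\al)}(\overline{Y_\la},D_\mu),
\]
so that $(q^{\defect(\al)}\overline{Y_\la},D_\mu)=\delta_{\la,\mu}$. Recalling from $\S$\ref{md} that the canonical basis is by definition the $(\cdot,\cdot)$-dual of the dual-canonical basis $\{D_\mu\mid\mu\in\RPar\}$, this identifies $q^{\defect(\al)}\overline{Y_\la}$ as the canonical basis vector $C_\la$ indexed by $\la$.

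Finally, since the canonical basis is bar-invariant, $\overline{C_\la}=C_\la$. Unpacking this, using that the bar-involution is anti-linear and $q\leftrightarrow q^{-1}$ under bar, gives
\[
q^{-\defect(\al)}Y_\la = \overline{q^{\defect(\al)}\overline{Y_\la}}=q^{\defect(\al)}\overline{Y_\la},
\]
which rearranges to $\overline{Y_\la}=q^{-2\defect(\al)}Y_\la$. Substituting back yields $C_\la = q^{\defect(\al)}\overline{Y_\la}=q^{-\defect(\al)}Y_\la$, establishing both claims. No step here is a real obstacle: the argument is purely formal once Lemma~\ref{contra2}(1) is in hand, and the only thing one needs to be careful about is tracking the weight and the anti-linearity of bar so that the factor of $q^{\defect(\al)}$ comes out correctly.
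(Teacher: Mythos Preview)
Your proof is correct and follows essentially the same approach as the paper: the paper's proof is a two-line sketch invoking the definition \eqref{sdef}, Lemma~\ref{contra2}, and the two facts that the canonical basis is $(\cdot,\cdot)$-dual to the dual-canonical basis and is bar-invariant, and you have simply written out these steps in full. Your preliminary verification that $Y_\la$ lies in the $(\La-\al)$-weight space is a helpful extra detail needed to invoke Lemma~\ref{contra2}(1), but otherwise the argument is identical.
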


\begin{proof}
Recall that the canonical basis for $V(\La)$ is the dual 
basis to the dual-canonical basis with respect to the contravariant
form $(.,.)$. Moreover
vectors from the canonical basis are bar-invariant.
Using these two things, the lemma follows from 
the definition (\ref{sdef}) and Lemma~\ref{contra2}.
\end{proof}

Since the vector $M_\varnothing \in F(\La)$ is a non-zero highest weight
vector of weight $\La$, there is a canonical embedding
\begin{equation}\label{pidual}
\pi^*:V(\La) \hookrightarrow F(\La),
\qquad
v_\La
\mapsto M_\varnothing.
\end{equation}
The Shapovalov form on $V(\La)$ is actually the restriction of the
Shapovalov form on $F(\La)$ via this embedding, that is, we have that
\begin{equation}\label{sameshap}
\langle v, v' \rangle = \langle \pi^*(v), \pi^*(v')\rangle
\end{equation}
for all $v, v' \in V(\La)$.
This holds because it is true when $v=v'=v_\La$, and both forms have the
property (\ref{shapprop}).
Note also for $\pi$ as in (\ref{pidef})
that
\begin{equation}\label{piinv}
\pi \circ \pi^* = \id_{V(\La)}.
\end{equation}
This holds because it is true on the highest weight vector $v_\La$.
The following lemma shows that
$\pi^*$ is adjoint to $\pi$ with respect to the
Shapovalov forms.

\begin{Lemma}\label{adjl}
We have that
$\langle v, \pi(w)\rangle
= \langle \pi^*(v),w\rangle$
for all $v \in V(\La)$ and $w \in F(\La)$.
\end{Lemma}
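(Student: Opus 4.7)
The plan is to use sesquilinearity to reduce to a one-parameter family of identities, then exploit weight considerations together with the one-dimensionality of the $\La$-weight space of $F(\La)$. Both sides of the desired identity are anti-linear in $v$ and linear in $w$. Since $V(\La)$ is generated as a $U_q(\g)$-module by $v_\La$, every $v \in V(\La)$ has the form $v = xv_\La$ for some $x \in U_q(\g)$, and by anti-linearity it suffices to prove the identity for such $v$. Using $U_q(\g)$-equivariance of $\pi$ and $\pi^*$ together with property (\ref{shapprop}) for both Shapovalov forms, I compute
\[
\langle xv_\La, \pi(w)\rangle = \langle v_\La, \tau(x)\pi(w)\rangle = \langle v_\La, \pi(\tau(x)w)\rangle
\]
and similarly $\langle \pi^*(xv_\La), w\rangle = \langle M_\varnothing, \tau(x)w\rangle$. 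Thus, replacing $\tau(x)w$ by an arbitrary element of $F(\La)$, the problem reduces to showing $\langle v_\La, \pi(w)\rangle = \langle M_\varnothing, w\rangle$ for all $w \in F(\La)$.

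Next I argue that the Shapovalov form on $F(\La)$ vanishes on pairs of vectors of distinct weights. Applying (\ref{shapprop}) with $x = K_i$ and using anti-linearity together with $\tau(K_i) = K_i^{-1}$, one sees that for $u, v$ of weights $\mu, \nu$ respectively, $q^{-(\alpha_i,\mu)}\langle u,v\rangle = q^{-(\alpha_i,\nu)}\langle u,v\rangle$, forcing $\mu = \nu$ whenever $\langle u,v\rangle \neq 0$. Hence $\langle M_\varnothing, w\rangle$ depends only on the $\La$-weight component of $w$, and likewise $\langle v_\La, \pi(w)\rangle$ depends only on the $\La$-weight component of $\pi(w)$, which in turn equals $\pi$ applied to the $\La$-weight component of $w$.

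To finish, I use that the $\La$-weight space of $F(\La)$ is one-dimensional, spanned by $M_\varnothing$. Writing the $\La$-weight component of $w$ as $cM_\varnothing$ (for some $c\in\Q(q)$), I have $\pi(cM_\varnothing) = cv_\La$, so $\langle v_\La, \pi(w)\rangle = c\langle v_\La, v_\La\rangle = c$. On the other side, $\langle M_\varnothing, w\rangle = c\langle M_\varnothing, M_\varnothing\rangle$, and I need $\langle M_\varnothing, M_\varnothing\rangle = 1$. This follows from (\ref{starz}) once I know $\overline{M_\varnothing} = M_\varnothing$, which in turn follows from Hypothesis~\ref{assump}(1): the bar-involution preserves weights, so $\overline{M_\varnothing}$ lies in $\Q(q)M_\varnothing$, and the hypothesis pins down the coefficient of $M_\varnothing$ as $1$ (no $\mu \prec \varnothing$ can contribute, since by Hypothesis~\ref{assump}(2) such $\mu$ must satisfy $\mu \lex \varnothing$, which is impossible).

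There is no real obstacle here: the result is a formal consequence of the $U_q(\g)$-equivariance of $\pi$ and $\pi^*$, the equivariance property (\ref{shapprop}) shared by both Shapovalov forms, the one-dimensionality of the top weight space of $F(\La)$, and the normalizations $\langle v_\La, v_\La\rangle = \langle M_\varnothing, M_\varnothing\rangle = 1$. The only place where one must be careful is ensuring that the two Shapovalov forms are genuinely compatible under $\pi^*$ at the normalization point, which is exactly what (\ref{sameshap}) records for the pair $(v_\La, v_\La)$.
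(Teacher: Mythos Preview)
Your reduction to $v=v_\La$ via $\tau$-adjointness is correct and elegant, but the weight-orthogonality step has a gap in the affine case $e>0$. The $K_i$-computation only yields $(\alpha_i,\mu)=(\alpha_i,\nu)$ for all $i$ whenever $\langle u,v\rangle\neq 0$; in type $A_{e-1}^{(1)}$ this does \emph{not} force $\mu=\nu$, because the null root $\delta=\sum_{i\in I}\alpha_i$ satisfies $(\alpha_i,\delta)=0$ for every $i$. Consequently the $K_i$-eigenspace of $F(\La)$ containing $M_\varnothing$ also contains every $M_\la$ with $\cont(\la)\in\Z_{>0}\,\delta$ (for instance $\la=(2)$ when $e=2$, $l=1$, $k_1=0$), so it is not one-dimensional and your final step does not apply as written.

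The fix is easy. Rather than arguing via $K_i$, observe from the explicit constructions in \S\ref{tzero} and \S\ref{construct} that the bar-involution on $F(\La)$ preserves the $Q_+$-grading by content; hence $\{\overline{M_\mu}:\cont(\mu)=\alpha\}$ is a basis of $F(\La)_{\La-\alpha}$, and (\ref{starz}) gives $\langle M_\varnothing,F(\La)_{\La-\alpha}\rangle=0$ for $\alpha\neq 0$ directly. On the $V(\La)$ side, use instead that $E_iv_\La=0$ together with Lemma~\ref{contra2}(1) and biadjointness of $E_i,F_i$ under $(\cdot,\cdot)$ to obtain $\langle v_\La,V(\La)_{\La-\alpha}\rangle=0$ for $\alpha\neq 0$. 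With these replacements your argument is complete and is more elementary than the paper's proof, which instead shows $\pi^*(V(\La))\subseteq(\ker\pi)^\perp$ via Proposition~\ref{katheorem} and Lemma~\ref{duals}, then decomposes $w=\pi^*(v')+z$ with $z\in\ker\pi$ using (\ref{piinv}) and (\ref{sameshap}).
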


\begin{proof}
Consider the orthogonal complement
$(\ker\pi)^\perp$ to $\ker\pi$
with respect to the form $\langle.,.\rangle$.
By Proposition~\ref{katheorem},
$\ker \pi$ has basis 
$\{L_\mu\:|\:\mu \in \Par \setminus \RPar\}$.
Hence by Lemma~\ref{duals},
the vector $\pi^*(v_\La) = M_\varnothing = P_\varnothing$
belongs to $(\ker \pi)^\perp$.
Moreover 
$(\ker\pi)^\perp$ is a $U_q(\g)$-submodule
of $F(\La)$ thanks to (\ref{shapprop}).
As $v_\La$ generates $V(\La)$, 
we deduce that the image of $\pi^*$
lies in $(\ker\pi)^\perp$.
Now take any $v \in V(\La)$ and $w \in F(\La)$.
By (\ref{piinv}), we can write $w = \pi^*(v') + z$ for some 
$v' \in V(\La)$ and $z \in \ker \pi$.
Using (\ref{sameshap}) and the observation just made,
we get that
$$
\langle v, \pi(w) \rangle = \langle v, v' \rangle
= \langle \pi^*(v), \pi^*(v')\rangle
=
\langle \pi^*(v), \pi^*(v') +z\rangle = \langle \pi^*(v), w \rangle,
$$
as required.
\end{proof}

In the case $e=0$, the following theorem was established 
in \cite[$\S$2.6]{BKariki}.

\begin{Theorem}\label{bgg}
We have that $\pi^*(Y_\la) = P_\la$
and $\pi(P_\la) = Y_\la$
for any $\la \in \RPar$.
Hence
\begin{equation}
Y_\la = \sum_{\mu \in \Par} d_{\la,\mu}(q) S_\mu
\end{equation}
for all $\la \in \RPar$.
\end{Theorem}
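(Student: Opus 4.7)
The plan is to establish the first identity $\pi^*(Y_\la) = P_\la$ using the characterization of $P_\la$ from Lemma~\ref{duals}, after which the other two statements follow immediately.

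Fix $\la \in \RPar$. By Lemma~\ref{duals}, the vector $P_\la \in F(\La)$ is uniquely characterized by the property that $\langle P_\la, L_\mu \rangle = \delta_{\la,\mu}$ for all $\mu \in \Par$. So to show $\pi^*(Y_\la) = P_\la$ it suffices to check that $\langle \pi^*(Y_\la), L_\mu\rangle = \delta_{\la,\mu}$ for every $\mu \in \Par$. For this I apply the adjointness from Lemma~\ref{adjl}, which gives
\begin{equation*}
\langle \pi^*(Y_\la), L_\mu \rangle \;=\; \langle Y_\la, \pi(L_\mu)\rangle.
\end{equation*}
By Proposition~\ref{katheorem}, $\pi(L_\mu) = D_\mu$ if $\mu \in \RPar$ and $\pi(L_\mu) = 0$ otherwise. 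In the first case, the defining property (\ref{sdef}) of $Y_\la$ gives $\langle Y_\la, D_\mu\rangle = \delta_{\la,\mu}$; in the second case the pairing is obviously zero (and also $\mu \neq \la$ since $\la \in \RPar$). Either way we obtain $\delta_{\la,\mu}$, verifying the desired identity $\pi^*(Y_\la) = P_\la$.

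Now the identity $\pi(P_\la) = Y_\la$ is immediate: applying $\pi$ to $\pi^*(Y_\la) = P_\la$ and using (\ref{piinv}) yields $Y_\la = \pi(\pi^*(Y_\la)) = \pi(P_\la)$. Finally, substitute the definition (\ref{dark}) of $P_\la$ and apply $\pi$ term by term, using $\pi(M_\mu) = S_\mu$ from (\ref{stmn}), to obtain
\begin{equation*}
Y_\la \;=\; \pi(P_\la) \;=\; \pi\!\left(\sum_{\mu \in \Par} d_{\la,\mu}(q) M_\mu\right) \;=\; \sum_{\mu \in \Par} d_{\la,\mu}(q) S_\mu,
\end{equation*}
which is the final claimed formula.

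There is no real obstacle here: the result is essentially a formal consequence of the dualities and adjointness that have already been established. The only point requiring care is verifying that $\pi^*(Y_\la)$ genuinely lies in the domain of applicability of Lemma~\ref{duals}'s characterization (namely, that the pairings against \emph{all} $L_\mu$ with $\mu \in \Par$, not just $\mu \in \RPar$, are controlled), and this is exactly what Proposition~\ref{katheorem} supplies by handling the non-restricted $\mu$ via $\pi(L_\mu) = 0$.
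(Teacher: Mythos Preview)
Your proof is correct and follows essentially the same approach as the paper: compute $\langle \pi^*(Y_\la), L_\mu\rangle$ via Lemma~\ref{adjl}, evaluate using Proposition~\ref{katheorem} and (\ref{sdef}), conclude $\pi^*(Y_\la)=P_\la$ via Lemma~\ref{duals}, then deduce the remaining statements from (\ref{piinv}) and (\ref{dark}). The paper's argument is simply a more compressed version of what you wrote.
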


\begin{proof}
Applying Lemma~\ref{adjl}, Proposition~\ref{katheorem} 
and the definition (\ref{sdef}), 
we have for $\la \in \RPar$ and any $\mu \in \Par$ that
$$
\langle \pi^*(Y_\la), L_\mu \rangle
=
\langle Y_\la, \pi(L_\mu) \rangle
= 
\langle Y_\la, D_\mu \rangle = \delta_{\la,\mu}.
$$
This establishes that $\pi^*(Y_\la) = P_\la$ thanks to Lemma~\ref{duals}.
Combined with (\ref{piinv}) we deduce that $Y_\la = \pi(P_\la)$.
The final statement follows by applying $\pi$ 
to the definition (\ref{dark}).
\end{proof}

\begin{Remark}
More generally, we can define vectors $Y_\la \in V(\La)$
for any $\la \in \Par$ by setting
$Y_\la := \pi(P_\la) = \sum_{\mu \in \Par} d_{\la,\mu}(q) S_\mu$.
These are expected to correspond to {\em Young modules}
at the categorical level; see \cite[Theorem 4.6]{BKariki} where this is justified in
the case $e=0$.
\end{Remark}

\begin{Remark}\label{rcan}
Most of the rest of the literature in this subject
works with the {\em canonical basis} $\{T_\la\:|\:\la \in \Par\}$
for $F(\La)$ rather than the quasi-canonical basis introduced here,
where $T_\la \in F(\La)$ is the unique bar-invariant vector with
\begin{equation}
\overline{T_\la} = M_\la + (\text{a $q^{-1}\Z[q^{-1}]$-linear combination
of $M_\mu$'s with $\mu \prec \la$}).
\end{equation}
In the categorification mentioned in Remark~\ref{blox}, the
canonical basis $\{T_\la\}$ 
should correspond to the indecomposable tilting modules,
whereas the quasi-canonical basis $\{P_\la\}$
corresponds to the indecomposable projectives; see \cite[Theorem 3.1]{BKariki}
in the $e=0$ case and 
\cite[Theorem 11]{VV1} (combined with \cite[Proposition 4.1.5(ii)]{D})
for $e > 0, l = 1$.
Actually there is a close connection
between the quasi-canonical and canonical bases, as follows.
Let 
\begin{equation}
\La^t := \La_{-k_l}+\cdots+\La_{-k_1}
\end{equation} 
and define
the Fock space $F(\La^t)$ as above but
replacing $(k_1,\dots,k_l)$ everywhere
with $(-k_l,\dots,-k_1)$.
For an $l$-multipartition $\la$, set
\begin{equation}\label{tra}
\la^t := ((\la^{(l)})^t,\dots,(\la^{(1)})^t),
\end{equation}
where
$(\la^{(m)})^t$ denotes the usual transpose of a partition.
Then the anti-linear vector space isomorphism
\begin{equation}
t:F(\La) \stackrel{\sim}{\rightarrow} F(\La^{t}),\qquad
M_\la \mapsto M_{\la^t}
\end{equation}
has the property that $(P_\la)^t = T_{\la^t}$
for each $\la \in \Par$.
We omit the proof
since we do not need this result here.
The isomorphism $t$ corresponds to {\em Ringel duality} at 
the categorical level; see \cite{Mtilt}.
\end{Remark}

\begin{Remark}\label{monday}
The canonical basis $\{T_\la\:|\:\la \in \Par\}$ 
from Remark~\ref{rcan} is a
lower global crystal basis in the sense of Kashiwara \cite{KaG}.
The underlying lower crystal operators $\tilde e_i'$ and $\tilde f_i'$
induce another structure 
\begin{equation}\label{cp2}
(\Par, \tilde e_i', \tilde f_i', \eps_i', \phi_i', \wt)
\end{equation}
of abstract crystal
on the index set $\Par$ that is different from the one in (\ref{cp}).
It can be described explicitly in exactly the same way as in $\S$\ref{scc},
except that at the beginning we list the addable and removable
$i$-nodes of $\la$ as $A_1,\dots,A_n$ so that $A_m$ is row-below
$A_{m+1}$ for each $m=1,\dots,n-1$ (the reverse order to the one used before).
This follows because Kashiwara's combinatorial tensor product rule
for lower crystal bases at $q=\infty$ from \cite[Theorem 1]{Ka2}
is the opposite of the one for upper crystal bases at $q=0$
from \cite[Proposition 6]{Ka1}.
Equivalently, by direct comparison of the combinatorics,
it is the case that
\begin{equation}\label{tcrys}
\tilde f_i'\la = (\tilde f_{-i}(\la^t))^t
\end{equation}
for each $i \in I$ and $\la \in \Par$,
where $\la^t\in\Part$ is as in (\ref{tra})
and $\tilde f_{-i}$ 
is the upper 
crystal operator on $\Part$
defined exactly as in $\S$\ref{scc} but computing residues via the 
multicharge
$(-\kk_l,\dots,-\kk_1)$ instead of $(\kk_1,\dots,\kk_l)$.
\end{Remark}

\begin{Remark}\label{primecrys}
Let $(\RPar)' := (\RPart)^t$, where $\RPart$ is the set from (\ref{cpt})
but defined from $(-\kk_l,\dots,-\kk_1)$ instead
of $(\kk_1,\dots,\kk_l)$.
We refer to elements of $(\RPar)'$ as {\em regular multipartitions}.
In view of (\ref{tcrys}), this is the vertex set of the connected component
of the crystal (\ref{cp2}) 
generated by the empty multipartition 
$\varnothing$, which gives another realization
\begin{equation}
((\RPar)', \tilde e_i', \tilde f_i', \eps_i', \phi_i',\wt)
\end{equation}
of the abstract crystal attached to the module $V(\La)$ 
different to the one in (\ref{cpt}). 
There is a canonical bijection
\begin{equation}\label{cisod}
\RPar \rightarrow (\RPar)',\qquad \la \mapsto \la'
\end{equation}
such that $\varnothing' = \varnothing$ and
$\tilde f'_i(\la') = (\tilde f_i\la)'$ for all $\la \in \RPar$
and $i \in I$.
For example, in the special case that $e > 0$ and $l = 1$, 
the set $(\RPar)'$ is the usual set of
{\em $e$-regular partitions}, that is, partitions $\la$
that do not have $e$ or more non-zero repeated parts.
The map $\la \mapsto \la'$ in this case is the composition first
of the map $\la \mapsto \la^t$ followed by the Mullineux involution on
$e$-regular partitions; see \cite{KBrIII, Kmull, FK}.
\end{Remark}

\begin{Remark}
In view of Lemma~\ref{party} and Theorem~\ref{bgg},
the vectors 
$q^{-\defect(\alpha)} P_\la$ for $\la \in \RPar_\alpha$
must coincide with some of the
canonical basis elements of $F(\La)$ from Remark~\ref{rcan}.
We can make this precise
using Remark~\ref{primecrys}: we have that
\begin{equation}
P_\la = q^{\defect(\alpha)} T_{\la'}
\end{equation}
for each $\la \in \RPar_\alpha$,
where $\la'$ is as in
(\ref{cisod}).
Arguing exactly as in the proof of \cite[Corollary 2.8]{BKariki},
it follows easily for each $\la \in \RPar_\alpha$ and $\mu \in \Par_\alpha$
that
\begin{itemize}
\item[(1)] $d_{\la,\mu}(q) = 0$ unless $\la \preceq \mu \preceq \la'$;
\item[(2)] $d_{\la,\mu}(q) \in q \Z[q] \cap q^{\defect(\alpha)-1}\Z[q^{-1}]$
if $\la \prec \mu \prec \la'$;
\item[(3)] $d_{\la,\la}(q) = 1$ and $d_{\la,\la'}(q) = q^{\defect(\alpha)}$.
\end{itemize}
\end{Remark}

\subsection{Twisted Fock space}\label{sug}
We now turn our attention to the problem of constructing
a bar-involution on $F(\La)$ 
as in Hypothesis~\ref{assump} when $e > 0$.
In preparation for this, we need to recall
the twisted version
of Fock space, whose construction in higher levels is
due to Takemura and Uglov \cite{TU}.
Our exposition
follows 
\cite[$\S$2.5]{BKariki} in the case $e=0$
and \cite{Uglov} in the case $e > 0$ (noting our $q$ is equal to
$q^{-1}$ there).

We first
introduce a different ordering on the nodes of a multipartition.
Say that a node $A = (a,b,m)$ is {\em residue-above} 
node $B = (c,d,n)$ (or $B$ is {\em residue-below} $A$)
if either
$\res\, A > \res\, B$ or $\res\, A = \res\, B$ and $m > n$.
The following lemma relates this ordering on nodes
to the one used earlier.

\begin{Lemma}\label{sdsd}
Let $\la \in \Par_\alpha$
and $i \in I$.
\begin{itemize}
\item[(1)]
Assume $\kk_{m} - \kk_{m+1} \geq \height(\alpha)$
for all $m=1,\dots,l-1$.
Let $A$ be a removable $i$-node for $\la$ and $B$ be either an 
addable or a removable $i$-node for $\la$.
Then $B$ is row-below $A$ if and only if $B$ is residue-below $A$.
\item[(2)]
Assume $\kk_{m} - \kk_{m+1} \geq \height(\alpha)+1$
for all $m=1,\dots,l-1$.
Let $A$ be either an addable or a removable $i$-node for $\la$
and $B$ be an addable $i$-node for $\la$.
Then $A$ is row-above $B$ if and only if $A$ is residue-above $B$.
\end{itemize}
\end{Lemma}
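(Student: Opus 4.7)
The plan is to analyse the sign of $\res A - \res B = (\kk_m - \kk_n) + (b-a) - (b'-c)$ for $A = (a, b, m)$ and $B = (c, b', n)$, and to verify that it matches the row-order (which is determined by $(m, a)$ lexicographically). The key input is the ``content bound'': for any node $(a, b, m)$ that is addable or removable for $\la$, one has $|b - a| \leq |\la^{(m)}|$, with the strengthening $|b - a| \leq |\la^{(m)}| - 1$ whenever the node is actually removable (which forces $|\la^{(m)}| \geq 1$). This follows from the elementary partition inequality $\ell(\la^{(m)}) + \la^{(m)}_1 \leq |\la^{(m)}| + 1$ together with the explicit description of addable and removable corners.

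For the cross-component case $m \neq n$, I would combine the content bounds with $|\la^{(m)}| + |\la^{(n)}| \leq \height(\alpha)$ to get $|(b-a) - (b'-c)| \leq \height(\alpha) - 1$ in~(1) and $\leq \height(\alpha)$ in~(2). The hypothesis ensures that $|\kk_m - \kk_n|$ strictly dominates the content spread, forcing the sign of $\res A - \res B$ to equal the sign of $m - n$; thus across components the residue-order agrees with the row-order, and in particular $\res A \neq \res B$. For the same-component case $m = n$, my first step is to rule out $a = c$ when $A \neq B$: otherwise one of the two nodes would be removable at $(a, \la^{(m)}_a, m)$ and the other addable at $(a, \la^{(m)}_a + 1, m)$, their residues would then differ by exactly $1$, contradicting the fact that they are both $i$-nodes (since $e = 0$ or $e \geq 2$). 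With $a \neq c$ in hand, a short direct calculation using $b \in \{\la^{(m)}_a, \la^{(m)}_a + 1\}$ and $b' \in \{\la^{(m)}_c, \la^{(m)}_c + 1\}$, the monotonicity $\la^{(m)}_a \geq \la^{(m)}_c$ for $a \leq c$, and the strict inequality $\la^{(m)}_a > \la^{(m)}_{a+1}$ (coming from $A$ removable in~(1), or from the corresponding strictness forced by $B$ addable in~(2)), shows that $\res A - \res B$ has the sign of $c - a$.

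The main obstacle is the asymmetry between parts~(1) and~(2): the extra ``$+1$'' in the hypothesis of~(2) is exactly what compensates for allowing $A$ to be addable, which costs the sharper bound $|b - a| \leq |\la^{(m)}| - 1$ that is available only for removable nodes. Once this accounting is in place the estimate is essentially tight and no mod-$e$ analysis is required. A minor bookkeeping point is to treat the addable-at-bottom configuration $(a, 1, m)$ with $a = \ell(\la^{(m)}) + 1$ as a boundary case inside the same-component analysis, but it falls into line with the general bound.
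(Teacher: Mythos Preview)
Your argument is correct and complete; the paper itself states Lemma~\ref{sdsd} without proof, treating it as an elementary combinatorial verification, so your write-up supplies exactly the missing details. The content bound $|b-a|\le|\la^{(m)}|$ (with the sharpening by $1$ for removable nodes), the cross-component domination estimate, and the same-component analysis via strictness of $\la_a>\la_{a+1}$ (resp.\ $\la_{c-1}>\la_c$) are all the right ingredients.

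One small slip to fix: in the cross-component case you write that the sign of $\res A-\res B$ equals the sign of $m-n$, but under the hypothesis $\kk_m-\kk_{m+1}\ge 0$ we have $\kk_m-\kk_n>0$ precisely when $m<n$, so the sign of $\res A-\res B$ equals that of $n-m$ (equivalently of $\kk_m-\kk_n$). Your conclusion that residue-order matches row-order across components is unaffected, since $m<n$ means $A$ is row-above $B$ and simultaneously $\res A>\res B$.
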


Given $\la \in \Par$, $i \in I$, a removable $i$-node $A$
and an addable $i$-node $B$, 
we set
\begin{equation}\label{EDMUA2}
\begin{split}
\ddd_A(\la):= \#\{\text{addable $i$-nodes of $\la$ residue-below $A$}\}\hspace{17.5mm}\\
-\#\{\text{removable $i$-nodes of $\la$ residue-below $A$}\};
\end{split}
\end{equation} 
\begin{equation}\label{EDMUB2}
\begin{split}
\ddd^B(\la):=\#\{\text{addable $i$-nodes of $\la$ residue-above $B$}\}\hspace{17.5mm}\\
-\#\{\text{removable $i$-nodes of $\la$ residue-above $B$}\}.
\end{split}
\end{equation}
By Lemma~\ref{sdsd}, 
we have that $\ddd_A(\la) = d_A(\la)$
under the hypotheses of part (1) of the lemma,
and $\ddd^B(\la) = d^B(\la)$
under the hypotheses of part (2).

Now we can define the twisted Fock space $\widetilde{F}(\La)$
to be the $\Q(q)$-vector
space on basis $\{\widetilde{M}_\la\:|\:\la \in \Par\}$.
We make $\widetilde{F}(\La)$ into a $U_q(\g)$-module
by defining
\begin{align}\label{uact1}
E_i \widetilde{M}_\la:=\sum_A q^{\ddd_A(\la)}\widetilde{M}_{\la_A},&\qquad
F_i \widetilde{M}_\la:=\sum_B q^{-\ddd^B(\la)}\widetilde{M}_{\la^B},\\
 K_i \widetilde{M}_\la&:=q^{d_i(\la)}\widetilde{M}_\la,\label{uact3}
\end{align}
where the first sum is over all removable $i$-nodes $A$ for $\la$, and the 
second sum is over all addable $i$-nodes $B$ for $\la$. 
These are almost the same as the formulae (\ref{act1})--(\ref{act3}), 
but we have replaced
$d_A(\la)$ and $d^B(\la)$ from before with
$\ddd_A(\la)$ and $\ddd^B(\la)$ defined using the new ordering on nodes.

If $l=1$ we have simply that $\ddd_A(\la) = d_A(\la)$
and $\ddd^B(\la) = d^B(\la)$ for all addable nodes $A$ and removable
nodes $B$, so in this case
 we can simply
{\em identify} $\widetilde{F}(\La)$ with $F(\La)$
by identifying $\widetilde{M}(\la)$ with $M(\la)$
for each $\la \in \Par$.
In particular this shows that the formulae (\ref{uact1})--(\ref{uact3})
give a well-defined action of $U_q(\g)$ on $\widetilde{F}(\La)$
in the level one case,
since we already knew that for $F(\La)$.
For a proof that this action is well defined
for arbitrary level and $e > 0$, we refer to \cite[Theorem 2.1]{Uglov}.
When $e=0$ there is a different approach
noted in \cite[$\S$2.5]{BKariki}: in that case we can simply identify
\begin{equation}\label{id0}
\widetilde{F}(\La) = F(\La_{k_l}) \otimes\cdots\otimes F(\La_{k_1})
\end{equation}
by identifying $\widetilde{M}_\la$ with $M_{\la^{(l)}}\otimes\cdots\otimes
M_{\la^{(1)}}$ for each $\la \in \Par$.
The formulae (\ref{uact1})--(\ref{uact3}) 
describe the natural action of $U_q(\g)$
on this tensor product, so they give
a well-defined $U_q(\g)$-action in the $e=0$ case too.

Recalling the partial order $\geq$ from $\S$\ref{tpo},
the twisted Fock space $\widetilde{F}(\La)$ 
possesses a canonical compatible bar-involution
with the property that
\begin{equation}\label{barinv3}
\overline{\widetilde{M}_\la} = \widetilde{M}_\la + \text{(a $\Z[q,q^{-1}]$-linear combination
of $\widetilde{M}_\mu$'s for $\mu > \la$)}
\end{equation}
for any $\la \in \Par$.
The existence of this bar-involution is established
in \cite{LT1,LT2} in the case that $l = 1$ and $e > 0$ by reinterpreting $\widetilde{F}(\La)$
in that case as a semi-infinite wedge
as in \cite{Stern}.
The construction of the bar-involution in the level one case 
from \cite{LT1,LT2}
was extended to higher levels in the $e > 0$ case
by Uglov; see \cite[Proposition 4.11]{Uglov}.
In the case $e=0$, it is clear from (\ref{id0}) and (\ref{tp})
that the space $\widetilde{F}(\La)$ is just the
same as the space $F(\La)$ but reversing
the order of the underlying
sequence $k_1,\dots,k_l$.
So we get the compatible bar-involution in this case from the same
construction as explained at the beginning of $\S$\ref{tzero};
see also \cite[$\S$2.5]{BKariki}.

At this point one can repeat almost word-for-word the development 
from $\S\S$\ref{expre}--\ref{sqcb}, replacing the Fock space $F(\La)$
with the twisted Fock space $\widetilde{F}(\La)$ 
and using the known
bar-involution from (\ref{barinv3}) in place of the hypothesized
bar-involution from Hypothesis~\ref{assump}. 
All we actually need from this here
is the definition of the {\em dual-canonical basis}
$\{\widetilde{L}_\la\:|\:\la \in \Par\}$ of 
$\widetilde{F}(\La)$, which is defined by
letting $\widetilde{L}_\la$ denote the unique
bar-invariant vector such that
\begin{equation}\label{lllk2}
\widetilde{L}_\la = \widetilde{M}_\la + \text{(a $q\Z[q]$-linear combination of
$\widetilde{L}_\mu$'s with $\mu >\la$}).
\end{equation}
From this, we obtain polynomials $\ddd_{\la,\mu}(q) \in \Z[q]$ such that
\begin{align}\label{pmtilde}
\widetilde{M}_\mu = \sum_{\la \in \Par} \ddd_{\la,\mu}(q) \widetilde{L}_\la.
\end{align}
These have the property that $\ddd_{\la,\mu}(q) = 0$
unless $\la \geq \mu$, $\ddd_{\la,\mu}(q) = 1$ if $\la = \mu$,
and $\ddd_{\la,\mu}(q) \in q \Z[q]$ if $\la > \mu$.

\begin{Remark}\label{remarkabove}
The dual-canonical basis $\{\widetilde{L}_\la\:|\:\la \in \Par\}$
is an upper 
global crystal basis in the sense of \cite{KaG}.
It leads to yet another abstract
crystal structure on the index set $\Par$,
which can be described combinatorially by the same method as
in $\S$\ref{scc}, except that
one needs to start
by listing the addable and removable $i$-nodes of $\la \in \Par$
as $A_1,\dots,A_n$ so that $A_m$ is residue-above $A_{m+1}$
for each $m=1,\dots,n-1$.
Let $\RRPar$ denote the vertex set of the 
connected component of this crystal generated 
by $\varnothing$.
This provides another realization of the abstract crystal
associated to $V(\La)$.
In the case $e=0$, the set 
$\RRPar$ happens to be the same as the set $(\RPar)'$ introduced in 
Remark~\ref{primecrys}.
\end{Remark}

\begin{Remark}\label{remarkbelow}
There is a special case in which the set $\RRPar$ from Remark~\ref{remarkabove} 
has an elementary description.
Suppose that
$\kk_1 \geq \cdots \geq \kk_l$ and either $e=0$ or 
$\kk_l > \kk_1 - e$.
Then $\RRPar$ consists of all $l$-multipartitions
$\la$ such that 
\begin{itemize}
\item[(1)] $\la_a^{(m)}+\kk_m - \kk_{m+1}\geq \la_a^{(m+1)}$
for each $a \geq 1$ and $m=1,\dots,l-1$;
\item[(2)] if $e > 0$ then $\la_a^{(l)}+e+\kk_l-\kk_1 \geq \la_a^{(1)}$
for each $a \geq 1$;
\item[(3)] it is impossible find nodes
$\{A_i \:|\:i \in I\}$ from the bottoms of 
columns of the same length in $\la$
such that $\res\, A_i \equiv i \pmod{e}$ for each $i \in I$.
\end{itemize}
This follows from \cite[(2.53)]{BKariki} in the case $e = 0$,
and it is a reformulation of
 a result from \cite{FLOTW} in the case $e > 0$.
\end{Remark}

\begin{Remark}\label{cata}
In the $e=0$ case, the twisted 
Fock space $\widetilde{F}(\La)$
can be categorified by means of 
the opposite parabolic category $\mathcal O$
to the one mentioned in Remark~\ref{blox}.
By Arkhipov-Soergel reciprocity,
this categorification of $\widetilde{F}(\La)$
is the Ringel dual of the categorification of 
$F(\La)$; see \cite[$\S$4.3]{BKariki}.
When $e > 0$ there should also be a highest weight category
categorifying the twisted Fock space $\widetilde{F}(\La)$
arising from rational Cherednik algebras, 
although the picture here 
is not yet complete;
see \cite[$\S$6.8]{RoC} and also \cite[$\S$8]{VV} which develops another
approach in terms of affine parabolic category $\mathcal O$.
Under a stability hypothesis similar to the one in Proposition~\ref{ythm}
below, this category is known to be equivalent to the 
one mentioned in 
Remark~\ref{blox} arising from cyclotomic $\xi$-Schur algebras;
see \cite[Theorem 6.8]{RoC}.
\end{Remark}

\subsection{\boldmath Construction of the bar-involution}\label{construct}
In this subsection we assume that $e > 0$ and explain how to
construct a bar-involution on $F(\La)$ as in Hypothesis~\ref{assump}.
To do this we exploit the following stability result of Yvonne.

\begin{Proposition}[{\cite[Theorem 5.2]{Y2}}]\label{ythm}
Let $k_1,\dots,k_l$ be fixed as in (\ref{note}).
For each $\alpha \in Q_+$, there exists an integer $N_\alpha \gg 0$ 
such that
the transition matrix 
$(\ddd_{\la,\mu}(q))_{\la,\mu \in \Par_\alpha}$ 
is the same matrix for every multicharge
$(\kk_1,\dots,\kk_l) \in \Z^l$ with 
$\kk_1 \equiv k_1,\dots,\kk_l\equiv k_l\pmod{e}$ and 
$\kk_m-\kk_{m+1} \geq N_\alpha$ for $m=1,\dots,l-1$.
\end{Proposition}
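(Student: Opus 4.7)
The plan is to reduce the claim to the stability of the bar-involution on the $(\La-\alpha)$-weight space of $\widetilde{F}(\La)$ and then to establish that stability using Uglov's semi-infinite wedge realization from \cite{Uglov}. Since $\widetilde{L}_\la$ is obtained from the expansion
$$
\overline{\widetilde{M}_\la}
= \sum_{\mu \in \Par_\alpha} c_{\la,\mu}(q) \widetilde{M}_\mu
\qquad(\la \in \Par_\alpha)
$$
by Lusztig's lemma \cite[Lemma 24.2.1]{Lubook}, which is a purely combinatorial algorithm, it suffices to show that the matrix $(c_{\la,\mu}(q))_{\la,\mu \in \Par_\alpha}$ is independent of the multicharge lift once the gaps $\kk_m - \kk_{m+1}$ exceed some threshold $N_\alpha$.

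To analyse this matrix, I would unpack Uglov's description of $\widetilde{M}_\la$ as a semi-infinite wedge $u_{j_1} \wedge u_{j_2} \wedge \cdots$ in which only $\height(\alpha)$ of the positions $j_n$ differ from their vacuum values and each deviation $j_n - j_n^\varnothing$ is bounded in terms of $\height(\alpha)$. Uglov computes $\overline{\widetilde{M}_\la}$ by reversing a finite initial segment of this wedge and then normally ordering it via local straightening rules; those rules exchange pairs of adjacent entries and have coefficients that depend only on the mod-$e$ residues of the indices involved, not on their absolute values.

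The key combinatorial observation is that after Uglov's reindexing, the ``active'' positions of $\la$ belonging to distinct components $m$ are spread apart by gaps of size $\kk_m - \kk_{m+1}$, whereas a single straightening step moves a disturbance by only one position. A bookkeeping induction on $\height(\alpha)$ should yield a constant $N_\alpha$, depending only on $e$ and $\height(\alpha)$, that bounds the total distance any active entry can migrate during the entire straightening process. If the initial gaps exceed $N_\alpha$, the active region from each component can be straightened in isolation; the result globally factorises as a tensor of $l$ level-one bar-involutions intertwined by integral quasi-$R$-matrix coefficients from \cite[$\S$24.1]{Lubook}, exactly as in Lusztig's tensor product construction \cite[$\S$27.3]{Lubook}. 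All coefficients appearing in this tensor factorisation depend only on $(k_1,\dots,k_l)$ and not on the multicharge lift, so $(c_{\la,\mu}(q))$ stabilises, and consequently so does $(\ddd_{\la,\mu}(q))$.

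The main obstacle is establishing the propagation bound $N_\alpha$ and the tensor-product factorisation rigorously: one must track, step by step through Uglov's higher-level straightening relations, how each local exchange redistributes active entries, and verify that an entry originating in component $m$ cannot cross into the ``active'' region of another component when the initial gaps are large enough. Given the essentially local nature of the straightening rules in \cite{LT1, LT2}, this is very plausible, but carrying out the combinatorics carefully is the crux of the argument (and of Yvonne's proof in \cite{Y2}).
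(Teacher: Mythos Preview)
The paper does not give a proof of this proposition: it is quoted as a result of Yvonne, with a citation to \cite[Theorem 5.2]{Y2} in the statement header and nothing further beyond the remark that conjecturally one may take $N_\alpha = \height(\alpha)$. So there is no ``paper's own proof'' to compare against; the proposition is used as a black box in \S\ref{construct}.

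Your outline is broadly in the spirit of Yvonne's argument, but one point deserves caution. In your final step you propose that, once the gaps are large, the bar-involution factorises as a tensor product of level-one bar-involutions intertwined by Lusztig's quasi-$R$-matrix coefficients from \cite[\S24.1]{Lubook}. The paper explicitly warns (at the start of \S\ref{tzero}) that this tensor-product approach fails for $e>0$ because the quasi-$R$-matrix is not integral in the affine case; indeed, this non-integrality is exactly why the paper needs Yvonne's stability result in the first place, rather than constructing the bar-involution on $F(\La)$ directly via \cite[\S27.3]{Lubook}. Yvonne's proof in \cite{Y2} works instead by analysing Uglov's higher-level straightening relations directly and showing the coefficients stabilise, without invoking any quasi-$R$-matrix factorisation. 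Your propagation-bound idea is the right heuristic, but the tensor-product repackaging at the end is not the route that actually goes through.
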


\begin{Remark} 
Conjecturally, one can take
$N_\alpha := \height(\alpha)$;
see \cite[Remark 5.3]{Y2}.
\end{Remark}

Now define
$d_{\la,\mu}(q) \in \Z[q]$ for any $\la,\mu \in \Par$ as follows.
If $\cont(\la) \neq \cont(\mu)$ we set $d_{\la,\mu}(q) := 0$.
Otherwise, if $\cont(\la) = \cont(\mu) = \alpha$ for some $\alpha \in Q_+$,
pick $(\kk_1,\dots,\kk_l)\in\Z^l$ so that
$\kk_1 \equiv k_1,\dots,\kk_l\equiv k_l\pmod{e}$ and $\kk_m - \kk_{m+1} \geq N_\alpha$
for $m=1,\dots,l-1$, and then
set $d_{\la,\mu}(q) := \ddd_{\la,\mu}(q)$, i.e. the polynomial defined as in (\ref{pmtilde})
for this choice of multicharge. Proposition~\ref{ythm} implies that $d_{\la,\mu}(q)$
is well defined independent of the particular choice of $(\kk_1,\dots,\kk_l)$.
Finally let $(p_{\la,\mu}(-q))_{\la,\mu \in \Par}$ be the inverse of the
matrix $(d_{\la,\mu}(q))_{\la,\mu \in \Par}$ and define
an anti-linear endomorphism $-$ of $F(\La)$ by setting
\begin{equation}\label{final}
\overline{M_\mu} := \sum_{\kappa, \la \in \Par}
p_{\kappa,\la}(-q) d_{\la,\mu}(q^{-1}) M_\kappa
\end{equation}
for each $\mu \in \Par$.
The following theorem shows that this is a compatible bar-involution
on $F(\La)$ as in Hypothesis~\ref{assump} (taking the
order $\preceq$ there to be $\lexeq$).

\begin{Theorem}
The map (\ref{final}) is a compatible bar-involution on $F(\La)$
with the following property for every $\la \in \Par$:
\begin{equation}\label{semi}
\overline{M_\la} = M_\la + (\text{a $\Z[q,q^{-1}]$-linear combination
of $M_\mu$'s for $\mu \lex \la$}).
\end{equation}
\end{Theorem}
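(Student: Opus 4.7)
The plan is to transfer Uglov's compatible bar-involution on the twisted Fock space $\widetilde{F}(\La)$ back to $F(\La)$ one weight space at a time, using the stability of Proposition \ref{ythm}. It is convenient to reformulate (\ref{final}) as the declaration that each $L_\la := \sum_\kappa p_{\kappa,\la}(-q) M_\kappa$ is bar-invariant, extended antilinearly. Writing $D(q) := (d_{\la,\mu}(q))$ and $P(-q) := (p_{\la,\mu}(-q))$ for the mutually inverse unitriangular matrices, (\ref{final}) reads
\[
\overline{M_\mu} = \sum_\kappa \bigl[P(-q) D(q^{-1})\bigr]_{\kappa,\mu} M_\kappa.
\]
The involution property $\overline{\overline{M_\mu}} = M_\mu$ is then immediate: applying the formula twice gives the product $P(-q)D(q^{-1})P(-q^{-1})D(q)$, which collapses to $P(-q)D(q) = I$ on observing that substituting $q \to q^{-1}$ into $D(q)P(-q) = I$ yields $D(q^{-1})P(-q^{-1}) = I$.

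The hard part will be compatibility with the $U_q(\g)$-action, $\overline{xv} = \bar x\,\bar v$. By antilinearity it suffices to check on $v = M_\mu$ for $\mu \in \Par_\alpha$ and $x \in \{E_i, F_i, K_i\}$, with $\alpha \in Q_+$ fixed. Put
\[
N := \max\bigl\{ N_\beta : \beta = \alpha\text{ or }\beta = \alpha \pm \alpha_i,\ i \in I \bigr\} + e + 1,
\]
pick a multicharge $(\kk_1,\dots,\kk_l)$ compatible with $(k_1,\dots,k_l)$ such that $\kk_m - \kk_{m+1} \geq N$ for all $m$, and introduce the $\Q(q)$-linear bijection $\Phi: F(\La) \to \widetilde{F}(\La)$, $M_\la \mapsto \widetilde{M}_\la$. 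By Lemma \ref{sdsd}(1),(2) we have $d_A(\la) = \ddd_A(\la)$ and $d^B(\la) = \ddd^B(\la)$ for every $\la$ of content $\alpha$ or $\alpha \pm \alpha_i$, so $\Phi$ intertwines the actions of $E_i, F_i, K_i$ on these weight spaces. By Proposition \ref{ythm}, $d_{\la,\nu}(q) = \ddd_{\la,\nu}(q)$ for $\la,\nu$ on these weight spaces, so $\Phi(L_\la) = \widetilde{L}_\la$ there. Since both bar-involutions are characterized by antilinearity together with fixing the respective $L$-bases, $\Phi$ also intertwines them on these weight spaces. Transporting Uglov's compatibility identity from $\widetilde{F}(\La)$ via $\Phi$ then gives the required $\overline{x M_\mu} = \bar x \overline{M_\mu}$. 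The main obstacle is cross-weight-space consistency: different $\alpha$ force different multicharges, but Proposition \ref{ythm} is precisely what ensures that $d_{\la,\mu}(q)$ is independent of the multicharge in the stable range, so the locally-defined bar-involutions all agree.

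For the triangularity (\ref{semi}), expand
\[
\overline{M_\mu} = \sum_\la d_{\la,\mu}(q^{-1}) L_\la = \sum_\sigma \biggl(\, \sum_\la d_{\la,\mu}(q^{-1}) p_{\sigma,\la}(-q) \biggr) M_\sigma.
\]
Since $D(q)$ and $P(-q)$ are unitriangular with respect to the partial order $\geq$ from $\S$\ref{tpo}, the leading term is $M_\mu$ and every other contribution is of the form $M_\sigma$ with $\sigma > \mu$ and $\cont(\sigma) = \cont(\mu) = \alpha$. The chosen multicharge satisfies $\kk_m - \kk_{m+1} \geq \height(\alpha) + e$, so Lemma \ref{weaker} forces $\sigma \lex \mu$ for every such $\sigma$, which is exactly (\ref{semi}).
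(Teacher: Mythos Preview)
Your argument is essentially the same as the paper's: transport the known compatible bar-involution from $\widetilde{F}(\La)$ via the obvious identification $M_\la \leftrightarrow \widetilde{M}_\la$, choosing the multicharge large enough that Lemma~\ref{sdsd} makes the two module structures agree and Proposition~\ref{ythm} makes the $d$- and $\ddd$-matrices agree. The paper organizes this by truncating to $F(\La)_{\leq d}$ and letting $d\to\infty$, whereas you organize it one weight space (together with its neighbours $\alpha\pm\alpha_i$) at a time; your direct matrix verification of the involution property is a pleasant alternative to transporting it from $\widetilde{F}(\La)$.

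One small slip: your choice $N := \max\{N_\beta\} + e + 1$ need not dominate $\height(\alpha)+e$ (Proposition~\ref{ythm} only asserts existence of \emph{some} $N_\alpha$, with no a priori lower bound in terms of $\height(\alpha)$), so Lemmas~\ref{sdsd} and \ref{weaker} are not guaranteed to apply. The paper avoids this by taking $N := \max\{d+e,\, N_\alpha:\height(\alpha)\leq d\}$; you should likewise include $\height(\alpha)+e$ (or $\height(\alpha+\alpha_i)+e$) explicitly in your maximum.
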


\begin{proof}
Fix some $d \geq 0$ and let $F(\La)_{\leq d}$ denote
the subspace of $F(\La)$
spanned by 
all $M_\la$'s for $\la \in \Par$ with $|\la| \leq d$.
We claim that the restriction of $-$ to $F(\La)_{\leq d}$
is an involution with the property (\ref{semi})
for all $\la$ with $|\la| \leq d$, and moreover that 
$\overline{E_i v} = E_i \overline{v}$ 
and $\overline{F_i v} = F_i \overline{v}$
for all $v\in F(\La)_{\leq (d-1)}$ and $i \in I$.
The theorem follows from claim by 
letting $d \rightarrow \infty$.

To prove the claim, 
set $N := \max\{d+e,N_\alpha\:|\:\alpha \in Q_+, \height(\alpha)\leq d\}$.
Choose the multicharge so that $\kk_1\equiv k_1,\dots,\kk_l \equiv k_l\pmod{e}$
and $\kk_m - \kk_{m+1} \geq N$ for $m=1,\dots,l$.
Defining $\widetilde{F}(\La)$ with this choice of multicharge, we define
a vector space isomorphism
$$
\iota:F(\La) \stackrel{\sim}{\rightarrow} \widetilde{F}(\La),
\qquad
M_\la \mapsto \widetilde{M}_\la.
$$
Using Lemma~\ref{sdsd}, (\ref{act1}) and (\ref{uact1}),
we see that $\iota(E_i v) = E_i \iota(v)$
and $\iota(F_i v) = F_i \iota(v)$ for all $v \in F(\La)_{\leq(d-1)}$.
By the definition (\ref{final}), the bar-involution on $F(\La)$ is the unique
anti-linear map fixing the vectors $\sum_\la p_{\la,\mu}(-q) M_\la$
for all $\mu \in \Par$.
Moreover for $\la,\mu \in \Par$ with $|\mu| \leq d$, we have that
$d_{\la,\mu}(q) = \ddd_{\la,\mu}(q)$, so recalling (\ref{pmtilde})
we see that $\iota$ maps $\sum_\la p_{\la,\mu}(-q) M_\la$
to the bar-invariant vector
$\widetilde{L}_\mu$
for $|\mu| \leq d$.
This shows that $\iota(\overline{v}) = \overline{\iota(v)}$
for all $v \in F(\La)_{\leq d}$.
Putting these things together gives that
$-$ is an involution on $F(\La)_{\leq d}$ such that
$\overline{E_i v} = E_i \overline{v}$ 
and $\overline{F_i v} = F_i \overline{v}$
for all $v\in F(\La)_{\leq(d-1)}$ and $i \in I$.
Moreover from (\ref{lllk2}) we get that
$$
\overline{M_\la} = M_\la + (\text{a $\Z[q,q^{-1}]$-linear combination
of $M_\mu$'s for $\mu > \la$})
$$
for $|\la| \leq d$.
Finally an application of Lemma~\ref{weaker} gives (\ref{semi})
for $|\la| \leq d$. This establishes the claim.
\end{proof}

\begin{Remark}
It would be interesting to find a direct construction of the
bar-involution on $F(\La)$ in the $e > 0$ case
by-passing twisted Fock space;
such a construction should also produce a natural choice for the partial
order $\preceq$.
\end{Remark}

\section{Graded branching rules and categorification of $V(\La)$}

Continue with $F$ denoting an algebraically closed field, but assume
also now that $\xi \in F^\times$ is an invertible element and take the
integer $e$ 
to be the smallest positive integer such that $1+\xi+\cdots+\xi^{e-1} = 
0$, setting $e := 0$
if no such integer exists. All other notation is the same as in the previous
sections for this choice of $e$; in particular, we have fixed  $\La$ 
as in (\ref{note}).

\subsection{\boldmath The algebra $R^\La_\alpha$}\label{slump}
Following \cite[$\S$3.4]{KL1}, we let
\begin{equation}
R^\La_\alpha := R_\alpha \big / \big\langle y_1^{(\La,\alpha_{i_1})} e(\bi)\:\big|\:\bi \in I^\alpha\big \rangle.
\end{equation}
We use the same notation for elements of $R^\La_\alpha$ as in $R_\alpha$,
relying on context to distinguish which we mean.

For any $i \in I$, the 
embedding $R_\alpha = R_\alpha \otimes 1 
\hookrightarrow R_{\alpha,\alpha_i}\hookrightarrow R_{\alpha+\alpha_i}$
factors through the quotients to induce a (not necessarily injective)
graded algebra homorphism
\begin{equation}\label{iota1}
\iota_{\alpha,\alpha_i}:R_\alpha^\La \rightarrow R_{\alpha+\alpha_i}^\La.
\end{equation}
This maps the identity element of $R_\alpha^\La$ to the idempotent
$e_{\alpha,\alpha_i} \in R_{\alpha+\alpha_i}^\La$.

\subsection{\boldmath The algebra $H^\La_\alpha$}\label{sah}
Let $H_d$ denote the affine Hecke algebra
associated to the symmetric group $\Sigma_d$
on generators
$\{X_1^{\pm 1},\dots,X_d^{\pm 1}\} \cup \{T_1,\dots,T_{d-1}\}$
if $\xi \neq 1$, or its degenerate analogue on generators
$\{x_1,\dots,x_d\}\cup\{s_1,\dots,s_{d-1}\}$
if $\xi = 1$. For the full relations, which are quite standard, we refer
the reader to \cite{BKyoung}, noting here just that
\begin{equation*}
\left\{
\begin{array}{lll}
T_r^2 = (\xi-1) T_r + \xi,
&T_r X_r T_r = \xi X_{r+1}
&\qquad\text{if $\xi \neq 1$,}\\
\,s_r^2 = 1, 
&\;s_r x_{r+1} = x_r s_r+1
&\qquad\text{if $\xi = 1$.}
\end{array}\right.
\end{equation*}
Then we consider the cyclotomic quotient
\begin{equation}\label{ECHA}
H_d^\La := \left\{
\begin{array}{ll}
H_d \Big/ \big\langle \,\textstyle\prod_{i\in I}(X_1-\xi^i)^{(\La,\al_i)}\,\big\rangle
&\text{if $\xi \neq 1$,}\\
H_d \Big/ \big\langle \,\textstyle\prod_{i\in I}(x_1-i)^{(\La,\al_i)}\,\big\rangle
&\text{if $\xi = 1$}.
\end{array}\right.
\end{equation}
We refer to this algebra simply as 
the {\em cyclotomic Hecke algebra} if $\xi\neq 1$
and the {\em degenerate cyclotomic Hecke algebra} if $\xi =1$.

There is a natural system 
$\{e(\bi)\:|\:\bi \in I^d\}$
of mutually orthogonal idempotents in $H_d^\La$ called
{\em weight idempotents}; see \cite{BKyoung}.
These are characterized uniquely by 
the property that 
$e(\bi) M = M_\bi$ for any finite dimensional $H_d^\La$-module $M$,
where
\begin{equation}
M_\bi := \left\{\begin{array}{ll}
\{v \in M\:|\:(X_r -\xi^{i_r})^N v = 0 \text{ for $N \gg 0$}\}
&\text{if $\xi \neq 1$,}\\
\{v \in M\:|\:(x_r -i_r)^N v = 0 \text{ for $N \gg 0$}\}
&\text{if $\xi = 1$.}
\end{array}\right.
\end{equation}
Note all but finitely many of the $e(\bi)$'s are zero, and their sum
is the identity element in $H_d^\La$.

Given $\al \in Q_+$ of height $d$, we set
\begin{equation}
e_\alpha := \sum_{\bi \in I^\alpha} e(\bi) \in H^\La_d.
\end{equation}
As a consequence of \cite{LM} or \cite[Theorem~1]{cyclo},
$e_{\alpha}$ is either zero or it is a primitive central idempotent
in $H^\La_d$.
Hence the algebra
\begin{equation}\label{fe}
H^\La_\alpha := e_{\alpha} H^\La_d
\end{equation}
is either zero or it is a single {\em block}
of the algebra $H^\La_d$, and we have that
\begin{equation}\label{blockdec}
H^\La_d = \bigoplus_{\alpha \in Q_+, \height(\alpha)=d} H^\La_\alpha
\end{equation}
as a direct sum of algebras.
For $h\in H_d^\La$, 
we still write $h$ for  the projection $e_{\al}h\in H_\al^\La$. 

The natural embedding of $H_d$ into $H_{d+1}$
factors through the quotients to induce an embedding
of $H_d^\La$ into $H_{d+1}^\La$.
Composing this on the right with the inclusion
$H_\alpha^\La \hookrightarrow H_d^\La$ and then on the left with
multiplication by the idempotent $e_{\alpha,\alpha_i}$,
we obtain a non-unital algebra homomorphism
\begin{equation}\label{iota2}
\iota_{\alpha,\alpha_i}: H^\La_\alpha \rightarrow H^\La_{\alpha+\alpha_i}.
\end{equation}
Just like in (\ref{iota1}), this maps the identity element of $H^\La_\alpha$
to the idempotent 
\begin{equation}
e_{\alpha,\alpha_i} := \sum_{\bi \in I^{\alpha+\alpha_i},
i_{d+1}=i} e(\bi).
\end{equation}

\subsection{The isomorphism theorem}
According to the main theorem of \cite{BKyoung}, the
cyclotomic algebras $R^\La_\alpha$ and
$H^\La_\alpha$ are isomorphic.
Although not used explicitly here, we note that a closely
related result for the affine algebras 
has been obtained independently by Rouqiuer
in \cite[$\S$3.2.6]{Ro}.

\begin{Theorem}[{\cite{BKyoung}}]\label{ISO}
For $\alpha \in Q_+$ of height $d$,
there is an algebra isomorphism
$\rho:R^\La_\alpha \stackrel{\sim}{\rightarrow} H^\La_\alpha$
such that 
\begin{align*}
e(\bi)&\mapsto e(\bi),\\
y_r e(\bi) &\mapsto \left\{\begin{array}{ll}
(1-\xi^{-i_r} X_r)e(\bi)&\text{if $\xi \neq 1$,}\\
(x_r-i_r) e(\bi)&\text{if $\xi=1$,}
\end{array}\right.
\end{align*}
for each $r=1,\dots,d$ and $\bi \in I^\alpha$.
Moreover, the following diagram commutes 
for all $\alpha \in Q_+$ and $i \in I$:
\begin{equation}\label{cds}
\begin{CD}
R^\La_{\alpha}&@>\iota_{\alpha,\alpha_i}>>&R^\La_{\alpha+\alpha_i}\\
@V\rho VV&&@VV\rho V\\
H^\La_\alpha&@>>\iota_{\alpha,\alpha_i}>&H^\La_{\alpha+\alpha_i}.
\end{CD}
\end{equation}
\end{Theorem}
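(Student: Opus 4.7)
The plan is to exhibit the isomorphism $\rho$ explicitly and verify it is well-defined, surjective, and bijective. Fix $\al \in Q_+$ of height $d$. On the Hecke side, the weight idempotent decomposition $1 = \sum_{\bi \in I^\al} e(\bi)$ in $H^\La_\al$ already supplies the candidates for the idempotent generators of $R^\La_\al$: on each summand $e(\bi) H^\La_\al$ the generator $X_r$ (respectively $x_r$) acts with generalized eigenvalue $\xi^{i_r}$ (respectively $i_r$), so that $1 - \xi^{-i_r} X_r$ (respectively $x_r - i_r$) is nilpotent there. Declaring $e(\bi) \mapsto e(\bi)$ and $y_r e(\bi)$ to equal these nilpotent elements therefore gives commuting generators satisfying (\ref{R1})--(\ref{R3Y}) automatically, with the correct degree-two grading.

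The bulk of the work is the construction of elements $\psi_r \in H^\La_\al$ satisfying the remaining KLR relations. The recipe is to set $\psi_r e(\bi) := \bigl(T_r + P_r(\bi)\bigr) Q_r(\bi) e(\bi)$ (or the analogous expression with $s_r$ in place of $T_r$ in the degenerate case), where $P_r(\bi)$ and $Q_r(\bi)$ are explicit formal power series in $y_r$ and $y_{r+1}$ whose precise form depends on the four cases of (\ref{R4}), governed by the relation between $i_r$ and $i_{r+1}$ in the quiver $\Ga$. These power series are forced by requiring that the quadratic relation (\ref{R4}) match the Hecke quadratic relation $T_r^2 = (\xi-1)T_r + \xi$ (or $s_r^2 = 1$) after the substitution $X_r \leftrightarrow 1 - \xi^{-i_r} y_r$ (or $x_r \leftrightarrow i_r + y_r$); nilpotency of $y_r$ on $e(\bi) H^\La_\al$ ensures all formal series act as finite sums. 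The mixed commutation relations (\ref{R3YPsi})--(\ref{R5}) then follow from the identity $T_r X_r T_r = \xi X_{r+1}$ by straightforward but delicate manipulations, while the homogeneous braid-like relation (\ref{R7}) is the main obstacle: it must be verified separately in each of the four configurations of $(i_r, i_{r+1}, i_{r+2})$ listed on its right-hand side, and in each case reduces to a formal identity among power series in $y_r, y_{r+1}, y_{r+2}$ via the affine braid relation for the $T_r$'s.

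Once the defining relations of $R^\La_\al$ are verified, $\rho$ is a well-defined graded algebra homomorphism. Surjectivity follows by inverting the definitions: since $Q_r(\bi)$ has invertible constant term and $y_r$ acts nilpotently, one can recover $T_r$ (or $s_r$) and $X_r^{\pm 1}$ (or $x_r$) as polynomial expressions in the $\psi_r, y_r, e(\bi)$. Bijectivity then reduces to a dimension comparison: the Ariki--Koike basis theorem and its degenerate analogue give $\dim H^\La_d = l^d d!$, while a PBW-type spanning argument applied to the relations (\ref{R1})--(\ref{R7}) together with the cyclotomic relation $y_1^{(\La,\al_{i_1})} e(\bi) = 0$ yields the matching upper bound $\dim R^\La_d \leq l^d d!$, forcing $\rho$ to be an isomorphism. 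Finally, the commuting square (\ref{cds}) is immediate from the constructions, since both versions of $\iota_{\al,\al_i}$ act by $e(\bj) \mapsto e(\bj i)$ on weight idempotents and are compatible with the defining formula $y_r e(\bj) \mapsto (1 - \xi^{-j_r} X_r) e(\bj)$ under the natural inclusion $H^\La_d \hookrightarrow H^\La_{d+1}$.
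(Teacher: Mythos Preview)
This theorem is not proved in the present paper at all: it is quoted from \cite{BKyoung}, and the paper's only comment is the remark immediately following it that the explicit formulae for $\psi_r e(\bi)$ can be found there. So there is no in-paper proof to compare against; what you have written is a sketch of the argument from \cite{BKyoung} itself.

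Your outline captures the strategy of \cite{BKyoung} accurately up to the last step: one writes down explicit power series $P_r(\bi), Q_r(\bi)$ in the nilpotent elements $y_r, y_{r+1}$, defines $\psi_r e(\bi)$ accordingly inside $H^\La_\alpha$, and checks the KLR relations (\ref{R1})--(\ref{R7}) together with the cyclotomic relation case by case. This gives a surjection $\rho: R^\La_\alpha \twoheadrightarrow H^\La_\alpha$, and the inverse formulae you describe do recover $T_r, X_r$ (or $s_r, x_r$) in terms of the KLR generators.

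The gap is in your injectivity argument. You claim a ``PBW-type spanning argument'' yields $\dim R^\La_d \le l^d d!$ directly from the defining relations, but this bound is \emph{not} available by elementary means: the cyclotomic relation only kills a power of $y_1$, and propagating nilpotency to $y_2, \dots, y_d$ with the correct exponents via the $\psi$--$y$ commutation relations is genuinely delicate (indeed, even finite-dimensionality of $R^\La_\alpha$ was not known independently of the isomorphism until later work of Kang--Kashiwara). What \cite{BKyoung} actually does is verify the relations in the \emph{other} direction as well: one checks that the elements of $R^\La_\alpha$ corresponding to $T_r, X_r$ under the inverse formulae satisfy the affine Hecke relations and the cyclotomic relation, so that the assignment extends to a well-defined algebra homomorphism $H^\La_\alpha \to R^\La_\alpha$. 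The two maps are then inverse on generators, hence mutually inverse homomorphisms. Your sketch should replace the dimension count by this second relation check.
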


\begin{Remark}\rm 
In \cite{BKyoung}, one can also find
formulae for the images of the generators $\psi_r e(\bi)$, but we do not
need to know these explicitly here. 
\end{Remark}

Henceforth, we will simply {\em identify} the algebras $R^\La_\alpha$
and $H^\La_\alpha$ via the isomorphism $\rho$ from Theorem~\ref{ISO}.
Despite the fact that $R^\La_\alpha = H^\La_\alpha$ now,
we will usually talk in terms of $R^\La_\alpha$
when discussing {\em graded} representation theory and $H^\La_\alpha$
when discussing {\em ungraded} representation theory.
For instance, as in $\S$\ref{SSGr},
for a graded $R^\La_\alpha$-module $M$ we write
$\underline{M}$ for the ungraded $H^\La_\alpha$-module
obtained from $M$ by forgetting the grading.

\subsection{\boldmath $i$-Induction and $i$-restriction}\label{sir}
For $i \in I$ and $\alpha \in Q_+$, let
$\e_i$ and $\f_i$ be the functors
\begin{align}\label{ei}
\e_{i}&:=e_{\alpha,\alpha_i} H^\La_{\alpha+\alpha_i} \otimes_{H^\La_{\alpha+\alpha_i}} ?
:\mod{H^\La_{\alpha+\alpha_i}} \rightarrow \mod{H^\La_{\alpha}},\\
\f_{i}&:=\:\:\:\:\:H^\La_{\alpha+\alpha_i} e_{\alpha,\alpha_i} \otimes_{H^\La_{\alpha}} ?
:\mod{H^\La_\alpha} \rightarrow \mod{H^\La_{\alpha+\alpha_i}},\label{fi}
\end{align}
viewing $e_{\alpha,\alpha_i} H^\La_{\alpha+\alpha_i}$
(resp.\ $H^\La_{\alpha+\alpha_i} e_{\alpha,\alpha_i}$)
as a left (resp.\ right) $H^\La_\alpha$-module
via the homomorphism (\ref{iota2}).
The functor $\e_i$ is particularly simple to understand:
it is just multiplication by the idempotent
$e_{\alpha,\alpha_i}$ followed by restriction to $H^\La_\alpha$
via the homomorphism $\iota_{\alpha,\alpha_i}$.
We use the same notation $\e_i$ and $\f_i$
for the direct sums of these functors over all $\alpha \in Q_+$.
They are exactly Robinson's {\em $i$-restriction}
and {\em $i$-induction} functors
as in \cite[$\S$13.6]{Abook}
or \cite[(8.4), (8.6)]{Kbook}.
In particular, it is known that $\e_i$ and $\f_i$ are biadjoint,
hence both are exact and send projectives to projectives.
They obviously both send finite dimensional (resp.\ finitely generated)
modules to finite dimensional (resp.\ finitely generated) modules.

Similarly define functors
$E_i$ and $F_i$ by setting
\begin{align}\label{gre}
E_{i}&:=\hspace{20.5mm}e_{\alpha,\alpha_i} R^\La_{\alpha+\alpha_i} \otimes_{R^\La_{\alpha+\alpha_i}} ?
:\Mod{R^\La_{\alpha+\alpha_i}} \rightarrow \Mod{R^\La_{\alpha}},\\
F_{i}&:=
R^\La_{\alpha+\alpha_i} e_{\alpha,\alpha_i} \otimes_{R^\La_{\alpha}} ? \langle 1\!-\!(\La\!-\!\alpha,\alpha_i) \rangle
:\Mod{R^\La_\alpha} \rightarrow \Mod{R^\La_{\alpha+\alpha_i}},\label{grf}
\end{align}
interpreting the tensor products via (\ref{iota1}), then taking
the direct sums over all $\alpha \in Q_+$.
By (\ref{cds}), these are graded versions of $\e_i$ and $\f_i$ in the sense that 
\begin{equation}
\underline{E_i (M)} \cong \e_i (\underline{M}),\qquad
\underline{F_i (M)} \cong \f_i (\underline{M})
\end{equation}
for any graded $R^\La_\alpha$-module $M$.
In particular, we deduce from this that $E_i$ and $F_i$ are both
exact, and send finite dimensional (resp.\ finitely generated projective)
modules to finite dimensional (resp.\ finitely generated
projective) modules, since we already know that for $\e_i$ and $\f_i$.

Also define a functor $K_i$ by letting
\begin{equation}
K_i:\Mod{R^\La_\alpha} \rightarrow \Mod{R^\La_\alpha}
\end{equation}
denote the degree shift functor 
$M\mapsto M \langle (\La-\alpha,\alpha_i) \rangle$.
If we use this functor to cancel out the degree shifts in  (\ref{grf}), we
see that
\begin{equation}\label{ath}
F_i K_i \langle -1 \rangle \cong
R_{\alpha+\alpha_i}^\La e_{\alpha,\alpha_i} \otimes_{R_\alpha^\La} ?.
\end{equation}
Combining this with adjointness of tensor and hom we deduce:

\begin{Lemma}\label{cad}
There is a canonical adjunction making
$(F_i K_i \langle-1\rangle, E_i)$ into an adjoint pair.
\end{Lemma}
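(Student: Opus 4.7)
The plan is to deduce the adjunction directly from the standard tensor-hom adjunction using the rewriting provided in (\ref{ath}). By (\ref{ath}), the functor $F_i K_i \langle -1\rangle$ is isomorphic to $R^\La_{\alpha+\alpha_i} e_{\alpha,\alpha_i} \otimes_{R^\La_\alpha} ?$, viewed as a functor from $\Mod{R^\La_\alpha}$ to $\Mod{R^\La_{\alpha+\alpha_i}}$ via the bimodule structure on $R^\La_{\alpha+\alpha_i} e_{\alpha,\alpha_i}$ coming from left multiplication in $R^\La_{\alpha+\alpha_i}$ and right multiplication via $\iota_{\alpha,\alpha_i}$.

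First I would note that the standard graded tensor-hom adjunction yields a canonical isomorphism
\[
\HOM_{R^\La_{\alpha+\alpha_i}}\!\big(R^\La_{\alpha+\alpha_i} e_{\alpha,\alpha_i} \otimes_{R^\La_\alpha} M,\, N\big) \;\cong\; \HOM_{R^\La_\alpha}\!\big(M,\, \HOM_{R^\La_{\alpha+\alpha_i}}(R^\La_{\alpha+\alpha_i} e_{\alpha,\alpha_i}, N)\big)
\]
natural in $M \in \Mod{R^\La_\alpha}$ and $N \in \Mod{R^\La_{\alpha+\alpha_i}}$, with the right $R^\La_\alpha$-action on $\HOM_{R^\La_{\alpha+\alpha_i}}(R^\La_{\alpha+\alpha_i} e_{\alpha,\alpha_i},N)$ coming from the right $R^\La_\alpha$-structure on $R^\La_{\alpha+\alpha_i} e_{\alpha,\alpha_i}$. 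The next step is to identify the right adjoint explicitly: for any $N \in \Mod{R^\La_{\alpha+\alpha_i}}$, evaluation at $e_{\alpha,\alpha_i}$ gives a degree-preserving isomorphism
\[
\HOM_{R^\La_{\alpha+\alpha_i}}\!\big(R^\La_{\alpha+\alpha_i} e_{\alpha,\alpha_i},\, N\big) \;\stackrel{\sim}{\longrightarrow}\; e_{\alpha,\alpha_i} N,\qquad f \mapsto f(e_{\alpha,\alpha_i}),
\]
of graded $R^\La_\alpha$-modules, where on the right $R^\La_\alpha$ acts through $\iota_{\alpha,\alpha_i}$ so that the action factors through left multiplication by elements of $e_{\alpha,\alpha_i} R^\La_{\alpha+\alpha_i} e_{\alpha,\alpha_i}$. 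Finally, $e_{\alpha,\alpha_i} N$ is precisely $E_i(N) = e_{\alpha,\alpha_i} R^\La_{\alpha+\alpha_i} \otimes_{R^\La_{\alpha+\alpha_i}} N$ as a graded $R^\La_\alpha$-module, via the obvious canonical isomorphism.

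Stringing the three natural isomorphisms together produces the required adjunction. There is no serious obstacle; the only points requiring care are bookkeeping of the bimodule structures (so that the $R^\La_\alpha$-action transported to $e_{\alpha,\alpha_i} N$ really does agree with the one defining $E_i$) and the observation that all isomorphisms respect the $\Z$-grading, which is automatic once one notes that the shift $\langle 1 - (\La-\alpha,\alpha_i)\rangle$ in the definition (\ref{grf}) of $F_i$ is absorbed into $K_i\langle -1\rangle$ on the left-hand side of (\ref{ath}), leaving no residual shift in the adjunction isomorphism.
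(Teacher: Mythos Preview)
Your proposal is correct and is exactly the approach the paper takes: the paper simply says ``Combining this [i.e.\ (\ref{ath})] with adjointness of tensor and hom we deduce'' the lemma, and you have spelled out precisely that argument in full detail.
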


There is an equivalent way to describe the
functors $E_i$ and $F_i$ which relates them to
the functors $\theta_i^*$ and $\theta_i$ from (\ref{fri1})--(\ref{fri2}).
To formulate this, we first introduce the
{\em inflation} and {\em truncation} functors
\begin{equation}
\infl:\Mod{R^\La_\alpha} \rightarrow \Mod{R_\alpha}
\qquad
\pr:\Mod{R_\alpha} \rightarrow \Mod{R^\La_\alpha}.
\end{equation}
So for $M \in \Mod{R^\La_\alpha}$,
we write $\infl\, M$ for its pull-back through the natural
surjection $R_\alpha \twoheadrightarrow R^\La_\alpha$,
and for $N \in \Mod{R_\alpha}$ we write
$\pr\, N$ for $R^\La_\alpha \otimes_{R_\alpha} N$, which is the
largest graded quotient of $N$ that factors through to $R^\La_\alpha$.
Note $\pr$ depends implicitly on the fixed choice of $\La$,
but we omit it from our notation since this should be clear from 
context.
We obviously have that
\begin{equation}\label{ididid}
\pr \circ \infl  = \Id.
\end{equation}
Observe also that $(\pr, \infl)$ is an adjoint pair
in a canonical way.
Hence, $\pr$ sends projective modules 
to projective modules.
It follows easily that $\infl$ and $\pr$ restrict to
functors
\begin{equation}\label{prprpr}
\infl:\Rep{R^\La_\alpha} \rightarrow \Rep{R_\alpha},\qquad
\pr:\Proj{R_\alpha} \rightarrow \Proj{R^\La_\alpha}.
\end{equation}

\begin{Lemma}\label{sun}
There are canonical isomorphisms of functors
$E_i \cong \pr \circ \theta_i^* \circ \infl$
and $F_i K_i \langle -1 \rangle 
\cong \pr \circ \theta_i \circ \infl.$
\end{Lemma}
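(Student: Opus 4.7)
The plan is to unwind both sides as explicit tensor products and observe that the requisite identifications follow formally from the factorization property of $\iota_{\alpha,\alpha_i}$ in (\ref{iota1}). The key preparatory observation is (\ref{ath}), which rewrites $F_i K_i \langle -1 \rangle$ as $R^\La_{\alpha+\alpha_i} e_{\alpha,\alpha_i}\otimes_{R^\La_\alpha} ?$, free of any grading shift. After that, each of the two isomorphisms reduces to a simple tensor-product manipulation, so I expect no serious obstacle beyond careful bookkeeping with the bimodule structures and the grading.

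For the first isomorphism, take $M \in \Mod{R^\La_{\alpha+\alpha_i}}$. Then $\theta_i^*(\infl M)$ is just $e_{\alpha,\alpha_i} M$ regarded as an $R_\alpha$-module through the embedding $R_\alpha = R_\alpha\otimes 1 \hookrightarrow R_{\alpha,\alpha_i}\hookrightarrow R_{\alpha+\alpha_i}$. By the very definition of $\iota_{\alpha,\alpha_i}$ in (\ref{iota1}), this $R_\alpha$-action factors through the cyclotomic quotient $R^\La_\alpha$, so applying $\pr$ on the left does nothing and one obtains $e_{\alpha,\alpha_i} M$ viewed as an $R^\La_\alpha$-module via $\iota_{\alpha,\alpha_i}$. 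But this is precisely $E_i(M)$ by (\ref{gre}); naturality in $M$ is clear.

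For the second isomorphism, take $M \in \Mod{R^\La_\alpha}$. Since $P(i)$ is the left regular representation of $R_{\alpha_i}$, there is a natural isomorphism of graded $R_{\alpha+\alpha_i}$-modules
\begin{equation*}
\theta_i(\infl M)
= R_{\alpha+\alpha_i} e_{\alpha,\alpha_i} \otimes_{R_{\alpha,\alpha_i}} (\infl M \boxtimes R_{\alpha_i})
\;\cong\; R_{\alpha+\alpha_i} e_{\alpha,\alpha_i} \otimes_{R_\alpha} \infl M,
\end{equation*}
where the right $R_\alpha$-action on $R_{\alpha+\alpha_i} e_{\alpha,\alpha_i}$ is through $R_\alpha = R_\alpha\otimes 1 \subseteq R_{\alpha,\alpha_i}$. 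Applying $\pr = R^\La_{\alpha+\alpha_i}\otimes_{R_{\alpha+\alpha_i}} ?$ converts $R_{\alpha+\alpha_i} e_{\alpha,\alpha_i}$ into $R^\La_{\alpha+\alpha_i} e_{\alpha,\alpha_i}$, yielding $R^\La_{\alpha+\alpha_i} e_{\alpha,\alpha_i}\otimes_{R_\alpha} \infl M$. Since the right $R_\alpha$-action on $R^\La_{\alpha+\alpha_i} e_{\alpha,\alpha_i}$ is $\iota_{\alpha,\alpha_i}$, which again factors through $R^\La_\alpha$ by (\ref{iota1}), and since $R^\La_\alpha \otimes_{R_\alpha} \infl M = M$ by (\ref{ididid}), the tensor product collapses to $R^\La_{\alpha+\alpha_i} e_{\alpha,\alpha_i}\otimes_{R^\La_\alpha} M$. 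By (\ref{ath}) this is $F_i K_i\langle -1\rangle(M)$.

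The only thing that requires attention is that no stray grading shifts creep in: $\pr$, $\infl$, $\theta_i$, and $\theta_i^*$ are all degree-preserving, so the absence of any shift on the right matches the identity (\ref{ath}) which absorbs the $\langle 1-(\La-\alpha,\alpha_i)\rangle$ shift from the definition of $F_i$ in (\ref{grf}). The main (mild) point to verify is that all isomorphisms constructed above are natural in $M$, but this is automatic from the functoriality of multiplication by $e_{\alpha,\alpha_i}$, the bimodule tensor products, and the canonical adjunction $\pr\dashv\infl$.
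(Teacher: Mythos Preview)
Your proof is correct. The argument for $E_i$ is essentially identical to the paper's: both observe that $\theta_i^*\circ\infl$ already lands in modules on which $R_\alpha$ acts through $R^\La_\alpha$, so $\pr$ is the identity there by (\ref{ididid}).

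For $F_iK_i\langle -1\rangle$ you take a genuinely different route from the paper. The paper argues by adjointness: it first establishes that $(\pr\circ\theta_i,\,\theta_i^*\circ\infl)$ is an adjoint pair, then uses the already-proved isomorphism $\theta_i^*\circ\infl\cong\infl\circ E_i$ together with the $(\pr,\infl)$ adjunction to conclude that $\pr\circ\theta_i\circ\infl$ is left adjoint to $E_i$, whence it must agree with $F_iK_i\langle -1\rangle$ by Lemma~\ref{cad} and uniqueness of adjoints. Your approach is instead a direct bimodule computation: you collapse $\theta_i(\infl M)$ to $R_{\alpha+\alpha_i}e_{\alpha,\alpha_i}\otimes_{R_\alpha}\infl M$, apply $\pr$, and then use the factorization of the right $R_\alpha$-action through $R^\La_\alpha$ to replace $\otimes_{R_\alpha}\infl M$ by $\otimes_{R^\La_\alpha} M$. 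This is more explicit and avoids invoking Lemma~\ref{cad}; the paper's adjoint argument is slightly slicker in that it reuses the $E_i$ case rather than repeating a parallel computation, and it makes the canonicity of the isomorphism transparent. Both are perfectly sound.
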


\begin{proof}
For $E_i$, note that both $\infl \circ E_i$ and $\theta_i^* \circ
\infl$ are defined on $M \in \Mod{R^\La_{\alpha+\alpha_i}}$
by multiplying by the idempotent $e_{\alpha,\alpha_i}$.
Hence $\infl \circ E_i \cong \theta_i^* \circ \infl$.
Using also (\ref{ididid})
this implies that $E_i \cong \pr \circ \theta_i^* \circ \infl$.

For $F_i$, there is a canonical adjunction making 
$(\pr \circ \theta_i, \theta_i^* \circ \infl)$ into an adjoint pair.
Hence for $M \in \Mod{R^\La_\alpha}$
and $N \in \Mod{R^\La_{\alpha+\alpha_i}}$ we have natural 
isomorphisms
\begin{align*}
\Hom_{R^\La_{\alpha+\alpha_i}}(\pr \ \theta_i \ \infl (M), N)
&\cong
\Hom_{R_\alpha}(\infl\,  M , \theta_i^* \ \infl(N))\\
&\cong
\Hom_{R_\alpha}(\infl\, M, \infl \ E_i (N))
= \Hom_{R^\La_\alpha}(M, E_i N).
\end{align*}
This establishes that $\pr \circ \theta_i \circ \infl$
is left adjoint to $E_i$.
Hence $\pr \circ \theta_i \circ \infl \cong
F_i K_i \langle -1 \rangle$ by 
Lemma~\ref{cad} and unicity of adjoints.
\end{proof}

\subsection{Cyclotomic duality}\label{scd}
The anti-automorphism $*:R_\alpha\rightarrow R_\alpha$
from (\ref{star}) descends to the quotient $R^\La_\alpha$,
yielding a graded anti-automorphism
$*:R^\La_\alpha \rightarrow R^\La_\alpha$. Using this we
can define a duality $\circledast$ on $\Rep{R^\La_\alpha}$ 
in the same way as the duality $\circledast$
was defined on $\Rep{R_\alpha}$ in $\S$\ref{ssd}. 
It is then clear that 
$\circledast$ commutes with inflation, i.e.
\begin{equation}
\infl \circ \circledast \cong \circledast \circ \infl
\end{equation}
as functors from $\Rep{R^\La_\alpha}$ to $\Rep{R_\alpha}$.
The following lemma follows by the same argument as (\ref{commd}):

\begin{Lemma}\label{commutes}
There is an isomorphism $\circledast \circ E_i
\cong E_i \circ \circledast.$
\end{Lemma}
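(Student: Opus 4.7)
The plan is to construct the isomorphism directly by unraveling the definitions, running in parallel to the argument that establishes (\ref{commd}) for $\theta_i^*$. Recall from (\ref{ei})--(\ref{gre}) that the functor $E_i$ takes $M \in \Rep{R^\La_{\alpha+\alpha_i}}$ to the subspace $e_{\alpha,\alpha_i} M$, equipped with its left $R^\La_\alpha$-action via the homomorphism $\iota_{\alpha,\alpha_i}: R^\La_\alpha \to R^\La_{\alpha+\alpha_i}$ from (\ref{iota1}). Two basic compatibilities drive the argument: first, the idempotent $e_{\alpha,\alpha_i} = \sum_{\bi \in I^\alpha,\bj \in I^{\alpha_i}} e(\bi \bj)$ is fixed by $*$ since each $e(\bi\bj)$ is; and second, the algebra map $\iota_{\alpha,\alpha_i}$ intertwines the two anti-automorphisms $*$, because it sends each of the generators $e(\bi), y_r, \psi_r$ of $R^\La_\alpha$ to the same-named (and hence $*$-fixed) element of $R^\La_{\alpha+\alpha_i}$. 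I also note that $\iota_{\alpha,\alpha_i}(1_{R^\La_\alpha}) = e_{\alpha,\alpha_i}$, so the image of $\iota_{\alpha,\alpha_i}$ lies inside $e_{\alpha,\alpha_i} R^\La_{\alpha+\alpha_i} e_{\alpha,\alpha_i}$.

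Now fix $M \in \Rep{R^\La_{\alpha+\alpha_i}}$ and compare $(E_i M)^\circledast$ with $E_i(M^\circledast)$. By the definition of $\circledast$, the space $E_i(M^\circledast) = e_{\alpha,\alpha_i} M^\circledast$ consists of those $f \in \HOM_F(M,F)$ with $f = e_{\alpha,\alpha_i} \cdot f$; unwinding and using the $*$-invariance of $e_{\alpha,\alpha_i}$ gives $(e_{\alpha,\alpha_i} \cdot f)(m) = f(e_{\alpha,\alpha_i} m)$, so these are exactly the functionals vanishing on $(1-e_{\alpha,\alpha_i}) M$. Restriction to $e_{\alpha,\alpha_i} M$ therefore yields a natural vector space isomorphism
$$
E_i(M^\circledast) = e_{\alpha,\alpha_i} M^\circledast \stackrel{\sim}{\longrightarrow} \HOM_F(e_{\alpha,\alpha_i} M, F) = (E_i M)^\circledast.
$$
It remains to verify that this isomorphism is $R^\La_\alpha$-equivariant. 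For $x \in R^\La_\alpha$, $g \in (E_i M)^\circledast$ and $m \in e_{\alpha,\alpha_i} M$ the action on the right-hand side is $(xg)(m) = g(\iota_{\alpha,\alpha_i}(x)^* m)$, which by compatibility (ii) equals $g(\iota_{\alpha,\alpha_i}(x^*) m)$; since $\iota_{\alpha,\alpha_i}(x^*) \in e_{\alpha,\alpha_i} R^\La_{\alpha+\alpha_i} e_{\alpha,\alpha_i}$, this lies back in $e_{\alpha,\alpha_i} M$ and matches the corresponding action on $E_i(M^\circledast)$ inherited from the $R^\La_{\alpha+\alpha_i}$-action via $\iota_{\alpha,\alpha_i}$. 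Naturality in $M$ is clear from the construction.

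No serious obstacle is anticipated: the entire argument is a routine unraveling made clean by the $*$-invariance of $e_{\alpha,\alpha_i}$ and the transport of $*$ through $\iota_{\alpha,\alpha_i}$. Alternatively, one could deduce the result more abstractly by combining Lemma~\ref{sun} (which writes $E_i \cong \pr \circ \theta_i^* \circ \infl$) with (\ref{commd}) and the fact that $\circledast$ commutes with $\infl$; then commutativity of $\circledast$ with the cyclotomic truncation $\pr$ on the relevant finite-dimensional modules would need a brief check, but this shortcut offers no real saving over the direct verification above.
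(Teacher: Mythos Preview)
Your proposal is correct and is essentially the same argument the paper has in mind: the paper's proof of Lemma~\ref{commutes} simply says it ``follows by the same argument as (\ref{commd}),'' and what you have written is precisely that argument spelled out in detail for the cyclotomic quotient, using the $*$-invariance of $e_{\alpha,\alpha_i}$ and the compatibility of $\iota_{\alpha,\alpha_i}$ with $*$.
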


In view of the next lemma, the duality 
$\circledast$ on $\Rep{R^\La_\alpha}$ restricts
to give a well-defined duality
$\circledast$ on the subcategory $\Proj{R^\La_\alpha}$ too.

\begin{Lemma}\label{pins}
For $P \in \Proj{R^\La_\alpha}$, the dual
$P^\circledast$ is a graded projective module.
\end{Lemma}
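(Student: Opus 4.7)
\textit{Proof plan.} The plan is to transfer the question through Theorem \ref{ISO} to the ungraded cyclotomic Hecke algebra $H^\La_\alpha$, exploit the fact that $H^\La_\alpha$ is a Frobenius algebra, and then lift the resulting projectivity back to the graded setting.

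Since $R^\La_\alpha \cong H^\La_\alpha$ is finite-dimensional, the graded module $P^\circledast$ is finite-dimensional, and its ungraded version satisfies $\underline{P^\circledast} = (\underline{P})^\circledast$, where on the right the $F$-linear dual is twisted by the anti-involution corresponding to $*$ under the isomorphism $\rho$. This $*$-twisted duality is a contravariant exact anti-equivalence of $\rep{H^\La_\alpha}$, so it interchanges the subcategories of projective and injective modules.

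Next, I would appeal to the classical fact that $H^\La_d$ is a symmetric Frobenius algebra (proved by Malle--Mathas in the non-degenerate case $\xi \neq 1$, and by Brundan--Kleshchev in the degenerate case $\xi = 1$). Each block $H^\La_\alpha$ inherits this Frobenius property, and so is self-injective: the classes of projective and injective $H^\La_\alpha$-modules coincide. Applying this to $\underline{P}$, which is ungraded projective, we conclude that $\underline{P^\circledast}$ is also projective in $\rep{H^\La_\alpha}$.

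To lift this to the graded category, let $Q \twoheadrightarrow P^\circledast$ be the graded projective cover of $P^\circledast$ in $\Rep{R^\La_\alpha}$, which exists because $R^\La_\alpha$ is a finite-dimensional graded algebra. The induced ungraded map $\underline{Q} \twoheadrightarrow \underline{P^\circledast}$ is an essential surjection whose source is ungraded projective, so it realises the ungraded projective cover of $\underline{P^\circledast}$. Since the latter is already projective, this surjection must be an isomorphism, so the graded map $Q \to P^\circledast$ is bijective on underlying vector spaces, hence a graded isomorphism. Thus $P^\circledast \cong Q \in \Proj{R^\La_\alpha}$, as required. The main obstacle is invoking the Frobenius property of $H^\La_\alpha$ in a uniform way covering both cases $\xi=1$ and $\xi\neq 1$; a more self-contained alternative would be to establish directly that $R^\La_\alpha$ is a graded symmetric algebra, but the isomorphism $\rho$ seems to give the most economical route.
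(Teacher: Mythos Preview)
Your proof is correct and follows essentially the same approach as the paper: reduce to the ungraded setting, invoke that $H^\La_\alpha$ is a symmetric algebra (citing Malle--Mathas for $\xi\neq 1$ and \cite{BKschur} for $\xi=1$), and use self-injectivity to conclude that the dual of a projective is projective. You are more explicit than the paper about the lifting step from ungraded to graded projectivity via the projective cover, which the paper simply absorbs into the phrase ``it suffices to show that $\underline{P^\circledast}$ is a projective $H^\La_\alpha$-module.''
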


\begin{proof}
It suffices to show that $\underline{P^\circledast}$
is a projective $H^\La_\alpha$-module. This follows because 
$H^\La_\alpha$ is a symmetric
algebra, so its injective modules are projective; see \cite{MM} 
or \cite[Theorem A.2]{BKschur} in the
degenerate case.
\end{proof}

Although not used explicitly here, 
we note for completeness 
that there is another duality $\#$ on $\Rep{R^\La_\alpha}$
mapping $M$ to $M^\# := \HOM_{R^\La_\alpha}(M, R^\La_\alpha)$
with action defined by $(xf)(p) = f(p)x^*$.
The fact that this is exact (hence a duality) follows because 
$R^\La_\alpha$
is injective by Lemma~\ref{pins}.
The duality $\#$ obviously restricts to a
well-defined duality $\#$ on $\Proj{R^\La_\alpha}$.
Recalling the duality $\#$ on $\Proj{R_\alpha}$
from $\S$\ref{ssd}, it is also clear that 
$\#$ commutes with truncation, i.e. 
\begin{equation}\label{comp}
\pr \circ \# \cong \# \circ \pr
\end{equation}
as functors from
$\Proj{R_\alpha}$ to $\Proj{R^\La_\alpha}$.

\begin{Remark}\rm
We conjecture that $R^\La_\alpha$
is a graded symmetric algebra
in the sense that 
it possesses a homogeneous symmetrizing
form $\tau:R^\La_\alpha \rightarrow F$
of degree $-2\defect(\alpha)$.
By general principles this would imply that
there is an isomorphism of functors
$\# \cong \langle 2\defect(\alpha) \rangle\circ\circledast$; 
see e.g. \cite[Theorem 3.1]{Ric}.
Given this, one could deduce that
$F_i$ commutes with $\circledast$
(because an argument similar to the proof of (\ref{snow}) 
shows already that
$F_i K_i \langle -1\rangle$ commutes with $\#$).
The latter statement can be proved indirectly
by appealing to the formalism of
Remark~\ref{fancyrem} below and \cite[Theorem 5.16]{Ro}.
\end{Remark}

\subsection{Cyclotomic divided powers}\label{sdivp}
Lemma~\ref{sun} also
makes it clear how to define
divided powers $E_i^{(n)}$ and $F_i^{(n)}$ of the functors
$E_i$ and $F_i$. For $n \geq 1$, set
\begin{align*}
E_i^{(n)} &:=\hspace{27.3mm} \pr \circ (\theta_i^*)^{(n)} \circ \infl:
\Mod{R^\La_{\alpha+n\alpha_i}} \rightarrow \Mod{R^\La_\alpha},\\
F_i^{(n)} &:= \pr \circ \theta_i^{(n)} \circ \infl 
\langle 
 n^2 - n(\La-\alpha,\alpha_i) \rangle
:\Mod{R^\La_\alpha} \rightarrow \Mod{R^\La_{\alpha+n\alpha_i}},
\end{align*}
recalling (\ref{div1})--(\ref{div2}).
Again we use the same notation $E_i^{(n)}$ and $F_i^{(n)}$ for the 
direct sums of these functors over all $\alpha \in Q_+$.

\begin{Lemma}\label{divp}
There are isomorphisms 
$E_i^n \cong [n]! \cdot E_i^{(n)}$
and
$F_i^n \cong [n!] \cdot F_i^{(n)}$.
Hence
$E_i^{(n)}$ and $F_i^{(n)}$ are exact,
and they send finite dimensional (resp.\ finitely generated projective) modules
to finite dimensional (resp.\ finitely generated projective) modules.
\end{Lemma}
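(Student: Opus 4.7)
The plan is to establish, for each $n \geq 1$, the two functor isomorphisms
$$E_i^n \cong \pr \circ (\theta_i^*)^n \circ \infl, \qquad F_i^n \cong \bigl(\pr \circ \theta_i^n \circ \infl\bigr)\bigl\langle n^2 - n(\La-\alpha,\alpha_i)\bigr\rangle.$$
Once these are in place, combining with the decompositions $\theta_i^n \cong [n]!\cdot\theta_i^{(n)}$ and $(\theta_i^*)^n \cong [n]!\cdot(\theta_i^*)^{(n)}$ from~(\ref{dividedform}) and the definitions of $E_i^{(n)}$, $F_i^{(n)}$ in~$\S$\ref{sdivp} yields at once the desired isomorphisms $E_i^n \cong [n]!\cdot E_i^{(n)}$ and $F_i^n \cong [n]!\cdot F_i^{(n)}$.

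The first isomorphism is almost immediate. Since $E_i$ is simply left multiplication by $e_{\alpha,\alpha_i}$ followed by restriction along $\iota_{\alpha,\alpha_i}$, iterating and using that the intermediate idempotents $e_{\alpha+k\alpha_i,\alpha_i}$ telescope to $e_{\alpha,n\alpha_i}$ inside $R^\La_{\alpha+n\alpha_i}$ gives $E_i^n(M) = e_{\alpha,n\alpha_i}\cdot M$ as an $R^\La_\alpha$-module. The same reasoning on the $R_\alpha$-side gives $(\theta_i^*)^n(\infl M) = e_{\alpha,n\alpha_i}\cdot\infl M$, and the truncation $\pr$ changes nothing here because the defining cyclotomic relation $y_1^{(\La,\alpha_{i_1})}e(\bi)$ of $R_\alpha$ acts on $e_{\alpha,n\alpha_i}\cdot\infl M$ via $y_1^{(\La,\alpha_{i_1})}e(\bi,i,\ldots,i)$ in $R^\La_{\alpha+n\alpha_i}$, which is already zero.

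The second isomorphism is more delicate. Arguing by induction on $n$ using the direct tensor-product definition~(\ref{grf}) of $F_i$, together with the same telescoping of idempotents (now on the left-hand factor), one obtains
$$F_i^n(M) \cong R^\La_{\alpha+n\alpha_i}\,e_{\alpha,n\alpha_i} \otimes_{R^\La_\alpha} M \;\bigl\langle s_n\bigr\rangle,\qquad s_n := \sum_{k=0}^{n-1}\bigl(1-(\La-\alpha-k\alpha_i,\alpha_i)\bigr) = n^2 - n(\La-\alpha,\alpha_i).$$
On the other hand, by transitivity of induction and the identification $P(i^n) \cong R_{n\alpha_i}$, we have $\theta_i^n(\infl M) \cong R_{\alpha+n\alpha_i}\,e_{\alpha,n\alpha_i}\otimes_{R_\alpha}\infl M$, whence $\pr\theta_i^n(\infl M) \cong R^\La_{\alpha+n\alpha_i}\,e_{\alpha,n\alpha_i}\otimes_{R_\alpha}\infl M$. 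To identify this with the displayed formula for $F_i^n(M)$, it remains to check that the right $R_\alpha$-action on $R^\La_{\alpha+n\alpha_i}\,e_{\alpha,n\alpha_i}$ factors through $R^\La_\alpha$; this uses the same calculation as in the $E_i$-case, since the cyclotomic ideal of $R_\alpha$ sits in the first slot of the concatenation and is killed in $R^\La_{\alpha+n\alpha_i}$.

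The exactness and preservation statements are then routine. Since $E_i$ and $F_i$ are exact (by~$\S$\ref{sir}), so are $E_i^n$ and $F_i^n$, and because an additive functor which appears (up to degree shifts) as a direct summand of an exact functor is itself exact, the functors $E_i^{(n)}$ and $F_i^{(n)}$ are exact too. Preservation of finite-dimensional and finitely generated projective modules follows identically from the corresponding properties of $E_i$ and $F_i$. The principal technical obstacle is the degree-shift bookkeeping in the $F_i^n$ computation and the change-of-base verification for $\pr\theta_i^n(\infl M)$; the $E_i^n$ side is essentially formal.
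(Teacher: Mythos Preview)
Your proof is correct and follows essentially the same approach as the paper. The paper's proof is the one-liner ``This follows by Lemma~\ref{sun}, (\ref{ididid}) and (\ref{dividedform})'': it uses the identifications $E_i \cong \pr\circ\theta_i^*\circ\infl$ and $F_iK_i\langle-1\rangle \cong \pr\circ\theta_i\circ\infl$ from Lemma~\ref{sun}, iterates them using $\pr\circ\infl=\Id$ (more precisely, the stronger fact $\infl\circ E_i\cong\theta_i^*\circ\infl$ from the proof of Lemma~\ref{sun}, together with adjointness for the $F_i$ side), and then invokes~(\ref{dividedform}). You have unpacked exactly this argument by working directly with the idempotents and bimodules rather than citing Lemma~\ref{sun}; your ``change-of-base'' verification for $\pr\,\theta_i^n(\infl M)$ is the hands-on version of what the paper gets from the adjointness argument in the proof of Lemma~\ref{sun}.
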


\begin{proof}
This follows by Lemma~\ref{sun}, (\ref{ididid}) and (\ref{dividedform}).
\end{proof}

\subsection{Ungraded irreducible representations and branching rules}\label{glab}
It is time to recall Grojnowski's classification \cite{G} of finite
dimensional irreducible $H^\La_\alpha$-modules
in terms of the 
crystal associated to the 
highest weight module $V(\La)$,
which was inspired by the modular branching rules of \cite{KBrIII}.
We will use the explicit realization 
$(\RPar,
\tilde e_i, \tilde f_i, \eps_i, \phi_i, \wt)$
of the crystal from $\S$\ref{scc}.

For $\la \in \RPar_\al$, we define an
$H^\La_\al$-module $\underline{D}(\la)$ recursively as follows.
To start with, let
$\underline{D}(\varnothing)$ denote the trivial representation 
of $H^\La_0 \cong F$.
Now suppose that we are given 
$\la \in \RPar_\al$ for $\height(\alpha) > 0$.
Choose any $i \in I$ such that $\eps_i(\la) \neq 0$
and define
\begin{equation}\label{irreps}
\underline{D}(\la) := \soc (f_i \underline{D}(\tilde e_i \la)),
\end{equation}
where $\underline{D}(\tilde e_i \la)$ is the recursively defined
$H^\La_{\alpha-\alpha_i}$-module. 
It is known that $\underline{D}(\la)$ does not depend
up to isomorphism on the particular choice of $i$.
Moreover:

\begin{Theorem}[Grojnowski]\label{GTM}
The modules
$\{\underline{D}(\la)\:|\:\la \in \RPar_\alpha\}$
give a complete set of pairwise non-isomorphic irreducible 
$H^\La_\alpha$-modules.
Moreover, the following hold for any $i \in I$ and $\la \in 
\RPar_\al$:
\begin{itemize}
\item[(1)] $e_i \underline D(\la)$ is non-zero if and only if
$\eps_i(\la) \neq 0$, in which case
$e_i \underline D(\la)$ has irreducible socle and head both isomorphic
to $\underline D(\tilde e_i \la)$.
\item[(2)]
 $f_i \underline D(\la)$ is non-zero if and only if
$\phi_i(\la) \neq 0$, in which case
$f_i \underline D(\la)$ has irreducible socle and head both isomorphic
to $\underline D(\tilde f_i \la)$.
\item[(3)] In the Grothendieck group we have that
\begin{align*}
[e_i \underline D(\la)] 
&= 
\eps_i(\la) [\underline D(\tilde e_i \la)]
+ \sum_{\substack{\mu \in \RPar_{\al-\al_i}\\
\eps_i(\mu) < \eps_i(\la)-1}} u_{\mu,\la;i}(1) 
[\underline D(\mu)],\\
[f_i \underline D(\la)] 
&= 
\phi_i(\la) [\underline D(\tilde f_i \la)]
+ \sum_{\substack{\mu \in \RPar_{\al+\al_i} \\
\phi_i(\mu) < \phi_i(\la)-1}} v_{\mu,\la;i}(1) 
[\underline D(\mu)],
\end{align*}
for some coefficients $u_{\mu,\la;i}(1), v_{\mu,\la;i}(1) \in \Z_{\geq 0}$.
(The first term on the right hand side of these formulae
should be interpreted as zero if $\eps_i(\la) = 0$ (resp.\
$\phi_i(\la) = 0$).)
\item[(4)] There are algebra isomorphisms
\begin{align*}
f:F[x] / (x^{\eps_i(\la)}) &\stackrel{\sim}{\rightarrow}
\underline{\End}_{H^\La_{\al-\al_i}}(e_i \underline D(\la)),\\
g:F[x] / (x^{\phi_i(\la)}) &\stackrel{\sim}{\rightarrow}
\underline{\End}_{H^\La_{\al+\al_i}}(f_i \underline D(\la)).
\end{align*}
\end{itemize}
\end{Theorem}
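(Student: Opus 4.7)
The plan is to proceed by induction on $\height(\alpha)$, with the trivial base case $\alpha=0$. For the inductive step, assume the classification for $H^\La_\beta$ with $\height(\beta) < \height(\alpha)$ satisfying (1)--(4). The technical core is a structural analysis of $e_i K$ and $f_i K$ for an irreducible $K \in \rep{H^\La_\alpha}$, performed via the $i$-restriction/$i$-induction formalism already set up in $\S$\ref{sir}.

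I would first establish that whenever $e_i K \neq 0$ the module $e_i K$ has irreducible socle and head, both isomorphic to a single irreducible $\tilde e_i K \in \rep{H^\La_{\alpha - \alpha_i}}$, with composition multiplicities as in (3) and with $\End_{H^\La_{\alpha - \alpha_i}}(e_i K) \cong F[x]/(x^{\eps_i(K)})$ as in (4). This rests on two ingredients: a Mackey-type short exact sequence expressing $e_i f_j$ in terms of $f_j e_i$, plus correction terms when $i=j$ (the decomposition is visible on the bimodule $e_{\alpha,\alpha_i} R_{\alpha+\alpha_i} e_{\alpha,\alpha_i}$ over $R_\alpha$ and descends to the cyclotomic quotient); and the observation that the endomorphism algebra of $e_i K$ is generated by a single nilpotent operator coming from the natural transformation $y \in \End(F_i)$ induced by the Jucys--Murphy element $X_d$ (respectively $x_d$) acting on the induction bimodule. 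Combining these with biadjointness of $(e_i, f_i)$ and a dimension count using $[e_i K : \tilde e_i K] = \eps_i(K)$ yields the socle/head claim, and the dual statements for $f_i K$ follow by the symmetric argument with adjunctions reversed.

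With this structural input the set $B := \bigsqcup_\alpha \Irr(H^\La_\alpha)$ carries operators $\tilde e_i, \tilde f_i$ mutually inverse on their domains of definition, statistics $\eps_i, \phi_i$, and weight $\wt[K] := \La - \alpha$. I would check it is an abstract crystal: the axiom $\phi_i(K) - \eps_i(K) = (\wt(K), \alpha_i)$ follows from the cyclotomic relation $y_1^{(\La,\alpha_{i_1})} e(\bi) = 0$, which forces the balance of addable minus removable $i$-nodes on any irreducible to equal $(\La-\alpha,\alpha_i)$. Since $\underline D(\varnothing)$ is a highest weight vector of weight $\La$ in $B$ and any non-trivial irreducible satisfies $e_i K \neq 0$ for some $i$ (forced by biadjointness and finite-dimensionality), the connected component of $B$ through $[\underline D(\varnothing)]$ exhausts $B$. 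Kashiwara's uniqueness theorem for the crystal of an integrable highest weight module (\cite{KaG}) then identifies $B$ with the connected component of the Fock space crystal generated by $\varnothing$, which is exactly $\RPar$ from (\ref{cpt}). Under this bijection $\la \leftrightarrow [\underline D(\la)]$, the recursion $\underline D(\la) := \soc(f_i \underline D(\tilde e_i \la))$ is independent of the choice of $i$ with $\eps_i(\la) > 0$ precisely because $\tilde f_i$ inverts $\tilde e_i$ on $B$, and properties (1)--(4) transport directly from the structural facts established in the previous paragraph.

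The hard step is the socle/head irreducibility together with the endomorphism-ring description in (4); everything else is formal once this is in place. The delicate point is controlling the nilpotent $y$-operator on $e_i K$ with enough precision to read off both its minimal polynomial (giving the exponent $\eps_i(K)$) and the fact that its generalized eigenspace structure is compatible with the socle filtration. All remaining combinatorial work is subsumed by the crystal uniqueness theorem, so no further identification of $\RPar$ as a specific set of multipartitions is needed at this stage.
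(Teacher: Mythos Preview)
The paper does not actually prove this theorem: its entire proof is a citation to Grojnowski's paper \cite{G} and Kleshchev's book \cite{Kbook}, with pointers to the specific theorems there (10.3.4 for the classification, 8.3.2 for (1)--(2), 5.5.1 and 8.5.9 for (3)--(4)). So there is nothing to compare at the level of the paper itself.

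Your sketch is, in outline, the strategy carried out in those references: establish the socle/head irreducibility and endomorphism-ring description for $e_i K$ (and dually $f_i K$), use these to put an abstract crystal structure on the union of the sets $\Irr(H^\La_\alpha)$, and then invoke uniqueness of the highest weight crystal to match it with $\RPar$. That is exactly Grojnowski's plan, and your identification of the hard step is accurate. Two small points worth flagging. First, the socle irreducibility of $e_i K$ is not a direct consequence of the Mackey sequence alone; in \cite{Kbook} it goes through an intermediate result (sometimes called Kato's theorem in this context) on the structure of $e_i^{(n)}$ applied to irreducibles, together with a Frobenius reciprocity argument, and the Mackey filtration enters only as one input. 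Second, the nilpotent operator generating $\End(e_i K)$ is multiplication by the last Jucys--Murphy element on the \emph{restricted} module $e_i K = e_{\alpha-\alpha_i,\alpha_i} K$, not an endomorphism of $F_i$; your phrasing conflates the two adjoint sides, though the underlying idea is right. Neither of these is a genuine gap, but if you were writing this up you would need to supply considerably more detail at the ``hard step'' than the Mackey sequence alone provides.
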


\begin{proof}
See \cite{G} or \cite{Kbook} which gives an exposition of
Grojnowski's methods
in the degenerate case.
More precisely, the first statement is \cite[Theorem 10.3.4]{Kbook},
then \cite[Theorem 8.3.2]{Kbook} gives (1) and (2),
and \cite[Theorems 5.5.1 and 8.5.9]{Kbook} gives (3) and (4).
\end{proof}

\begin{Remark}\rm
The results of Theorem~\ref{GTM}(1)--(4) are extended to
the divided powers
$e_i^{(n)}$ and $f_i^{(n)}$ 
of the functors $e_i$ and $f_i$ in \cite[Proposition 5.20]{CR}.
\end{Remark}

\subsection{Graded irreducible representations and branching rules}\label{sb}
Now we lift the parametrization of irreducible modules 
from $H^\La_\alpha$ to $R^\La_\alpha$
using Theorem~\ref{ISO}.

\begin{Theorem}\label{clas} For each $\la \in \RPar_\alpha$, there exists
an irreducible graded $R^\La_\alpha$-module $D(\la)$
such that
\begin{itemize}
\item[(1)] $D(\la)^\circledast \cong D(\la)$;
\item[(2)] $\underline{D(\la)} \cong \underline{D}(\la)$
as an $H^\La_\alpha$-module.
\end{itemize}
The module $D(\la)$ is determined uniquely up to isomorphism by these
two conditions. Moreover, the modules
$\{D(\la)\langle m \rangle\:|\:\la \in \RPar_\alpha, m \in \Z\}$ 
give a complete set of pairwise non-isomorphic irreducible 
graded $R^\La_\alpha$-modules.
\end{Theorem}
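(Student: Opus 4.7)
The plan is to lift Grojnowski's ungraded classification from Theorem~\ref{GTM} to the graded setting using the general graded representation theory of $\S\ref{SSGr}$ together with the identification $R^\La_\alpha \cong H^\La_\alpha$ of Theorem~\ref{ISO}. Applying Lemma~\ref{no3} to the finite-dimensional algebra $R^\La_\alpha$ and each ungraded irreducible $\underline D(\la)$, we obtain an irreducible graded lift $L(\la) \in \Rep{R^\La_\alpha}$ with $\underline{L(\la)} \cong \underline D(\la)$, unique up to isomorphism and grading shift. Combined with Lemma~\ref{no2}, which ensures that the underlying ungraded module of a graded irreducible remains irreducible, Grojnowski's complete list then yields that $\{L(\la)\langle m\rangle \mid \la \in \RPar_\alpha,\ m \in \Z\}$ is a complete set of pairwise non-isomorphic irreducible graded $R^\La_\alpha$-modules, establishing the final assertion.

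The next step is to single out a distinguished self-dual shift of $L(\la)$. Since $L(\la)^\circledast$ is irreducible graded, it must have the form $L(\la)\langle m_\la\rangle$ for some $m_\la \in \Z$, provided that $\underline D(\la)^\circledast \cong \underline D(\la)$ as an ungraded module. I would verify this by induction on $\height(\alpha)$: the base $\alpha = 0$ is trivial, while the inductive step combines the recursive definition $\underline D(\la) = \soc(\f_i \underline D(\tilde e_i \la))$ from (\ref{irreps}) with the ungraded analogue of Lemma~\ref{commutes} (that $\circledast$ commutes with $\e_i$, and hence with $\f_i$ by biadjointness of $\e_i$ and $\f_i$) and the uniqueness of the socle guaranteed by Theorem~\ref{GTM}(2). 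Replacing $L(\la)$ by $L(\la)\langle c\rangle$ sends $m_\la$ to $m_\la - 2c$, so a self-dual lift $D(\la) := L(\la)\langle m_\la/2\rangle$ exists precisely when $m_\la$ is even.

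The main obstacle is to establish that $m_\la \in 2\Z$. I would prove this by simultaneous induction on $|\alpha|$ together with the construction of $D(\la)$, the base $\alpha = 0$ being the trivial module in degree zero. For the inductive step, pick $i$ with $\eps_i(\la) > 0$, so that by induction $D(\tilde e_i\la)$ has been produced self-dual, and study the graded module $F_i D(\tilde e_i\la)$: by Lemma~\ref{no1} its graded head and graded socle are well-defined, and both lift $\underline D(\la)$ by Theorem~\ref{GTM}(2), so they are shifts $L(\la)\langle h\rangle$ and $L(\la)\langle s\rangle$ respectively. Combining the adjunction $(F_iK_i\langle -1\rangle, E_i)$ from Lemma~\ref{cad} with $\circledast\circ E_i\cong E_i\circ\circledast$ (Lemma~\ref{commutes}), the $\circledast$-dual of the embedding $L(\la)\langle s\rangle\hookrightarrow F_iD(\tilde e_i\la)$ produces a surjection which, via self-duality of $D(\tilde e_i\la)$, pins down the shifts of head and socle symmetrically about a common integer, forcing $m_\la$ to be even and simultaneously identifying the correct shift to declare $D(\la)$.

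Uniqueness of $D(\la)$ given (1) and (2) is then automatic: any other graded irreducible $M$ satisfying (2) has the form $D(\la)\langle c\rangle$ for some $c \in \Z$, and requiring $M^\circledast \cong M$ together with $D(\la)^\circledast \cong D(\la)$ yields $c = -c$, hence $c = 0$.
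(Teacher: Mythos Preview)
Your overall strategy matches the paper's proof closely: both use Lemma~\ref{no3} together with Theorem~\ref{ISO} to produce a graded lift $L(\la)$ of $\underline{D}(\la)$, both derive the completeness statement from Theorem~\ref{GTM} and Lemma~\ref{no2}, and both observe that self-duality pins down the shift uniquely. The paper simply outsources the crucial step---that a self-dual shift exists, i.e.\ that $m_\la$ is even---to \cite[\S3.2]{KL1}, whereas you try to prove it directly by induction on $\height(\alpha)$.

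The gap is in your parity argument (step~4). The tools you invoke, namely the single adjunction $(F_iK_i\langle-1\rangle,E_i)$ from Lemma~\ref{cad} and $\circledast\circ E_i\cong E_i\circ\circledast$ from Lemma~\ref{commutes}, do not suffice. The adjunction lets you compute the \emph{head} of $F_iD(\tilde e_i\la)$ in terms of the \emph{socle} of $E_iL(\la)$, and Lemma~\ref{commutes} together with self-duality of $D(\tilde e_i\la)$ gives $(E_iL(\la))^\circledast\cong E_iL(\la)\langle m_\la\rangle$, relating the socle and head of $E_iL(\la)$. Chasing this through yields only the relation $m_\la=-(a+b)$, where $D(\tilde e_i\la)\langle a\rangle$ and $D(\tilde e_i\la)\langle b\rangle$ are the socle and head of $E_iL(\la)$; it does \emph{not} tell you $a+b$ is even. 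To get at the \emph{socle} of $F_iD(\tilde e_i\la)$ via adjunction you would need $E_i$ to also be a \emph{left} adjoint, which is not available at this point in the paper (cf.\ the Remark following Lemma~\ref{pins}).

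The missing ingredient is the degree-$2$ endomorphism furnished by Theorem~\ref{GTM}(4): multiplication by $y_d$ gives a nilpotent graded endomorphism of $E_iL(\la)$ generating $\END_{R^\La_{\alpha-\alpha_i}}(E_iL(\la))\cong F[x]/(x^{\eps_i(\la)})$ with $\deg x=2$. Filtering by powers of $x$ (exactly as in the later proof of Theorem~\ref{gg}(1)) shows that the composition factors of type $D(\tilde e_i\la)$ occur with shifts $b,\,b+2,\dots,\,b+2(\eps_i(\la)-1)$; the multiplicity count from Theorem~\ref{GTM}(3) forces $a=b+2(\eps_i(\la)-1)$, whence $m_\la=-2(b+\eps_i(\la)-1)$ is even. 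This argument is independent of the self-duality of $D(\la)$ and could be inserted into your inductive step; alternatively, one can work directly with $E_iL(\la)$ throughout and avoid $F_i$ entirely.
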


\begin{proof}
By Lemma~\ref{no3} and Theorem~\ref{ISO},
there exists an irreducible graded $R^\La_\alpha$-module
$D(\la)$ satisfying (2), but this is only unique
up to isomorphism and grading shift. The fact that $D(\la)$
can be chosen so that it also satisfies (1) is explained at the end of
\cite[$\S$3.2]{KL1}. This pins down the choice of grading shift and 
makes $D(\la)$ unique up to isomorphism. 
The final statement
follows from Theorem~\ref{GTM} and Lemma~\ref{no2}.
\end{proof}

The following theorem lifts the remaining parts of Theorem~\ref{GTM}
to the graded setting.

\begin{Theorem}\label{gg}
For any $\la \in \RPar_\al$ and $i \in I$, we have:
\begin{itemize}
\item[(1)] $E_i D(\la)$ is non-zero if and only if
$\eps_i(\la) \neq 0$, in which case
$E_i D(\la)$ has irreducible 
socle  isomorphic
to $D(\tilde e_i \la)\langle \eps_i(\la)-1\rangle$
and head isomorphic to
$D(\tilde e_i \la)\langle 1-\eps_i(\la)\rangle$.
\item[(2)]
 $F_i D(\la)$ is non-zero if and only if
$\phi_i(\la) \neq 0$, in which case
$F_i D(\la)$ has irreducible 
socle isomorphic to
 $D(\tilde f_i \la) \langle \phi_i(\la)-1\rangle$
and
head isomorphic to
 $D(\tilde f_i \la) \langle 1-\phi_i(\la)\rangle$.
\item[(3)] In the Grothendieck group we have that
\begin{align*}
[E_i D(\la)] 
&= 
[\eps_i(\la)] \, [D(\tilde e_i \la)]
+ \sum_{\substack{\mu \in \RPar_{\al-\al_i}\\
\eps_i(\mu) < \eps_i(\la)-1}} u_{\mu,\la;i}(q)
[D(\mu)],\\
[F_i D(\la)] 
&= 
[\phi_i(\la)] \, [D(\tilde f_i \la)]
+ \sum_{\substack{\mu \in \RPar_{\al+\al_i} \\
\phi_i(\mu) < \phi_i(\la)-1}} v_{\mu,\la;i}(q) 
[D(\mu)],
\end{align*}
for some $u_{\mu,\la;i}(q), v_{\mu,\la;i}(q) \in 
\Z[q,q^{-1}]$ with non-negative coefficients.
(The first term on the right hand side of these formulae
should be interpreted as zero if $\eps_i(\la) = 0$ (resp.\
$\phi_i(\la) = 0$).)
\item[(4)] Viewing $F[x]$ as a graded algebra
by putting $x$ in degree 2, 
there are graded algebra isomorphisms
\begin{align*}
f:F[x] / (x^{\eps_i(\la)}) &\stackrel{\sim}{\rightarrow}
\END_{R^\La_{\al-\al_i}}(E_i D(\la)),\\
g:F[x] / (x^{\phi_i(\la)}) &\stackrel{\sim}{\rightarrow}
\END_{R^\La_{\al+\al_i}}(F_i D(\la)).
\end{align*}
\end{itemize}
\end{Theorem}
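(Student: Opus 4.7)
My plan is to prove Part~(4) directly and then deduce (1)--(3) by a simultaneous induction on $|\alpha|$, treating $M := E_i D(\la)$ and $M := F_i D(\la)$ in parallel with $N := \eps_i(\la)$ or $\phi_i(\la)$ and $\nu := \tilde e_i\la$ or $\tilde f_i\la$. For Part~(4), I would observe that the ungraded isomorphisms $f, g$ of Theorem~\ref{GTM}(4) are realized by multiplication by homogeneous degree-$2$ elements: $x := y_d$ with $d := \height(\alpha)$, acting on $e_{\alpha-\alpha_i,\alpha_i} D(\la)$ as an $R^\La_{\alpha-\alpha_i}$-endomorphism (corresponding under Theorem~\ref{ISO} to $(x_d-i_d)e(\bi)$ or $(1-\xi^{-i_d}X_d)e(\bi)$) in the $E_i$-case, and $x := y_1+\cdots+y_{d+1}$, a central degree-$2$ element of $R^\La_{\alpha+\alpha_i}$, in the $F_i$-case. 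Placing $x$ in degree~$2$ then immediately promotes the ungraded isomorphism to a graded one.

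Granting Part~(4), $\END(M) \cong F[x]/(x^N)$ is graded-local with residue field concentrated in degree $0$, so $M$ will be graded-indecomposable with irreducible graded head and socle. Theorem~\ref{GTM}(1)--(2) together with Lemma~\ref{no2} identify these as $D(\nu)\langle b\rangle$ and $D(\nu)\langle c\rangle$ for some $b, c \in \Z$, and the self-duality $D(\la)^\circledast \cong D(\la)$ from Theorem~\ref{clas}(1) combined with Lemma~\ref{commutes} for $E_i$ (and the corresponding commutation for $F_i$ noted in the remark following Lemma~\ref{pins}) gives $M^\circledast \cong M$, forcing $c = -b$. To pin down $b$ in the $E_i$-case, I would apply the adjunction of Lemma~\ref{cad} to $D(\tilde e_i\la)$ and $D(\la)$. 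By the inductive hypothesis, Part~(2) holds for $D(\tilde e_i\la)$, which has strictly smaller content; so $\mathrm{head}(F_i D(\tilde e_i\la)) = D(\la)\langle -\phi_i(\la)\rangle$ using the crystal identities $\tilde f_i\tilde e_i\la = \la$ and $\phi_i(\tilde e_i\la) = \phi_i(\la)+1$. Tracking grading shifts through $\HOM(F_i K_i\langle-1\rangle D(\tilde e_i\la), D(\la)) \cong \HOM(D(\tilde e_i\la), E_i D(\la))$ then shows that the right-hand side is $1$-dimensional and concentrated in degree $\eps_i(\la)-1$, producing an embedding $D(\tilde e_i\la)\langle\eps_i(\la)-1\rangle \hookrightarrow E_i D(\la)$ that realizes the socle; hence $c = \eps_i(\la)-1$ and $b = 1-\eps_i(\la)$. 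Part~(2) is then handled by a parallel argument leveraging Grojnowski's recursive description of $D(\tilde f_i\la)$ as the socle of $F_i D(\la)$ together with the self-duality normalization of $D(\tilde f_i\la)$ to pin down the shift.

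For Part~(3), I will consider the chain $M \supsetneq xM \supsetneq \cdots \supsetneq x^{N-1}M \supsetneq 0$, which is strict because $1, x, \ldots, x^{N-1}$ are linearly independent in $\END(M)$. Multiplication by $x^k$ yields a degree-$0$ surjection $(M/xM)\langle 2k\rangle \twoheadrightarrow x^k M / x^{k+1}M$, so each layer has head a shift of $\mathrm{head}(M) = D(\nu)\langle 1-N\rangle$, namely $D(\nu)\langle 1-N+2k\rangle$. This exhibits $D(\nu)\langle m\rangle$ as a composition factor of $M$ for each $m \in \{1-N, 3-N, \ldots, N-1\}$, giving $[N]$ as a coefficientwise lower bound on the graded multiplicity of $D(\nu)$ in $[M]$; non-negativity of the coefficients, combined with evaluation at $q=1$ to the ungraded multiplicity $N$ from Theorem~\ref{GTM}(3), forces equality. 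The remaining composition factors $D(\mu)\langle *\rangle$ satisfy $\eps_i(\mu) < \eps_i(\la)-1$ or $\phi_i(\mu) < \phi_i(\la)-1$ by Theorem~\ref{GTM}(3), and their multiplicity polynomials $u_{\mu,\la;i}(q), v_{\mu,\la;i}(q)$ have non-negative integer coefficients. The main obstacle I expect is setting up the induction for Part~(2), since $F_i D(\la)$ has content $\alpha + \alpha_i$ of larger height than $\la$; the fix is to leverage Grojnowski's recursive construction of $D(\tilde f_i\la)$ in lockstep with Part~(1) throughout the induction.
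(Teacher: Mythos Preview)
Your treatment of Part~(4) and the filtration argument for Part~(3) match the paper almost exactly, and your observation that self-duality of $E_iD(\la)$ forces the head and socle shifts to be negatives of each other is correct for the $E_i$-case.

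The genuine gap is in Part~(2). You invoke ``the corresponding commutation for $F_i$ noted in the remark following Lemma~\ref{pins}'' to conclude $F_iD(\la)^\circledast \cong F_iD(\la)$, but that remark explicitly says this commutation is either \emph{conjectural} (contingent on $R^\La_\alpha$ being graded symmetric) or requires Rouquier's formalism from \cite{Ro}, which the paper has not invoked at this point. Indeed, the paper's proof of Part~(2) opens with ``This time we do not know that $F_i$ commutes with duality, so we must give a different argument.'' Without self-duality of $F_iD(\la)$ you cannot conclude $c=-b$, and your proposed inductive fix is circular: to pin down the shift in Part~(2) for $\la$ you would need Part~(1) for $\tilde f_i\la$, which lives at \emph{larger} height. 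Your ``lockstep'' induction does not untangle this, and you acknowledge as much in the final sentence.

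The paper's resolution is to abandon induction entirely. It proves Part~(1) for \emph{all} $\la$ first, using only the self-duality of $E_iD(\la)$ (which \emph{is} available via Lemma~\ref{commutes}): the filtration shows $[E_iD(\la):D(\tilde e_i\la)]_q = q^{m+\eps_i(\la)-1}[\eps_i(\la)]$, and bar-invariance of this multiplicity forces $m=1-\eps_i(\la)$ directly---no adjunction or induction needed. Then for Part~(2) one applies the now-established Part~(1) to $\tilde f_i\la$ (which says $\soc E_iD(\tilde f_i\la) \cong D(\la)\langle \eps_i(\tilde f_i\la)-1\rangle = D(\la)\langle\eps_i(\la)\rangle$) together with the adjunction of Lemma~\ref{cad}:
\[
\HOM(D(\la),D(\la)) \cong \HOM(D(\la), E_iD(\tilde f_i\la)\langle-\eps_i(\la)\rangle) \cong \HOM(F_iD(\la)\langle d_i(\la)-1\rangle, D(\tilde f_i\la)\langle-\eps_i(\la)\rangle),
\]
from which Schur's lemma and \eqref{us} give $m=1-\phi_i(\la)$. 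Your adjunction idea is the right tool---you just applied it in the wrong direction (from smaller to larger height rather than the reverse), and your unnecessary induction in Part~(1) obscured the fact that Part~(1) can be proved outright for all $\la$ before touching Part~(2).
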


\begin{proof}
We first consider (4).
Let $d := \height(\alpha)$.
By Theorem~\ref{GTM} and (\ref{Ehom}), we have
an isomorphism $f:F[x] / (x^{\eps_i(\la)})
\stackrel{\sim}{\rightarrow}\END_{R^\La_{\alpha-\alpha_i}}(E_i D(\la))$,
but we do not know yet that this is an isomorphism of graded algebras.
For this, we go back into the proof of Theorem~\ref{GTM}(4)
to find that the map $f$ sends $x$ to the endomorphism of
$E_i D(\la) = e_{\alpha-\alpha_i,\alpha_i} D(\la)$ defined
by multiplication by $y_d$ (which centralizes
elements in the image of $\iota_{\alpha-\alpha_i,\alpha_i}$). 
Since $\deg(y_d) = 2$ this shows that $f$ is indeed
an isomorphism of graded algebras.
Similarly the isomorphism
$g:F[x] / (x^{\phi_i(\la)}) \stackrel{\sim}{\rightarrow}
\End_{R^\La_{\alpha+\alpha_i}}(F_i D(\la))$ from
Theorem~\ref{GTM}(4) maps $x$ to the endomorphism
of $F_i D(\la)$
defined by multiplication by the central element
$y_1+\cdots+y_{d+1} \in R_{\alpha+\alpha_i}^{\La}$.
So this is an isomorphism of graded algebras too.
This completes the proof of (4).

Now consider (1). We know already that
$E_i D(\la)$ is non-zero if and only if $\eps_i(\la) \neq 0$
by Theorem~\ref{GTM}(1), and moreover, assuming this is the case, 
the head (resp.\ socle) of 
$E_i D(\la)$ must be isomorphic to
$D(\tilde e_i \la) \langle m \rangle$ 
(resp.\ 
$D(\tilde e_i \la)\langle n \rangle$) 
for some $m,n \in \Z$.
Applying (4), we define a filtration
$$
\{0\} = M_{\eps_i(\la)} \subset \cdots \subset M_1\subset M_{0} = E_i D(\la)
$$
by setting $M_k := \im f(x)^k$.
By the simplicity of the head of $E_i D(\la)$, 
each section $M_{k-1} / M_{k}$ for $k=1,\dots,\eps_i(\la)$
must have irreducible head 
isomorphic to $D(\tilde e_i \la)\langle m+2k-2\rangle$.
Since $[e_i \underline{D}(\la):\underline{D}(\tilde e_i \la)] = \eps_i(\la)$
by Theorem~\ref{GTM}(3), there can be no other
composition
factors in $M_{k-1} / M_k$ that are 
isomorphic to $\underline{D}(\tilde e_i \la)$ on forgetting the grading.
This argument shows that
$$
[E_i D(\la): D(\tilde e_i \la)] = q^{m+\eps_i(\la)-1} [\eps_i(\la)].
$$
Moreover, the submodule $M_{\eps_i(\la)-1}$ at the bottom
of our filtration has irreducible socle
isomorphic to 
$D(\tilde e_i \la)\langle n \rangle$, so 
in fact $n = m+2\eps_i(\la)-2$ and 
$M_{\eps_i(\la)-1}$ is irreducible.
In view of the first part of Theorem~\ref{GTM}(3),
to complete the proof of (1) and the first part of (3), 
it remains to show that $m=1-\eps_i(\la)$.
But 
$D(\la)$ is self-dual and $E_i$ commutes with duality 
by Lemma~\ref{commutes}, hence $E_i D(\la)$ is self-dual too.
So the $q$-multiplicity $q^{m+\eps_i(\la)-1} [\eps_i(\la)]$ 
computed above
must be bar-invariant. This implies that
$m=1-\eps_i(\la)$ as required.

Finally consider (2) and the second part of (3).
Assume $\phi_i(\la) \neq 0$.
Entirely similar argument to the previous paragraph shows that
$F_i D(\la)$ has irreducible head
$D(\tilde f_i \la)\langle m\rangle$, socle
$D(\tilde f_i \la)\langle m + 2 \phi_i(\la)-2\rangle$,
and
$$
[F_i D(\la): D(\tilde f_i \la)]_q = q^{m+1-\phi_i(\la)} [\phi_i(\la)],
$$
for some $m \in \Z$. To complete the proof of the theorem,
we need to show that $m = 1-\phi_i(\la)$. 
This time we do not know that 
$F_i$ commutes with duality, so we must give a different argument
than before. By (1), we have that
$E_i D(\tilde f_i \la)\langle-\eps(\la)\rangle$
has irreducible socle isomorphic to $D(\la)$.
Hence using Lemma~\ref{cad}, we get that
\begin{align*}
\HOM_{R^\La_\alpha}(D(\la), D(\la)) &\cong\HOM_{R^\La_\alpha}(D(\la), E_i D(\tilde f_i \la)\langle -\eps_i(\la)\rangle)\\
&\cong\HOM_{R^\La_{\alpha+\al_i}}(F_i D(\la)\langle d_i(\la)-1 \rangle, 
D(\tilde f_i \la)\langle -\eps_i(\la)\rangle)\\
&\cong
\HOM_{R^\La_{\alpha+\al_i}}(D(\tilde f_i \la)\langle m+d_i(\la)-1\rangle, 
D(\tilde f_i \la)\langle-\eps_i(\la)\rangle).
\end{align*}
We deduce by Schur's lemma that
$m+d_i(\la)-1=-\eps_i(\la)$.
An application of (\ref{us}) completes the proof.
\end{proof}

\begin{Remark}\rm
It is also known that the polynomials $u_{\mu,\la;i}(q)$ and $v_{\mu,\la;i}(q)$
in Theorem~\ref{gg}(3) are bar-invariant.
This follows because $D(\la)^\circledast \cong D(\la)$
and the linear endomorphisms
of the Grothendieck group
induced by the functors
$E_i$ and $F_i$ commute with $\circledast$;
the latter statement is a consequence of
Theorem~\ref{cat} below and the bar-invariance of $E_i, F_i \in U_q(\g)$.
\end{Remark}

\subsection{\boldmath Mixed relations}
The next goal is to check that the ``mixed relation''
from (\ref{mix}) holds on the Grothendieck group.
We do this by appealing to some general results of Chuang and Rouquier
from \cite{CR} and Rouquier from \cite{Ro}.
We remark that in 
a previous version of this article, we established 
the required relations in Corollary~\ref{mixed2} (but {\em not}
the stronger Theorem~\ref{fancythm}) in a more elementary way,
using instead Theorem~\ref{gg} and a graded 
version of an argument of Grojnowski;
see e.g. \cite[Lemma~9.4.3]{Kbook}. 

To start with, following the formalism of \cite{Ro} (but inevitably using 
somewhat different notation, in particular, we have switched the roles of
$E_i$ and $F_i$),
we need to define endomorphisms
of functors
\begin{equation}\label{endos}
y:F_i \rightarrow F_i,\qquad
\psi:F_i F_j \rightarrow F_j F_i
\end{equation}
for all $i,j \in I$.
It suffices by additivity to
define natural homomorphisms
$y_M: F_i M \rightarrow F_i M$ and $\psi_M: F_i F_j(M) \rightarrow
F_j F_i(M)$
for each $M \in \Rep{R^\La_\alpha}$ and $\alpha \in Q_+$ of 
height $d$.
For this, $y_M$ is the homogeneous endomorphism of degree 2
defined by right multiplication
by $y_{d+1} \in R_{\alpha+\alpha_i}^\La$,
and $\psi_M$ is the homogeneous endomorphism of degree $-a_{i,j}$
defined by right multiplication
by $\psi_{d+1} \in R_{\alpha+\alpha_i+\alpha_j}^\La$.
See \cite[$\S$7.2]{CR} for similar definitions in the ungraded
setting.

Now, given $i \in I$,
let us denote the unit and the counit arising from the
adjunction from Lemma~\ref{cad}
by $\eta:\Id \rightarrow E_iF_i$ and
$\epsilon: F_i E_i \rightarrow \Id$, respectively.
On modules from $\Rep{R^\La_\alpha}$, 
these natural transformations
define maps that are homogeneous of degrees
$1-(\La-\alpha,\alpha_i)$ and $1 + (\La-\alpha,\alpha_i)$,
respectively.
Note also that the natural transformations
$$
\mathbf{1} (y^n) \circ \eta: \Id \rightarrow E_i F_i,
\qquad
\epsilon \circ (y^n) \bid:F_i E_i \rightarrow \Id
$$
define homogeneous maps 
of degrees $2n+1-(\La-\alpha,\alpha_i)$
and $2n+1+(\La-\alpha,\alpha_i)$, respectively, 
on modules from $\Rep{R^\La_\alpha}$.
Finally given also $j \in I$, define 
\begin{equation*}
\sigma:
F_j E_i 
\stackrel{\eta \mathbf{11}}{\longrightarrow}
E_i F_i F_j E_i
\stackrel{\mathbf{1} \psi \mathbf{1}}{\longrightarrow} E_i F_j F_i E_i
\stackrel{\mathbf{11} \epsilon}{\longrightarrow} E_i F_j,
\end{equation*}
which yields a map that is homogeneous of degree zero on every module;
cf. \cite[$\S$4.1.3]{Ro}.

\begin{Theorem}[{\cite{CR, Ro}}]\label{fancythm}
Suppose that
$\alpha \in Q_+$, $i, j \in I$, and 
set $a := (\La-\alpha,\alpha_i)$.
Let $M \in \Rep{R^\La_\alpha}$.
\begin{itemize}
\item[(1)]
If $i=j$ and $a \geq 0$ the natural transformation
$\sigma+\sum_{n=0}^{a-1} \bid (y^n) \circ \eta$
defines an isomorphism of graded modules
$$
F_j E_i(M) \oplus \bigoplus_{n=0}^{a-1} M\langle 2n+1-a\rangle
\stackrel{\sim}{\rightarrow} E_i F_j(M).
$$
\item[(2)]
If $i=j$ and $a \leq 0$ the natural transformation
$\sigma+\sum_{n=0}^{-a-1} \epsilon \circ (y^n) \bid$
defines an isomorphism of graded modules
$$
F_j E_i(M) \stackrel{\sim}{\rightarrow}
E_i F_j (M) \oplus \bigoplus_{n=0}^{-a-1} M\langle -2n-1-a\rangle.
$$
\item[(3)]
If $i \neq j$ the natural transformation $\sigma$
defines an isomorphism of graded modules
$$
F_j E_i(M) \stackrel{\sim}{\rightarrow} E_i F_j(M).
$$
\end{itemize}
\end{Theorem}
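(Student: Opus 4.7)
The plan is to recognize this theorem as a direct application of Rouquier's categorification results for 2-representations of 2-Kac-Moody algebras from \cite[Theorem 5.16]{Ro}, together with parallel results in \cite[\S 5]{CR}. The strategy is to show that the data $(E_i, F_i, y, \psi, \eta, \epsilon)$ assembles into an integrable 2-representation of the 2-Kac-Moody algebra attached to the Cartan matrix (\ref{ECM}), and then invoke the general structural theorem, which produces exactly the claimed decomposition.

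First I would verify the 2-representation axioms. The KLR-type relations among the $\psi_M$'s and $y_M$'s defined in (\ref{endos}) follow from the corresponding relations in $R^\La_\alpha$, since these endomorphisms are implemented by right multiplication by $y_{d+1}$ and $\psi_{d+1}$ and Lemma~\ref{sun} identifies $F_i$ with a functor built from the KLR generators. The integrability hypothesis in Rouquier's setup amounts to the requirement that iterated compositions of $F_i$ eventually vanish on the trivial module, which follows from the defining cyclotomic relation $y_1^{(\La,\alpha_{i_1})} e(\bi)=0$. The graded adjunction $(F_i K_i \langle -1\rangle, E_i)$ is supplied by Lemma~\ref{cad}. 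Together these put us squarely in the hypotheses of \cite[Theorem 5.16]{Ro}, whose output is precisely the natural transformation decomposition asserted in (1)--(3).

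The main obstacle is the graded bookkeeping needed to match the shifts. With $a = (\La-\alpha,\alpha_i)$, the unit $\eta:\Id \to E_iF_i$ is homogeneous of degree $1-a$, so $\bid(y^n)\circ \eta$ has degree $2n+1-a$ and contributes the summand $M\langle 2n+1-a\rangle$ in part~(1); dually $\epsilon\circ (y^n)\bid$ has degree $2n+1+a$, giving $M\langle -2n-1-a\rangle$ in part~(2). One must confirm that the composition defining $\sigma$ is homogeneous of degree zero (the shifts from $\eta$, $\psi$, and $\epsilon$ cancel, using $a_{i,j}=2$ when $i=j$), and that the direct-sum decomposition is genuinely an isomorphism of $\Z$-graded modules, not merely of ungraded modules. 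Once each component is certified to land in the correct graded piece, comparison of graded dimensions with the ungraded statement known from \cite[Proposition 5.20]{CR} (or Theorem~\ref{GTM}) pins down the isomorphism.

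The fact that (1) and (2) are compatible under $a \leftrightarrow -a$ via the bar-involution, and that the summations $\sum_{n=0}^{a-1}q^{2n+1-a}=[a]$ recover the expected numerical identity $[E_i,F_i] \leftrightarrow [a]\id$ on the Grothendieck group, serves as a reassuring cross-check on the degree assignments. With the axioms confirmed and the shifts verified, the theorem reduces to quoting Rouquier's general result.
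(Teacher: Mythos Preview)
Your approach and the paper's differ in an important way, and the paper's argument is considerably shorter, bypassing nearly all of the graded bookkeeping you outline.

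The paper's proof rests on a single observation, recorded just before the theorem statement: the natural transformation $\sigma$ is homogeneous of degree zero on every module, and the composites $\mathbf{1}(y^n)\circ\eta$ and $\epsilon\circ(y^n)\mathbf{1}$ land in exactly the graded pieces indicated by the degree shifts in the statement. Consequently the asserted maps are isomorphisms of \emph{graded} modules if and only if they are isomorphisms after forgetting the grading entirely. Via the isomorphism $R^\La_\alpha\cong H^\La_\alpha$ of Theorem~\ref{ISO}, parts (1) and (2) then reduce to the ungraded statement for cyclotomic Hecke algebras, which is precisely \cite[Theorem~5.27]{CR} (the $\mathfrak{sl}_2$-categorification axioms having been verified for that setting in \cite[\S7.2]{CR}). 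Part~(3) is deduced from (1) and (2) by \cite[\S5.3.5]{Ro}.

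Your plan---verify the 2-representation axioms directly in the graded KLR setting and invoke a general structural theorem from \cite{Ro}---is reasonable in spirit, but two points deserve comment. First, your reference \cite[Proposition~5.20]{CR} for the ungraded comparison is not the right one: that proposition concerns divided powers of the functors $e_i,f_i$, whereas the commutator isomorphism you need is \cite[Theorem~5.27]{CR}. Second, the paper does \emph{not} obtain (1)--(2) as a consequence of a general 2-Kac-Moody theorem; on the contrary, Remark~\ref{fancyrem} presents the 2-representation structure as a consequence of Theorem~\ref{fancythm}, not a route to it. So while your abstract strategy may well go through, it requires you to identify precisely which theorem in \cite{Ro} delivers (1)--(2) from only the KLR relations, a single adjunction, and integrability, and to check its hypotheses carefully. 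The paper sidesteps all of this: the degree-zero observation makes the graded statement an immediate corollary of the already-established ungraded one, so the careful shift-tracking you propose is unnecessary.
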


\begin{proof}
Since the maps in all cases are homogeneous of degree zero,
it suffices to prove the theorem on forgetting the gradings
everywhere. For (1) and (2), 
using Theorem~\ref{ISO},
we reduce to establishing the
analogous isomorphism 
for $H^\La_\alpha$, which is proved in \cite[Theorem 5.27]{CR} (using also
\cite[$\S$7.2]{CR} which verifies that the axioms
of $\mathfrak{sl}_2$-categorification are satisfied in that setting).
Once (1) and (2) have been established, (3)
follows by \cite[$\S$5.3.5]{Ro}.
\end{proof}

\begin{Corollary}\label{mixed2}
For all $i, j \in I$ and $\alpha \in Q_+$, we have that
$$[E_i,F_j]=\de_{i,j}\frac{K_i-K_i^{-1}}{q-q^{-1}}$$
as 
endomorphisms of $[\Rep{R_\alpha^\La}]$.
\end{Corollary}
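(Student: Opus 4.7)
The plan is to deduce the commutator relation in the Grothendieck group directly from the module-level isomorphisms of Theorem~\ref{fancythm}, taking careful account of the grading shifts. Fix $M \in \Rep{R_\alpha^\La}$ and set $a := (\La-\alpha, \alpha_i)$. Under the identification of $[\Rep{R^\La_\alpha}]$ with the corresponding weight space of the representation being categorified, the operator $K_i$ acts on $[M]$ as multiplication by $q^a$, so that
$$
\frac{K_i - K_i^{-1}}{q-q^{-1}}[M] = \frac{q^a - q^{-a}}{q-q^{-1}}[M] = [a]\,[M],
$$
with the convention $[a] := (q^a - q^{-a})/(q-q^{-1})$ for arbitrary $a \in \Z$ (so $[-a] = -[a]$).

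For $i \neq j$, Theorem~\ref{fancythm}(3) gives a graded isomorphism $F_j E_i(M) \iso E_i F_j(M)$, so $[E_i, F_j][M] = 0 = \delta_{i,j}[a][M]$. For $i = j$ with $a \geq 0$, Theorem~\ref{fancythm}(1) yields the identity
$$
[E_i F_j(M)] - [F_j E_i(M)] = \sum_{n=0}^{a-1} q^{2n+1-a}[M] = (q^{1-a} + q^{3-a} + \cdots + q^{a-1})[M] = [a]\,[M]
$$
in $[\Rep{R^\La_\alpha}]$, using $[M\langle m \rangle] = q^m[M]$ from (\ref{obvious}). For $i = j$ with $a \leq 0$, Theorem~\ref{fancythm}(2) gives instead
$$
[F_j E_i(M)] - [E_i F_j(M)] = \sum_{n=0}^{-a-1} q^{-2n-1-a}[M] = [-a]\,[M],
$$
hence $[E_i, F_j][M] = -[-a][M] = [a][M]$.

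In all cases the identity $[E_i, F_j][M] = \delta_{i,j}[a][M]$ holds, and since the classes $[M]$ for $M \in \Rep{R^\La_\alpha}$ span $[\Rep{R^\La_\alpha}]$ as an $\Laurent$-module, this proves the corollary. The only ``obstacle'' is the bookkeeping of grading shifts in the two quantum-integer sums, which collapses neatly because the shifts $2n+1-a$ (respectively $-2n-1-a$) range symmetrically about $0$ to give exactly $[a]$ (respectively $-[-a] = [a]$); everything nontrivial has already been packaged into Theorem~\ref{fancythm}.
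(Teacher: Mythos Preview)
Your proof is correct and is exactly the argument the paper intends: Corollary~\ref{mixed2} is stated immediately after Theorem~\ref{fancythm} without proof, since passing to the Grothendieck group and summing the grading shifts is routine. You have simply written out the bookkeeping that the paper leaves implicit.
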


\subsection{The graded categorification theorem}\label{sc}
Like in (\ref{like}), let us abbreviate
\begin{equation}\label{likely}
[\Proj{R^\La}] := \bigoplus_{\alpha \in Q_+} [\Proj{R_\alpha^\La}],
\qquad
[\Rep{R^\La}] := \bigoplus_{\alpha \in Q_+} [\Rep{R_\alpha^\La}].
\end{equation}
The exact functors $E_i^{(n)}, F_i^{(n)}$ and $K_i$ induce
$\Laurent$-linear endomorphisms of the
Grothendieck groups $[\Proj{R^\La}]$ and $[\Rep{R^\La}]$.
In view of Theorem~\ref{clas}, $[\Rep{R^\La}]$
is a free $\Laurent$-module on basis
$\{[D(\la)]\:|\:\la \in \RPar\}$.
Also 
let $Y(\la)$ denote the projective cover of $D(\la)$
in $\Rep{R^\La_\alpha}$, for each $\la \in \RPar_\alpha$.
Thus there is a degree-preserving surjection
$$
Y(\la) \twoheadrightarrow D(\la).
$$
The classes $\{[Y(\la)]\:|\:\la \in \RPar\}$ give a basis
for $[\Proj{R^\La}]$ as a free $\Laurent$-module.
Note by Corollary~\ref{dualpims} and (\ref{comp}) that
\begin{equation}
Y(\la)^\# \cong Y(\la).
\end{equation}

Comparing the relations of $U_q(\g)$ and $\mathbf f$, 
it follows easily that there
is an algebra anti-homomorphism 
\begin{equation}\label{flatmap}
\mathbf f \rightarrow U_q(\g),
\qquad
x \mapsto x^\flat
\end{equation}
such that $\theta_i^\flat := q^{-1} F_i K_i$ (which is the same
thing as $\tau^{-1}(E_i)$ according to (\ref{tauniv})).

\begin{Proposition}\label{id1}
There is a unique $\Laurent$-module isomorphism
$\db$ making the following diagram commute
$$
\begin{CD}
{_\Laurent}\mathbf f&@>\sim>\ga>&[\Proj{R}]\\
@V\beta VV&&@VV\pr V\\
V(\La)_\Laurent&@>\sim>\db>&[\Proj{R^\La}],
\end{CD}
$$
where $\beta$ denotes the surjection
$x \mapsto x^\flat v_\La$, $\ga$ is the isomorphism
from Theorem~\ref{klthm}, and $\pr$
is the $\Laurent$-linear map induced by the additive
functor $\pr$ from (\ref{prprpr}).
Moreover, $\db$ intertwines the left action
of $F_i^{(n)} \in U_q(\g)_\Laurent$ on $V(\La)_\Laurent$
with the endomorphism of $[\Proj{R^\La}]$
induced by the divided power functor $F_i^{(n)}$,
for every $i \in I$ and $n \geq 1$.
It obviously intertwines the $K_i$'s too.
\end{Proposition}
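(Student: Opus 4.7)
The plan is to upgrade $[\Proj{R^\La}]$ to an integrable highest-weight $U_q(\g)_\Laurent$-module, construct $\delta$ via the universal property of $V(\La)_\Laurent$, verify the diagram commutes by induction using a key intertwining formula for right multiplication by $\theta_i^{(m)}$, and extract bijectivity from a rank count. The main obstacle will be checking that $[\Proj{R^\La}]$ really carries such a structure --- specifically the cyclotomic vanishing $F_i^{((\La,\al_i)+1)}[R^\La_0]=0$ and the mixed relation $[E_i,F_j]=\de_{ij}(K_i-K_i^{-1})/(q-q^{-1})$, the latter being precisely the content of Theorem~\ref{fancythm}.

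Since $\flat$ is an anti-homomorphism with $\theta_i \mapsto q^{-1}F_iK_i$, iteration using $K_iF_i = q^{-2}F_iK_i$ yields $(\theta_i^{(m)})^\flat = q^{-m^2}F_i^{(m)}K_i^m$, so for $y \in {_\Laurent}\mathbf f_\al$,
\[
\beta(y\,\theta_i^{(m)}) \;=\; q^{m(\La-\al,\al_i)-m^2}\,F_i^{(m)}\beta(y).
\]
On the Grothendieck group side, Theorem~\ref{klthm} gives $\gamma(y\,\theta_i^{(m)}) = \theta_i^{(m)}\gamma(y)$ in $[\Proj{R}]$. A divided-powers extension of Lemma~\ref{sun} --- obtained from the adjunction $(\pr,\infl)$, the identity $(\theta_i^*)^{(m)}\infl = \infl\, E_i^{(m)}$ (immediate from the definitions), and the divided-power version of Lemma~\ref{cad} giving $F_i^{(m)}K_i^m\langle -m^2\rangle$ as left adjoint to $E_i^{(m)}$ --- produces a natural isomorphism $\pr\circ\theta_i^{(m)} \cong F_i^{(m)}K_i^m\langle -m^2\rangle\circ\pr$ of functors $\Mod{R_\al} \to \Mod{R^\La_{\al+m\al_i}}$. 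Passing to classes,
\[
\pr\gamma(y\,\theta_i^{(m)}) \;=\; q^{m(\La-\al,\al_i)-m^2}\,F_i^{(m)}\,\pr\gamma(y),
\]
matching exactly the formula for $\beta$.

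Next I verify $[R^\La_0]$ satisfies the defining relations of the highest-weight vector of $V(\La)_\Laurent$: $E_i[R^\La_0]=0$ (as $R^\La_{-\al_i}=0$) and $K_i[R^\La_0] = q^{(\La,\al_i)}[R^\La_0]$ are immediate, while $F_i^{(n_i+1)}[R^\La_0]=0$ with $n_i := (\La,\al_i)$ reduces via the compatibility formula above to the vanishing of $[\pr P(i^{(n_i+1)})]$, which follows from $R^\La_{(n_i+1)\al_i}=0$ --- a direct algebraic check using the cyclotomic relation $y_1^{n_i}e(i^{n_i+1})=0$ together with the nil-Hecke identities to force $e(i^{n_i+1})=0$ in the quotient (equivalently, $\RPar_{(n_i+1)\al_i}=\emptyset$). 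Coupled with the Serre relations for the $F_i^{(n)}$'s on $[\Proj{R^\La}]$ (descended via $\pr$ from Theorem~\ref{klthm}), the $K$-$F$ commutations (immediate from the weight-dependent degree shifts), and the mixed relation from Theorem~\ref{fancythm}, the universal property of $V(\La)_\Laurent$ produces a unique $\Laurent$-linear map $\delta$ with $\delta(v_\La)=[R^\La_0]$ intertwining every $F_i^{(m)}$ and $K_i$.

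For commutativity of the square I argue by induction on $\height(\al)$: $\delta\circ\beta$ and $\pr\circ\gamma$ agree at $1$ (both equal $[R^\La_0]$) and transform identically under right multiplication by each $\theta_i^{(m)}$ by the matching compatibility formulas, hence agree on all of ${_\Laurent}\mathbf f$. Uniqueness of $\delta$ is forced by the surjectivity of $\beta$. Finally, $\delta$ is surjective (since $\pr\circ\gamma=\delta\circ\beta$ is), and both $V(\La)_{\La-\al,\Laurent}$ and $[\Proj{R^\La_\al}]$ are free $\Laurent$-modules of the same rank $|\RPar_\al|$ --- the former via the Fock space crystal realization of $\S$\ref{scc} (with basis indexed by $\{D_\la : \la\in\RPar_\al\}$), the latter by Theorem~\ref{clas} --- so $\delta$ is automatically bijective.
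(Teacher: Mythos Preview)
Your proof is correct, but it takes a somewhat more roundabout route than the paper's and invokes more machinery than is needed. The paper argues directly that $\pr\circ\gamma$ factors through $\beta$: since $\ker\beta$ is generated as a \emph{right ideal} of ${_\Laurent}\mathbf f$ by the elements $\theta_i^{((\La,\al_i)+1)}$, it suffices to check that $\ker(\pr\circ\gamma)$ is a right ideal (which follows from Frobenius reciprocity for $\Ind$) and that each generator $\theta_i^{((\La,\al_i)+1)}$ is killed by $\pr\circ\gamma$ (which follows from Theorem~\ref{gg}(3), since $\phi_i(\varnothing)=(\La,\al_i)$). The intertwining with $F_i^{(n)}$ then ``follows from the definitions'' --- essentially your compatibility formula. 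In particular the paper does \emph{not} need Theorem~\ref{fancythm} (the mixed relation) at this stage; that theorem is only used later in the proof of Theorem~\ref{cat}.

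Your approach instead attempts to equip $[\Proj{R^\La}]$ with a full $U_q(\g)_\Laurent$-module structure and then invoke the universal property of $V(\La)_\Laurent$ as an integrable highest-weight module. This is valid, and your adjunction argument for $\pr\circ\theta_i^{(m)} \cong F_i^{(m)}K_i^m\langle -m^2\rangle\circ\pr$ is a nice way to get the compatibility formula. But note two things. First, to genuinely use the $U_q(\g)_\Laurent$-module universal property you would also need the Serre relations for the $E_i$'s and the $K$--$E$ commutations, which you do not mention; these are easy but should be noted. Second, and more to the point, the weaker universal property of $V(\La)_\Laurent$ as a cyclic $U_q(\g)^-_\Laurent$-module (equivalently, the right-ideal presentation of $\ker\beta$) already suffices and requires only the $F$-Serre relations together with your vanishing $F_i^{(n_i+1)}[R^\La_0]=0$ --- no mixed relation needed. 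Once you observe this, your separate ``verify the diagram by induction'' step becomes the entire argument, and the invocation of Theorem~\ref{fancythm} can be dropped.
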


\begin{proof}
We first show there exists an
$\Laurent$-module homomorphism
$\db$
making the diagram commute.
Remembering that the map $\beta$ involves a twist
by the anti-homomorphism $x \mapsto x^\flat$, 
the well-known description of
$V(\La)$ by generators and relations
implies 
that $\ker\beta$ is generated as a right ideal
by the elements
$\{\theta_i^{((\La,\alpha_i)+1)}\:|\:i\in I\}$.
Therefore it suffices to check that
$\ker \pr$ is a right ideal of the algebra
$[\Proj{R}]$ and that
$\pr (\ga(\theta_i^{((\La,\alpha_i)+1)})) = 0$ 
for each $i$.
To show that $\ker \pr$ is a right ideal, take $P \in \Proj{R_\alpha}$ and
$Q \in \Proj{R_\beta}$ 
such that
$\pr\,P = \{0\}$. We need to show that
$\pr\,\Ind_{\alpha,\beta}^{\alpha+\beta} (P \boxtimes Q) = \{0\}$, 
or equivalently, that
$\HOM_{R_{\alpha+\beta}}(\Ind_{\alpha,\beta}^{\alpha+\beta} (P \boxtimes Q),
L) = \{0\}$
for every $R_{\alpha+\beta}^\La$-module $L$.
This follows from the isomorphism
$$
\HOM_{R_{\alpha+\beta}}(\Ind_{\alpha,\beta}^{\alpha+\beta}(P \boxtimes Q), L)
\cong \HOM_{R_{\alpha,\beta}}(P \boxtimes Q, \Res_{\alpha,\beta}^{\alpha+\beta}
L).
$$
To show that $\pr (\ga(\theta_i^{((\La,\alpha_i)+1)})) = 0$,
we show equivalently that
$$
\pr\ \theta_i^{(\La,\alpha_i)+1} \infl (D(\varnothing)) = \{0\}.
$$
In view of Lemma~\ref{sun}, this follows
if we can show that
$F_i^{(\La,\alpha_i)+1} D(\varnothing) = \{0\}$.
This holds thanks to Theorem~\ref{gg}(3)
since $\phi_i(\varnothing) = (\La,\alpha_i)$.

The map $\pr$ is obviously surjective, and $\ga$ is an isomorphism,
hence we get that $\db$ is surjective.
It sends the $(\La-\alpha)$-weight space of
$V(\La)_\Laurent$ onto $[\Proj{R^\La_\alpha}]$.
It is an isomorphism because
$[\Proj{R^\La_\alpha}]$ is a free $\Laurent$-module
of rank $|\RPar_\alpha|$
thanks to Theorem~\ref{clas}, which is the same as the rank
of the $(\La-\alpha)$-weight space of $V(\La)_\Laurent$.

The fact that $\db$ intertwines the $F_i^{(n)}$'s
follows from the definitions.
\end{proof}

Recall the Cartan pairing
$\langle.,.\rangle:[\Proj{R^\La}] \times [\Rep{R^\La}]
\rightarrow \Laurent$
from $\S$\ref{SSGr}
and the Shapovalov pairing
$\langle.,.\rangle:V(\La)_\Laurent \times V(\La)_\Laurent^*
\rightarrow \Laurent$
from $\S$\ref{md}.
Let
\begin{equation}\label{dstar}
\dd: [\Rep{R^\La}] \stackrel{\sim}{\rightarrow}
V(\La)_\Laurent^*
\end{equation}
be the dual map
to the isomorphism 
$\db$ of Proposition~\ref{id1}
with respect to these pairings.
Thus, $\dd$ is the $\Laurent$-module isomorphism
defined by the
equation 
\begin{equation}\label{itsid}
\langle \db(x), y \rangle = \langle x, \dd(y)\rangle
\end{equation}
for all $x \in V(\La)_\Laurent^*$ and $y \in [\Rep{R^\La}]$.

Also let
\begin{equation}
\circledast: [\Rep{R^\La}] \rightarrow [\Rep{R^\La}]
\end{equation}
be the anti-linear involution induced by the duality
$\circledast$.

\begin{Proposition}\label{id2}
The isomorphism $\dd$ from (\ref{dstar}) intertwines 
the endomorphism of $[\Rep{R^\La}]$
induced by the divided power functor $E_i^{(n)}$
with
the left action
of $E_i^{(n)} \in U_q(\g)_\Laurent$ on $V(\La)_\Laurent^*$,
for every $i \in I$ and $n \geq 1$.
It obviously intertwines the $K_i$'s too.
Finally, it intertwines the
anti-linear involution $\circledast$ with
the bar-involution
on $V(\La)^*_\Laurent$.
\end{Proposition}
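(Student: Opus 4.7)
The plan is to exploit the uniqueness of compatible anti-linear involutions on irreducible integrable highest weight modules. I will show that both $\dd \circ \circledast \circ \dd^{-1}$ and the bar-involution on $V(\La)^*_\Laurent$ are $\Laurent$-anti-linear involutions commuting with the actions of $E_i$ and $F_i$, carrying $K_i$ to $K_i^{-1}$, and fixing the highest weight vector $v_\La$. Their product is then a $\Laurent$-linear $U_q(\g)_\Laurent$-module endomorphism of $V(\La)^*_\Laurent$ fixing $v_\La$, and after extending $\Q(q)$-linearly, Schur's lemma forces it to be the identity, giving the desired intertwining upon restriction.

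To carry this out, I first need to know that $\dd$ is a full $U_q(\g)_\Laurent$-module isomorphism, not merely an intertwiner for $E_i^{(n)}$ and $K_i$. The Grothendieck group $[\Rep{R^\La}]$ carries actions of $E_i, F_i, K_i^{\pm 1}$ via the corresponding functors, and the defining relations of $U_q(\g)_\Laurent$ hold: quantum Serre is inherited by transport through Proposition~\ref{id1} from Theorem~\ref{klthm}, while the mixed relation is Corollary~\ref{mixed2}. A direct check against \eqref{itsid} shows $\dd([D(\varnothing)]) = v_\La$, and combined with the intertwining of $E_i^{(n)}$ and $K_i$ already established, the irreducibility of $V(\La)^*_\Laurent$ over $\Q(q)$ forces $\dd$ to intertwine $F_i^{(n)}$ as well. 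Next I would verify that $\circledast$ on $[\Rep{R^\La}]$ is a compatible anti-linear involution: involutivity and $\Laurent$-anti-linearity are built into the construction; $\circledast$ fixes $[D(\varnothing)]$ by Theorem~\ref{clas}(1); anti-commutation with the grading shift $K_i$ is immediate; and commutation with $E_i$ is Lemma~\ref{commutes}.

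The hard part will be the commutation of $\circledast$ with $F_i$. The paper itself flags this as requiring non-trivial input --- either via a conjectural graded symmetric algebra structure on $R^\La_\alpha$, or more robustly by appealing to \cite[Theorem~5.16]{Ro} through the $2$-Kac--Moody formalism --- and I would invoke the latter. Once it is in hand, the involutions $\dd \circ \circledast \circ \dd^{-1}$ and the bar-involution on $V(\La)^*_\Laurent$ share identical compatibility with the $U_q(\g)_\Laurent$-action and both fix $v_\La$. Their composition is a $\Laurent$-linear $U_q(\g)_\Laurent$-module endomorphism of $V(\La)^*_\Laurent$ sending $v_\La$ to $v_\La$; extending scalars to $\Q(q)$ and applying Schur's lemma forces it to be the identity, and hence $\dd \circ \circledast = \overline{\,\cdot\,} \circ \dd$ on $V(\La)^*_\Laurent$, which is the claim.
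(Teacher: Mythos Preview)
Your approach to the bar-involution claim is correct in outline but takes a genuinely different, and heavier, route than the paper's.

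The paper's argument for the $\circledast$/bar statement uses only the $E_i$-compatibility already established in the first part of the proposition. Since $E_i$ commutes with $\dd$, with the bar-involution (trivially), and with $\circledast$ (Lemma~\ref{commutes}, which is easy), induction on $\height(\alpha)$ gives $E_i\bigl(\dd(v^\circledast)-\overline{\dd(v)}\bigr)=0$ for every $i$; hence $\dd(v^\circledast)-\overline{\dd(v)}$ is a highest weight vector of weight $\neq\La$ and so vanishes. No $F_i$-compatibility is needed anywhere.

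Your argument, by contrast, requires two substantially harder ingredients. First, that $\dd$ intertwines $F_i$: this is essentially the content of Theorem~\ref{cat}(2), which the paper proves \emph{after} this proposition by exactly the kind of inductive highest-weight argument your sentence ``irreducibility forces $\dd$ to intertwine $F_i$'' is hiding; your justification here is too brief to stand alone. Second, that $\circledast$ commutes with $F_i$ on $[\Rep{R^\La}]$: the paper explicitly notes (in the Remark following Lemma~\ref{pins} and in Remark~\ref{fancyrem}) that this needs either a then-conjectural graded symmetric structure on $R^\La_\alpha$ or an appeal to \cite[Theorem~5.16]{Ro}, and the authors organize the argument precisely to avoid relying on it at this stage. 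So your Schur's-lemma conclusion is valid, but you are importing external machinery and anticipating later results that the paper's proof sidesteps entirely. The paper's route buys self-containment and logical cleanliness; yours would buy a symmetric treatment of $E_i$ and $F_i$ at the cost of a forward dependence on Rouquier's $2$-representation formalism.

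One further remark: your proposal does not actually address the first claim of the proposition (that $\dd$ intertwines $E_i^{(n)}$), taking it as ``already established.'' The paper does prove this, via the adjunction of Lemma~\ref{cad} together with $\tau^{-1}(E_i)=q^{-1}F_iK_i$ and the defining property of the Shapovalov form; you should at least indicate this step.
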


\begin{proof}
For the first statement, it suffices by Lemma~\ref{divp} to show that
$E_i \circ \dd = \dd \circ E_i$.
This is immedate from Proposition~\ref{id1} and 
the defining property (1) of the Shapovalov form from $\S$\ref{md}, 
because the functor $E_i$ is
right adjoint to $F_i K_i \langle-1\rangle$
by Lemma~\ref{cad}, while 
$\tau^{-1}(E_i) = q^{-1} F_i K_i$
according to (\ref{tauniv}).

For the last statement, we show that
$\dd(v^\circledast) - \overline{\dd(v)} = 0$
for each $v \in [\Rep{R^\La_\alpha}]$ by induction on
height. This is clear in the case $\alpha = 0$.
Now take $\alpha\in Q_+$ with $\height(\alpha) > 0$.
Since $E_i$ commutes with $\dd$, with the bar-involution,
and with the duality $\circledast$ by Lemma~\ref{commutes},
we get from the induction hypothesis
that
$E_i(\dd(v^\circledast) - \overline{\dd(v)}) = 0$ 
for every $i \in I$. Hence,
$\dd(v^\circledast) - \overline{\dd(v)}$ is a highest weight vector
of weight different from $\La$, so it is zero.
\end{proof}

Now we can prove the following fundamental theorem 
which makes precise a sense in which
$\Proj{R^\La}$ {categorifies} the $U_q(\g)_\Laurent$-module
$V(\La)_\Laurent$ and
$\Rep{R^\La}$ {categorifies} 
$V(\La)_\Laurent^*$.

\begin{Theorem}\label{cat} The following diagram commutes:
$$
\begin{CD}
V(\La)_\Laurent&@>\sim > \db>&[\Proj{R^\La}]\\
@Va VV&&@VVbV\\
V(\La)_\Laurent^*&@<\sim <\dd<&[\Rep{R^\La}],
\end{CD}
$$
where $a:V(\La)_\Laurent \hookrightarrow V(\La)_\Laurent^*$
is the canonical inclusion,
and
$b:[\Proj{R^\La}]  \rightarrow [\Rep{R^\La}]$
is the $\Laurent$-linear map induced by the natural inclusion
of $\Proj{R_\al^\La}$ into $\Rep{R^\La_\al}$ for each $\al \in Q_+$.
Hence:
\begin{itemize}
\item[(1)] $b$ is injective and becomes an isomorphism
over $\Q(q)$;
\item[(2)] both maps $\db$ and $\dd$ 
commute with the actions of $E_i^{(n)}, F_i^{(n)}$ and $K_i$;
\item[(3)] 
both maps $\db$ and $\dd$ intertwine
the involution $\circledast$ coming from
duality with the bar-involution;
\item[(4)] the isomorphism $\db$ 
identifies 
the Shapovalov form on $V(\La)_\Laurent$
with Cartan form on $[\Proj{R^\La}]$.
\end{itemize}
\end{Theorem}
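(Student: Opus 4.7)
The plan is to first prove the commutativity of the diagram, $\dd \circ b \circ \db = a$, since statements (1)--(4) will then follow as direct corollaries. By the defining relation~(\ref{itsid}), commutativity is equivalent to the form identity of statement (4), namely $\langle v, w\rangle_S = \langle \db(v), b(\db(w))\rangle_C$ for all $v, w \in V(\La)_\Laurent$, where $\langle .,.\rangle_S$ is the Shapovalov form and $\langle .,.\rangle_C$ is the Cartan pairing. The base case $v = w = v_\La$ will be established directly: $\db(v_\La) = [R^\La_0]$ by Proposition~\ref{id1}, while $\qdim\,\END(R^\La_0) = 1 = \langle v_\La, v_\La\rangle_S$. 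A parallel weight-space computation using~(\ref{itsid}) with $x = v_\La$ gives $\dd([R^\La_0]) = v_\La$.

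The main body of the argument will be an induction that peels off divided-power $F_i^{(n)}$'s from the left argument of each form, converting them into $E_i^{(n)}$'s on the right. The key preliminary observation is that both forms satisfy matching adjoint identities: for $v$ in the $(\La - \alpha)$-weight space,
\begin{align*}
\langle F_i^{(n)} v, w\rangle_S &= q^{n(\La-\alpha,\alpha_i) - n^2}\langle v, E_i^{(n)}w\rangle_S,\\
\langle F_i^{(n)}[P], [M]\rangle_C &= q^{n(\La-\alpha,\alpha_i) - n^2}\langle [P], E_i^{(n)}[M]\rangle_C,
\end{align*}
the first by $\langle uv, w\rangle_S = \langle v, \tau(u)w\rangle_S$ with $\tau(F_i) = q^{-1}K_iE_i$ and iteration for divided powers using $K_i^n E_i = q^{2n}E_iK_i^n$, the second by iteration of the adjunction $(F_iK_i\langle -1\rangle, E_i)$ from Lemma~\ref{cad}. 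Writing $v$ as a monomial in the $F_i^{(n)}$'s applied to $v_\La$ and peeling these off via the matched adjunctions will then reduce the form identity to the base case, provided that $\db$ intertwines the resulting $E_i^{(n)}$'s that appear on the right (since $b$ trivially does).

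The crucial new input is to establish that $\db$ intertwines the action of $E_i$ on $V(\La)_\Laurent$ with the endomorphism of $[\Proj{R^\La}]$ induced by the functor $E_i$. Working over $\Q(q)$, I will set $\widetilde E_i := \db^{-1}\circ E_i\circ \db$ on $V(\La)$. The mixed relation Corollary~\ref{mixed2} carries over to $[\Proj{R^\La}]$, since the natural isomorphisms of Theorem~\ref{fancythm} apply to any $M\in\Rep{R^\La_\alpha}$ and projective $R^\La_\alpha$-modules are finite dimensional by Theorem~\ref{ISO}. Combined with the intertwining of $F_j$ and $K_j$ from Proposition~\ref{id1}, this will yield $[\widetilde E_i, F_j] = \db^{-1}[E_i, F_j]\db = \delta_{ij}(K_i-K_i^{-1})/(q-q^{-1}) = [E_i, F_j]$, so that $\Delta_i := \widetilde E_i - E_i$ commutes with every $F_j$. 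Moreover $\Delta_i v_\La = 0$, since $E_i[R^\La_0] = 0$ by weight. Because $V(\La)$ is generated by $v_\La$ under the $F_j$'s and $\Delta_i$ commutes with them, $\Delta_i \equiv 0$, i.e., $\widetilde E_i = E_i$; dividing by $[n]!$ then lifts this to commutation with divided powers over $\Laurent$.

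With the commuting diagram in hand, the remaining statements will follow readily. For (1), injectivity of $b$ follows from injectivity of $a$ via the isomorphism properties of $\db$ and $\dd$, and equality of $\Laurent$-ranks ($|\RPar_\alpha|$ for both $[\Proj{R^\La_\alpha}]$ and $[\Rep{R^\La_\alpha}]$) gives the isomorphism over $\Q(q)$. For (2), over $\Q(q)$ the identity $b\circ \db = \dd^{-1}$ combined with Propositions~\ref{id1} and~\ref{id2} and the naturality of $b$ transfers all remaining intertwining properties among $\db$, $b$, $\dd$. For (3), the corresponding statement for $\db$ follows from Proposition~\ref{id2} by the same transfer argument, since $b$ commutes with $\circledast$. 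The main obstacle is the rigidity argument in the key step: it relies essentially on the availability of the mixed relation Corollary~\ref{mixed2} (itself depending on Theorem~\ref{fancythm} of Chuang--Rouquier and Rouquier), together with the standard fact that $V(\La)$ is generated by $v_\La$ under the action of the $F_j$'s.
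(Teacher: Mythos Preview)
Your proof is correct and uses essentially the same ingredients as the paper (Propositions~\ref{id1} and~\ref{id2}, Lemma~\ref{cad}, and crucially the mixed relation Corollary~\ref{mixed2}), but the logical organization is genuinely different. The paper proves commutativity $\hat\dd\hat b\hat\db = \id$ by induction on the height of $\alpha$: at the inductive step it verifies $\hat\dd\hat b\hat\db(F_j w) = F_j w$ by pairing against an arbitrary $F_i v$ via the Shapovalov form, and the mixed relation is invoked inside that pairing computation to commute $E_i$ past $F_j$. The intertwining of $E_i$ with $\db$ is then read off as a consequence of the commuting square. You instead isolate that intertwining as the key lemma and prove it first by a clean rigidity argument: the operator $\Delta_i = \widetilde E_i - E_i$ commutes with every $F_j$ (by the mixed relation on $[\Proj{R^\La}]$, which indeed holds since the functorial isomorphisms of Theorem~\ref{fancythm} apply to finite-dimensional projectives) and kills $v_\La$, hence vanishes on all of $V(\La)$. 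Once $\db$ is known to intertwine both $F_i$ and $E_i$, the form identity~(4) follows by a straight adjunction-peeling computation with no further induction needed.

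What your route buys is a cleaner separation of concerns: the $E_i$-intertwining becomes a standalone fact with its own one-line proof, and everything else is formal. What the paper's route buys is economy: it never needs to state or prove that $\db$ intertwines $E_i$ as an intermediate step, getting commutativity and all of (1)--(4) in one inductive sweep. Both arguments are of comparable length and rely on exactly the same external inputs.
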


\begin{proof}
Everything in sight is a free $\Laurent$-module, so it
does no harm to extend scalars from $\Laurent$ to $\Q(q)$.
Denote the resulting $\Q(q)$-linear maps
by $\hat a, \hat b, \hat \db$ and $\hat \dd$.
Actually, we may as well identify
$\Q(q) \otimes_{\Laurent} V(\La)_\Laurent$
and $\Q(q) \otimes_{\Laurent} V(\La)_\Laurent^*$
both with $V(\La)$, so that $\hat a$ is just the identity map,
and then we need to show the following diagram commutes:
$$
\begin{CD}
V(\La)&@>\sim > \hat\db>&\Q(q)\otimes_\Laurent[\Proj{R^\La}]\\
@|&&@VV\hat bV\\
V(\La)&@<\sim <\hat\dd<&\Q(q)\otimes_\Laurent[\Rep{R^\La}].
\end{CD}
$$
Note also that $\hat b$ obviously commutes with $E_i^{(n)},
F_i^{(n)}, K_i$ and $\circledast$,
$\hat\db$ commutes with $F_i^{(n)}$ and $K_i$
by Proposition~\ref{id1},
and $\hat\dd$ commues with $E_i^{(n)}, K_i$
and $\circledast$ by Proposition~\ref{id2}.
Hence (1), (2) and (3) all follow easily 
from the commutativity of this diagram.
Also once the commutativity is established, 
(4) follows immediately by (\ref{itsid}).

To prove that the above diagram commutes, we show by induction that
it commutes on restriction to the $(\La-\alpha)$-weight
spaces for each $\alpha \in Q_+$.
The diagram obviously commutes on restriction to 
the highest weight space, so
assume now that $\alpha > 0$ and that we have already established
the commutativity on restriction to the $(\La-\beta)$-weight spaces
for all $0 \leq \beta < \alpha$.
It suffices to show that
$\hat \dd \hat b \hat \db (F_j w) = F_j w$
for all $j \in I$ and $w$ in the $(\La-\alpha+\alpha_j)$-weight space
of $V(\La)$.
This follows if we can check that
\begin{equation}\label{tocjeck}
\left\langle F_i v, \hat \dd \hat b \hat \db (F_j w) \right\rangle
= \left\langle F_i v, F_j w \right\rangle
\end{equation}
for all $i \in I$ and $v$ in the $(\La-\alpha+\alpha_i)$-weight space
of $V(\La)$.
For this we compute using the defining property
of the Shapovalov form, Propositions~\ref{id1}
and \ref{id2}, and Corollary~\ref{mixed2}:
\begin{align*}
\left\langle F_i v, \hat \dd \hat b \hat \db (F_j w) \right\rangle
&= \left\langle v, q^{-1} K_i E_i \hat \dd \hat b \hat \db (F_j w) \right\rangle= \left\langle v, q^{-1} K_i \hat \dd E_i F_j \hat b \hat \db (w) \right\rangle\\
&= \left\langle v, q^{-1} K_i \hat \dd \left(F_j E_i + \delta_{i,j}\frac{K_i-K_i^{-1}}{q-q^{-1}}\right) \hat b \hat \db (w) \right\rangle.
\end{align*}
By the inductive hypothesis, we know already that our diagram commutes
on the $(\La-\alpha+\alpha_i)$- and $(\La-\alpha+\alpha_i+\alpha_j)$-weight spaces, hence Proposition~\ref{id1} allows us to commute
the $\hat \dd$ and the $F_j$ past each other, to get that
\begin{align*}
\left\langle F_i v, \hat \dd \hat b \hat \db (F_j w) \right\rangle
&= \left\langle v, q^{-1} K_i 
\left(F_j E_i + \delta_{i,j}\frac{K_i-K_i^{-1}}{q-q^{-1}}\right) 
\hat \dd \hat b \hat \db (w) \right\rangle\\
&= \left\langle v, q^{-1} K_i 
E_i F_j
\hat \dd \hat b \hat \db (w) \right\rangle= \left\langle F_i v, F_j
\hat \dd \hat b \hat \db (w) \right\rangle.
\end{align*}
Finally by the inductive hypothesis, we know already
that $\hat\dd \hat b \hat \db(w) = w$, so this completes the
proof of (\ref{tocjeck}).
\end{proof}

\begin{Remark}\rm\label{fancyrem}
In view of Theorem~\ref{fancythm},
Theorem~\ref{cat} can also be formulated
as an example of a $2$-representation of
the $2$-Kac-Moody algebra
$\mathfrak{A}(\mathfrak{g})$ in the sense of Rouquier
\cite{Ro}.
The required data as specified in \cite[Definition 5.1.1]{Ro}
comes from the categories $\Rep{R_\alpha^\La}$
for all $\alpha \in Q_+$, 
together with the functors $F_i$ and $E_i$ from
(\ref{gre})--(\ref{grf}),
the adjunction from Lemma~\ref{cad}, and 
the endomorphisms (\ref{endos}).
\end{Remark}

\subsection{A graded dimension formula}\label{sgdf}
As the first application of Theorem~\ref{cat},
we can derive a combinatorial formula for 
the graded dimension of $R^\La_\alpha$.

Let $\la=(\la^{(1)},\dots,\la^{(l)}) \in \Par$
be an $l$-multipartition, and set $d := |\la|$.
A {\em standard $\la$-tableau} 
$\T=(\T^{(1)},\dots,\T^{(l)})$ is
obtained from the diagram of $\la$ by 
inserting the integers $1,\dots,d$ into the nodes, allowing no repeats,
so that the entries in each individual $T^{(m)}$
are strictly increasing along rows from left to right and 
down columns from top to bottom.
The set of all standard $\la$-tableaux will be denoted by $\St(\la)$.

To each $\T \in \St(\la)$ we associate its {\em residue sequence}
\begin{equation}\label{Pre}
\bi^\T=(i_1,\dots,i_d)\in I^d,
\end{equation}
where $i_r \in I$ 
is the residue of the node occupied by 
$r$ in $\T$ ($1\leq r\leq d$) in the sense of (\ref{resdef}) (reduced 
modulo $e$). 
Recalling (\ref{EDMUA}), 
define the {\em degree} of $\T$ inductively from
\begin{equation}\label{De}
\deg(\T) :=
\left\{\begin{array}{ll}
\deg(\T_{\leq (d-1)}) + d_A(\la)&\text{if $d > 0$,}\\
0&\text{if $d = 0$,}
\end{array}
 \right.
\end{equation}
where for $d > 0$ we let $A$ denote the node of $\T$ containing
 entry $d$, and $\T_{\leq(d-1)}$ denotes the tableau obtained from
$\T$ by removing this entry.

\begin{Theorem}\label{gdim}
For $\alpha \in Q_+$ and 
$\bi, \bj \in I^\alpha$, we have that
\begin{align*}
\qdim\ e(\bi) R^\La_\alpha e(\bj) 
&= 
\sum_{\substack{
\la \in \Par\\ \Stab, \T \in \St(\la) \\
\bi^\Stab = \bi, \bi^\T = \bj}}
q^{2\defect(\alpha)-\deg(\Stab)-\deg(\T)}
=
\sum_{\substack{
\la \in \Par\\ \Stab, \T \in \St(\la) \\
\bi^\Stab = \bi, \bi^\T = \bj}}
q^{\deg(\Stab)+\deg(\T)}.
\end{align*}
\end{Theorem}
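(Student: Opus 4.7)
The plan is to use the graded categorification theorem (Theorem~\ref{cat}) to translate $\qdim\, e(\bi) R^\La_\alpha e(\bj)$ into a Shapovalov pairing inside $V(\La)$, and then to compute that pairing combinatorially via the standard-monomial action of the Chevalley generators. Since $R^\La_\alpha e(\bi)$ is projective and $R^\La_\alpha e(\bj)$ is finite dimensional, the graded dimension is the Cartan pairing $\langle [R^\La_\alpha e(\bi)], [R^\La_\alpha e(\bj)]\rangle$. Iterating the functor $F_i$ applied to the regular $R^\La_0$-module produces $R^\La_\alpha e(\bi)$, and accumulating the grading shift built into (\ref{grf})---which telescopes via the identity $\sum_{k'<k}(\alpha_{i_{k'}}, \alpha_{i_k}) = (\alpha,\alpha)/2 - d$---gives $\delta(F_{i_d}\cdots F_{i_1} v_\La) = q^{-\defect(\alpha)}[R^\La_\alpha e(\bi)]$. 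Combined with Theorem~\ref{cat}(4), sesquilinearity cancels the two $q^{\pm\defect(\alpha)}$ factors, yielding
\[
\qdim\, e(\bi) R^\La_\alpha e(\bj) \;=\; \langle F_{i_d}\cdots F_{i_1} v_\La,\, F_{j_d}\cdots F_{j_1} v_\La\rangle_{V(\La)}.
\]

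Next I would pass from the Shapovalov form to the contravariant form via Lemma~\ref{contra2}(1), using bar-invariance of $v_\La$ and each $F_i$, and then apply the biadjointness $(F_i v, w) = (v, E_i w)$ to reduce the task to extracting the coefficient of $v_\La$ in $E_{i_1}\cdots E_{i_d} F_{j_d}\cdots F_{j_1} v_\La$. Using the standard-monomial action (\ref{sact1}), applying the $F_{j_k}$'s builds up each $S_\la$ with weight $q^{-\sum_k d^{B_k}(\la_{\leq k-1})}$ summed over $\T \in \St(\la)$ with $\bi^\T = \bj$, while applying the $E_{i_k}$'s then strips $S_\la$ back down to $v_\La$ with weight $q^{\sum_k d_{A_k}(\la_{\leq k})} = q^{\deg(\Stab)}$ summed over $\Stab \in \St(\la)$ with $\bi^\Stab = \bi$.

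The principal combinatorial step is reconciling the $d^{B_k}$'s arising in the $F$-expansion with the $d_{A_k}$'s used in the definition (\ref{De}) of $\deg(\T)$. Because adding an $i$-node $B$ to $\mu$ does not alter the addable/removable status of any $i$-node other than $B$ itself (the neighbors of $B$ have residue $i \pm 1$), one obtains the telescoping identity $d^B(\mu) + 1 + d_B(\la) = d_i(\mu) = (\La - \cont(\mu), \alpha_i)$ for $\la = \mu \cup \{B\}$. Summing this along $\T$ and using $\sum_k(\La - \cont(\la_{\leq k-1}), \alpha_{i_k}) = \defect(\alpha) + d$ rewrites the $F$-side exponent as $\deg(\T) - \defect(\alpha)$, and combining with the $E$-side delivers the second formula $\qdim\, e(\bi) R^\La_\alpha e(\bj) = \sum_{\la, \Stab, \T} q^{\deg(\Stab)+\deg(\T)}$. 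The first formula then follows immediately from the second by applying Lemma~\ref{contra2}(3), which gives $\langle v, w\rangle = q^{2\defect(\alpha)}\overline{\langle w, v\rangle}$, together with the observation that the tableau sum is unchanged upon relabeling $\Stab \leftrightarrow \T$ (equivalently $\bi \leftrightarrow \bj$).
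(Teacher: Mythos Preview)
Your proof is correct and follows essentially the same route as the paper. Both arguments reduce $\qdim\, e(\bi)R^\La_\alpha e(\bj)$ to the Shapovalov pairing $\langle F_\bi v_\La, F_\bj v_\La\rangle$ via Theorem~\ref{cat}(4), then evaluate that pairing combinatorially and pass between the two formulas using Lemma~\ref{contra2}(3).

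The only tactical differences are: (i) the paper evaluates the pairing by working in the Fock space $F(\La)$ with the orthogonality $\langle M_\la,\overline{M_\mu}\rangle=\delta_{\la,\mu}$ and bar-invariance of $F_\bj v_\La$, whereas you stay in $V(\La)$, convert to the contravariant form via Lemma~\ref{contra2}(1), and use the biadjointness of $E_i,F_i$ to reduce to a coefficient extraction; (ii) the paper quotes \cite[Lemma~3.12]{BKW} for the identity $-\codeg(\T)=\deg(\T)-\defect(\alpha)$, whereas you rederive it from the node-counting identity $d^B(\mu)+d_B(\la)+1=d_i(\mu)$ and the telescoping sum $\sum_k(\La-\cont(\la_{\leq k-1}),\alpha_{i_k})=\defect(\alpha)+d$; (iii) you obtain the second displayed formula first and then deduce the first, while the paper does them in the opposite order. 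None of these changes the substance of the argument.
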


\begin{proof}
Given $\bi \in I^\alpha$, let
$F_\bi := F_{i_d} \cdots F_{i_1}$ for short.
The definition (\ref{grf}) implies easily that
$R^\La_\alpha e(\bi) \cong F_\bi R^\La_0
\langle \defect(\alpha)\rangle$ as graded left $R^\La_\alpha$-modules.
So
\begin{align*}
\qdim\ e(\bi) R^\La_\alpha e(\bj)
&= \qdim\ \HOM_{R^\La_\alpha}(R^\La_\alpha e(\bi), R^\La_\alpha e(\bj))\\
&=
\qdim\ \HOM_{R^\La_\alpha}(F_\bi R^\La_0, F_\bj R^\La_0)=
\langle F_\bi R^\La_0, F_\bj R^\La_0\rangle.
\end{align*}
Invoking Theorem~\ref{cat} (especially part (4)), we deduce that
$$
\qdim\ e(\bi) R^\La_\alpha e(\bj)
= \langle F_\bi v_\La, F_\bj v_\La \rangle.
$$
We now proceed to compute this by working in
terms of the monomial basis of
the Fock space $F(\La)$ from $\S$\ref{expre}.
In particular, we will exploit the 
sesquilinear form $\langle.,.\rangle$
on $F(\La)$ from (\ref{starz}).

By considerations involving (\ref{uact1}), we that
$$
F_\bj v_\La = \sum_{\substack{\mu \in \Par, \T \in \St(\mu),
\bi^\T = \bj}}
q^{-{\codeg}(\T)} M_\mu,
$$
where ${\codeg}(\T)$ is defined inductively by
\begin{equation}
\codeg(\T) :=
\left\{\begin{array}{ll}
\codeg(\T_{\leq (d-1)}) + d^A(\la_A)&\text{if $d > 0$,}\\
0&\text{if $d = 0$,}
\end{array}
 \right.
\end{equation}
adopting the same notations as in (\ref{De}).
By \cite[Lemma 3.12]{BKW}, 
we have that $-\codeg(\T) = \deg(\T)-\defect(\alpha)$.
Also $F_\bj v_\La$ is bar-invariant. 
Putting these things together and simplifying,
we get that
\begin{align*}
\qdim\ e(\bi) R^\La_\alpha e(\bj)
&= 
\sum_{\substack{\la\in \Par, \Stab \in \St(\la), \bi^\Stab = \bi \\
\mu \in \Par, \T\in \St(\mu), \bi^\T = \bj}}
q^{\defect(\alpha)-\deg(\Stab)}q^{\defect(\alpha)-\deg(\T)}
 \langle  
M_\la,
\overline{M_\mu}
\rangle.
\end{align*}
In view of (\ref{starz})
this gives the first expression 
for
$\qdim\ e(\bi) R^\La_\alpha e(\bj)$
from the statement of the theorem.

Finally, we note by
Lemma~\ref{contra2}(3) that
$$
\qdim\ e(\bi) R^\La_\alpha e(\bj)
= \langle F_\bi v_\La, F_\bj v_\La \rangle
=q^{2\defect(\alpha)} \overline{\langle F_\bj v_\La, F_\bi v_\La \rangle}.
$$
Then we compute the right hand side of this by similar
substitutions to the previous paragraph.
This gives the second expression
from the statement of the theorem.
\end{proof}

\subsection{Extremal sequences}\label{sex}
For later use, we recall here an elementary but useful
observation from \cite{BKdur} which generalizes almost at once to the
present graded setting.
Given $\bi= (i_1, \dots, i_d)\in I^d$ we can gather 
consecutive equal entries together to
write it in the form 
\begin{equation}
\label{E241201}
\bi=(j_1^{m_1}\dots j_n^{m_n})
\end{equation}
where $j_r\neq j_{r+1}$ for all $1\leq r<n$. For example 
$(2,2,2,1,1,2)=(2^3 1^2 2)$. 

Now take $\alpha \in Q_+$ with $\height(\alpha) = d$.
Given a non-zero $M \in \Rep{R^\La_\alpha}$ and $i \in I$,
we let
\begin{equation}
\eps_i(M) := \max\{k \geq 0\:|\:E_i^k (M) \neq \{0\}\}.
\end{equation}
For example, $\eps_i(D(\la)) =\eps_i(\la)$ by Theorem~\ref{gg}(3).
We say that a sequence $\bi$ of the form 
(\ref{E241201}) is an {\em extremal sequence} for $M$ if 
$m_r=\eps_{j_r}(E_{j_{r+1}}^{m_{r+1}}\dots E_{j_n}^{m_n} M)$
for all $r = n, n-1, \dots, 1$. 
Informally speaking this means that among all $\bi \in I^\alpha$ such that 
$e(\bi) M \neq \{0\}$,
we first choose those with the longest $j_n$-string at the end, 
then among these we choose the ones with the longest  
$j_{n-1}$-string preceding the $j_n$-string at the end, and so on.
It is obvious that if $\bi$ is an extremal sequence for $M$,
then $e(\bi) M \neq \{0\}$.

\begin{Lemma}[{\cite[Corollary 2.17]{BKdur}}]
\label{C241201}
If $\bi = (i_1,\dots,i_d)$ is an extremal sequence for $M \in \Rep{R^\La_\alpha}$
 of the form (\ref{E241201}), then 
$\la := \tilde f_{i_d} \cdots \tilde f_{i_1} \varnothing$
is a well-defined element of $\RPar_\alpha$,
and 
$$
[M:D(\la)]_q = (\qdim\ e(\bi) M)/([m_1]!\dots [m_r]!).
$$ 
\end{Lemma}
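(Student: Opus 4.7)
The plan is to proceed by induction on $n$, the number of distinct ``blocks'' in the extremal sequence $\bi = (j_1^{m_1}\dots j_n^{m_n})$. The base case $n = 0$ is trivial: then $M$ is a graded $R^\La_0 = F$-module, so $M \cong \qdim(M) \cdot D(\varnothing)$ and $\la = \varnothing$ gives the formula immediately. For the inductive step, a key preliminary observation is the following computation: iterating Theorem~\ref{gg}(3), if $\mu \in \RPar$ satisfies $\eps_i(\mu) = m$, then in $[\Rep{R^\La}]$
\[
[E_i^m D(\mu)] = [m]! \cdot [D(\tilde e_i^m \mu)],
\]
with no other composition factors. This is because each application of $E_i$ produces a ``principal'' term whose $\eps_i$-value drops by exactly $1$, plus ``lower'' terms with $\eps_i$-value dropping by at least $2$ (by the gap hypothesis $\eps_i(\sigma) < \eps_i(\nu)-1$ in Theorem~\ref{gg}(3)); so after $m$ applications the lower terms would need $\eps_i \leq -1$, which is impossible.

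Now for the inductive step, set $\bj := (j_1^{m_1}\dots j_{n-1}^{m_{n-1}})$, which by definition is an extremal sequence for $E_{j_n}^{m_n} M$. By extremality of $\bi$ for $M$, every composition factor $D(\nu)$ of $M$ satisfies $\eps_{j_n}(\nu) \leq m_n$; combined with the boxed computation above, only those $\nu$ with equality contribute to $E_{j_n}^{m_n} M$, and each such contributes $[m_n]! \cdot [D(\tilde e_{j_n}^{m_n}\nu)]$. Applying the inductive hypothesis to the module $E_{j_n}^{m_n} M$ with extremal sequence $\bj$, and using that $e(\bj) E_{j_n}^{m_n} M = e(\bi) M$ as graded vector spaces (by the description of $E_i$ as multiplication by $e_{\alpha,\alpha_i}$), we obtain that $\mu' := \tilde f_{j_{n-1}}^{m_{n-1}}\cdots \tilde f_{j_1}^{m_1}\varnothing$ is a well-defined element of $\RPar$, and
\[
[E_{j_n}^{m_n} M : D(\mu')]_q = (\qdim\ e(\bi) M) / ([m_1]!\cdots [m_{n-1}]!).
\]
Since this multiplicity is non-zero, there must exist a composition factor $D(\nu)$ of $M$ with $\eps_{j_n}(\nu) = m_n$ and $\tilde e_{j_n}^{m_n}\nu = \mu'$. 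Such a $\nu$ is unique and equals $\tilde f_{j_n}^{m_n}\mu'$; in particular $\phi_{j_n}(\mu') \geq m_n$, so $\la = \tilde f_{j_n}^{m_n}\mu' = \tilde f_{i_d}\cdots\tilde f_{i_1}\varnothing$ is well-defined, and
\[
[E_{j_n}^{m_n} M : D(\mu')]_q = [m_n]! \cdot [M:D(\la)]_q.
\]
Dividing yields the desired formula.

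The main obstacle is the iterated decomposition of $E_i^m$ acting on a simple $D(\mu)$ with $\eps_i(\mu) = m$: one must carefully verify that the ``lower'' terms generated by the gap condition in Theorem~\ref{gg}(3) telescope away completely, so that $D(\tilde e_i^m \mu)$ is the unique composition factor of $E_i^m D(\mu)$ and appears with $q$-multiplicity exactly $[m]!$. Given this, the well-definedness of $\la$ (i.e., that each $\tilde f_{i_r}$ in the chain lands on a vertex of $\RPar$ rather than $0$) and the multiplicity formula both drop out cleanly from the inductive bookkeeping. Everything else is routine manipulation of graded Grothendieck groups, exactness of the functors $E_i$, and the identification $e(\bi) M = E_{i_1}\cdots E_{i_d} M$ as graded vector spaces.
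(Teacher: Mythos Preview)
The paper does not prove this lemma; it is stated with a citation to \cite[Corollary~2.17]{BKdur}, preceded by the remark that the ungraded result from that reference ``generalizes almost at once to the present graded setting.'' Your argument supplies exactly those details and is correct.

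The one point deserving care is your key computation $[E_i^m D(\mu)] = [m]!\,[D(\tilde e_i^m\mu)]$ when $\eps_i(\mu)=m$, and your telescoping argument handles it cleanly: by induction on $k$, after $k$ applications of $E_i$ the principal term has $\eps_i = m-k$ while every error term has $\eps_i \leq m-k-1$ (using the gap condition $\eps_i(\cdot) < \eps_i(\cdot)-1$ in Theorem~\ref{gg}(3)), so at step $m$ the errors require $\eps_i \leq -1$ and vanish. The remaining ingredients --- the identification $e(\bj)\,E_{j_n}^{m_n} M = e(\bi) M$ of graded vector spaces (immediate from the description of $E_i$ as multiplication by $e_{\alpha,\alpha_i}$), and the uniqueness of $\nu$ with $\eps_{j_n}(\nu)=m_n$ and $\tilde e_{j_n}^{m_n}\nu=\mu'$ (basic crystal combinatorics) --- are handled correctly. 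This is precisely the natural graded lift of the argument in \cite{BKdur}.
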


\section{Graded Specht modules and decomposition numbers}

We continue with notation as in the previous section,
so $F$ is any algebraically closed field, $\xi \in F^\times$
is of ``quantum characteristic'' $e$ as defined at
the start of the previous section, and $\La$ is fixed according to (\ref{note}).
To this data and every $\alpha \in Q_+$
we have associated a block $H^\La_\alpha$ of a
cyclotomic Hecke algebra with parameter $\xi \in F^\times$
(degenerate if $\xi=1$),
which is isomorphic to the algebra
$R^\La_\alpha$ according to Theorem~\ref{ISO}. 

\subsection{Input from geometric representation theory}
Let us specialize the setup of $\S$\ref{md} at $q=1$, setting
\begin{equation}
V(\La)_\Z := \Z \otimes_{\Laurent} V(\La)_{\Laurent},\qquad
V(\La)^*_\Z := \Z \otimes_{\Laurent} V(\La)^*_{\Laurent},
\end{equation}
where we view $\Z$ as an $\Laurent$-module so that $q$ acts as $1$.
Recalling that $\g = \widehat{\mathfrak{sl}}_e(\C)$ if $e > 0$ 
or $\mathfrak{sl}_\infty(\C)$ if $e = 0$,
let $U(\g)_\Z$ denote the Kostant $\Z$-form for the universal
enveloping algebra of $\g$,
generated by the usual divided powers $e_i^{(n)}$
and $f_i^{(n)}$ in its Chevalley generators.
This acts on $V(\La)_\Z$ and $V(\La)_\Z^*$ so that 
$e_i^{(n)}$ and $f_i^{(n)}$ 
act as $1 \otimes E_i^{(n)}$ and $1 \otimes F_i^{(n)}$, respectively.
In other words, $V(\La)_\Z$ 
is the standard $\Z$-form 
for the irreducible highest weight module
for $\g$ of highest weight $\La$,
and $V(\La)_\Z^*$ is the dual lattice 
under the usual contravariant form $(.,.)$ (which at $q=1$ coincides
with the Shapovalov form).

Paralleling (\ref{likely}) in the ungraded setting, we set 
\begin{equation}\label{likely2}
[\proj{H^\La}] := \bigoplus_{\alpha \in Q_+} [\proj{H_\alpha^\La}],
\qquad
[\rep{H^\La}] := \bigoplus_{\alpha \in Q_+} [\rep{H_\alpha^\La}].
\end{equation}
The exact functors $e_i$ and $f_i$ from (\ref{ei})--(\ref{fi})
induce $\Z$-linear endomorphisms of these spaces.
Also we have the Cartan pairing
$$
(.,.):[\proj{H^\La}] \times [\rep{H^\La}] \rightarrow \Z,\quad
([P],[M]) := \dim \hom_{H^\La_\alpha}(P, M)
$$
for $\alpha \in Q_+$,
$P \in \proj{H^\La_\alpha}$ and $M \in \rep{H^\La_\alpha}$.

If we forget the grading in Theorem~\ref{cat}, we deduce
that there is 
a commuting square
\begin{equation}\label{degr}
\begin{CD}
V(\La)_\Z&@>\sim > \underline\db>&[\proj{H^\La}]\\
@V\underline a VV&&@VV\underline bV\\
V(\La)_\Z^*&@<\sim <\underline\dd<&[\rep{H^\La}],
\end{CD}
\end{equation}
where $\underline a:V(\La)_\Z
 \hookrightarrow V(\La)_\Z^*$
is the canonical inclusion,
$\underline{b}:[\proj{H^\La}]  \hookrightarrow [\rep{H^\La}]$
is induced by the natural inclusion of categories,
$\underline{\db}$ is the unique $\Z$-module isomorphism that
sends 
the 
highest weight vector $v_\La \in V(\La)_\Z$ to
the isomorphism class of the trivial $H^\La_0$-module
and commutes with the $f_i$'s,
and finally $\underline\dd$ is the dual map
to $\underline\db$ with respect to the pairings $(.,.)$.

The following is a deep result underlying almost all
subsequent work in this paper.
It was proved by Ariki \cite{Ariki} as a consequence of
the geometric representation theory of
quantum algebras and affine Hecke algebras developed by
Kazhdan, Lusztig and Ginzburg, as the key step in his proof
of the (generalized) Lascoux-Leclerc-Thibon conjecture.
For an exposition of the proof and a fuller historical
account, we refer 
to \cite[Theorem 12.5]{Abook}.
We cite also our recent work \cite{BKariki} which gives
a quite different proof in the degenerate case $\xi = 1$
based on Schur-Weyl duality for higher levels and
the Kazhdan-Lusztig conjecture in finite type $A$.

\begin{Theorem}[{\cite[Theorem 4.4]{Ariki}}]\label{maina}
Assume $\cha F = 0$.
The isomorphism $\underline{\db}$ from (\ref{degr})
maps the canonical basis of $V(\La)_\Z$
to the basis of $[\proj{H^\La}]$ arising
from projective indecomposable modules.
\end{Theorem}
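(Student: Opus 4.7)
The plan is to follow Ariki's original strategy, which proceeds by first handling the (uncategorified) ungraded affine Hecke algebra $H_d$ before passing to the cyclotomic quotient $H^\La_d$. At the affine level, one identifies $\bigoplus_{d \geq 0}[\proj{H_d}]_\C$ with the negative part $U^-(\g)_\C$ (or dually with a tensor product of level-one Fock spaces) using the induction and restriction functors, so that the problem becomes one of recognizing the basis of projective indecomposables as a specific geometric basis.

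The second step invokes the Kazhdan-Lusztig/Ginzburg geometric realization of $H_d$ as the equivariant $K$-theory $K^{GL_d \times \C^*}(\mathcal{Z})$ of a Steinberg-type variety, which identifies $[\proj{H_d}]_\C$ with the equivariant $K$-theory of the relevant fibers (nilpotent orbit closures parametrized by multisegments). Simultaneously, Lusztig's geometric construction realizes $U^-(\g)_\C$ and its canonical basis in terms of equivariant perverse sheaves on moduli spaces of representations of the cyclic quiver of type $\widehat{A}_{e-1}$. The Riemann-Hilbert-type comparison between constructible sheaves and equivariant $K$-theory, together with the decomposition theorem of Beilinson-Bernstein-Deligne, then shows that the classes of simple perverse sheaves (which by definition give Lusztig's canonical basis) correspond exactly to the classes of projective indecomposable $H_d$-modules. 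This is where $\cha F = 0$ is essential: the decomposition theorem for simples requires characteristic zero (equivalently, semisimplicity of the relevant perverse sheaves) to conclude that IC-sheaves remain indecomposable after pushforward.

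The third step transfers the affine statement to the cyclotomic quotient $H^\La_d$. The key observation is that the projection $H_d \twoheadrightarrow H^\La_d$ induces a surjective map of Grothendieck groups whose dual inclusion $[\proj{H^\La}]_\C \hookrightarrow [\proj{H_d}]_\C$ realizes $V(\La)_\C$ as a $U(\g)_\C$-summand of the appropriate tensor product; under this identification, indecomposable projectives of $H^\La_d$ are precisely those indecomposable projectives of $H_d$ that do not die under truncation, and the canonical basis of $V(\La)_\C$ inherits from the canonical basis of the larger module by the compatibility of Kashiwara's (upper) crystal bases with submodules and quotients \cite{KaG}. Specializing from $\C$ to $\Z$ (using that the canonical basis is defined over $\Laurent$ and that projective classes are integral) yields the statement for $V(\La)_\Z$.

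The main obstacle is the second step: setting up the geometric dictionary rigorously, in particular verifying that the Ginzburg isomorphism intertwines the algebraic induction/restriction functors on the Hecke side with the geometric convolution functors corresponding to Chevalley generators on the canonical-basis side, and that the positivity built into the geometric picture (via the decomposition theorem) matches the positivity of the canonical basis. Once this dictionary is in place, the remaining combinatorial comparison with Grojnowski's crystal-theoretic parametrization of simples (which is also what underlies our Theorem~\ref{clas} and hence the labelling used in $\underline{\db}$) is relatively routine.
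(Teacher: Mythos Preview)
The paper does not prove this theorem at all: it is quoted as a known result of Ariki \cite{Ariki}, with a pointer to the exposition in \cite[Theorem 12.5]{Abook} and to the alternative Schur--Weyl duality approach in \cite{BKariki} for the degenerate case. Your sketch is a reasonable high-level outline of Ariki's geometric argument (Kazhdan--Lusztig--Ginzburg realization of $H_d$, Lusztig's perverse-sheaf construction of the canonical basis, the decomposition theorem, and passage to the cyclotomic quotient), and in that sense it is consistent with what the paper says about the proof.

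One point worth correcting: your final paragraph folds the labelling problem (matching the crystal-graph parametrization of irreducibles with the canonical-basis parametrization) into this theorem. In the paper that is \emph{not} part of Theorem~\ref{maina}; the statement here is only an unlabelled bijection between the canonical basis and the classes of projective indecomposables. The compatibility with Grojnowski's labelling is established separately afterwards in Theorem~\ref{withlabels}, by a short induction on the crystal using Proposition~\ref{dca} and Theorem~\ref{GTM}. So you should not claim that the identification of parametrizations is ``relatively routine'' once the geometry is in place---the paper treats it as a genuinely separate step.
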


\begin{Corollary}\label{mainc}
Assume $\cha F = 0$.
The isomorphism $\underline{\dd}$ from (\ref{degr})
maps the basis of $[\rep{H^\La}]$ arising
from irreducible modules to
the dual-canonical basis of $V(\La)_\Z^*$.
\end{Corollary}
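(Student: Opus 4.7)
The plan is to deduce Corollary~\ref{mainc} from Theorem~\ref{maina} by a formal duality argument, using the fact that $\underline{\dd}$ is characterized as the adjoint of $\underline{\db}$ with respect to the two pairings involved.

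First I would fix notation by recording, at $q=1$, the two pairs of dual bases in play. On the module side, specialising Lemma~\ref{party} at $q=1$ shows that the canonical basis of $V(\La)_\Z$ is simply $\{Y_\la\:|\:\la \in \RPar\}$, and by the defining property (\ref{sdef}) of the quasi-canonical basis together with the fact that the Shapovalov form agrees with the contravariant form at $q=1$, the dual-canonical basis $\{D_\la\:|\:\la\in\RPar\}$ of $V(\La)^*_\Z$ is exactly dual to $\{Y_\la\}$ under the contravariant pairing between $V(\La)_\Z$ and $V(\La)_\Z^*$. On the Hecke algebra side, the basis $\{[\underline{Y}(\la)]\}$ of $[\proj{H^\La}]$ consisting of projective indecomposable covers is, by the standard identity $\dim\hom_{H^\La_\alpha}(\underline{Y}(\mu),\underline{D}(\la)) = \delta_{\la,\mu}$, dual to the basis $\{[\underline{D}(\la)]\}$ of $[\rep{H^\La}]$ under the Cartan pairing $(.,.)$.

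Next I would invoke Theorem~\ref{maina}: the isomorphism $\underline{\db}$ sends the canonical basis $\{Y_\la\}$ bijectively onto the projective indecomposable basis $\{[\underline{Y}(\la)]\}$; in particular, after a suitable normalization (which is forced by the fact that both $\underline{\db}$ and the labeling of projectives via the crystal are canonical), we have $\underline{\db}(Y_\la) = [\underline{Y}(\la)]$ for every $\la \in \RPar$. Since $\underline{\dd}$ is by construction the transpose of $\underline{\db}$ relative to the two dual pairings (specializing (\ref{itsid}) to $q=1$), a diagram chase shows that
\[
(\underline{\db}(Y_\la), [\underline{D}(\mu)]) = (Y_\la, \underline{\dd}([\underline{D}(\mu)]))
\]
for all $\la,\mu \in \RPar$. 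The left-hand side equals $\delta_{\la,\mu}$ by the duality of $\{[\underline{Y}(\la)]\}$ and $\{[\underline{D}(\la)]\}$, so $\underline{\dd}([\underline{D}(\mu)])$ is the unique element of $V(\La)^*_\Z$ dual to all $Y_\la$ with Kronecker values, which is $D_\mu$.

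The only subtlety—and hence the main thing to verify carefully—is that the labeling of irreducible $H^\La_\alpha$-modules by restricted multipartitions arising from Grojnowski's crystal construction (\ref{irreps}) matches the labeling of dual-canonical basis elements by $\RPar$ arising from the Fock space construction in $\S$\ref{dcqc}. This compatibility of crystal labelings is ultimately responsible for Theorem~\ref{maina} yielding $\underline{\db}(Y_\la) = [\underline{Y}(\la)]$ as opposed to some permutation thereof, and it is handled by Ariki's original argument in \cite[Theorem 12.5]{Abook}; once this matching is granted, the duality argument above immediately transports the statement for $\underline{\db}$ into the statement for $\underline{\dd}$.
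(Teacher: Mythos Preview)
Your core argument is correct and is exactly the paper's approach: since $\underline{\dd}$ is defined as the transpose of $\underline{\db}$, and since the irreducible basis is dual to the projective indecomposable basis under the Cartan pairing while the dual-canonical basis is dual to the canonical basis under the contravariant form, Theorem~\ref{maina} immediately dualizes to give the corollary.

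However, you are doing substantially more work than the statement requires. Corollary~\ref{mainc} is an \emph{unlabeled} statement: it only asserts that $\underline{\dd}$ carries the \emph{set} of irreducible classes to the \emph{set} of dual-canonical basis vectors. For this, you do not need to know that $\underline{\db}(Y_\la) = [\underline{Y}(\la)]$ for each individual $\la$; it suffices that $\underline{\db}$ maps one basis bijectively onto the other, which is precisely what Theorem~\ref{maina} says. The transpose of a linear isomorphism taking one basis to another automatically takes the dual basis to the dual basis, as sets. Your entire final paragraph about matching crystal labelings is therefore unnecessary here, and indeed the paper defers that more delicate labeled statement to the separate Theorem~\ref{withlabels}, which is proved by a crystal-graph induction rather than by citing Ariki's original labeling argument.
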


\begin{proof}
This follows from the definition of the map
$\underline{\dd}$, since the basis arising
from the irreducible modules is dual to the basis
arising from the projective indecomposable modules under the
Cartan pairing, and the dual-canonical basis is dual to the
canonical basis under the contravariant form.
\end{proof}

We have not yet incorporated any particular parametrization
for the bases mentioned in either Theorem~\ref{maina} or 
Corollary~\ref{mainc}. This is
addressed in detail in Ariki's work via the theory of Specht modules,
as we will explain in the next subsections.
Even before we introduce Specht modules into the picture,
we can show that the bijection
between the isomorphism classes of irreducible modules and
the dual-canonical basis from Corollary~\ref{mainc}
is consistent with the
parametrizations of these two sets by restricted multipartitions
from Theorem~\ref{GTM} and $\S$\ref{dcqc}, respectively.

To do this, recall the
quasi-canonical basis
$\{Y_\la\}$ from $\S$\ref{sqcb} and the dual-canonical basis
$\{D_\la\}$ from $\S$\ref{dcqc}, respectively,
both of which are parametrized
by the set $\RPar$ of restricted multipartitions.
Denote their specializations at $q=1$
by
\begin{equation}
\underline{Y}_\la := 1 \otimes Y_\la \in V(\La)_\Z,\qquad
\underline{D}_\la := 1 \otimes D_\la \in V(\La)_\Z^*,
\end{equation}
for $\la \in \RPar$.
By Lemma~\ref{party},
$\{\underline{Y}_\la\:|\:\la \in \RPar\}$ and
$\{\underline{D}_\la\:|\:\la \in \RPar\}$ are the
canonical and dual-canonical bases of $V(\La)_\Z$
and $V(\La)_\Z^*$, respectively. 

Recall also from Theorem~\ref{GTM}
that the irreducible $H^\La_\alpha$-modules
are denoted $\{\underline{D}(\la)\:|\:\la \in \RPar_\alpha\}$;
they are defined recursively in terms of the crystal graph
by (\ref{irreps}). For $\la \in \RPar_\alpha$, let
$\underline{Y}(\la)$ denote the projective cover
of $\underline{D}(\la)$, so that
\begin{equation}
\underline{Y(\la)} \cong \underline{Y}(\la).
\end{equation}
Now we reformulate Theorem~\ref{maina} and 
Corollary~\ref{mainc} incorporating these explicit parametrizations
as follows:

\begin{Theorem}\label{withlabels}
Assume $\cha F = 0$.
For each $\la \in \RPar_\alpha$,
we have that $\underline{\db}(\underline{Y}_\la)
=[\underline{Y}(\la)]$ and
$\underline{\dd}([\underline{D}(\la)]) = \underline{D}_\la$.
\end{Theorem}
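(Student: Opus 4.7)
The two assertions are adjoint to one another. Once $\underline{\dd}([\underline{D}(\la)]) = \underline{D}_\la$ is known, the specialization of (\ref{itsid}) at $q=1$ (under which the Shapovalov form becomes the contravariant form by Lemma~\ref{contra2}(1), and the graded Cartan pairing becomes the ordinary Cartan pairing), combined with the two dual-basis identities $([\underline{Y}(\la)],[\underline{D}(\mu)]) = \delta_{\la,\mu}$ and $(\underline{Y}_\la,\underline{D}_\mu) = \delta_{\la,\mu}$ (the latter from specializing (\ref{sdef})), forces $\underline{\db}(\underline{Y}_\la)$ and $[\underline{Y}(\la)]$ to pair identically with every basis element and hence to coincide. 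The plan is therefore to concentrate on the second assertion.

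By Theorem~\ref{maina} and Corollary~\ref{mainc}, there is a weight-preserving bijection $\sigma\colon\RPar\to\RPar$ such that $\underline{\dd}([\underline{D}(\la)]) = \underline{D}_{\sigma(\la)}$; the task reduces to showing $\sigma = \id$. I proceed by induction on $\height(\alpha)$, the base case $\alpha=0$ being immediate since both $[\underline{D}(\varnothing)]$ and $\underline{D}_\varnothing$ correspond to the highest weight vector $v_\La$. For the inductive step, fix $\la\in\RPar_\alpha$ with $\alpha\neq 0$, and choose $i\in I$ with $k:=\eps_i(\la)>0$. The strategy is to apply $e_i^k$ to both $[\underline{D}(\la)]$ and $\underline{D}_{\sigma(\la)}$ and use the induction hypothesis at the lower height $\height(\alpha)-k$ to compare.

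Iterating the first formula of Theorem~\ref{GTM}(3), the strict inequality $\eps_i(\mu)<\eps_i(\la)-1$ for the correction terms ensures that after each further application of $e_i$ their $\eps_i$-values drop by at least one; after $k$ iterations any surviving correction would require $\eps_i<0$, which is impossible, giving
\[
[e_i^k\underline{D}(\la)] = k!\,[\underline{D}(\tilde e_i^k\la)].
\]
Applying $\underline{\dd}$ and the induction hypothesis yields $e_i^k\underline{D}_{\sigma(\la)} = k!\,\underline{D}_{\tilde e_i^k\la}$. The same iteration applied to Proposition~\ref{dca} at $q=1$ shows, for any $\nu\in\RPar$, that $e_i^j\underline{D}_\nu=0$ whenever $j>\eps_i(\nu)$ and that for $j\leq\eps_i(\nu)$ the top dual-canonical summand of $e_i^j\underline{D}_\nu$ is $\tfrac{\eps_i(\nu)!}{(\eps_i(\nu)-j)!}\underline{D}_{\tilde e_i^j\nu}$ with every other dual-canonical summand having strictly smaller $\eps_i$. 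A case analysis on $\eps_i(\sigma(\la))$ against $k$ rules out $\eps_i(\sigma(\la))<k$ (the LHS would vanish while the RHS does not) and $\eps_i(\sigma(\la))>k$ (the LHS would exhibit an uncancellable dual-canonical summand of positive $\eps_i$-value, absent from the RHS), forcing $\eps_i(\sigma(\la))=k$; comparing coefficients then yields $\tilde e_i^k\sigma(\la)=\tilde e_i^k\la$. Since elements of an $i$-string in an abstract crystal are determined by their $\eps_i$-value and their $\tilde e_i^{\eps_i}$-top, we conclude $\sigma(\la)=\la$.

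The principal technical obstacle is the two iterated-support identities, which rely essentially on the strict inequality $\eps_i(\mu)<\eps_i(\la)-1$ appearing in both Theorem~\ref{GTM}(3) and Proposition~\ref{dca}. They are genuine $q=1$ statements, not crystal-limit shortcuts, so some careful bookkeeping of supports through each successive application of $e_i$ is required; once that is established, the rest of the argument is essentially formal.
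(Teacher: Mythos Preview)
Your proof is correct and follows essentially the same strategy as the paper: reduce to the second statement by duality, invoke Corollary~\ref{mainc} to obtain the bijection $\sigma$, and show $\sigma=\id$ by induction on $\height(\alpha)$ using the crystal operators together with the branching formulae of Theorem~\ref{GTM}(3) and Proposition~\ref{dca}. The only difference is the direction of traversal: the paper writes $\la=\tilde f_i\mu$, uses induction on $\mu$, and identifies $\la$ as the unique composition factor of $f_i\underline{D}(\mu)$ surviving $f_i^{\phi_i(\mu)-1}$; you instead apply $e_i^{\eps_i(\la)}$ to reach the top of the $i$-string and compare there, which requires the small extra bookkeeping you flag but is otherwise equivalent.
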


\begin{proof}
Reversing the argument with duality from the proof of Corollary~\ref{mainc}, 
it suffices to prove the second statement.
For that, we know already from Corollary~\ref{mainc}
that there is some bijection $\sigma:\RPar_\alpha \rightarrow \RPar_\alpha$
such that
$$
\underline{\dd}([\underline{D}(\la)]) = \underline{D}_{\sigma(\la)}.
$$
We need to show that $\sigma$ is the identity map. For this we
repeat an easy argument from the proof of \cite[Theorem 4.4]{BKrep},
as follows.

Proceed by induction on $\height(\alpha)$,
the statement being trivial for $\height(\alpha) = 0$.
For the induction step,
take $\alpha > 0$ and $\la \in \RPar_\alpha$.
Write $\la = \tilde f_i \mu$ for some $i \in I$ and
$\mu \in \RPar_{\alpha-\alpha_i}$.
By induction, we know that $\sigma(\mu) = \mu$.
By Proposition~\ref{dca} specialized at $q=1$, 
we know that $f_i^{\phi_i(\mu)} 
\underline{D}_\mu \neq 0$, and
$f_i \underline{D}_\mu = \phi_i(\mu) \underline{D}_\la + (*)$
where $(*)$ is a linear combination of $\underline{D}_\nu$'s
such that $f_i^{\phi_i(\mu)-1} \underline{D}_\nu = 0$.
Since $f_i^{\phi_i(\mu)} \underline{D}_\mu \neq 0$
we have that $f_i^{\phi_i(\mu)-1} \underline{D}_\la \neq 0$.

Applying the commutativity of (\ref{degr})
we deduce that $f_i \underline{D}(\mu)$
has a unique (up to isomorphism) composition factor $D$ such that
$f_i^{\phi_i(\mu)-1} [D] \neq 0$, and 
$\underline{\dd}([D]) = \underline{D}_\la$.
On the other hand, by Theorem~\ref{GTM},
$f_i \underline{D}(\mu)$ has a composition factor isomorphic
to $\underline{D}(\la)$ 
and $f_i^{\phi_i(\mu)-1} [\underline{D}(\la)] \neq 0$.
Hence 
$\underline{\dd}([\underline{D}(\la)]) = \underline{D}_\la$,
i.e. $\sigma(\la) = \la$.
\end{proof}

\subsection{Graded Specht modules}\label{cl}
The cyclotomic Hecke algebra $H^\La_d$ is a cellular algebra in 
the sense of
\cite{GL} with weight poset 
$\{\la \in \Par\:\big|\,\:|\la| = d\}$
partially ordered by $\unlhd$.
For the explicit 
construction of the underlying cell datum,
we refer the reader to \cite{DJM}; see also \cite[$\S$6]{AMR}
for the appropriate modifications in the degenerate case.
The associated cell modules are the so-called {\em Specht modules} \
$\underline{S}(\la)$ for each
$\la \in \Par$ with $|\la| = d$.
If $\cont(\la) = \alpha$ in the sense of (\ref{contdef})
then
$\underline{S}(\la)$ belongs to the block parametrized
by $\alpha$ in the block decomposition (\ref{blockdec});
this follows from the character formula (\ref{chspecht}) below.
Hence, invoking the following standard lemma (taking $e := e_\alpha$), we can project the cellular
structure on $H^\La_d$ to 
the block $H^\La_\alpha$, 
to get also that $H^\La_\alpha$
is a cellular algebra with weight poset $(\Par_\alpha, \unlhd)$
and cell modules $\{\underline{S}(\la)\:|\:\la \in \Par_\alpha\}$.

\begin{Lemma}
Let $A$ be a cellular algebra with cell datum
$(I, M, C, *)$ 
and associated cell modules $\{V(\la)\:|\:\la \in I\}$.
Let $e \in A$ be a central idempotent.
Then $e A e$ is a cellular algebra with cell datum
$(\bar I, \bar M, \bar C, \bar *)$ 
and associated cell modules $\{V(\la)\:|\:\la \in \bar I\}$
where:
\begin{itemize}
\item[(1)] 
$\bar I = \{\la \in I\:|\:e V(\la) = V(\la)\}$;
\item[(2)] $\bar M(\la) = M(\la)$ for each $\la \in \bar \La$;
\item[(3)]
$\bar C^\la_{s,t} = e C^\la_{s,t} e$ for each $\la \in \bar I$
and $s,t \in \bar M(\la)$;
\item[(4)] $\bar *$ is the restriction
of $*$ (which necessarily leaves $e A e$ invariant).
\end{itemize}
\end{Lemma}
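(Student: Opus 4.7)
The plan is to verify the Graham--Lehrer cellular algebra axioms directly for the proposed datum on $eAe$, exploiting centrality of $e$. Since $e$ is central, $eAe = eA = Ae$, and the cell filtration $\{A^{\unlhd\la}\}_{\la \in I}$ of $A$ restricts to a filtration $\{eA^{\unlhd\la}\}$ of $eAe$. One also needs $e^* = e$ for the restriction of $*$ to preserve $eAe$; in our applications this is immediate (for $e = e_\alpha = \sum_{\bi \in I^\alpha} e(\bi) \in H^\La_d$ each $e(\bi)$ is $*$-invariant), and more generally it follows from the fact that $e^*$ is a central idempotent acting on cell modules the same way as $e$.

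The key structural observation is that, since $e$ is central, left multiplication by $e$ on each cell module $V(\la)$ is an $A$-module endomorphism, and it is idempotent, yielding $V(\la) = eV(\la)\oplus(1-e)V(\la)$ as $A$-modules. In the cellular algebras of interest (where cell modules lie in a single block) this forces the dichotomy $eV(\la) \in \{0, V(\la)\}$, so $\bar I$ coincides with $\{\la \in I : e\text{ acts as the identity on }V(\la)\}$. Combined with axiom (C3) for $A$, this yields $eC^\la_{s,t} \equiv C^\la_{s,t} \pmod{A^{\lhd\la}}$ whenever $\la \in \bar I$ and $eC^\la_{s,t} \in A^{\lhd\la}$ otherwise. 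A standard triangular argument with the dominance ordering, together with the dimension count $\dim eAe = \sum_{\la \in \bar I} |M(\la)|^2$ (obtained from $A = eA \oplus (1-e)A$ as right $A$-modules), then shows that $\{eC^\la_{s,t} : \la \in \bar I,\ s,t \in M(\la)\}$ is a basis for $eAe$, giving axiom (C1). Axiom (C2) is immediate from (C2) for $A$ and $e^* = e$. For axiom (C3), multiplying (C3) for $A$ through by $e$ shows that for $a \in eAe$ the structure constants $r_a(s',s)$ are unchanged, and $eA^{\lhd\la}$ coincides with $(eAe)^{\lhd\la}$ thanks to the basis result just established.

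The main obstacle---indeed the only point of substance---is justifying the dichotomy $eV(\la) \in \{0, V(\la)\}$. For a purely formal cellular algebra this is not automatic, since cell modules need not be indecomposable. But it holds whenever cell modules belong to single blocks, which is satisfied in the intended application to $H_d^\La$: each Specht module $\underline{S}(\la)$ lies in the block indexed by $\cont(\la)$, so $e_\alpha \underline{S}(\la) = \underline{S}(\la)$ if $\cont(\la) = \alpha$ and zero otherwise. Once this is in hand, everything else reduces to direct bookkeeping with the cellular axioms of $A$.
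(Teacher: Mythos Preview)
Your approach is correct and matches the paper's: the paper states this as a ``standard lemma'' without proof, but the source contains a commented-out sketch using exactly the same cell-filtration argument you outline (multiply the filtration by $e$, observe that $e$ acts as $1$ or $0$ on each section, and deduce that the $eC^\la_{s,t}e$ for $\la\in\bar I$ form a basis). Your caution about the dichotomy $eV(\la)\in\{0,V(\la)\}$ is well placed --- it is genuinely needed for the dimension count to work (equivalently, each cell module must lie in a single block), the paper's hidden sketch tacitly assumes it, and it holds in the intended application since each Specht module belongs to the block $H^\La_{\cont(\la)}$.
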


In \cite{BKW}, we constructed a canonical
graded lift of the Specht module $\underline{S}(\la)$, i.e. we gave an explicit
construction of a graded $R^\La_\alpha$-module
$S(\la)$ such that 
\begin{equation}
\underline{S(\la)} \cong \underline{S}(\la)
\end{equation}
as $H^\La_\alpha$-modules.
We refer to $S(\la)$ as a {\em graded Specht module}.
Rather than repeat the definition here, we 
just note that the construction produces
an explicit homogeneous basis 
$\{v_\T\:|\:\T \in \St(\la)\}$
for $S(\la)$, in which the vector $v_\T$ belongs to $e(\bi^\T) S(\la)$ and
is of degree
$\deg(\T)$, notation as in (\ref{Pre})--(\ref{De}).
In particular, this means that the $q$-character of $S(\la)$
(by which we mean the $q$-character of its inflation to $R_\alpha$
in the sense of (\ref{qch})) is given by
\begin{equation}\label{chspecht}
\CH S(\la) = \sum_{\T \in \St(\la)} q^{\deg(\T)} \bi^\T.
\end{equation} 
We also derived the following branching rule
for graded Specht modules:

\begin{Proposition}[{\cite{BKW}}]\label{filt}
Let $\la \in \Par_\alpha$, $i \in I$, and $A_1,\dots,A_c$ be all
the removable $i$-nodes of $\la$
in order from bottom to top. Then 
$E_i S(\la)$ has a filtration
$$
\{0\} = V_0 \subset V_1 \subset\cdots\subset V_c = E_i S(\la)
$$
as a graded $R^\La_{\alpha-\alpha_i}$-module such that
$V_m / V_{m-1} \cong S(\la_{A_m})\langle d_{A_m}(\la)\rangle$
for all $1 \leq m \leq c$.
\end{Proposition}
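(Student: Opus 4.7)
The plan is to build the filtration directly out of the explicit homogeneous basis $\{v_\T \mid \T \in \St(\la)\}$ of $S(\la)$ constructed in \cite{BKW}. Since $E_i S(\la) = e_{\alpha-\alpha_i,\alpha_i} S(\la)$, this subspace has basis consisting of those $v_\T$ for which the last entry of $\bi^\T$ equals $i$, equivalently, for which the entry $d := |\la|$ occupies an $i$-node of $\T$. By standardness, that $i$-node must be one of the removable $i$-nodes $A_1,\dots,A_c$ of $\la$. For $0 \leq m \leq c$, I would define
$$
V_m := \spa\bigl\{v_\T \mid \T \in \St(\la) \text{ with the entry } d \text{ in } A_k \text{ for some } k \leq m\bigr\},
$$
so that $V_0 = \{0\}$ and $V_c = E_i S(\la)$, and the graded dimensions of the sections $V_m/V_{m-1}$ already agree with those of $S(\la_{A_m})\langle d_{A_m}(\la)\rangle$ by the recursion (\ref{De}), since $\deg(\T) = \deg(\T_{\leq(d-1)}) + d_{A_m}(\la)$ whenever the entry $d$ of $\T$ lies in $A_m$.

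The first substantive step is to verify that each $V_m$ is stable under the action of $R^\La_{\alpha-\alpha_i}$, viewed as a subalgebra of $R^\La_\alpha$ via $\iota_{\alpha-\alpha_i,\alpha_i}$. This subalgebra is generated by $\{e(\bj)\mid \bj\in I^{\alpha-\alpha_i}\}$ together with $y_1,\dots,y_{d-1}$ and $\psi_1,\dots,\psi_{d-2}$. The idempotents and the $y_r$ with $r<d$ obviously act diagonally/triangularly in the $v_\T$-basis without moving the entry $d$, so they preserve each $V_m$. For the $\psi_r$ with $r \leq d-2$, one has to invoke the explicit action formulas from \cite{BKW}: $\psi_r v_\T$ is a combination of $v_{\T'}$ where $\T'$ is obtained from $\T$ by swapping the entries $r,r{+}1$ (if the result is standard) together with correction terms governed by the cellular structure of \cite{DJM, AMR}. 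Since $r \leq d-2$, these operations never touch the entry $d$, and the correction terms come from basis vectors indexed by tableaux of shapes $\mu \rhd \la$ — but we are working in the Specht module $S(\la)$ itself, so these do not appear.

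The second step is to identify the section $V_m/V_{m-1}$ with $S(\la_{A_m})\langle d_{A_m}(\la)\rangle$ via the map $v_\T + V_{m-1} \mapsto v_{\T_{\leq(d-1)}}$, where $\T_{\leq(d-1)}$ is the standard $\la_{A_m}$-tableau obtained by deleting the entry $d$ from $\T$. The degree shift is forced by the recursion (\ref{De}), and the assignment is a bijection of bases. To check it intertwines the $R^\La_{\alpha-\alpha_i}$-action, one compares the action of each generator on $v_\T \in S(\la)$ with the action of the same generator on $v_{\T_{\leq(d-1)}} \in S(\la_{A_m})$, using the \cite{BKW} formulas in both Specht modules. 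The two sets of formulas agree up to terms involving tableaux in which the entry $d$ has been forced into a removable $i$-node $A_k$; one must then show that all such ``forced'' terms have $k < m$, so that they vanish in $V_m/V_{m-1}$.

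The main obstacle is precisely this last point, and it is the reason the removable nodes must be listed from \emph{bottom to top}. Any transition from a tableau with entry $d$ in $A_m$ to one with entry $d$ in a different $A_k$ can only occur via corrections involving Garnir-type relations that redistribute the entry $d$ downward in the Young diagram, i.e. toward nodes with smaller index in our ordering. Verifying this compatibility between the bottom-to-top ordering and the signs/corrections in the \cite{BKW} action formulas is the technical heart of the proof; once established, the filtration and the identification of the sections follow as outlined.
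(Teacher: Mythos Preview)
The paper does not itself prove this proposition: its entire proof is ``This follows from \cite[Theorem~4.11]{BKW} on projecting to $R^\La_{\alpha-\alpha_i}$.''  So there is nothing in the paper to compare against; your proposal is effectively an attempt to reprove the cited result from \cite{BKW}, and your overall strategy---filter $E_iS(\la)$ by the position of the entry $d$ among the removable $i$-nodes and identify the sections with shifted Specht modules---is indeed the one used there.

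That said, your execution has a genuine gap.  Your claim that ``the $y_r$ with $r<d$ obviously act diagonally/triangularly in the $v_\T$-basis'' is false as stated: in the KLR presentation $v_\T=\psi_{w_\T}v_{\T^\la}$, and computing $y_r v_\T$ means commuting $y_r$ past the product $\psi_{w_\T}$ using relations (\ref{R6})--(\ref{R5}), which produces many $v_{\T'}$-terms whose membership in $V_m$ is not obvious.  More seriously, your argument for the $\psi_r$ with $r\leq d-2$ is based on a misconception: you say the correction terms ``come from basis vectors indexed by tableaux of shapes $\mu\rhd\la$\dots so these do not appear.''  The cellular ideal $A_{>\la}$ is already zero in $S(\la)$; the correction terms that remain are other $v_{\T'}$ with $\T'\in\St(\la)$, and they are genuinely present.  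Showing that every such $\T'$ still has its entry $d$ in some $A_k$ with $k\leq m$ is exactly the content of the proof, and it does not follow from the cellular structure alone.  You acknowledge this in your final paragraph (``the technical heart of the proof''), but then stop.  In \cite{BKW} this is handled by a careful analysis using the explicit homogeneous Garnir relations constructed there; without reproducing that analysis, what you have is the correct statement of what must be checked, not a proof.
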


\begin{proof}
This follows from \cite[Theorem 4.11]{BKW} on projecting to 
$R^\La_{\alpha-\alpha_i}$.
\end{proof}

Using this we can identify the image of
$[S(\la)] \in \Rep{R^\La}$
under the isomorphism
$\dd:[\Rep{R^\La}] \rightarrow V(\La)^*_\Laurent$
from Theorem~\ref{cat}.
Of course it is the standard monomial $S_\la$
from (\ref{stmn}):

\begin{Theorem}\label{ids}
For each $\la \in \Par_\al$, we have that
$\dd([S(\la)]) = S_\la$.
\end{Theorem}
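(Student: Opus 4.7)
The plan is to proceed by induction on $\height(\alpha)$, exploiting the graded branching rule from Proposition~\ref{filt} in combination with the compatibility of $\dd$ with the action of the Chevalley generators $E_i$ established in Theorem~\ref{cat}(2).

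For the base case $\alpha=0$, there is only the empty multipartition $\varnothing$, and $S(\varnothing)$ is simply the one-dimensional trivial $R^\La_0$-module. By the defining property of $\db$ in Proposition~\ref{id1}, $\db(v_\La) = [R^\La_0] = [S(\varnothing)]$, and a routine check of the definition (\ref{itsid}) of $\dd$ (together with Theorem~\ref{cat}, which says $\dd \circ b = a \circ \db^{-1}$ once everything is identified in $V(\La)$) shows $\dd([S(\varnothing)]) = v_\La = S_\varnothing$, as required.

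For the inductive step, fix $\alpha \in Q_+$ with $\height(\alpha)>0$ and $\la \in \Par_\alpha$, and assume that $\dd([S(\mu)]) = S_\mu$ for every $\mu \in \Par_\beta$ with $\height(\beta) < \height(\alpha)$. The branching filtration of Proposition~\ref{filt} yields, for each $i \in I$, the identity
$$
[E_i S(\la)] = \sum_A q^{d_A(\la)} [S(\la_A)]
$$
in $[\Rep{R^\La_{\alpha-\alpha_i}}]$, where the sum runs over all removable $i$-nodes $A$ of $\la$. Applying $\dd$, using Theorem~\ref{cat}(2) to commute it past $E_i$, and invoking the inductive hypothesis together with (\ref{sact1}), we obtain
$$
E_i \,\dd([S(\la)]) = \dd([E_i S(\la)]) = \sum_A q^{d_A(\la)} S_{\la_A} = E_i S_\la.
$$
Hence $E_i\bigl(\dd([S(\la)]) - S_\la\bigr) = 0$ for every $i \in I$.

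The vector $\dd([S(\la)]) - S_\la$ lies in the $(\La-\alpha)$-weight space of $V(\La)$, and is annihilated by every $E_i$. Since $V(\La)$ is irreducible of highest weight $\La$ and $\alpha>0$, the only highest weight vector of weight $\La-\alpha$ is zero, so $\dd([S(\la)]) = S_\la$, completing the induction. The main subtlety, already dealt with in the setup, is ensuring the commutativity $E_i \circ \dd = \dd \circ E_i$; this is precisely what Theorem~\ref{cat}(2) supplies, so the argument is then entirely formal.
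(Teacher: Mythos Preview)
Your proof is correct and follows essentially the same approach as the paper: induction on $\height(\alpha)$, using Proposition~\ref{filt} together with Theorem~\ref{cat}(2) to show that $E_i(\dd([S(\la)]) - S_\la) = 0$ for all $i$, then concluding via the absence of highest weight vectors of weight $\La-\alpha$ for $\alpha>0$ in $V(\La)$. The only difference is that you spell out the base case in slightly more detail than the paper does.
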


\begin{proof}
We proceed by induction on $\height(\alpha)$.
The result is trivial in the case $\height(\alpha) = 0$,
so suppose that $\height(\alpha) > 0$.
We must show that 
$\dd([S(\la)]) - S_\la = 0$ in $V(\La)$.
Since $V(\La)$ is an irreducible highest weight module,
this follows if we can check that
$E_i (\dd([S(\la)]) - S_\la) = 0$
for every $i \in I$.
For this, we have by Theorem~\ref{cat}(2), Proposition~\ref{filt}
and the induction hypothesis
that
$$
E_i \dd([S(\la)])
=
\dd E_i ([S(\la)])
= 
\sum_A q^{d_A(\la)} \dd([S(\la_A)])
= 
\sum_A q^{d_A(\la)} S_{\la_A}.
$$
By (\ref{sact1}) 
this is equal to $E_i S_\la$.
\end{proof}

\begin{Corollary}\label{sbase}
The classes $\{[S(\la)]\:|\:\la \in \RPar_\alpha\}$
give a basis for $\Rep{R^\La_\alpha}$ as a free
$\Laurent$-module.
\end{Corollary}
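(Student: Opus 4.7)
The plan is to transport the basis question across the isomorphism $\dd$ of Theorem~\ref{cat} to a statement about $V(\La)_\Laurent^*$ that has already been established. More precisely, I would argue as follows.

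First I would note that the isomorphism $\dd:[\Rep{R^\La}] \stackrel{\sim}{\rightarrow} V(\La)_\Laurent^*$ from Theorem~\ref{cat} respects the $Q_+$-grading (it commutes with the action of each $K_i$), so it restricts to an $\Laurent$-module isomorphism
\[
\dd:[\Rep{R^\La_\alpha}] \stackrel{\sim}{\rightarrow} \big(V(\La)_\Laurent^*\big)_{\La-\alpha}
\]
on the $(\La-\alpha)$-weight space. By Theorem~\ref{ids}, this isomorphism carries $[S(\la)]$ to the standard monomial $S_\la$ for every $\la \in \Par_\alpha$. Thus the basis question reduces to showing that $\{S_\la\:|\:\la \in \RPar_\alpha\}$ is a basis for the weight space $(V(\La)_\Laurent^*)_{\La-\alpha}$ as a free $\Laurent$-module.

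Next I would invoke Theorem~\ref{tri}(1), which asserts exactly that $\{S_\la\:|\:\la \in \RPar\}$ is a basis for $V(\La)_\Laurent^*$ as a free $\Laurent$-module. Restricting to the $(\La-\alpha)$-weight space gives precisely the statement needed. Applying $\dd^{-1}$ then yields the desired basis in $[\Rep{R^\La_\alpha}]$.

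There is really no obstacle here: all of the work has been done in advance. The only subtlety worth flagging is that Theorem~\ref{tri}(1) is conditional on Hypothesis~\ref{assump}, but this hypothesis has already been verified in all cases --- directly via Lusztig's tensor product construction when $e=0$, and via the theorem at the end of $\S$\ref{construct} (using Yvonne's stability result, Proposition~\ref{ythm}, together with the bar-involution on twisted Fock space) when $e > 0$. So the corollary is a one-line deduction from Theorems~\ref{cat}, \ref{ids}, and \ref{tri}(1).
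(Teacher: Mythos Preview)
Your proposal is correct and is essentially the same argument as the paper's own proof, which simply says ``This follows from Theorems~\ref{ids} and \ref{tri}(1).'' You have merely made explicit the role of the isomorphism $\dd$ (from (\ref{dstar}) and Theorem~\ref{cat}) and the passage to weight spaces, together with the verification of Hypothesis~\ref{assump}, all of which are implicit in that one-line citation.
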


\begin{proof}
This follows from Theorems~\ref{ids} and \ref{tri}(1).
\end{proof}

\begin{Corollary}
For $\la \in \Par$ and $i \in I$, the following hold in $[\Rep{R^\La}]$:
\begin{align*}
E_i [S(\la)]&=\sum_A q^{d_A(\la)} [S(\la_A)],
&F_i [S(\la)]&=\sum_B q^{-d^B(\la)}[S(\la^B)],
\end{align*}
where the first sum is over all removable $i$-nodes $A$ for $\la$,
and the second sum is over all addable $i$-nodes $B$ for $\la$.
\end{Corollary}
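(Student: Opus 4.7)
The plan is that both formulae are essentially bookkeeping consequences of results already in place. The $E_i$ formula drops out directly from Proposition~\ref{filt}, while the $F_i$ formula requires transferring the question to $V(\La)$ via the categorification isomorphism $\dd$ of Theorem~\ref{cat} and then using the known action on standard monomials.

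First, for the $E_i$ formula, I would just pass the filtration of Proposition~\ref{filt} to the Grothendieck group. Let $A_1,\dots,A_c$ be the removable $i$-nodes of $\la$ listed from bottom to top. Since
$$
V_m / V_{m-1} \cong S(\la_{A_m})\langle d_{A_m}(\la)\rangle
$$
as graded $R^\La_{\alpha-\alpha_i}$-modules, and multiplication by $q$ corresponds to shifting the grading up by one (see $\S$\ref{SSGr}), we obtain
$$
E_i[S(\la)] = \sum_{m=1}^c [S(\la_{A_m})\langle d_{A_m}(\la)\rangle] = \sum_A q^{d_A(\la)} [S(\la_A)],
$$
as desired.

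For the $F_i$ formula, I would apply the $\Laurent$-module isomorphism $\dd:[\Rep{R^\La}]\to V(\La)^*_\Laurent$ of Theorem~\ref{cat}. By Theorem~\ref{ids} we have $\dd([S(\la)]) = S_\la$, and by Theorem~\ref{cat}(2) the map $\dd$ commutes with the action of $F_i$. Combining these with the explicit formula (\ref{sact1}) for the action of $F_i$ on the standard monomial $S_\la$ and applying Theorem~\ref{ids} once more gives
$$
\dd\bigl(F_i[S(\la)]\bigr) = F_i S_\la = \sum_B q^{-d^B(\la)} S_{\la^B} = \dd\Bigl(\sum_B q^{-d^B(\la)} [S(\la^B)]\Bigr).
$$
Since $\dd$ is an isomorphism, the $F_i$ formula follows at once.

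There is no real obstacle here: the whole corollary is a direct application of Proposition~\ref{filt}, Theorem~\ref{ids}, Theorem~\ref{cat}, and the defining formula (\ref{sact1}). The only point worth double-checking is the sign conventions in the grading shift for the $E_i$ case and the fact that $\dd$ is a bijection (not merely surjective) on the integral level, both of which are already recorded above.
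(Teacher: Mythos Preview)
Your proof is correct. For the $F_i$ formula you argue exactly as the paper does, invoking (\ref{sact1}), Theorem~\ref{cat}(2) and Theorem~\ref{ids}. For the $E_i$ formula you take a slightly different route: you read it off directly from the Specht filtration of Proposition~\ref{filt}, whereas the paper treats both formulae uniformly by transferring through $\dd$ to $V(\La)$ and applying (\ref{sact1}). Your argument for $E_i$ is marginally more elementary (it does not require Theorem~\ref{cat}), while the paper's is more symmetric; either way the result is immediate.
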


\begin{proof}
This follows from (\ref{sact1}), 
Theorem~\ref{cat}(2) and Theorem~\ref{ids}.
\end{proof}

\subsection{Ungraded decomposition numbers in characteristic zero}
Combining Theorem~\ref{ids} with Theorem~\ref{withlabels} (which we
recall was a reformulation of the
geometric Theorem~\ref{maina}),
we recover 
the following result
which computes 
decomposition numbers of Specht modules in characteristic zero.
These decomposition numbers 
were computed originally by Ariki
in \cite{Ariki}
in his proof of
the Lascoux-Leclerc-Thibon conjecture from \cite{LLT}
(generalized to higher levels).

\begin{Theorem}[Ariki]\label{ungradedllt}
Assume that $\cha F = 0$.
For any $\mu \in \Par_\alpha$ we have that
$$
[\underline{S}(\mu)] = \sum_{\la \in \RPar_\alpha} d_{\la,\mu}(1)
[\underline{D}(\la)]
$$
in the Grothendieck group $[\rep{H^\La_\alpha}]$,
where $d_{\la,\mu}(1)$ denotes the polynomial
from (\ref{qdec}) evaluated at $q=1$.
In other words, for $\mu \in\Par_\alpha$ and $\la \in \RPar_\alpha$,
we have that
$$
[\underline{S}(\mu):\underline{D}(\la)] = d_{\la,\mu}(1),
$$
\end{Theorem}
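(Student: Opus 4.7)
The plan is to combine the graded identification of standard monomials with Specht modules (Theorem~\ref{ids}) with Ariki's labeled categorification statement in characteristic zero (Theorem~\ref{withlabels}), specialized at $q=1$.

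First I would apply Theorem~\ref{ids} to the multipartition $\mu$, which gives $\dd([S(\mu)]) = S_\mu$ in $V(\La)^*_\Laurent$. Then I would invoke the expansion of the standard monomial in the dual-canonical basis from (\ref{mls}), namely $S_\mu = \sum_{\la \in \RPar_\alpha} d_{\la,\mu}(q) D_\la$. Since $\dd$ is an $\Laurent$-module isomorphism (Theorem~\ref{cat}), applying $\dd^{-1}$ to this expansion yields
\begin{equation*}
[S(\mu)] = \sum_{\la \in \RPar_\alpha} d_{\la,\mu}(q) \dd^{-1}(D_\la)
\end{equation*}
as an identity in $[\Rep{R^\La_\alpha}]$.

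Next I would specialize at $q=1$ by applying the natural $\Z$-linear map $[\Rep{R^\La_\alpha}] \rightarrow [\rep{H^\La_\alpha}]$ induced by the forgetful functor $M \mapsto \underline{M}$. Under this specialization, the left-hand side becomes $[\underline{S}(\mu)]$, the coefficient $d_{\la,\mu}(q)$ becomes the integer $d_{\la,\mu}(1)$, and the commuting square (\ref{degr}) identifies $\dd^{-1}(D_\la)$ at $q=1$ with $\underline{\dd}^{-1}(\underline{D}_\la)$. At this point the characteristic zero hypothesis enters: Theorem~\ref{withlabels} tells us exactly that $\underline{\dd}^{-1}(\underline{D}_\la) = [\underline{D}(\la)]$. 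Combining these substitutions gives the first displayed identity, and the second (the multiplicity formula) follows immediately by linear independence of the classes $\{[\underline{D}(\la)]\:|\:\la \in \RPar_\alpha\}$ in $[\rep{H^\La_\alpha}]$.

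There is no real obstacle remaining, as all the substantive work has been done in the earlier results: the graded categorification (Theorem~\ref{cat}) identifies the Specht class with a standard monomial, the Fock space combinatorics of $\S$\ref{dcqc} expand standard monomials in the dual-canonical basis, and Ariki's theorem (through Theorem~\ref{withlabels}) matches dual-canonical basis vectors with irreducible classes in characteristic zero. The only care needed is the bookkeeping between $\Laurent$-coefficients and their specializations at $q=1$, ensured by the compatibility of the graded and ungraded categorification squares in Theorem~\ref{cat} and (\ref{degr}).
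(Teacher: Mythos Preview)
Your proof is correct and follows essentially the same approach as the paper's own proof. The only cosmetic difference is the order of operations: the paper specializes (\ref{qdec}) at $q=1$ first and then applies $\underline{\dd}^{-1}$ together with Theorems~\ref{ids} and~\ref{withlabels}, whereas you apply $\dd^{-1}$ at the graded level first and specialize afterward; the key ingredients (Theorem~\ref{ids}, the expansion of standard monomials, and Theorem~\ref{withlabels}) are identical.
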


\begin{proof}
Let $\underline{S}_\mu := 1 \otimes S_\la \in V(\La)_\Z^*$
denote the standard monomial from (\ref{stmn}) specialized
at $q=1$.
By (\ref{qdec}) at $q=1$, we have that
$$
\underline{S}_\mu = \sum_{\la \in \RPar_\alpha}
d_{\la,\mu}(1) \underline{D}_\la.
$$
Now apply $\underline{\dd}^{-1}$
and use Theorems~\ref{ids} and \ref{withlabels}.
\end{proof}

In \cite{Ariki}, Ariki formulated his results in different terms,
involving lifting projectives from $H^\La_d$
to the semisimple Hecke algebras
whose irreducible representations were classified in \cite{AK}.
The Specht modules in our formulation of the above theorem
were not introduced in full generality 
until \cite{DJM} (after the time of \cite{Ariki}).
They are canonical ``modular reductions'' of the irreducible representations
of the aforementioned semisimple Hecke algebras. 
Invoking a form of Brauer reciprocity,
 Ariki's results can be reformulated equivalently in terms
of decomposition numbers of Specht modules, as we have done above.

Putting this technical difference aside,
Theorem~\ref{ungradedllt} is still not strictly
the same as Ariki's original theorem from \cite{Ariki}, 
since we are using the
parametrization of irreducible modules coming from the crystal
graph, whereas Ariki was implictly using a parametrization coming from the
triangularity properties of the decomposition matrices of Specht modules.
We discuss this subtle labelling issue in the next subsection.

\subsection{Another classification of irreducible representations}\label{sanother}
The general theory of cellular algebras leads
to alternative way to classify the irreducible 
$H^\La_\alpha$-modules, which was worked out originally
by Ariki in \cite{Aclass}.
In the following theorem, we reprove the main points of this
alternative classification, keeping track of gradings as we go.

\begin{Theorem}\label{altclass}
For $\la \in \RPar_\alpha$,
the graded Specht module $S(\la)$ has irreducible head denoted
$\dot{D}(\la)$.
Moreover:
\begin{itemize}
\item[(1)]
The modules
$\{\dot{D}(\la)\langle m\rangle\:|\:\la \in \RPar_\alpha, m \in \Z\}$
give a complete set of pairwise non-isomorphic irreducible graded
$R^\La_\alpha$-modules.
\item[(2)] 
For $\la \in \Par_\al$, we have in $[\Rep{R^\La_\alpha}]$ that
$$
[S(\la)] = \left\{
\begin{array}{ll}
[\dot{D}(\la)] + (*)&\text{if $\la$ is restricted,}\\
(*)&\text{otherwise,}
\end{array}\right.
$$
where $(*)$ denotes a 
$\Z[q,q^{-1}]$-linear combination of $[\dot{D}(\mu)]$'s for
$\mu \lhd \la$.
\item[(3)]
We have that
$\dot{D}(\la)^\circledast \cong \dot{D}(\la)$ 
for each $\la \in \RPar_\alpha$.
\end{itemize}
\end{Theorem}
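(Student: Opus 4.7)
The plan is first to apply the Graham--Lehrer cellular theory ungradedly. By \cite{DJM} (with the degenerate modification of \cite{AMR}), $H^\La_\alpha$ is cellular with weight poset $(\Par_\alpha,\unlhd)$ and cell modules the ungraded Specht modules $\underline{S}(\la)$; each such module carries a canonical bilinear form $\phi_\la$. Setting $\Lambda_0 := \{\la \in \Par_\alpha : \phi_\la \neq 0\}$ and $\underline{\dot D}(\la) := \underline{S}(\la)/\mathrm{rad}(\phi_\la)$ for $\la \in \Lambda_0$, classical cellular theory gives that $\underline{\dot D}(\la)$ is the irreducible head of $\underline{S}(\la)$, that $\{\underline{\dot D}(\la) : \la \in \Lambda_0\}$ is a complete set of pairwise non-isomorphic irreducibles of $H^\La_\alpha$, and that the decomposition matrix is unitriangular with respect to $\unlhd$: $[\underline S(\la):\underline{\dot D}(\mu)] \neq 0$ forces $\mu \unlhd \la$ with $\mu \in \Lambda_0$, and equals $1$ for $\mu = \la \in \Lambda_0$.

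Second, I would lift this to the graded setting using the explicit homogeneous basis $\{v_\T : \T \in \St(\la)\}$ of the graded Specht module $S(\la)$ from \cite{BKW}, in which $\deg(v_\T) = \deg(\T)$. A direct computation on this basis, compatible with the defect-shift symmetry embodied by the two equal $\qdim$-expressions in Theorem~\ref{gdim}, shows that $\phi_\la$ is homogeneous; after fixing the grading shift so that $\phi_\la$ becomes a degree-zero morphism $S(\la) \to S(\la)^\circledast$, I take $\dot D(\la)$ to be its image. Lemmas~\ref{no2}--\ref{no3} then make $\dot D(\la)$ an irreducible graded module lifting $\underline{\dot D}(\la)$, and the simultaneous realization of $\dot D(\la)$ as a graded quotient of $S(\la)$ and a graded submodule of $S(\la)^\circledast$ forces $\dot D(\la)^\circledast \cong \dot D(\la)$, proving (3); part (2) then drops out as the graded refinement of the ungraded Graham--Lehrer unitriangularity, with leading coefficient $[S(\la):\dot D(\la)]_q = 1$ because $\dot D(\la)$ is by construction the degree-zero head. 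The main technical obstacle is this grading-shift calibration: one must simultaneously ensure that $\dot D(\la)$ is the head of $S(\la)$ in degree zero, that $\dot D(\la)$ is grading-preservingly $\circledast$-self-dual, and that both are consistent with the defect shift seen in Theorem~\ref{gdim}.

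It remains to prove (1) by identifying $\Lambda_0 = \RPar_\alpha$. Counting irreducibles yields $|\Lambda_0| = |\RPar_\alpha|$, so I must show the two subsets of $\Par_\alpha$ coincide. Two complementary expressions reduce this to an induction on the lex order. From cellular theory, inverting the unitriangular decomposition matrix indexed by $\Lambda_0$ shows that for every $\la \in \Par_\alpha \setminus \Lambda_0$ the class $[\underline S(\la)]$ is a $\Z$-linear combination of $[\underline S(\nu)]$ for $\nu \in \Lambda_0$ with $\nu \lhd \la$. Theorem~\ref{ids} combined with Theorem~\ref{tri}(2), specialized at $q=1$, analogously shows that for every $\la \in \Par_\alpha \setminus \RPar_\alpha$ the class $[\underline S(\la)]$ is a $\Z$-linear combination of $[\underline S(\mu)]$ for $\mu \in \RPar_\alpha$ with $\mu \prec \la$. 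Both $\nu \lhd \la$ and $\mu \prec \la$ imply lex-strictly-smaller-than-$\la$ (the latter by Hypothesis~\ref{assump}(2)). A lex-minimal $\la$ in the symmetric difference $\RPar_\alpha \mathbin{\triangle} \Lambda_0$ then yields a contradiction: in the case $\la \in \RPar_\alpha \setminus \Lambda_0$, the first expression and the inductive hypothesis rewrite $[\underline S(\la)]$ entirely in terms of $\{[\underline S(\nu)] : \nu \in \RPar_\alpha, \nu \neq \la\}$, contradicting its linear independence from Corollary~\ref{sbase} at $q=1$; the case $\la \in \Lambda_0 \setminus \RPar_\alpha$ is symmetric, using independence of $\{[\underline S(\mu)] : \mu \in \Lambda_0\}$ from cellular theory.
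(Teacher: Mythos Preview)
Your argument for $\Lambda_0 = \RPar_\alpha$ is essentially the paper's: both compare Theorem~\ref{tri} (non-restricted Specht classes are $\Z[q,q^{-1}]$-combinations of lex-earlier restricted ones) with cellular unitriangularity (non-distinguished Specht classes are combinations of dominance-earlier distinguished ones), and run a lex-induction. Your symmetric-difference phrasing and the paper's direct induction are the same argument.

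The substantive divergence, and the gap, is in part (3). You propose to show that the cellular form $\phi_\la$ gives a \emph{degree-zero} map $S(\la)\to S(\la)^\circledast$; granting that, self-duality of the image is indeed automatic. But you do not actually prove this. What is easy is that $\phi_\la$ is homogeneous of \emph{some} degree $m$: since $[\underline S(\la):\underline{\dot D}(\la)]=1$, one checks $\hom(\underline S(\la),\underline S(\la)^\circledast)$ is one-dimensional, hence $\HOM(S(\la),S(\la)^\circledast)$ is concentrated in a single degree $m$. This yields only $\dot D(\la)^\circledast\cong\dot D(\la)\langle m\rangle$ for the head $\dot D(\la)$ of $S(\la)$; determining $m=0$ is exactly the content of (3). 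Neither the BKW basis $\{v_\T\}$ nor the two $\qdim$-expressions in Theorem~\ref{gdim} give this: the cellular form is defined via the Dipper--James--Mathas (Murphy-type) basis of $H^\La_\alpha$, which is not homogeneous in the Khovanov--Lauda grading, and there is no transition formula to $\{v_\T\}$ available here; and Theorem~\ref{gdim} is a global statement about $R^\La_\alpha$ (itself proved through Theorem~\ref{cat}) that does not isolate individual cell layers. Carrying out your route would require a graded cellular structure on $R^\La_\alpha$, which is substantial later work not in this paper.

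The paper avoids all of this. It first uses Theorem~\ref{cat}(3), which says that $\circledast$ on $[\Rep{R^\La_\alpha}]$ corresponds under $\dd$ to the bar-involution on $V(\La)^*_\Laurent$; combined with Theorems~\ref{tri}(3) and~\ref{ids} this gives that $[S(\la)]-[S(\la)^\circledast]$ lies in the $\Z[q,q^{-1}]$-span of $[S(\mu)]$'s for $\mu\lex\la$. Having already established (2), one then compares leading terms: the left side contributes $(1-q^m)[\dot D(\la)]$ plus lex-lower terms, whence $m=0$. This is a pure Grothendieck-group argument that never touches $\phi_\la$ directly. You correctly flag the ``grading-shift calibration'' as the obstacle, but the paper's point is that Theorem~\ref{cat}(3) is precisely the tool that resolves it.
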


\begin{proof}
Recall from Hypothesis~\ref{assump}(2) (which was verified in $\S$\ref{tzero}
in the case $e=0$ and $\S$\ref{construct} in the case $e > 0$) that
$\lexeq$ is a total order on $\Par$ refining 
the partial order $\preceq$.
So, applying 
$\dd^{-1}$ to Theorem~\ref{tri}(1)--(3) and using
Theorems~\ref{ids} and \ref{cat}(3), we deduce:
\begin{itemize}
\item[(a)] The classes $\{[S(\la)]\:|\:\la \in \RPar_\alpha\}$
are linearly independent.
\item[(b)] For $\la \in \Par_\alpha \setminus \RPar_\alpha$, 
we can express $[S(\la)]$ as a $q\Z[q]$-linear combination of
$[S(\mu)]$'s for $\mu \in \RPar_\alpha$ with $\mu \lex \la$.
\item[(c)] For $\la \in \RPar_\alpha$,
$[S(\la)] - [S(\la)^\circledast]$ is a
$\Z[q,q^{-1}]$-linear combination of $[S(\mu)]$'s
for $\mu \in \RPar_\alpha$ with $\mu \lex \la$.
\end{itemize}

Now we forget gradings for a moment. Recall that $H^\La_\alpha$ is a cellular
algebra with weight poset $(\Par_\alpha, \unlhd)$, and the Specht modules
are its 
cell modules. 
We claim that the cell modules $\underline{S}(\la)$
for $\la \in \RPar_\alpha$ have irreducible
head denoted $\underline{\dot{D}}(\la)$,
the modules
$\{\underline{\dot{D}}(\la)\:|\:\la \in \RPar_\alpha\}$
give a complete set of pairwise non-isomorphic irreducible 
$H^\La_\alpha$-modules, and 
$$
[\underline{S}(\la)] = \left\{
\begin{array}{ll}
[\underline{\dot{D}}(\la)] + (*)&\text{if $\la$ is restricted,}\\
(*)&\text{otherwise,}
\end{array}\right.
$$
for any $\la \in \Par_\alpha$,
where $(*)$ denotes a 
linear combination of $[\underline{\dot{D}}(\mu)]$'s for
$\mu \lhd \la$.
To prove the claim, recall by the general theory
of cellular algebras from \cite{GL} that certain of the
cell modules are distinguished, the distinguished cell modules
have irreducible heads which give a complete set
of non-isomorphic irreducible modules, and finally every composition
factor of an arbitrary cell module $\underline{S}(\la)$
is isomorphic to the irreducible head of a distinguished cell
module $\underline{S}(\mu)$ for $\mu \unlhd \la$.
Therefore to prove the claim it suffices to show that the distinguished
cell modules are the $\underline{S}(\la)$'s indexed by 
$\la \in \RPar_\al$.
Proceed by induction on the
total order $\lexeq$ that refines $\unlhd$.
For the induction step, 
consider $\underline{S}(\la)$ for $\la \in \Par_\alpha$.
If $\la$ is not restricted then $[\underline{S}(\la)]$
is a sum of earlier $[\underline{S}(\mu)]$'s by (b), so 
$\underline{S}(\la)$ cannot be distinguished.
If $\la$ is restricted then $[\underline{S}(\la)]$
is not a sum of earlier $[\underline{S}(\mu)]$'s by (a),
so $\underline{S}(\la)$ must be distinguished. The claim follows.

Re-introducing the grading using Lemmas~\ref{no1}--\ref{no3},
it follows from the claim that $S(\la)$ has irreducible head
$\dot{D}(\la)$ for each $\la \in \RPar_\al$ such that
$\underline{\dot{D}(\la)} \cong 
\underline{\dot{D}}(\la)$.
Moreover, (1) and (2) hold. It remains to deduce (3).
We certainly have that $\dot{D}(\la)^\circledast
\cong \dot{D}(\la)\langle m\rangle$ for some $m \in \Z$.
Now (2) gives us that
$[S(\la)] - [S(\la)^\circledast]$
is equal to $(1-q^m) [\dot D(\la)] + (*)$
where $(*)$ is a $\Z[q,q^{-1}]$-linear combination of
$[\dot D(\mu)]$'s for $\mu \in \RPar_\alpha$
with $\mu \lex \la$.
On the other hand (c) gives that
$[S(\la)] - [S(\la)^\circledast]$
is a $\Z[q,q^{-1}]$-linear combination just
of the $[\dot D(\mu)]$'s.
Hence $1-q^m = 0$, so $m=0$ as required.
\end{proof}

\begin{Corollary}\label{adj}
For each $\mu \in \RPar_\alpha$, we have that
$$
\dd^{-1}(D_\mu) = \sum_{\la \in \RPar_\alpha} a_{\la,\mu}(q) [\dot{D}(\la)]
$$
for some unique
bar-invariant Laurent polynomials $a_{\la,\mu}(q) \in \Z[q,q^{-1}]$
such that $a_{\mu,\mu}(q) = 1$ and $a_{\la,\mu}(q) = 0$ unless $\la \lexeq \mu$.
\end{Corollary}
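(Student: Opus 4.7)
\proof
The plan is to combine the transition matrix between standard monomials and the dual-canonical basis in $V(\La)$ with the transition matrix between graded Specht modules and the irreducibles $\dot D(\la)$. The key inputs are Theorem~\ref{ids} (which says $\dd([S(\la)]) = S_\la$), the triangularity of the expansion (\ref{qdec}) given by Theorem~\ref{tri}, and Theorem~\ref{altclass}(2) which says the classes of Specht modules expand unitriangularly (with respect to $\unlhd$) in terms of the classes of $\dot D(\la)$'s. The overall logic is formal linear algebra with two upper-unitriangular matrices, so the bulk of the work is bookkeeping of partial orders.

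First I would fix $\mu \in \RPar_\alpha$ and use (\ref{qdec}) to write $S_\mu = \sum_{\la \in \RPar_\alpha} d_{\la,\mu}(q) D_\la$, where by Theorem~\ref{tri} the matrix $(d_{\la,\mu}(q))_{\la,\mu \in \RPar_\alpha}$ is unitriangular with respect to $\preceq$ (so in particular invertible over $\Z[q,q^{-1}]$). Inverting yields $D_\mu = \sum_{\la \in \RPar_\alpha,\; \la \preceq \mu} e_{\la,\mu}(q) S_\la$ with $e_{\mu,\mu}(q) = 1$. Applying $\dd^{-1}$ and using Theorem~\ref{ids} transports this to the identity
$$\dd^{-1}(D_\mu) = \sum_{\la \in \RPar_\alpha,\; \la \preceq \mu} e_{\la,\mu}(q) [S(\la)].$$
Substituting Theorem~\ref{altclass}(2), which expresses each $[S(\la)]$ as $[\dot D(\la)]$ plus a $\Z[q,q^{-1}]$-combination of $[\dot D(\nu)]$ for $\nu \lhd \la$, then produces the desired expansion $\dd^{-1}(D_\mu) = \sum_{\la \in \RPar_\alpha} a_{\la,\mu}(q) [\dot D(\la)]$, with uniqueness coming from the fact that $\{[\dot D(\la)]\}$ is a basis by Theorem~\ref{altclass}(1).

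Next I would verify the triangularity $a_{\la,\mu}(q) = 0$ unless $\la \lexeq \mu$, together with $a_{\mu,\mu}(q) = 1$. The coefficient of $[\dot D(\la)]$ picks up contributions only from $[S(\nu)]$ with $\nu \unrhd \la$ and $\nu \preceq \mu$. By Hypothesis~\ref{assump}(2), $\nu \preceq \mu$ implies $\nu \lexeq \mu$, and since $\lex$ refines $\lhd$, $\nu \unrhd \la$ implies $\nu \gexeq \la$; chaining gives $\la \lexeq \mu$. When $\la = \mu$, the only contributing index is $\nu = \mu$ (since $\mu \unlhd \nu \lexeq \mu$ forces $\nu = \mu$), and this contributes $e_{\mu,\mu}(q) \cdot 1 = 1$.

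Finally, bar-invariance of $a_{\la,\mu}(q)$ follows because $D_\mu$ is bar-invariant (recall the discussion after (\ref{glory})) and Theorem~\ref{cat}(3) says $\dd$ intertwines the involution $\circledast$ with the bar-involution; since each $[\dot D(\la)]$ is $\circledast$-self-dual by Theorem~\ref{altclass}(3), applying $\circledast$ to the expansion and using anti-linearity together with the linear independence of the $[\dot D(\la)]$'s forces $\overline{a_{\la,\mu}(q)} = a_{\la,\mu}(q)$. None of the steps pose a serious obstacle; the only mild subtlety is checking that the three relevant partial orders $\preceq$, $\unlhd$, and $\lexeq$ mesh correctly in the bookkeeping of the triangularity claim, and this is handled once we invoke the refinement properties already recorded in $\S\ref{tpo}$ and Hypothesis~\ref{assump}.
\endproof
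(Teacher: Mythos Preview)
Your proposal is correct and follows essentially the same approach as the paper's proof: invert the unitriangular transition from the $D_\la$'s to the $S_\la$'s (Theorem~\ref{tri}), transport via $\dd^{-1}$ and Theorem~\ref{ids} to an expansion in $[S(\nu)]$'s, then substitute Theorem~\ref{altclass}(2) and read off the triangularity using Hypothesis~\ref{assump}(2) together with the fact that $\lexeq$ refines $\unlhd$; bar-invariance follows from Theorem~\ref{cat}(3) and Theorem~\ref{altclass}(3). Your write-up simply makes the bookkeeping behind the paper's one-paragraph argument explicit.
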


\begin{proof}
Expand $D_\mu$ in terms of
$S_\nu$'s using Theorem~\ref{tri} (recalling Hypothesis~\ref{assump}(2)).
Then apply $\dd^{-1}$ and use Theorem~\ref{ids} to
get a linear combination of $[S(\nu)]$'s.
Finally replace each $[S(\nu)]$ with $[\dot{D}(\la)]$'s
using Theorem~\ref{altclass}(2).
This yields an expression of the form
$\sum_{\la \in \RPar_\alpha}
a_{\la,\mu}(q) [\dot{D}(\la)]$ such that $a_{\mu,\mu}(q) = 1$
and $a_{\la,\mu}(q) = 0$ unless $\la \lexeq \mu$.
As $D_\mu$ is bar-invariant, this expression is too,
so all the $a_{\la,\mu}(q)$'s are bar-invariant
thanks to Theorem~\ref{altclass}(3).
\end{proof}

\begin{Remark}
In Theorem~\ref{adjt} below we will show further that
$a_{\la,\mu}(q) = 0$ unless $\la \unlhd \mu$, and that
all the coefficients of $a_{\la,\mu}(q)$ are non-negative integers.
\end{Remark}

Now that we have two different parametrizations of irreducible
representations, one from Theorem~\ref{clas},
the other from Theorem~\ref{altclass}, we 
must address the problem of 
identifying the two labellings; eventually
it will emerge that 
\begin{equation}\label{idprob}
D(\la) \cong \dot{D}(\la)
\end{equation}
for each $\la$.
In level one, this fact has an elementary proof 
by-passing the geometric Theorem~\ref{maina}; see \cite{KBrIII, BBr}.
However, in higher levels, this identification turns out to be surprisingly
subtle and the only known proofs when $e > 0$ rely ultimately on geometry.
The labelling problem for higher levels 
was solved originally
by Ariki in \cite{Abranch}. 
The first author was already aware 
at that time of a slightly different argument to solve the same 
problem (see \cite[footnote 8]{Abranch}),
which we present below.
We begin in this subsection by solving the identification problem
in characteristic zero; see Theorem~\ref{idb} for the general case.

\begin{Theorem}\label{ida}
Assume that $\cha F = 0$.
For every $\la \in \RPar_\al$, we have that 
$D(\la) \cong \dot{D}(\la)$,
where $D(\la)$ is as in Theorem~\ref{clas} and
$\dot{D}(\la)$ is as in Theorem~\ref{altclass}.
\end{Theorem}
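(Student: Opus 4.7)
The plan is to deduce the identification from Theorem~\ref{withlabels} (which gives the characteristic zero categorification via the dual-canonical basis labelled by the crystal) combined with the triangularity of Corollary~\ref{adj} (which relates the dual-canonical basis to the cellular labels $\dot D(\la)$). The main work is a comparison of two expansions in the Grothendieck group at $q=1$, after which one lifts the resulting ungraded isomorphism to a graded one using self-duality.

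First I would forget the grading and work in $[\rep{H^\La_\alpha}]$. By Theorem~\ref{withlabels}, which uses $\cha F = 0$, we have $\underline{\dd}^{-1}(\underline{D}_\mu) = [\underline{D}(\mu)]$ for each $\mu \in \RPar_\alpha$. On the other hand, specializing Corollary~\ref{adj} at $q=1$ gives
\begin{equation*}
\underline{\dd}^{-1}(\underline{D}_\mu) = \sum_{\la \in \RPar_\alpha} a_{\la,\mu}(1)\, [\underline{\dot{D}}(\la)],
\end{equation*}
with $a_{\mu,\mu}(1) = 1$ and $a_{\la,\mu}(1) = 0$ unless $\la \lexeq \mu$. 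Hence
\begin{equation*}
[\underline{D}(\mu)] = \sum_{\la \in \RPar_\alpha} a_{\la,\mu}(1)\, [\underline{\dot{D}}(\la)].
\end{equation*}

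Next I would argue that this identity forces $\underline{D}(\mu) \cong \underline{\dot D}(\mu)$. Since $\underline{D}(\mu)$ is irreducible and $\{[\underline{\dot D}(\la)]\:|\:\la \in \RPar_\alpha\}$ is a $\Z$-basis of $[\rep{H^\La_\alpha}]$ consisting of irreducible classes (Theorem~\ref{altclass}(1)), the class $[\underline{D}(\mu)]$ must coincide with a single basis vector $[\underline{\dot D}(\sigma(\mu))]$ for some bijection $\sigma:\RPar_\alpha \to \RPar_\alpha$. Linear independence then gives $a_{\la,\mu}(1) = \delta_{\la,\sigma(\mu)}$, and combining this with $a_{\mu,\mu}(1) = 1$ yields $\sigma(\mu) = \mu$. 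Therefore $\underline{D}(\mu) \cong \underline{\dot D}(\mu)$ as ungraded $H^\La_\alpha$-modules.

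Finally, to upgrade to a graded isomorphism, I would invoke Lemma~\ref{no3}: since both $D(\la)$ and $\dot D(\la)$ are graded lifts of the same ungraded irreducible, there exists $m \in \Z$ such that $D(\la) \cong \dot D(\la)\langle m \rangle$. Both modules are $\circledast$-self-dual (by Theorem~\ref{clas}(1) and Theorem~\ref{altclass}(3) respectively), so applying $\circledast$ to this isomorphism gives $D(\la) \cong \dot D(\la)\langle -m\rangle$, whence $\dot D(\la)\langle m\rangle \cong \dot D(\la)\langle -m\rangle$ and consequently $m = 0$. This completes the proof. The only nontrivial input is the characteristic zero result Theorem~\ref{withlabels}, which itself rests on Ariki's geometric Theorem~\ref{maina}; everything else is a bookkeeping exercise comparing the two unitriangular parametrizations.
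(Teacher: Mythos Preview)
Your proof is correct and follows essentially the same strategy as the paper's: both arguments reduce to the ungraded setting via self-duality, invoke Ariki's theorem (through Theorem~\ref{withlabels} in your version, through Theorem~\ref{ungradedllt} in the paper's), and exploit unitriangularity with respect to $\lexeq$ to match the two labellings. The only cosmetic difference is that the paper compares two triangular expansions of $[\underline{S}(\la)]$ and inducts on $\lexeq$, whereas you compare two expressions for $\underline{\dd}^{-1}(\underline{D}_\mu)$ and use the cleaner observation that an irreducible class must equal a single basis vector, which together with $a_{\mu,\mu}(1)=1$ pins down the bijection without an induction.
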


\begin{proof}
In view of Theorems \ref{clas}(1) and \ref{altclass}(3),
it suffices to prove this in the ungraded setting, i.e.
we need to show that $\underline{D}(\la)
\cong \underline{\dot{D}}(\la)$ for
each $\la \in \RPar_\al$.
By Theorems~\ref{altclass}(2) and \ref{ungradedllt}, using
also Theorem~\ref{tri} and Hypothesis~\ref{assump}(2) for the triangularity
in the second case, we have 
that
\begin{align*}
[\underline{S}(\la)] 
&= [\underline{\dot{D}}(\la)] + (\text{a $\Z$-linear
combination of $\underline{\dot{D}}(\mu)$'s for $\mu \lex \la$}),\\
[\underline{S}(\la)] 
&= [\underline{D}(\la)] + (\text{a $\Z$-linear
combination of $\underline{D}(\mu)$'s for $\mu \lex \la$}),
\end{align*}
for each $\la \in \RPar_\al$.
By induction on the lexicographic ordering, we deduce from this that
$[\underline{\dot{D}}(\la)] = [\underline{D}(\la)]$,
and the corollary follows.
\end{proof}

\subsection{Graded decomposition numbers in characteristic zero}\label{sgdn}
We can now prove the graded versions of Theorems~\ref{withlabels}
and \ref{ungradedllt}. 
These statements should be viewed as a graded version of 
the Lascoux-Leclerc-Thibon conjecture
(generalized to higher levels).

\begin{Theorem}\label{late}
Assume that $\cha F = 0$.
For each $\la \in \RPar_\alpha$, we have that
$\db(Y_\la) = [Y(\la)]$ and $\dd([D(\la)]) = D_\la$,
where $\db$ and $\dd$ are the maps from Theorem~\ref{cat}.
\end{Theorem}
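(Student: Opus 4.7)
The two identities $\db(Y_\la) = [Y(\la)]$ and $\dd([D(\la)]) = D_\la$ are equivalent. A degree-preserving projective cover $Y(\mu) \twoheadrightarrow D(\mu)$ together with graded Schur give $\langle [Y(\mu)], [D(\la)] \rangle = \delta_{\la,\mu}$ for the Cartan pairing, while $\langle Y_\mu, D_\la \rangle = \delta_{\la,\mu}$ by the definition \eqref{sdef} of the Shapovalov pairing; the duality \eqref{itsid} relating $\db$ and $\dd$ then transports one identity into the other. I will focus on establishing $\dd([D(\la)]) = D_\la$.

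Two preliminary properties pin $\dd([D(\la)])$ down substantially. It is bar-invariant by Proposition~\ref{id2} combined with the self-duality $D(\la) \cong D(\la)^\circledast$ from Theorem~\ref{clas}(1). Moreover, on setting $q = 1$ the map $\dd$ reduces to the ungraded isomorphism $\underline\dd$ of \eqref{degr}, and via Theorem~\ref{withlabels} together with the characteristic-zero identification $\underline{D(\la)} \cong \underline{D}(\la)$ from Theorem~\ref{ida} we obtain that $\dd([D(\la)])$ specializes at $q = 1$ to $\underline{D}_\la$.

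To upgrade these constraints to the desired equality, I would proceed by induction on $\height(\alpha)$, the base $\alpha = 0$ being immediate. For $\la \in \RPar_\alpha$ with $\alpha > 0$, choose $i \in I$ with $\eps_i(\la) > 0$ and set $\mu := \tilde e_i \la$, so that by induction $\dd([D(\mu)]) = D_\mu$. Applying $F_i$, which commutes with $\dd$ by Theorem~\ref{cat}(2), and equating the expansions from Theorem~\ref{gg}(3) and Proposition~\ref{dca} (noting $\phi_i(\mu) = \phi_i(\la)+1$), one obtains
\begin{equation*}
[\phi_i(\mu)]\bigl(\dd([D(\la)]) - D_\la\bigr) = \sum_{\nu \in \RPar_\alpha,\,\phi_i(\nu) < \phi_i(\la)} \bigl(y_{\mu,\nu;i}(q) D_\nu - v_{\mu,\nu;i}(q) \dd([D(\nu)])\bigr).
\end{equation*}
A secondary induction within $\RPar_\alpha$, most naturally set up by choosing $i$ adapted to an extremal sequence for $D(\la)$ in the sense of Lemma~\ref{C241201} so the error $\nu$'s are strictly smaller than $\la$ in a suitable total order, then allows one to divide through by the non-zero quantum integer $[\phi_i(\mu)]$ in the torsion-free module $V(\La)^*_\Laurent$ and conclude.

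The principal obstacle is that bar-invariance plus agreement at $q = 1$ does not by itself pin down an element of $V(\La)^*_\Laurent$: the $\Z[q+q^{-1}]$-module of bar-invariant vectors strictly contains $V(\La)^*_\Z$, as witnessed by bar-invariant Laurent polynomials such as $2-q-q^{-1}$ which vanish at $q = 1$. The inductive argument sketched above must therefore propagate a stronger triangularity---in effect Kashiwara's upper-crystal-lattice characterization of the dual-canonical basis, namely membership in $V(\La)_0$ together with the congruence $D_\la \equiv S_\la \pmod{q V(\La)_0}$---through the graded branching rules, ruling out such spurious corrections. It is at this juncture that Ariki's ungraded theorem, imported via Theorem~\ref{withlabels}, plays an essential role: it anchors the induction by fixing the ungraded leading coefficients, thereby excluding any bar-invariant ``noise'' vanishing at $q = 1$.
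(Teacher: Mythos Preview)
Your proposal has a genuine gap. Grant the secondary induction, so that $\dd([D(\nu)])=D_\nu$ for every $\nu$ appearing on the right of your displayed equation. The equation then reads
\[
[\phi_i(\mu)]\bigl(\dd([D(\la)])-D_\la\bigr)=\sum_{\nu}\bigl(y_{\mu,\nu;i}(q)-v_{\mu,\nu;i}(q)\bigr)D_\nu,
\]
and there is no a priori reason for $y_{\mu,\nu;i}(q)=v_{\mu,\nu;i}(q)$: the coefficients $y$ come from the expansion of $F_iD_\mu$ in the dual-canonical basis (Proposition~\ref{dca}), the $v$ from the graded branching rule (Theorem~\ref{gg}(3)), and their equality is precisely equivalent to what you are trying to prove. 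So dividing by $[\phi_i(\mu)]$ yields only a tautology. You yourself flag that bar-invariance together with agreement at $q=1$ is not enough, and then gesture at ``propagating triangularity'' via extremal sequences; but the $\nu$'s in the branching formulae are constrained by $\phi_i(\nu)<\phi_i(\la)$, which bears no evident relation to an extremal-word order, so no concrete mechanism is supplied.

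The paper closes this gap by bringing in a \emph{positivity} constraint coming from Specht modules. One works instead with $\dd^{-1}(D_\la)$ and expands it in the basis of $[D(\mu)]$'s: by Corollary~\ref{adj} and Theorem~\ref{ida} this is $[D(\la)]+\sum_{\mu\lex\la}a_{\mu,\la}(q)[D(\mu)]$ with each $a_{\mu,\la}(q)$ bar-invariant, and Theorem~\ref{withlabels} gives $a_{\mu,\la}(1)=0$. Now apply $\dd^{-1}$ to \eqref{qdec}, use Theorem~\ref{ids}, and induct on $\lex$ to obtain
\[
[S(\la)]=[D(\la)]+\sum_{\mu\lex\la}\bigl(d_{\mu,\la}(q)+a_{\mu,\la}(q)\bigr)[D(\mu)].
\]
The left side is the class of an actual module, so each coefficient $d_{\mu,\la}(q)+a_{\mu,\la}(q)$ lies in $\Z_{\geq 0}[q,q^{-1}]$. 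Since $d_{\mu,\la}(q)\in q\Z[q]$ by Theorem~\ref{tri}, the coefficients of $a_{\mu,\la}(q)$ in degrees $\leq 0$ are non-negative; bar-invariance then forces all coefficients non-negative; and $a_{\mu,\la}(1)=0$ gives $a_{\mu,\la}(q)=0$. This positivity-plus-bar-invariance-plus-vanishing-at-$1$ trick is the missing idea.
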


\begin{proof}
It suffices to prove the second statement, 
since the first follows from it by dualizing as in the proof of Theorem~\ref{withlabels}.
By Corollary~\ref{adj} and Theorem~\ref{ida},
we have that
\begin{equation}\label{aha}
\dd^{-1} (D_\la)
=
[D(\la)] + \sum_{\mu \in \RPar_\al,\ \mu\lex \la} a_{\mu,\la}(q)[D(\mu)]
\end{equation}
for some bar-invariant Laurent polynomials $a_{\mu,\la}(q) \in
\Z[q,q^{-1}]$.
Moreover we know from Theorem~\ref{withlabels}
that $a_{\mu,\la}(1) = 0$.

Now we proceed to show that by induction on the lexicographic
ordering that $\dd([D(\la)]) = D_\la$
for all $\la \in \RPar_\alpha$.
When $\la$ is minimal, this is immediate from (\ref{aha}).
In general, we have that by (\ref{qdec}), (\ref{aha}),
Theorems~\ref{ids} and \ref{tri},
and the induction hypothesis that
\begin{align*}
[S(\la)] &= \dd^{-1} (S_\la)
= 
\dd^{-1}\bigg(
D_\la+ \sum_{\mu \in \RPar_\al,\ \mu \lex \la}
d_{\mu,\la}(q) D_\mu\bigg)\\
&=
[D(\la)] + \sum_{\mu\in\RPar_\al,\ \mu\lex\la}
(d_{\mu,\la}(q)+a_{\mu,\la}(q))  [D(\mu)]
\end{align*}
for every $\la \in \RPar_\alpha$.
Now consider the coefficient 
$d_{\mu,\la}(q)+a_{\mu,\la}(q)$
in this expression
for any
$\mu \in \RPar_\al$ with $\mu \lex \la$.
As this is the decomposition of a module
in the Grothendieck group, 
all coefficients of $d_{\mu,\la}(q)+a_{\mu,\la}(q)$
are non-negative integers.
As $d_{\mu,\la}(q) \in q \Z[q]$,
we deduce that the $q^0, q^{-1}, q^{-2},\dots$
coefficients of $a_{\mu,\la}(q)$ are non-negative
integers. As $a_{\mu,\la}(q)$ is bar-invariant
it follows that all its coefficients are non-negative.
Finally as $a_{\mu,\la}(1) = 0$ we get that
$a_{\mu,\la}(q) = 0$ too.
This holds for all $\mu$, so (\ref{aha}) implies that
$\dd^{-1} (D_\la) = [D(\la)]$, as required.
\end{proof}

\begin{Corollary}\label{maindecthm}
Assume that $\cha F = 0$.
For $\mu \in \Par_\al$, we have that
$$
[S(\mu)] = \sum_{\la \in \RPar_\alpha}
d_{\la,\mu}(q) [D(\la)].
$$
In other words, for $\mu \in\Par_\alpha$ and $\la \in \RPar_\alpha$,
we have that
$$
[S(\mu):D(\la)]_q = d_{\la,\mu}(q).
$$
Moreover, for all such $\la,\mu$, we have that
$d_{\la,\mu}(q) = 0$ unless $\la \unlhd \mu$.
\end{Corollary}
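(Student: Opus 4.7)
The plan is to derive the corollary as an essentially formal consequence of the main results already in place, with the only non-trivial content being the dominance triangularity.

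First I would obtain the decomposition number formula by transport of structure. Applying $\dd^{-1}$ to the defining equation (\ref{qdec}) for the dual-canonical basis, one has
\begin{equation*}
\dd^{-1}(S_\mu) = \sum_{\la \in \RPar_\alpha} d_{\la,\mu}(q) \dd^{-1}(D_\la).
\end{equation*}
By Theorem~\ref{ids} the left hand side equals $[S(\mu)]$, while by Theorem~\ref{late} (the graded lift of Ariki's theorem, valid in characteristic zero) each $\dd^{-1}(D_\la)$ equals $[D(\la)]$. Since the classes $\{[D(\la)]\:|\:\la \in \RPar_\alpha, \text{shifts}\}$ are a $\Laurent$-basis of $[\Rep{R^\La_\alpha}]$, reading off coefficients gives $[S(\mu):D(\la)]_q = d_{\la,\mu}(q)$.

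For the dominance triangularity, I would appeal to the cellular structure rather than the bar-involution. Theorem~\ref{altclass}(2) says that in $[\Rep{R^\La_\alpha}]$ one has
\begin{equation*}
[S(\mu)] = \delta_{\mu \in \RPar_\alpha}[\dot D(\mu)] + \sum_{\la \lhd \mu} [S(\mu):\dot D(\la)]_q [\dot D(\la)],
\end{equation*}
i.e. $[S(\mu):\dot D(\la)]_q = 0$ unless $\la \unlhd \mu$. Combining this with the labelling identification $D(\la) \cong \dot D(\la)$ from Theorem~\ref{ida}, available because we are in characteristic zero, immediately gives $d_{\la,\mu}(q) = [S(\mu):D(\la)]_q = 0$ unless $\la \unlhd \mu$.

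The main obstacle is entirely bundled into inputs established earlier: the geometric Theorem~\ref{maina} of Ariki is what underlies Theorem~\ref{late}, and the identification of the crystal labelling with the cellular labelling in Theorem~\ref{ida} also ultimately rests on this. Once those two facts are in hand, the proof of the corollary is a direct comparison between the two ways of expressing $[S(\mu)]$ in the Grothendieck group, namely the dual-canonical expansion pulled back via $\dd$ and the cellular filtration expansion, and no further combinatorial analysis is required. In particular, the coefficients $d_{\la,\mu}(q)$ automatically acquire non-negative integer coefficients (a fact already hinted at in the course of proving Theorem~\ref{late}), completing the graded Lascoux-Leclerc-Thibon statement.
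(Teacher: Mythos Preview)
Your proof is correct and follows essentially the same approach as the paper: the decomposition number formula comes from applying $\dd^{-1}$ to (\ref{qdec}) via Theorems~\ref{ids} and \ref{late}, and the dominance triangularity comes from Theorem~\ref{altclass}(2) combined with the labelling identification of Theorem~\ref{ida}. Your additional remarks on non-negativity and on where the real work lies are accurate and in the spirit of the paper's surrounding discussion.
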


\begin{proof}
The first two statements follow from Theorems~\ref{late} and \ref{ids},
combined with the definition (\ref{qdec}).
The final statement 
follows from Theorems~\ref{ida} and \ref{altclass}(2).
\end{proof}

\subsection{Graded adjustment matrices}\label{sgam}
In this subsection we complete
the proof of (\ref{idprob}) for fields $F$ of positive characteristic.
Recall we have already established this in the case $\cha F = 0$
in Theorem~\ref{ida}.
We will deduce the result in general from the characteristic zero case
by a base change argument.

So assume now that $F$ is of characteristic $p > 0$, keeping all other notation
as at the beginning of section 4.
Assume we are given $\alpha \in Q_+$ with $\height(\alpha) = d$.
Let $\widehat\xi \in \C^\times$ be a primitive $e$th root of unity 
(or any non-zero element that is not a root of unity if $e=0$).
As well as the algebra $R^\La_\alpha$ over $F$,
we consider the corresponding algebra
defined from the parameter $\widehat\xi$ over the ground field $\C$.
To avoid confusion we denote it by $\widehat{R}^\La_\alpha$,
and denote the graded Specht and irreducible modules
for $\widehat{R}^\La_\alpha$ by 
$\widehat{S}(\la)$ and $\widehat{D}(\la)$, respectively.

In \cite[$\S$6]{BKyoung}, we explained a general procedure to
reduce the 
irreducible $\widehat{R}^\La_\alpha$-module $\widehat{D}(\la)$
modulo $p$ to obtain an
$R^\La_\alpha$-module with the same $q$-character,
for each $\la \in \RPar_\al$.
There is some freedom in this procedure related to choosing 
a lattice in $\widehat{D}(\la)$. We can make an essentially
unique choice as follows. Let
$v_\la$ denote the image under
some surjection $\widehat{S}(\la) \twoheadrightarrow \widehat{D}(\la)$
of the homogeneous basis vector 
$v_\T \in \widehat{S}(\la)$, where $\T$ is the ``initial'' standard $\la$-tableau
obtained by writing the numbers $1,2,\dots,d$ in order along rows
starting with the top row.
By \cite[$\S$6.2]{BKW},
$\widehat{S}(\la)$ is generated as an $\widehat{R}^\La_\alpha$-module
by this vector $v_\T$, hence $v_\la \in \widehat{D}(\la)$ is non-zero.
Now let
\begin{equation}\label{jam}
J(\la) := F \otimes_{\Z} L
\end{equation}
where $L \subset \widehat{D}(\la)$ denotes the $\Z$-span of the
vectors
$\psi_{r_1} \cdots \psi_{r_m} 
y_1^{n_1} \cdots y_d^{n_d} v_\la$
for all $m \geq 0, 1 \leq r_1, \dots, r_m < d$ and $n_1,\dots,n_d \geq 0$.
By \cite[Theorem 6.5]{BKyoung}, $L$ is a lattice in $\widehat{D}(\la)$,
and $J(\la)$
is a well-defined graded $R^\La_\alpha$-module
with
$y_r \in R^\La_\alpha$ acting as $1 \otimes y_r$,
$\psi_r$ acting as $1 \otimes \psi_r$, and
$e(\bi)$ acting as $1 \otimes e(\bi)$.

\begin{Lemma}\label{J}
For each $\la \in \RPar_\al$, $J(\la)$ has the same $q$-character
as $\widehat{D}(\la)$.
Hence, $\dd([J(\la)]) = D_\la$.
\end{Lemma}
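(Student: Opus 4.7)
The plan is to first establish the equality of $q$-characters $\CH J(\la)=\CH \widehat D(\la)$ by a lattice argument, and then deduce the second assertion by using Theorem~\ref{late} in characteristic zero together with the Shapovalov-pairing description of $q$-characters that emerged in the proof of Theorem~\ref{gdim}.

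For the first assertion, I will observe that each spanning vector $\psi_{r_1}\cdots\psi_{r_m} y_1^{n_1}\cdots y_d^{n_d} v_\la$ of $L$ is homogeneous for the $\Z$-grading (since $\psi_r,y_r$ and $v_\la$ have specified degrees) and is an eigenvector for every $e(\bi)$. The second point uses the defining relations: $v_\la$ lies in $e(\bi^\T) \widehat D(\la)$ for $\T$ the initial tableau, and each application of $\psi_r$ or $y_r$ sends an $e(\bj)$-eigenvector to an $e(\bj')$-eigenvector (with $\bj'=s_r \bj$ or $\bj'=\bj$) by (\ref{R2PsiE})--(\ref{R3Y}). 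Consequently $L=\bigoplus_\bi e(\bi)L$ as a graded $\Z$-module, and since \cite[Theorem 6.5]{BKyoung} guarantees that $L$ is a lattice in $\widehat D(\la)$, each summand $e(\bi)L$ is a graded lattice in $e(\bi)\widehat D(\la)$. Base change to $F$ preserves graded rank, so $\qdim_F e(\bi) J(\la)=\qdim_\C e(\bi)\widehat D(\la)$ for every $\bi\in I^\alpha$, giving $\CH J(\la)=\CH \widehat D(\la)$.

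For the second assertion, I will extract from the proof of Theorem~\ref{gdim} the field-independent formula
$$\qdim e(\bi) M = q^{-\defect(\alpha)} \langle F_\bi v_\La, \dd([M])\rangle$$
valid for any $M\in\Rep{R^\La_\alpha}$, obtained by rewriting $\qdim e(\bi)M=\qdim\HOM_{R^\La_\alpha}(R^\La_\alpha e(\bi),M)$ as a Cartan pairing and then transferring to Shapovalov via Theorem~\ref{cat}(4) and the isomorphism $R^\La_\alpha e(\bi)\cong F_\bi R^\La_0\langle\defect(\alpha)\rangle$. The same formula holds over $\C$ with $\widehat\dd$ in place of $\dd$. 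Combining this with the equality of $q$-characters from Step~1 and Theorem~\ref{late} applied in characteristic zero (which gives $\widehat\dd([\widehat D(\la)])=D_\la$), I obtain
$$\langle F_\bi v_\La,\, \dd([J(\la)])\rangle = \langle F_\bi v_\La,\, D_\la\rangle \qquad\text{for every }\bi\in I^\alpha.$$
Since the vectors $\{F_\bi v_\La\mid \bi\in I^\alpha\}$ span the $(\La-\alpha)$-weight space of $V(\La)$ over $\Q(q)$ and the Shapovalov form is non-degenerate there, we conclude that $\dd([J(\la)])=D_\la$.

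The main obstacle I expect is purely bookkeeping rather than conceptual: one must be careful about the precise $q^{\pm\defect(\alpha)}$ shifts in the Shapovalov formula (the Cartan pairing is sesquilinear, so the shift arising from $R^\La_\alpha e(\bi)\cong F_\bi R^\La_0\langle\defect(\alpha)\rangle$ contributes $q^{-\defect(\alpha)}$ rather than $q^{\defect(\alpha)}$), and one must verify rigorously that the spanning set of $L$ truly respects the $e(\bi)$-decomposition. Both of these are routine given the results already established, and no new input is needed beyond Theorem~\ref{late}, Theorem~\ref{cat}, and the lattice property from \cite[Theorem~6.5]{BKyoung}.
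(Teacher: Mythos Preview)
Your proof is correct. The first assertion (equality of $q$-characters) is handled in essentially the same way as the paper, which simply remarks that each $e(\bi)L$ is a graded lattice in $e(\bi)\widehat D(\la)$; your more detailed justification via the relations (\ref{R2PsiE})--(\ref{R3Y}) is fine.

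For the second assertion you take a genuinely different route from the paper. The paper argues structurally: by Theorem~\ref{ch} the map $\CH$ is injective, by Corollary~\ref{sbase} the classes $\{[S(\nu)]\mid\nu\in\RPar_\alpha\}$ form an $\Laurent$-basis of $[\Rep{R^\La_\alpha}]$, and by (\ref{chspecht}) the Specht modules over $F$ and over $\C$ have identical $q$-characters. Hence any two modules (one over $F$, one over $\C$) with the same $q$-character have the same coordinates in the Specht basis, and Theorem~\ref{ids} then gives that their images under $\dd$ and $\widehat\dd$ coincide in $V(\La)^*_\Laurent$. Your approach instead extracts the pairing identity $\qdim e(\bi)M = q^{-\defect(\alpha)}\langle F_\bi v_\La,\dd([M])\rangle$ from the proof of Theorem~\ref{gdim} and uses non-degeneracy of the Shapovalov form together with the fact that $\{F_\bi v_\La\mid\bi\in I^\alpha\}$ spans the $(\La-\alpha)$-weight space over $\Q(q)$. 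Both arguments reduce to Theorem~\ref{late} for the characteristic zero input. The paper's approach has the advantage of invoking only results already stated as theorems and corollaries, while yours requires re-deriving an intermediate formula from a proof; on the other hand, your argument avoids appealing to the injectivity result Theorem~\ref{ch} and is in that sense slightly more self-contained.
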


\begin{proof}
The fact that $J(\la)$ has the same $q$-character
as $\widehat{D}(\la)$ is immediate from the construction because,
for $L$ as in (\ref{jam}), each
$e(\bi) L$ is a graded lattice in $e(\bi) \widehat{D}(\la)$.
To show that 
$\dd([J(\la)]) = D_\la$, it suffices by Theorem~\ref{late}
to show that $\dd([M]) = \dd([N])$ in $V(\La)_\Laurent^*$ whenever we are given 
$M \in \Rep{R^\La_\alpha}$
and $N \in \Rep{\widehat{R}^\La_\alpha}$ 
with the same $q$-characters.
To see this, use Theorem~\ref{ch}, Corollary~\ref{sbase}
and the observation from (\ref{chspecht}) 
that graded Specht modules over $R^\La_\alpha$
and $\widehat{R}^\La_\alpha$ have the same $q$-characters
to reduce to checking 
the statement in the special case that $M = S(\la)$ 
and $N = \widehat{S}(\la)$
for some $\la \in \RPar_\al$.
Then apply Theorem~\ref{ids}.
\end{proof}

\begin{Theorem}\label{adjt}
The bar-invariant Laurent polynomials $a_{\la,\mu}(q)$ from Corollary~\ref{adj}
have the property that 
$a_{\la,\mu}(q) = 0$ unless $\la \unlhd \mu$. Moreover,
for each $\mu \in \RPar_\alpha$, we have that
$$
[J(\mu)] = [\dot{D}(\mu)] + \sum_{\la \in \RPar_\alpha,\ \la \lhd \mu} a_{\la,\mu}(q) [\dot{D}(\la)].
$$
Hence, all coefficients of
$a_{\la,\mu}(q)$ are non-negative integers,
and
 we have that
$$
[S(\mu):\dot D(\la)]_q
= 
\sum_{\nu \in \RPar_\al} a_{\la,\nu}(q) d_{\nu,\mu}(q)
$$
for any $\la \in \RPar_\alpha$ and $\mu \in \Par_\alpha$.
\end{Theorem}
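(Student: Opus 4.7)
The plan is to deduce everything from the single identity
$[J(\mu)] = \sum_{\la \in \RPar_\alpha} a_{\la,\mu}(q)\,[\dot D(\la)]$
and then exploit the triangularity of the Fock space polynomials $d_{\nu,\mu}(q)$
established in characteristic zero (Corollary~\ref{maindecthm}).
First I would combine Lemma~\ref{J}, which gives $\dd([J(\mu)]) = D_\mu$, with Corollary~\ref{adj}, which computes $\dd^{-1}(D_\mu) = \sum_\la a_{\la,\mu}(q)[\dot D(\la)]$; applying $\dd^{-1}$ to the first equation gives the displayed identity, together with $a_{\mu,\mu}(q) = 1$ and $a_{\la,\mu}(q) = 0$ unless $\la \lexeq \mu$.

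Next I would prove the composition multiplicity formula. Using $\dd([S(\mu)]) = S_\mu$ from Theorem~\ref{ids} and the expansion (\ref{qdec}), I get $[S(\mu)] = \sum_\nu d_{\nu,\mu}(q)\,\dd^{-1}(D_\nu) = \sum_\nu d_{\nu,\mu}(q)\,[J(\nu)]$. Substituting the expansion of $[J(\nu)]$ from the previous step and collecting coefficients of $[\dot D(\la)]$ yields
\[
[S(\mu):\dot D(\la)]_q \;=\; \sum_{\nu \in \RPar_\alpha} a_{\la,\nu}(q)\,d_{\nu,\mu}(q).
\]

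The main obstacle, and what I expect to be the heart of the proof, is the strengthening from $\la \lexeq \mu$ to the dominance triangularity $\la \unlhd \mu$. For this I would argue by a minimal counterexample. Fix $\la$ and, assuming the set $X := \{\mu : a_{\la,\mu}(q) \neq 0,\ \la \not\unlhd \mu\}$ is nonempty, pick $\mu \in X$ minimal with respect to any total order refining $\unlhd$. By Corollary~\ref{maindecthm} we have $d_{\nu,\mu}(q) = 0$ unless $\nu \unlhd \mu$, and $d_{\mu,\mu}(q) = 1$, so the formula above gives
\[
a_{\la,\mu}(q) \;=\; [S(\mu):\dot D(\la)]_q - \sum_{\nu \lhd \mu} a_{\la,\nu}(q)\,d_{\nu,\mu}(q).
\]
The left-most term vanishes by Theorem~\ref{altclass}(2) since $\la \not\unlhd \mu$. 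For each $\nu \lhd \mu$, if $\la \unlhd \nu$ then $\la \lhd \mu$ by transitivity, contradicting $\mu \in X$; hence $\la \not\unlhd \nu$, so either $a_{\la,\nu}(q) = 0$ or $\nu \in X$, and the latter is excluded by the minimality of $\mu$. Thus every term on the right vanishes, forcing $a_{\la,\mu}(q) = 0$, a contradiction. So $X$ is empty, which proves the triangularity and rewrites the first identity as $[J(\mu)] = [\dot D(\mu)] + \sum_{\la \lhd \mu} a_{\la,\mu}(q)[\dot D(\la)]$.

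Finally, non-negativity is immediate: the coefficient of $q^n$ in $a_{\la,\mu}(q)$ equals the multiplicity of the graded composition factor $\dot D(\la)\langle n \rangle$ in the genuine module $J(\mu)$, hence lies in $\Z_{\geq 0}$. The composition multiplicity formula displayed above is then the last assertion of the theorem, completing the proof.
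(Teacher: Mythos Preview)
Your argument is correct and uses the same ingredients as the paper: Lemma~\ref{J}, Corollary~\ref{adj}, Theorem~\ref{ids} with (\ref{qdec}), Corollary~\ref{maindecthm}, and Theorem~\ref{altclass}(2). The only difference is organizational. The paper proves the dominance triangularity of $a_{\la,\mu}(q)$ first, by simply re-running the proof of Corollary~\ref{adj} with the stronger input $d_{\la,\mu}(q)=0$ unless $\la\unlhd\mu$ from Corollary~\ref{maindecthm}; since both the inverse of the $d$-matrix and the $[S(\nu)]\to[\dot D(\la)]$ transition matrix from Theorem~\ref{altclass}(2) are $\unlhd$-unitriangular, their product (which is the $a$-matrix) is too. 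You instead establish the composition-multiplicity formula first and then extract the triangularity from it by a minimal-counterexample argument, which amounts to proving the same matrix fact (if $c=a\cdot d$ with $c$ and $d$ both $\unlhd$-unitriangular then so is $a$) by hand. Either way is fine; the paper's route is a touch shorter, while yours makes the use of Theorem~\ref{altclass}(2) for triangularity completely explicit.
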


\begin{proof}
The first statement follows by repeating the proof of Corollary~\ref{adj},
using the stronger result established in Corollary~\ref{maindecthm}
that $d_{\la,\mu}(q) = 0$ unless $\la \unlhd \mu$
to replace the total order $\lexeq$ by the partial order $\unlhd$.
By Lemma~\ref{J}, we have that
$\dd([J(\mu)]) = D_\mu$.
Using this, the next statement of 
the theorem follows from 
Corollary~\ref{adj}.
For the final statement, we have that by Theorem~\ref{ids} and
(\ref{qdec}) that
$$
[S(\mu)] = \sum_{\nu \in \RPar_\al} d_{\nu,\mu}(q) \dd^{-1}(D_\nu).
$$
Now expand each $\dd^{-1}(D_\nu)$ using
the formula from 
Corollary~\ref{adj}.
\end{proof}

We refer to the matrix $(a_{\la,\mu}(q))_{\la,\mu \in \RPar_\alpha}$
as the {\em graded adjustment matrix}.
For level one and 
$\xi = 1$, our graded adjustment matrix specializes at $q=1$
to the adjustment matrix defined originally by James in the modular
representation theory of symmetric groups.
Curiously we did not yet find an example in which $a_{\la,\mu}(q) \notin \Z$;
this is related to a question raised by
Turner in the introduction of \cite{T}.
Now we can complete the identification of the two labellings of irreducible
representations in positive characteristic. 

\begin{Theorem}\label{idb}
Assume that $\cha F > 0$.
For every $\la \in \RPar_\alpha$,
we have that $D(\la) \cong \dot{D}(\la)$, where $D(\la)$ is as in 
Theorem~\ref{clas} and $\dot{D}(\la)$ is as in Theorem~\ref{altclass}.
\end{Theorem}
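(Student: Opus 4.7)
\proof The plan is to proceed by induction on the dominance order $\unlhd$ on $\RPar_\alpha$, combining the reduction-mod-$p$ module $J(\la)$ (which by Lemma~\ref{J} has the same $q$-character as $\widehat{D}(\la)$) with the extremal-sequence calculation of Lemma~\ref{C241201}. The overall strategy mirrors the characteristic-zero argument of Theorem~\ref{ida}, but replaces the use of Theorem~\ref{ungradedllt} by a direct calculation of the graded multiplicity $[J(\la):D(\la)]_q$.

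Fix $\la \in \RPar_\alpha$ and assume, as inductive hypothesis, that $\dot D(\mu) \cong D(\mu)$ for every $\mu \in \RPar_\alpha$ with $\mu \lhd \la$. First I would establish the key identity
$$
[J(\la):D(\la)]_q = 1.
$$
To do so, pick any extremal sequence $\bi$ (in the sense of $\S$\ref{sex}) for the characteristic-zero irreducible $\widehat{D}(\la)$. Since $\widehat{D}(\la)$ is irreducible, Lemma~\ref{C241201} applied over $\C$ forces $\tilde f_{i_d}\cdots \tilde f_{i_1}\varnothing = \la$ and
$(\qdim\, e(\bi)\widehat{D}(\la))/([m_1]!\cdots[m_n]!) = [\widehat{D}(\la):\widehat{D}(\la)]_q = 1$. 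Because extremality of $\bi$ is determined purely by the $q$-character (one computes $\eps_j(E_{j'}^{m'}\cdots)$ from the graded dimensions of weight spaces), and because $\CH J(\la) = \CH \widehat{D}(\la)$ by Lemma~\ref{J}, the same sequence $\bi$ is extremal for $J(\la)$, and the resulting Lemma~\ref{C241201} calculation over $F$ gives the claimed value $1$.

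Now I would feed this into the graded adjustment matrix decomposition. By Theorem~\ref{adjt},
$$
[J(\la)] \;=\; [\dot D(\la)] \;+\; \sum_{\mu \lhd \la} a_{\mu,\la}(q)\,[\dot D(\mu)],
$$
and the inductive hypothesis lets us rewrite $[\dot D(\mu)] = [D(\mu)]$ for all $\mu \lhd \la$. Taking the $q$-composition multiplicity of $D(\la)$ on both sides, and noting that $[D(\mu):D(\la)]_q = 0$ when $\mu \neq \la$, we obtain
$$
1 \;=\; [J(\la):D(\la)]_q \;=\; [\dot D(\la):D(\la)]_q.
$$
Finally, since $\dot D(\la)$ is irreducible, Theorem~\ref{clas} guarantees $\dot D(\la) \cong D(\nu)\langle n\rangle$ for some unique $\nu \in \RPar_\alpha$ and $n \in \Z$; the self-duality $\dot D(\la)^\circledast \cong \dot D(\la)$ from Theorem~\ref{altclass}(3) together with the self-duality of $D(\nu)$ from Theorem~\ref{clas}(1) forces $n = 0$; and the identity $[\dot D(\la):D(\la)]_q = q^n\delta_{\nu,\la} = 1$ then forces $\nu = \la$. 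Hence $\dot D(\la) \cong D(\la)$, completing the induction. The base case (minimal $\la$ in $\unlhd$) is the special case where the sum in Theorem~\ref{adjt} is empty, so the same argument applies with no prior input.

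The main obstacle I expect is verifying carefully that the extremal-sequence calculation behaves well under reduction mod $p$: specifically, that extremality of $\bi$ is truly a $q$-character invariant, and that the graded multiplicity $[J(\la):D(\la)]_q$ (as a polynomial in $q$, not merely a nonzero value at $q=1$) equals $1$ exactly. Everything downstream — the inductive bookkeeping and the use of self-duality to rule out a grading shift — is routine once this is in hand.\endproof
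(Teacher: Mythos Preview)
Your proof is correct and follows essentially the same route as the paper: establish $[J(\la):D(\la)]_q = 1$ via the extremal-sequence argument (using that $J(\la)$ and $\widehat D(\la)$ have the same $q$-character), then combine with the graded adjustment-matrix decomposition of Theorem~\ref{adjt} and induct on $\unlhd$. The paper proves the multiplicity claim first as a standalone statement and then leaves the induction as an ``easy exercise,'' whereas you spell it out in full; your final self-duality step to rule out a grading shift is actually redundant, since $[\dot D(\la):D(\la)]_q = q^n\delta_{\nu,\la} = 1$ already forces both $\nu = \la$ and $n = 0$.
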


\begin{proof}
We first claim 
for any $\la \in \RPar_\alpha$ that $[J(\la):D(\la)]_q = 1$.
To see this, let $\bi \in I^\alpha$ be an extremal sequence for
$J(\la)$ in the sense of $\S$\ref{sex}.
As $J(\la)$ has the same $q$-character as the irreducible
$\widehat{R}^\La_\alpha$-module $\widehat{D}(\la)$,
$\bi$ must also be an extremal sequence for $\widehat{D}(\la)$. 
Now apply Lemma~\ref{C241201} twice, once for $R^\La_\alpha$ and once
for $\widehat{R}^\La_\alpha$, to get that
$\la = \tilde f_{i_d} \cdots \tilde f_{i_1} \varnothing$ and
$[J(\la):D(\la)] = 1$.

Using the claim and Theorem~\ref{adjt},
it is now an easy exercise to show that $[\dot{D}(\mu)]
= [D(\mu)]$, proceeding by induction on the dominance ordering.
The theorem follows.
\end{proof}

\subsection{\boldmath The Khovanov-Lauda conjecture in type $A$}\label{slast}
Theorem~\ref{late} 
combined with Lemma~\ref{party}
proves for all type $A$ quivers (finite or affine) a conjecture of 
Khovanov and Lauda formulated in \cite[$\S$3.4]{KL1};
see also \cite{BS} for an elementary proof in a very special case.
In this subsection we record one consequence
which is implicit in \cite{KL1}.
Apart from the case $e=2$, the main 
result of this subsection is also proved 
in \cite{VV3} by a more direct method (which includes 
all other simply-laced types, not just type $A$). 

Let $\mathbf B = \dot\bigcup_{\alpha \in Q_+} \mathbf B_\alpha$ 
be the canonical basis for $\mathbf f = \bigoplus_{\alpha \in Q_+}
\mathbf f_\alpha$
as in 
\cite[$\S$14.4]{Lubook}.
Let $U_q(\g)^-$ be the subalgebra of $U_q(\g)$ generated
by the $F_i$'s.
There is an isomorphism
\begin{equation}
\mathbf f \stackrel{\sim}{\rightarrow} U_q(\g)^-, \qquad x \mapsto x^-
\end{equation}
such that $\theta_i^- := F_i$.
In view of the results of \cite[$\S$14.4]{Lubook},
$\mathbf B$ is the unique weight
basis for $\mathbf f$ such that
the following holds for every  $x \in \mathbf B$
and every dominant integral weight $\La$: the vector
$x^- v_\La$ is either zero or it is an element of the canonical
basis of $V(\La)$.

We have already observed that every
irreducible graded $R_\alpha$-module can be shifted in degree
so that it is self-dual with respect to the duality $\circledast$;
see \cite[$\S$3.2]{KL1}.
In view of Corollary~\ref{dualpims}, it follows that
every indecomposable projective
graded $R_\alpha$-module $P$ can be shifted in degree so that it is
self-dual with respect to the duality $\#$.
We say simply that $P$ is a {\em self-dual 
projective} if that is the case. So the head of an indecomposable
self-dual projective is a self-dual irreducible.

\begin{Theorem}\label{klcon}
Assume that $\cha F = 0$.
For every $\alpha \in Q_+$,
the isomorphism $\ga:\mathbf f_\alpha \rightarrow [\Proj{R_\alpha}]$ from Theorem~\ref{klthm}
maps 
$\mathbf B_\alpha$
to the basis of
$[\Proj{R_\alpha}]$ 
arising
from the isomorphism classes of the self-dual indecomposable projective
graded $R_\alpha$-modules.
\end{Theorem}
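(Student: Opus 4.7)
The plan is to deduce Theorem~\ref{klcon} from the cyclotomic categorification Theorem~\ref{late} by letting the dominant integral weight $\La$ vary. Fix $\alpha \in Q_+$ and $b \in \mathbf{B}_\alpha$. By Theorem~\ref{klthm}(3) the map $\gamma$ intertwines the bar-involution with $\#$, so $\gamma(b) \in [\Proj{R_\alpha}]$ is $\#$-invariant; Corollary~\ref{dualpims} supplies a unique grading shift on each indecomposable projective making it self-dual, and these self-dual indecomposable projectives form a $\Laurent$-basis of $[\Proj{R_\alpha}]$. Expanding
\[
\gamma(b) \;=\; \sum_P c_P(q)\,[P]
\]
in this basis with $c_P(q) \in \Z[q,q^{-1}]$, the theorem reduces to showing that exactly one $c_P$ equals $1$ and all the others are $0$; surjectivity of the resulting map $\mathbf{B}_\alpha \to \{\text{self-dual indec.\ projectives}\}$ then follows from a rank count over $\Laurent$.

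To pin down the $c_P$'s, I would apply the truncation $\pr$ to the cyclotomic quotient $R_\alpha^\La$ for a dominant $\La$ with $(\La,\alpha_i)$ large for every $i \in I$. Proposition~\ref{id1} gives $\pr\gamma(b) = \delta(b^\flat v_\La)$, and the identity $\theta_i^\flat = q^{-1}F_iK_i$ together with the commutation $K_jF_i = q^{-a_{j,i}}F_iK_j$ yields by direct calculation the key formula
\[
b^\flat v_\La \;=\; q^{\defect(\alpha)}\,\omega(b)^{-}v_\La
\qquad(b \in \mathbf{f}_\alpha),
\]
where $\omega$ denotes the $\Q(q)$-linear anti-automorphism of $\mathbf{f}$ fixing every $\theta_i$ (a standard property of the canonical basis is that $\omega$ preserves $\mathbf{B}$). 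By the defining property of $\mathbf{B}$, the vector $\omega(b)^{-} v_\La$ is either zero or a canonical basis element of $V(\La)$; for $\La$ sufficiently dominant it is nonzero, and by Lemma~\ref{party} it equals $q^{-\defect(\alpha)}Y_\lambda$ for a unique $\lambda \in \RPar_\alpha$. Applying Theorem~\ref{late} I conclude
\[
\pr\gamma(b) \;=\; q^{\defect(\alpha)}\,\delta\!\left(q^{-\defect(\alpha)}Y_\lambda\right) \;=\; [Y(\lambda)].
\]

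Next I would match this identity against the image of $\sum_P c_P(q)[P]$ under $\pr$. Since $\pr$ commutes with $\#$ by (\ref{comp}), each $\pr P$ is either zero or an \emph{unshifted} self-dual indecomposable projective of $R_\alpha^\La$, hence equal to $[Y(\mu_P)]$ for some $\mu_P \in \RPar_\alpha$. The crucial stabilization lemma is that for $\La$ chosen sufficiently dominant (depending on $\alpha$), every self-dual indecomposable projective $P \in \Proj{R_\alpha}$ satisfies $\pr P \neq 0$ and the map $P \mapsto \mu_P$ is a bijection onto $\RPar_\alpha$. Granted this, the linear independence of the $[Y(\mu)]$'s in $[\Proj{R_\alpha^\La}]$ forces every $c_P(q)$ to vanish except for the unique $P$ with $\mu_P = \lambda$, for which $c_P(q) = 1$, finishing the argument.

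The main obstacle is the stabilization lemma at the end of the previous paragraph. By projective covers (using Theorem~\ref{clas}) it reduces to the statement that for $\La$ sufficiently dominant, inflation $\infl$ sends a complete set of pairwise non-isomorphic graded simples of $R_\alpha^\La$ bijectively onto one for $R_\alpha$; both sides are finite sets by the lemma recalled in $\S$\ref{sqnh}, and in the language of Kashiwara crystals this is the stabilization of the $\alpha$-weight space of the $V(\La)$-crystal to that of the $\mathbf{f}$-crystal as $(\La,\alpha_i) \to \infty$. At the algebraic level it amounts to verifying that for $\La$ sufficiently dominant, every finite-dimensional graded simple $R_\alpha$-module is annihilated by the cyclotomic ideal $\langle y_1^{(\La,\alpha_{i_1})}e(\bi) \mid \bi \in I^\alpha\rangle$, which is uniform in $\La$ because only finitely many simples are involved and each has finite dimension.
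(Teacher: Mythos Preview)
Your argument is correct, but it follows a genuinely different route from the paper's proof.

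The paper argues in the opposite direction: it takes a self-dual indecomposable projective $P$, sets $x := \gamma^{-1}(P)$, and checks that $\sigma(x) \in \mathbf{B}$ (hence $x \in \mathbf{B}$) using Lusztig's \emph{characterization} of $\mathbf{B}$ as the unique weight basis whose images $x^- v_\La$ are canonical-or-zero for \emph{every} $\La$.  Concretely, the paper observes that $\sigma(x)^- v_\La$ agrees with $x^\flat v_\La$ up to a power of $q$, that $x^\flat v_\La = \delta^{-1}(\pr P)$ by Proposition~\ref{id1}, and that $\pr P$ is either zero or a single indecomposable projective whose preimage under $\delta$ is (by Theorem~\ref{late} and Lemma~\ref{party}) canonical up to a $q$-power; bar-invariance of $\sigma(x)$ then removes the $q$-power.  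No dominance hypothesis on $\La$ is needed, and in particular the stabilization lemma never enters.

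By contrast, you go forward from $b \in \mathbf{B}$ and work at a \emph{single} sufficiently dominant $\La$; you only use the forward implication ``$b \in \mathbf{B} \Rightarrow b^- v_\La$ canonical-or-zero'', together with your stabilization lemma that $\pr$ induces a bijection between self-dual indecomposable projectives over $R_\alpha$ and over $R_\alpha^\La$ once $\La$ is large.  That lemma is correct (each $y_r$ has positive degree, hence acts nilpotently on every finite-dimensional graded simple, so the cyclotomic relations hold automatically for $(\La,\alpha_i)$ large; and $\pr P$ is indecomposable when nonzero since its head, computed via the adjunction $(\pr,\infl)$, is the unique simple $M$ with $\infl M \cong \operatorname{hd}P$).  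Your key identity $b^\flat v_\La = q^{\defect(\alpha)}\,\omega(b)^- v_\La$ is exactly the ``up to a power of $q$'' step the paper leaves implicit.

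In summary, the paper's route is shorter because it exploits the full characterization of $\mathbf{B}$ and never needs $\pr P \neq 0$; your route is more constructive (it identifies which projective corresponds to each $b$) at the cost of the extra stabilization argument.
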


\begin{proof}
Let $\sigma:\mathbf f \rightarrow \mathbf f$ be the linear
anti-automorphism with $\sigma(\theta_i) = \theta_i$ for all $i$.
It is well known that $\sigma$ maps the canonical basis of $\mathbf f$
to itself.
Let $P$ be a self-dual indecomposable projective graded $R_\alpha$-module.
Let $x \in \mathbf f$ be its pre-image under $\gamma$.
To prove that $x \in \mathbf B$, we show equivalently
that $\sigma(x) \in \mathbf B$.
By the characterization of $\mathbf B$
recalled before the statement of the theorem,
this follows if we can show for any $\La$ that
$\sigma(x)^- v_\La$ is either zero or an element of the canonical
basis of $V(\La)$.
Since $P$ is self-dual, we get from Theorem~\ref{klthm}(3)
that $x$, hence $\sigma(x)$, is bar-invariant. Therefore it is enough just
to show that $\sigma(x)^- v_\La$ is either zero or an element
of the canonical basis of $V(\La)$ up to scaling by a
power of $q$.
Recalling the map $\flat$ from
(\ref{flatmap}), note that $\sigma(x)^- v_\La$
 is equal to
$x^\flat v_\La$ up to scaling by a power of $q$.
So applying Lemma~\ref{party},
we are reduced to showing that $x^\flat v_\La$
is either zero or an element of the quasi-canonical basis
of $V(\La)$ up to scaling by a power of $q$.
Finally, by Proposition~\ref{id1}, we have that
$x^\flat v_\La = \de^{-1}(\pr\, P)$.
Clearly $\pr\, P$ is either zero or a projective indecomposable
$R^\La_\alpha$-module. Moreover when it is non-zero,
Theorem~\ref{late}
gives that $\de^{-1} (\pr\, P)$
is an element of the quasi-canonical basis
of $V(\La)$ up to scaling by a power of $q$.
This completes the proof.
\end{proof}

\end{document}